\newtheorem{theorem}{Theorem}[section]
\newtheorem{lemma}[theorem]{Lemma}
\newtheorem{corollary}[theorem]{Corollary}
\newtheorem{remark}[theorem]{Remark}
\newtheorem{definition}[theorem]{Definition}
\renewcommand \theequation {%
\ifnum \c@section>\z@ \@arabic\c@section.%
\fi\@arabic\c@equation} \@addtoreset{equation}{section}
\providecommand{\ud}[1]{\mathrm{d}{#1}}
\providecommand{\abs}[1]{\left\vert#1\right\vert}
\providecommand{\nm}[1]{\left\Vert#1\right\Vert}
\providecommand{\br}[1]{\langle #1 \rangle}
\providecommand{\tm}[2]{\left\Vert#1\right\Vert_{L^2(#2)}}
\providecommand{\im}[2]{\left\Vert#1\right\Vert_{L^{\infty}(#2)}}
\providecommand{\lnnm}[1]{{\left\Vert#1\right\Vert}_{L^{\infty}L^{\infty}}}
\providecommand{\lnm}[1]{\left\Vert#1\right\Vert_{L^{\infty}}}
\providecommand{\tnm}[1]{\left\Vert#1\right\Vert_{L^{2}}}
\providecommand{\tnnm}[1]{{\left\Vert#1\right\Vert}_{L^{2}L^2}}
\def\p{\partial}
\def\half{\frac{1}{2}}
\def\rt{\rightarrow}
\def\no{\nonumber}
\def\ue{\mathrm{e}}
\def\u{U^{\e}}
\def\ub{\mathscr{U}^{\e}}
\def\bu{\bar U^{\e}}
\def\bub{\bar{\mathscr{U}}^{\e}}
\def\e{\epsilon}
\def\s{\mathcal{S}}
\def\vx{\vec x}
\def\vw{\vec w}
\def\nx{\nabla_{x}}
\def\vn{\vec n}
\def\px{\p_{\eta}}
\def\l{\lambda}
\def\ll{\mathcal{L}}
\def\k{\kappa}
\def\q{Q}
\def\qb{\mathscr{Q}}
\def\v{\mathscr{V}}
\def\w{\mathscr{W}}
\def\d{\delta}
\def\pp{\mathcal{P}}
\def\vn{\vec\nu}
\def\xc{X_{cl}}
\def\wc{W_{cl}}
\def\t{\mathcal{T}}
\def\k{\mathcal{K}}
\def\a{\mathscr{A}}
\def\b{\mathscr{B}}
\def\rk{R_{\kappa}}
\def\z{\mathscr{Z}}
\def\ea{\eta_{\ast}}
\def\pa{\phi_{\ast}}
\def\id{{\bf{1}}}
\begin{document}
\title{Geometric Correction in Diffusive Limit of Neutron Transport Equation in 2D Convex Domains}

\author[Y. Guo]{Yan Guo}
\address[Y. Guo]{\newline\indent
Division of Applied Mathematics, Brown University,
\newline\indent Providence, RI 02912, USA }
\email{Yan\_Guo@brown.edu}

\author[L. Wu]{Lei Wu}
\address[L. Wu]{
   \newline\indent Department of Mathematical Sciences, Carnegie Mellon University
\newline\indent Pittsburgh, PA 15213, USA}
\email{lwu2@andrew.cmu.edu}

\subjclass[2010]{35L65, 82B40, 34E05}

\begin{abstract}
Consider the steady neutron transport equation with diffusive boundary condition. In \cite{AA003} and \cite{AA006}, it was discovered that geometric correction is necessary for the Milne problem of Knudsen-layer construction in a disk or annulus. In this paper, we establish diffusive limit for a 2D convex domain. Our contribution relies on novel $W^{1,\infty}$ estimates for the Milne problem with geometric correction in the presence of a convex domain, as well as an $L^{2m}-L^{\infty}$ framework which yields stronger remainder estimates.\\
\textbf{Keywords:} geometric correction, $W^{1,\infty}$ estimates, $L^{2m}-L^{\infty}$ framework.
\end{abstract}

\maketitle

\tableofcontents

\newpage


\pagestyle{myheadings} \thispagestyle{plain} \markboth{LEI WU}{DIFFUSIVE LIMIT IN GENERAL DOMAIN}

\section{Introduction}

\subsection{Problem Formulation}

We consider the steady neutron transport equation in a
two-dimensional convex domain with diffusive boundary. In the space
domain $\vx=(x_1,x_2)\in\Omega$ where $\p\Omega\in C^2$ and the velocity domain
$\vw=(w_1,w_2)\in\s^1$, the neutron density $u^{\e}(\vx,\vw)$
satisfies
\begin{eqnarray}\label{transport}
\left\{
\begin{array}{rcl}\displaystyle
\e \vw\cdot\nabla_x u^{\e}+u^{\e}-\bar u^{\e}&=&0\ \ \text{in}\ \ \Omega,\\
u^{\e}(\vx_0,\vw)&=&\pp[u^{\e}](\vx_0)+\e g(\vx_0,\vw)\ \ \text{for}\
\ \vw\cdot\vn<0\ \ \text{and}\ \ \vx_0\in\p\Omega,
\end{array}
\right.
\end{eqnarray}
where
\begin{eqnarray}\label{average}
\bar u^{\e}(\vx)=\frac{1}{2\pi}\int_{\s^1}u^{\e}(\vx,\vw)\ud{\vw},
\end{eqnarray}
\begin{eqnarray}\label{diffusive}
\pp[u^{\e}](\vx_0)=\frac{1}{2}\int_{\vw\cdot\vec
n>0}u^{\e}(\vx_0,\vw)(\vw\cdot\vn)\ud{\vw},
\end{eqnarray}
$\vn$ is the outward unit normal vector, with the Knudsen number $0<\e<<1$. Also, $u^{\e}$ satisfies the normalization condition
\begin{eqnarray}\label{normalization}
\int_{\Omega\times\s^1}u^{\e}(\vx,\vw)\ud{\vw}\ud{\vx}=0,
\end{eqnarray}
and $g$ satisfies the compatibility condition
\begin{eqnarray}\label{compatibility}
\int_{\p\Omega}\int_{\vw\cdot\vn<0}g(\vx_0,\vw)(\vw\cdot\vn)\ud{\vw}\ud{\vx_0}=0.
\end{eqnarray}
We intend to study the behavior of $u^{\e}$ as $\e\rt0$.

Based on the flow direction, we can divide the boundary $\Gamma=\{(\vx,\vw): \vx\in\p\Omega\}$ into
the in-flow boundary $\Gamma^-$, the out-flow boundary $\Gamma^+$
and the grazing set $\Gamma^0$ as
\begin{eqnarray}
\Gamma^{-}&=&\{(\vx,\vw): \vx\in\p\Omega, \vw\cdot\vn<0\}\\
\Gamma^{+}&=&\{(\vx,\vw): \vx\in\p\Omega, \vw\cdot\vn>0\}\\
\Gamma^{0}&=&\{(\vx,\vw): \vx\in\p\Omega, \vw\cdot\vn=0\}
\end{eqnarray}
It is easy to see $\Gamma=\Gamma^+\cup\Gamma^-\cup\Gamma^0$.
Hence, the boundary condition is only given for $\Gamma^{-}$.

\subsection{Main Result}

\begin{theorem}\label{main}
Assume $g(\vx_0,\vw)\in C^2(\Gamma^-)$ satisfying (\ref{compatibility}). Then for the steady neutron
transport equation (\ref{transport}), there exists a unique solution
$u^{\e}(\vx,\vw)\in L^{\infty}(\Omega\times\s^1)$ satisfying (\ref{normalization}). Moreover, for any $0<\d<<1$, the solution obeys the estimate
\begin{eqnarray}
\im{u^{\e}(\vx,\vw)-\u_0(\vx)}{\Omega\times\s^1}\leq C(\d,\Omega)\e^{1-\d},
\end{eqnarray}
where $\u_0(\vx)$ satisfies
\begin{eqnarray}
\left\{
\begin{array}{rcl}
\Delta_x\u_0&=&0\ \ \text{in}\
\ \Omega,\\\rule{0ex}{2em}\dfrac{\p\u_0}{\p\vec
\nu}&=&\dfrac{1}{\pi}\displaystyle
\int_{\vw\cdot\vn<0}g(\vx,\vw)(\vw\cdot\vn)\ud{\vw}\ \ \text{on}\ \
\p\Omega,\\\rule{0ex}{1em}
\displaystyle\int_{\Omega}\u_0(\vx)\ud{\vx}&=&0,
\end{array}
\right.
\end{eqnarray}
in which $C(\d,\Omega)>0$ denotes a constant that depends on $\d$ and $\Omega$.
\end{theorem}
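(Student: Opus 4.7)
The plan is to construct a multiscale asymptotic ansatz
\begin{equation*}
u^{\e} \approx \ubc_{0}+\e \ubc_{1}+\e^{2}\ubc_{2}+\v_{0}+\e\v_{1}+\e^{2}\v_{2},
\end{equation*}
where the $\ubc_{k}(\vx,\vw)$ are interior (Hilbert) terms and the $\v_{k}$ are boundary-layer correctors, and then to upgrade this formal ansatz to a genuine sup-norm estimate by controlling the remainder $R:=u^{\e}-(\text{ansatz})$ through a combined $L^{2m}$--$L^{\infty}$ argument. Substituting the interior expansion into (\ref{transport}) and collecting powers of $\e$: the order $\e^{0}$ equation forces $\ubc_{0}=\ubc_{0}(\vx)$; the order $\e^{1}$ equation gives $\ubc_{1}-\bubc_{1}=-\vw\cdot\nx\ubc_{0}$; and the solvability condition at order $\e^{2}$, obtained by integrating in $\vw$, produces $\Delta_{x}\ubc_{0}=0$. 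The same procedure pins down $\ubc_{1}$ and $\ubc_{2}$.

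Near $\p\Omega$ I will switch to local coordinates $(\eta,\theta,\phi)$, where $\eta=d(\vx,\p\Omega)/\e$ is the stretched inward normal, $\theta$ is tangential arclength, and $\phi$ is the polar angle of $\vw$ relative to the inward normal. This change of variables introduces a curvature-dependent coefficient in the streaming operator, i.e.\ the geometric correction of \cite{AA003,AA006}. Each $\v_{k}$ is then determined by a Milne-type problem on $\eta\in[0,\infty)$ with that $\eta$-dependent coefficient, and its solvability condition at order $\e^{0}$ forces the Neumann data
\begin{equation*}
\frac{\p\ubc_{0}}{\p\vn}=\frac{1}{\pi}\int_{\vw\cdot\vn<0}g(\vx,\vw)(\vw\cdot\vn)\ud{\vw}
\end{equation*}
on $\p\Omega$; together with (\ref{compatibility}) and the zero-mean normalization, this yields the unique $\ubc_{0}$ appearing in the theorem. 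Pointwise decay of $\v_{k}$ as $\eta\rt\infty$ is controlled by a $W^{1,\infty}$ estimate for the geometrically corrected Milne problem, which I view as the first technical workhorse of the proof.

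The remainder $R$ satisfies $\e\vw\cdot\nx R+R-\bar R=\e^{N}\mathscr{S}$ for an explicit source $\mathscr{S}$, together with a diffusive boundary datum of order $\e^{N}$. The naive $L^{2}$ energy identity yields only $\tnm{R}\ls\e^{-1/2}\tnm{\mathscr{S}}$, which is too weak once $R$ is reinserted into the mild formulation. Accordingly I will run an $L^{2m}$--$L^{\infty}$ bootstrap: $L^{2m}$ bounds come from multiplying the equation by $R\abs{R}^{2m-2}$ and absorbing the boundary contribution through the diffusive reflection, while sup-norm bounds come from integrating along characteristics and using the $W^{1,\infty}$ Milne estimate to handle the layer portion of $\mathscr{S}$. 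Interpolating an inequality of the shape $\lnm{R}\ls \e^{-\d}\tnm{R}^{1/m}$ and closing the loop with $m$ large compared to $1/\d$ should deliver the advertised $\e^{1-\d}$ rate.

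The principal obstacle is the grazing set $\Gamma^{0}$: along trajectories nearly tangent to $\p\Omega$ the characteristic estimate loses a factor $1/\abs{\vw\cdot\vn}$, and the boundary-layer ansatz itself degenerates there. The intended workaround is to split velocities into a grazing slab, whose contribution is absorbed through the $L^{2m}$ bound at the cost of a small $\e^{\d}$ measure factor, and a non-grazing region where characteristic integration together with the $W^{1,\infty}$ Milne estimates applies cleanly. Reconciling these two regimes, and exploiting convexity of $\Omega$ to prevent accumulation of grazing reflections, is where I expect the bulk of the technical effort to lie.
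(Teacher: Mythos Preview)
Your overall architecture is right, but two choices in your plan would not close.

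\textbf{The $L^{2m}$ step.} Multiplying the remainder equation by $R\abs{R}^{2m-2}$ and integrating gives a dissipation term $\int (R-\bar R)\,R\abs{R}^{2m-2}$ that controls nothing useful: when $R$ is concentrated in the kernel (i.e.\ $R\approx\bar R$ is large while $R-\bar R$ is small) this term is tiny and you get no bound on $\nm{\bar R}_{L^{2m}}$. The paper's mechanism is different: keep the $L^{2}$ energy estimate to control $R-\bar R$ and the non-$\pp$ part of the trace, and obtain $\nm{\bar R}_{L^{2m}}$ separately by testing the \emph{linear} weak formulation against $\phi=-\vw\cdot\nx\zeta$, where $\zeta$ solves $\Delta\zeta=(\bar R)^{2m-1}$ with Neumann data. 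The cross term $-\e\int(\vw\cdot\nx\phi)\bar R$ then produces exactly $\e\nm{\bar R}_{L^{2m}}^{2m}$, and every other term is bounded via $W^{2,\frac{2m}{2m-1}}$ elliptic estimates and trace inequalities. The interpolation/Young step is applied not to $R$ itself but to $(1-\pp)[R]$ on $\Gamma^{+}$ and to $R-\bar R$ in the bulk, between $L^{2}$ and $L^{\infty}$, so that the resulting $o(1)\e^{1/m}\lnm{R}$ can be absorbed after the $L^{\infty}$ stage.

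\textbf{The boundary layer order.} You propose correctors $\v_{0},\v_{1},\v_{2}$. For diffusive boundary the zeroth layer vanishes identically, and the Neumann data for $\u_{0}$ arises from the compatibility condition of the \emph{first}-order Milne problem (not order $\e^{0}$). More importantly, constructing $\v_{2}$ would require $\partial_{\tau}\v_{1}$ as source in a further Milne problem, and the tangential-derivative estimate for the geometrically corrected Milne problem is already the hardest result here (it costs a factor $\abs{\ln\e}^{8}$ and is obtained through a weighted $L^{\infty}$ bound on the \emph{normal} derivative using the invariant distance $\zeta(\eta,\phi)=(1-(\ue^{-V(\eta)}\cos\phi)^{2})^{1/2}$, which is where the grazing difficulty is actually absorbed). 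Differentiating once more in $\tau$ is not available. The paper therefore stops the layer at $\v_{1}$, keeps the interior expansion to $\u_{2}$, and lets the resulting $O(\e^{2})$ boundary mismatch be swallowed by the improved remainder estimate above. Your grazing-slab splitting is not needed in the remainder analysis; the double-Duhamel characteristic argument plus the $L^{2m}$ bound on $\bar R$ suffices there.
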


\subsection{Background and Methods}

Diffusive limit, or more general hydrodynamic limit, plays a key role in connecting kinetic theory and fluid mechanics. Since 1960s, this type of problems have been extensively studied in many different settings: steady or unsteady, linear or nonlinear, strong solution or weak solution, etc. We refer to the references
\cite{Larsen1974=}, \cite{Larsen1974}, \cite{Larsen1975}, \cite{Larsen1977}, \cite{Larsen.D'Arruda1976}, \cite{Larsen.Habetler1973}, \cite{Larsen.Keller1974}, \cite{Larsen.Zweifel1974}, \cite{Larsen.Zweifel1976}, \cite{Li.Lu.Sun2015=}, \cite{Li.Lu.Sun2015}, \cite{Li.Lu.Sun2015==} for more details. Among all these variations, one of the simplest but most important models - steady neutron transport equation with one-speed velocity in bounded domains, where the boundary layer effect shows up, has long been believed to be satisfactorily solved since Bensoussan, Lions and Papanicolaou published their remarkable paper \cite{Bensoussan.Lions.Papanicolaou1979} in 1979.

Unfortunately, their results are shown to be false due to lack of regularity for the classical Milne problem in \cite{AA003} and \cite{AA006}. A new approach with geometric correction to the Milne problem has been developed to ensure regularity in the cases of disk and annulus in \cite{AA003} and \cite{AA006}. However, this new method fails to treat more general domains.

Consider the boundary layer expansion with geometric correction
\begin{eqnarray}
\ub(\eta,\tau,\phi)=\ub_0(\eta,\tau,\phi)+\e\ub_1(\eta,\tau,\phi),
\end{eqnarray}
where $\eta$ denotes the rescaled normal variable, $\tau$ the tangential variable and $\phi$ the velocity variable defined in (\ref{substitution 1}), (\ref{substitution 2}), (\ref{substitution 3}), and (\ref{substitution 4}). Thanks to the diffusive boundary condition, $\ub_0=0$. As \cite{AA003} stated, the boundary layer must formally satisfy
\begin{eqnarray}
\sin\phi\frac{\p \ub_1}{\p\eta}+\frac{\e}{R_{\kappa}-\e\eta}\cos\phi\frac{\p
\ub_1}{\p\phi}+\ub_1-\bub_1&=&0,
\end{eqnarray}
where $R_{\kappa}$ is the radius of curvature at boundary.

In the absence of the geometric correction $\dfrac{\e}{R_{\kappa}-\e\eta}\cos\phi\dfrac{\p\ub_1}{\p\phi}$ as in \cite{Bensoussan.Lions.Papanicolaou1979}, the key tangential derivative $\dfrac{\p\ub_1}{\p\tau}$ is not bounded for such a classical Milne problem. Therefore, the expansion breaks down. In the case when $R_{\kappa}$ is constant, as in \cite{AA003} and \cite{AA006}, $\dfrac{\p\ub_1}{\p\tau}$ is smooth, since the tangential derivative commutes with the equation. On the other hand, when $R_{\kappa}$ is a function of $\tau$, then $\dfrac{\p\ub_1}{\p\tau}$ relates to the normal derivative $\dfrac{\p\ub_1}{\p\eta}$, whose boundedness had remained open.

Our main contribution is to show $\dfrac{\p\ub_1}{\p\tau}$ is bounded when $R_{\kappa}$ is not a constant for a general convex domain. Our proof is intricate and lies on the weighted $L^{\infty}$ estimates for the normal derivative. We use careful analysis along the characteristic curves in the presence of non-local averaging $\bub_1$ over $\phi$. The convexity and invariant kinetic distance $\zeta(\eta,\tau,\phi)$ defined in (\ref{weight function}), plays the crucial role. Our paper marks an important first step towards the study of diffusive expansions of neutron transport equations and other kinetic equations with boundary layer correction.

Moreover, we have to improve the remainder estimate to avoid higher-order expansion. This is done by a new $L^{2m}$-$L^{\infty}$ framework. The main idea is to introduce a special test function in weak formulation to treat kernel and non-kernel parts separately, and further improve the $L^{\infty}$ estimate by a modified double Duhamel's principle. The proof relies on a delicate analysis using interpolation and Young's inequality.

Applying these two new techniques, we successfully obtain the diffusive limit of neutron transport equation in a convex domain with diffusive boundary.

\subsection{Notation and Structure}

Throughout this paper, unless specified, $C>0$ denotes a universal constant which does not depend on the data and
can change from one inequality to another.
When we write $C(z)$, it means a certain positive constant depending
on the quantity $z$.

Our paper is organized as follows: in Section 2, we present the asymptotic analysis of the equation (\ref{transport});  in Section 3, we prove the weighted $L^{\infty}$ estimates of derivatives in $\e$-Milne problem with geometric correction; in Section 4, we prove the improved $L^{\infty}$ estimate of remainder equation; finally, in
Section 5, we prove the diffusive limit, i.e. Theorem \ref{main}.

\section{Asymptotic Analysis}

\subsection{Interior Expansion}

We define the interior expansion as follows:
\begin{eqnarray}\label{interior expansion}
\u(\vx,\vw)\sim\u_0(\vx,\vw)+\e\u_1(\vx,\vw)+\e^2\u_2(\vx,\vw),
\end{eqnarray}
where $\u_k$ can be determined by comparing the order of $\e$ by
plugging (\ref{interior expansion}) into the equation
(\ref{transport}). Thus we have
\begin{eqnarray}
\u_0-\bu_0&=&0,\label{expansion temp 1}\\
\u_1-\bu_1&=&-\vw\cdot\nx\u_0,\label{expansion temp 2}\\
\u_2-\bu_2&=&-\vw\cdot\nx\u_1.\label{expansion temp 3}
\end{eqnarray}
Plugging (\ref{expansion temp 1}) into (\ref{expansion temp 2}),
we obtain
\begin{eqnarray}
\u_1=\bu_1-\vw\cdot\nx\bu_0.\label{expansion temp 4}
\end{eqnarray}
Plugging (\ref{expansion temp 4}) into (\ref{expansion temp 3}),
we get
\begin{eqnarray}\label{expansion temp 5}
\u_2-\bu_2=-\vw\cdot\nx(\bu_1-\vw\cdot\nx\bu_0)=-\vw\cdot\nx\bu_1+\vw^2\Delta_x\bu_0+2w_1w_2\p_{x_1x_2}\bu_0.
\end{eqnarray}
Integrating (\ref{expansion temp 5}) over $\vw\in\s^1$, we achieve
the final form
\begin{eqnarray}
\Delta_x\bu_0=0.
\end{eqnarray}
which further implies $\u_0(\vx,\vw)$ satisfies the equation
\begin{eqnarray}\label{interior 1}
\left\{ \begin{array}{rcl} \u_0&=&\bu_0,\\
\Delta_x\bu_0&=&0.
\end{array}
\right.
\end{eqnarray}
In a similar fashion, for $k=1,2$, $\u_k$ satisfies
\begin{eqnarray}\label{interior 2}
\left\{ \begin{array}{rcl} \u_k&=&\bu_k-\vw\cdot\nx\u_{k-1},\\
\Delta_x\bu_k&=&\displaystyle-\int_{\s^1}\vw\cdot\nx\u_{k-1}\ud{\vw}.\end{array}
\right.
\end{eqnarray}
It is easy to see $\bu_k$ satisfies an elliptic equation. However, the boundary condition of $\bu_k$ is unknown at this stage, since generally $\u_k$ does not necessarily satisfy the diffusive boundary condition of (\ref{transport}). Therefore, we have to resort to boundary layer.

\subsection{Local Coordinate System}

Basically, we use two types of coordinate systems: Cartesian coordinate
system for interior solution, which is stated above, and local coordinate system in a neighborhood of the boundary for boundary layer.

Assume the Cartesian coordinate system is $\vx=(x_1,x_2)$. Using polar coordinates system $(r,\theta)\in[0,\infty)\times[-\pi,\pi)$ and choosing pole in $\Omega$, we assume $\p\Omega$ is
\begin{eqnarray}
\left\{
\begin{array}{rcl}
x_1&=&r(\theta)\cos\theta,\\
x_2&=&r(\theta)\sin\theta,
\end{array}
\right.
\end{eqnarray}
where $r(\theta)>0$ is a given function. Our local coordinate system is similar to polar coordinate
system, but varies to satisfy the specific requirement.

In the domain near the boundary, for each $\theta$, we have the
outward unit normal vector
\begin{eqnarray}
\vn=\left(\frac{r(\theta)\cos\theta+r'(\theta)\sin\theta}{\sqrt{r(\theta)^2+r'(\theta)^2}},\frac{r(\theta)\sin\theta-r'(\theta)\cos\theta}{\sqrt{r(\theta)^2+r'(\theta)^2}}\right).
\end{eqnarray}
We can determine each
point on this normal line by $\theta$ and its distance $\mu$ to
the boundary point $\bigg(r(\theta)\cos\theta,r(\theta)\sin\theta\bigg)$ as follows:
\begin{eqnarray}\label{local}
\left\{
\begin{array}{rcl}
x_1&=&r(\theta)\cos\theta+\mu\dfrac{-r(\theta)\cos\theta-r'(\theta)\sin\theta}{\sqrt{r(\theta)^2+r'(\theta)^2}},\\\rule{0ex}{2.0em}
x_2&=&r(\theta)\sin\theta+\mu\dfrac{-r(\theta)\sin\theta+r'(\theta)\cos\theta}{\sqrt{r(\theta)^2+r'(\theta)^2}},
\end{array}
\right.
\end{eqnarray}
where $r'(\theta)=\dfrac{\ud{r}}{\ud{\theta}}$. It is easy to see that $\mu=0$ denotes the boundary $\p\Omega$ and $\mu>0$ denotes the interior of $\Omega$.

By chain rule, for any $u=u(x_1,x_2)$ we have
\begin{eqnarray}
\frac{\p u}{\p x_1}=\frac{\p u}{\p\theta}\frac{\p\theta}{\p
x_1}+\frac{\p u}{\p\mu}\frac{\p\mu}{\p x_1},\label{local 1}\\
\frac{\p u}{\p x_2}=\frac{\p u}{\p\theta}\frac{\p\theta}{\p
x_2}+\frac{\p u}{\p\mu}\frac{\p\mu}{\p x_2}.\label{local 2}
\end{eqnarray}
Hence, the key part is to calculate $\dfrac{\p\theta}{\p
x_1}$, $\dfrac{\p\mu}{\p x_1}$,
$\dfrac{\p\theta}{\p
x_2}$ and $\dfrac{\p\mu}{\p x_2}$ in terms of $\mu$ and $\theta$. For
simplicity, we may denote the transform (\ref{local}) as follows.
\begin{eqnarray}\label{local.}
\left\{
\begin{array}{rcl}
x_1&=&a(\theta)+\phi A(\theta),\\
x_2&=&b(\theta)+\phi B(\theta),
\end{array}
\right.
\end{eqnarray}
where
\begin{eqnarray}
a=r\cos\theta,&\quad& A=\frac{-r\cos\theta-r'\sin\theta}{(r^2+(r')^2)^{1/2}},\\
b=r\sin\theta,&\quad& B=\frac{-r\sin\theta+r'\cos\theta}{(r^2+(r')^2)^{1/2}}.
\end{eqnarray}
Taking $x_1$ and $x_2$ derivative in (\ref{local.}) reveals that
\begin{eqnarray}
(a'+\phi A')\frac{\p\theta}{\p x_1}+A\frac{\p\phi}{\p x_1}&=&1,\label{system 1}\\
(b'+\phi B')\frac{\p\theta}{\p x_1}+B\frac{\p\phi}{\p x_1}&=&0,\label{system 2}\\
(a'+\phi A')\frac{\p\theta}{\p x_2}+A\frac{\p\phi}{\p x_2}&=&0,\label{system 3}\\
(b'+\phi B')\frac{\p\theta}{\p x_2}+B\frac{\p\phi}{\p x_2}&=&1,\label{system 4}
\end{eqnarray}
where the superscript $'$ denotes the derivative with respect to
$\theta$. The detailed expression is as follows:
\begin{eqnarray}
a'&=&r'\cos\theta-r\sin\theta,\\
b'&=&r'\sin\theta+r\cos\theta,\\
A'&=&\frac{r^3\sin\theta-2(r')^3\cos\theta-r''r^2\sin\theta+2r(r')^2\sin\theta-r'r^2\cos\theta+rr'r''\cos\theta}{(r^2+(r')^2))^{3/2}},\\
B'&=&\frac{-r^3\cos\theta-2(r')^3\sin\theta-2r(r')^2\cos\theta-r^2r'\sin\theta+r^2r''\cos\theta+rr'r''\sin\theta}{(r^2+(r')^2))^{3/2}}.
\end{eqnarray}
Then we can solve the linear system (\ref{system 1}) to
(\ref{system 4}) by Cramer's rule as
\begin{eqnarray}
\frac{\p\theta}{\p x_1}&=&\frac{\bigg\vert\begin{array}{cc}1&A\\
0&B\end{array}\bigg\vert}{\bigg\vert\begin{array}{cc}a'+\mu
A'&A\\
b'+\mu B'&B\end{array}\bigg\vert}=\frac{B}{C},\qquad
\frac{\p\mu}{\p x_1}=\frac{\bigg\vert\begin{array}{cc}a'+\mu
A'&1\\
b'+\mu
B'&0\end{array}\bigg\vert}{\bigg\vert\begin{array}{cc}a'+\mu
A'&A\\
b'+\mu B'&B\end{array}\bigg\vert}=-\frac{b'+\mu B'}{C},\\
\frac{\p\theta}{\p x_2}&=&\frac{\bigg\vert\begin{array}{cc}0&A\\
1&B\end{array}\bigg\vert}{\bigg\vert\begin{array}{cc}b'+\mu B'&A\\
b'+\mu B'&B\end{array}\bigg\vert}=-\frac{A}{C},\qquad
\frac{\p\mu}{\p x_2}=\frac{\bigg\vert\begin{array}{cc}a'+\mu
A'&0\\
b'+\mu
B'&1\end{array}\bigg\vert}{\bigg\vert\begin{array}{cc}a'+\mu
A'&A\\
b'+\mu B'&B\end{array}\bigg\vert}=\frac{a'+\mu A'}{C},
\end{eqnarray}
where $C$ denotes the determinant of the system, which is also the
Jacobian of the transform $(x_1,x_2)\rightarrow(\mu,\theta)$ as
\begin{equation}
C=\bigg\vert\begin{array}{cc}a'+\mu
A'&A\\
b'+\mu B'&B\end{array}\bigg\vert.
\end{equation}
Then a direct calculation reveals that
\begin{eqnarray}
C&=&(r^2+(r')^2)^{1/2}+\mu\frac{rr''-r^2-2r'^2}{(r^2+r'^2)},\nonumber
\end{eqnarray}
and
\begin{eqnarray}
\frac{\p\theta}{\p x_1}&=&\frac{(-r\sin\theta+r'\cos\theta)(r^2+r'^2)^{1/2}}{(r^2+r'^2)^{3/2}+\mu(rr''-r^2-2r'^2)},\\
\frac{\p\theta}{\p x_2}&=&\frac{(r\cos\theta+r'\sin\theta)(r^2+r'^2)^{1/2}}{(r^2+r'^2)^{3/2}+\mu(rr''-r^2-2r'^2)},\\
\frac{\p\mu}{\p x_1}&=&
\frac{-(r\cos\theta+r'\sin\theta)(r^2+r'^2)}{(r^2+r'^2)^{3/2}+\mu(rr''-r^2-2r'^2)}\\
&&-\mu\frac{-r^3\cos\theta-2r'^3\sin\theta-2rr'^2\cos\theta
-r^2r'\sin\theta+r^2r''\cos\theta+rr'r''\sin\theta}{(r^2+r'^2)^{2}+\mu(rr''-r^2-2r'^2)(r^2+r'^2)^{1/2}},\nonumber\\
\frac{\p\mu}{\p x_2}&=&\frac{(-r\sin\theta+r'\cos\theta)(r^2+r'^2)}{(r^2+r'^2)^{3/2}+\mu(rr''-r^2-2r'^2)}\\
&&+\mu\frac{r^3\sin\theta-2r'^3\cos\theta-r''r^2\sin\theta+2rr'^2\sin\theta-r'r^2\cos\theta+rr'r''\cos\theta}
{(r^2+r'^2)^{2}+\mu(rr''-r^2-2r'^2)(r^2+r'^2)^{1/2}}.\nonumber
\end{eqnarray}
A further simplification shows that, we may denote above relation as
follows:
\begin{eqnarray}
\frac{\p\theta}{\p x_1}=\frac{MP}{P^3+Q\mu},&\quad&
\frac{\p\mu}{\p x_1}=-\frac{N}{P},\\
\frac{\p\theta}{\p x_2}=\frac{NP}{P^3+Q\mu},&\quad&
\frac{\p\mu}{\p x_2}=\frac{M}{P},
\end{eqnarray}
where
\begin{eqnarray}
P&=&(r^2+r'^2)^{1/2},\\
Q&=&rr''-r^2-2r'^2,\\
M&=&-r\sin\theta+r'\cos\theta,\\
N&=&r\cos\theta+r'\sin\theta.
\end{eqnarray}
Therefore, by (\ref{local 1}) and (\ref{local 2}), noting the fact that for smooth convex domain, the curvature
\begin{eqnarray}
\kappa(\theta)=\frac{r^2+2r'^2-rr''}{(r^2+r'^2)^{3/2}},
\end{eqnarray}
and radius of curvature
\begin{eqnarray}
R_{\kappa}(\theta)=\frac{1}{\kappa(\theta)}=\frac{(r^2+r'^2)^{3/2}}{r^2+2r'^2-rr''},
\end{eqnarray}
we define substitutions as follows:\\
\ \\
Substitution 1: \\
Let $u^{\e}(x_1,x_2,w_1,w_2)\rt u^{\e}(\mu,\theta,w_1,w_2)$ with
$(\mu,\theta,w_1,w_2)\in [0,R_{\min})\times[-\pi,\pi)\times\s^1$ for $R_{\min}=\min_{\theta}R_{\kappa}$ as
\begin{eqnarray}\label{substitution 1}
\left\{
\begin{array}{rcl}
x_1&=&r(\theta)\cos\theta+\mu\dfrac{-r(\theta)\cos\theta-r'(\theta)\sin\theta}{\sqrt{r(\theta)^2+r'(\theta)^2}},\\\rule{0ex}{2.0em}
x_2&=&r(\theta)\sin\theta+\mu\dfrac{-r(\theta)\sin\theta+r'(\theta)\cos\theta}{\sqrt{r(\theta)^2+r'(\theta)^2}},
\end{array}
\right.
\end{eqnarray}
and then the equation (\ref{transport}) is transformed into
\begin{eqnarray}\label{transport 1}
\left\{
\begin{array}{rcl}
&&\displaystyle\e\Bigg(w_1\frac{-r\cos\theta-r'\sin\theta}{(r^2+r'^2)^{1/2}}+w_2\frac{-r\sin\theta+r'\cos\theta}{(r^2+r'^2)^{1/2}}\Bigg)\frac{\p u^{\e}}{\p\mu}\\
&&\displaystyle+\e\Bigg(w_1\frac{-r\sin\theta+r'\cos\theta}{(r^2+r'^2)(1-\kappa\mu)}+w_2\frac{r\cos\theta+r'\sin\theta}{(r^2+r'^2)(1-\kappa\mu)}\Bigg)\frac{\p u^{\e}}{\p\theta}+u^{\e}-\bar u^{\e}=0,\\\rule{0ex}{2.0em}
&&u^{\e}(0,\theta,\vw)=\pp[u^{\e}](\theta)+\e g(\theta,\vw)\ \ \text{for}\
\ \vw\cdot\vn<0,
\end{array}
\right.
\end{eqnarray}
where
\begin{eqnarray}
\vw\cdot\vn=w_1\frac{-r\cos\theta-r'\sin\theta}{(r^2+r'^2)^{1/2}}+w_2\frac{-r\sin\theta+r'\cos\theta}{(r^2+r'^2)^{1/2}},
\end{eqnarray}
and
\begin{eqnarray}
\pp[u^{\e}](\vx_0)=\frac{1}{2}\int_{\vw\cdot\vec
n>0}u^{\e}(\vx_0,\vw)(\vw\cdot\vn)\ud{\vw},
\end{eqnarray}
in a neighborhood of the boundary.

In order for the transform being bijective, we require the Jacobian
$C>0$. Then it implies that $0\leq\mu<R_{\kappa}(\theta)$, which is the maximum
extension of the valid domain for local coordinate system. Since
we will only use this coordinate system in a neighborhood of the
boundary, above analysis reveals that as long as the largest
curvature of the boundary is strictly positive and finite, which is naturally satisfied in a smooth convex domain, we can take the transform as
valid for area of $0\leq\mu<\min_{\theta}R_{\kappa}$. For the unit
plate, we have $R_{\kappa}=1$ and the transform is valid for all the
points in the plate except the center.

Noting the fact that
\begin{eqnarray}
\left(\frac{M}{P}\right)^2+\left(\frac{N}{P}\right)^2=
\left(\frac{-r\cos\theta-r'\sin\theta}{(r^2+r'^2)^{1/2}}\right)^2+\left(\frac{-r\sin\theta+r'\cos\theta}{(r^2+r'^2)^{1/2}}\right)^2=1,
\end{eqnarray}
we can further simplify (\ref{transport 1}).\\
\ \\
Substitution 2: \\
Let $u^{\e}(\mu,\theta,w_1,w_2)\rt u^{\e}(\mu,\tau,w_1,w_2)$ with
$(\mu,\tau,w_1,w_2)\in [0,R_{\min})\times[-\pi,\pi)\times\s^1$ as
\begin{eqnarray}\label{substitution 2}
\left\{
\begin{array}{rcl}
\sin\tau&=&\dfrac{r\sin\theta-r'\cos\theta}{(r^2+r'^2)^{1/2}},\\\rule{0ex}{2.0em}
\cos\tau&=&\dfrac{r\cos\theta+r'\sin\theta}{(r^2+r'^2)^{1/2}},
\end{array}
\right.
\end{eqnarray}
which implies
\begin{eqnarray}
\frac{\ud{\tau}}{\ud{\theta}}=\kappa(r^2+r'^2)^{1/2}>0,
\end{eqnarray}
and then the equation (\ref{transport}) is transformed into
\begin{eqnarray}\label{transport 2}
\left\{
\begin{array}{l}\displaystyle
-\e\left(w_1\cos\tau+w_2\sin\tau\right)\frac{\p
u^{\e}}{\p\mu}-\frac{\e}{R_{\kappa}-\mu}\left(w_1\sin\tau-w_2\cos\tau\right)\frac{\p
u^{\e}}{\p\tau}+u^{\e}-\bar u^{\e}=0,\\\rule{0ex}{2.0em}
u^{\e}(0,\tau,\vw)=\pp[u^{\e}](0,\tau)+\e g(\tau,\vw)\ \
\text{for}\ \ w_1\cos\tau+w_2\sin\tau<0,
\end{array}
\right.
\end{eqnarray}
where
\begin{eqnarray}
\pp[u^{\e}](0,\tau)=\half\int_{w_1\cos\tau+w_2\sin\tau>0}u^{\e}(0,\tau,\vw)(\vw\cdot\vn)\ud{\vw},
\end{eqnarray}
in a neighborhood of the boundary. Note that here since $\tau$ denotes the angle of normal vector, the domain of $\tau$ is the same as $\theta$, i.e. $[-\pi,\pi)$.

\subsection{Boundary Layer Expansion with Geometric Correction}

In order to define boundary layer, we need several more substitutions:\\
\ \\
Substitution 3:\\
We further make the scaling transform for $u^{\e}(\mu,\tau,w_1,w_2)\rt
u^{\e}(\eta,\tau,w_1,w_2)$ with $(\eta,\tau,w_1,w_2)\in
[0,R_{\min}/\e)\times[-\pi,\pi)\times\s^1$ as
\begin{eqnarray}\label{substitution 3}
\left\{
\begin{array}{rcl}
\eta&=&\mu/\e,\\
\tau&=&\tau,\\
w_1&=&w_1,\\
w_2&=&w_2,
\end{array}
\right.
\end{eqnarray}
which implies
\begin{eqnarray}
\frac{\p u^{\e}}{\p\mu}=\frac{1}{\e}\frac{\p u^{\e}}{\p\eta}.
\end{eqnarray}
Then equation (\ref{transport}) is transformed into
\begin{eqnarray}\label{transport 3}
\left\{\begin{array}{l}\displaystyle
-\bigg(w_1\cos\tau+w_2\sin\tau\bigg)\frac{\p
u^{\e}}{\p\eta}-\frac{\e}{R_{\kappa}-\e\eta}\bigg(w_1\sin\tau-w_2\cos\tau\bigg)\frac{\p
u^{\e}}{\p\tau}+u^{\e}-\bar u^{\e}=0,\\\rule{0ex}{2.0em}
u^{\e}(0,\tau,w_1,w_2)=\pp[u^{\e}](0,\tau)+\e g(\tau,w_1,w_2)\ \
\text{for}\ \ w_1\cos\tau+w_2\sin\tau<0,
\end{array}
\right.
\end{eqnarray}
where
\begin{eqnarray}
\pp[u^{\e}](0,\tau)=\half\int_{w_1\cos\tau+w_2\sin\tau>0}u^{\e}(0,\tau,\vw)(\vw\cdot\vn)\ud{\vw}.
\end{eqnarray}
\ \\
Substitution 4:\\
Define the velocity substitution for $u^{\e}(\eta,\tau,w_1,w_2)\rt
u^{\e}(\eta,\tau,\xi)$ with $(\eta,\tau,\xi)\in
[0,R_{\min}/\e)\times[-\pi,\pi)\times[-\pi,\pi)$ as
\begin{eqnarray}\label{substitution 4}
\left\{
\begin{array}{rcl}
\eta&=&\eta\\
\tau&=&\tau\\
w_1&=&-\sin\xi\\
w_2&=&-\cos\xi
\end{array}
\right.
\end{eqnarray}
We have the succinct form
\begin{eqnarray}\label{transport 4}
\left\{\begin{array}{l}\displaystyle \sin(\tau+\xi)\frac{\p
u^{\e}}{\p\eta}-\frac{\e}{R_{\kappa}-\e\xi}\cos(\tau+\xi)\frac{\p
u^{\e}}{\p\tau}+u^{\e}-\bar u^{\e}=0,\\\rule{0ex}{2.0em}
u^{\e}(0,\tau,\xi)=\pp[u^{\e}](0,\tau)+\e g(\tau,\xi),\ \ \text{for}\ \
\sin(\tau+\xi)>0
\end{array}
\right.
\end{eqnarray}
where
\begin{eqnarray}
\pp[u^{\e}](0,\tau)=-\half\int_{\sin(\tau+\xi)<0}u^{\e}(0,\tau,\xi)\sin(\tau+\xi)\ud{\xi}
\end{eqnarray}
\ \\
Substitution 5:\\
Finally, we make the substitution for $u^{\e}(\eta,\tau,\xi)\rt
u^{\e}(\eta,\tau,\phi)$ with $(\eta,\tau,\phi)\in
[0,R_{\min}/\e)\times[-\pi,\pi)\times[-\pi,\pi)$ as
\begin{eqnarray}\label{substitution 5}
\left\{
\begin{array}{rcl}
\eta&=&\eta\\
\tau&=&\tau\\
\phi&=&\tau+\xi
\end{array}
\right.
\end{eqnarray}
and achieve the form
\begin{eqnarray}\label{transport temp}
\left\{\begin{array}{l}\displaystyle \sin\phi\frac{\p
u^{\e}}{\p\eta}-\frac{\e}{R_{\kappa}-\e\eta}\cos\phi\bigg(\frac{\p
u^{\e}}{\p\phi}+\frac{\p
u^{\e}}{\p\tau}\bigg)+u^{\e}-\bar u^{\e}=0\\\rule{0ex}{2.0em}
u^{\e}(0,\tau,\phi)=\pp[u^{\e}](0,\tau)+\e g(\tau,\phi)\ \ \text{for}\ \
\sin\phi>0
\end{array}
\right.
\end{eqnarray}
where
\begin{eqnarray}
\pp[u^{\e}](0,\tau)=-\half\int_{\sin\phi<0}u^{\e}(0,\tau,\tau)\sin\phi\ud{\phi}
\end{eqnarray}
We define the boundary layer expansion as follows:
\begin{eqnarray}\label{boundary layer expansion}
\ub(\eta,\tau,\phi)\sim\ub_0(\eta,\tau,\phi)+\e\ub_1(\eta,\tau,\phi),
\end{eqnarray}
where $\ub_k$ can be determined by comparing the order of $\e$ via
plugging (\ref{boundary layer expansion}) into the equation
(\ref{transport temp}). Thus, in a neighborhood of the boundary, we have
\begin{eqnarray}
\sin\phi\frac{\p \ub_0}{\p\eta}+\frac{\e}{R_{\kappa}-\e\eta}\cos\phi\frac{\p
\ub_0}{\p\phi}+\ub_0-\bub_0&=&0,\label{expansion temp 6}\\
\sin\phi\frac{\p \ub_1}{\p\eta}+\frac{\e}{R_{\kappa}-\e\eta}\cos\phi\frac{\p
\ub_1}{\p\phi}+\ub_1-\bub_1&=&\frac{1}{R_{\kappa}-\e\eta}\cos\phi\frac{\p
\ub_0}{\p\tau},\label{expansion temp 7}
\end{eqnarray}
where
\begin{eqnarray}
\bub_k(\eta,\tau)=\frac{1}{2\pi}\int_{-\pi}^{\pi}\ub_k(\eta,\tau,\phi)\ud{\phi}.
\end{eqnarray}

\subsection{Matching of Interior Solution and Boundary Layer}

The bridge between interior solution and boundary layer
is the boundary condition of (\ref{transport}), so we
first consider the boundary expansion:
\begin{eqnarray}
(\u_0+\ub_0)&=&\pp[\u_0+\ub_0],\\
(\u_1+\ub_1)&=&\pp[\u_1+\ub_1]+g.
\end{eqnarray}
Noting the fact that $\bu_k=\pp[\bu_k]$, we can simplify above
conditions as follows:
\begin{eqnarray}
\ub_0&=&\pp[\ub_0],\\
\ub_1&=&\pp[\ub_1]+(\vw\cdot\u_0-\pp(\vw\cdot\u_0))+g.
\end{eqnarray}
The construction of $\u_k$ and $\ub_k$ is as follows:\\
\ \\
Step 0: Preliminaries.\\
Assume the cut-off functions $\psi_0\in C^{\infty}[0,\infty)$ is defined as
\begin{eqnarray}\label{cut-off 2}
\psi_0(y)=\left\{
\begin{array}{ll}
1&0\leq y\leq\dfrac{1}{2},\\
0&\dfrac{3}{4}\leq y<\infty.
\end{array}
\right.
\end{eqnarray}
Also, define the force as
\begin{eqnarray}\label{force}
F(\e;\eta,\tau)=-\frac{\e}{R_{\kappa}(\tau)-\e\eta},
\end{eqnarray}
and the length for $\e$-Milne problem as $L=\e^{-1/2}$. For $\phi\in[-\pi,\pi]$, denote $R\phi=-\phi$.\\
\ \\
Step 1: Construction of $\ub_0$.\\
Define the zeroth order boundary layer as
\begin{eqnarray}\label{expansion temp 9}
\left\{
\begin{array}{rcl}
\ub_0(\eta,\tau,\phi)&=&\psi_0(\e^{1/2}\eta)\bigg(f_0^{\e}(\eta,\tau,\phi)-f^{\e}_{0,L}(\tau)\bigg),\\
\sin\phi\dfrac{\p f_0^{\e}}{\p\eta}+F(\e;\eta,\tau)\cos\phi\dfrac{\p
f_0^{\e}}{\p\phi}+f_0^{\e}-\bar f_0^{\e}&=&0,\\\rule{0ex}{1em}
f_0^{\e}(0,\tau,\phi)&=&\pp[f_0^{\e}](0,\tau)\ \ \text{for}\ \
\sin\phi>0,\\\rule{0ex}{1em}
f_0^{\e}(L,\tau,\phi)&=&f_0^{\e}(L,\tau,R\phi),
\end{array}
\right.
\end{eqnarray}
with
\begin{eqnarray}
\pp[f_0^{\e}](0,\tau)=0,
\end{eqnarray}
and $f^{\e}_{0,L}$ is defined as in Theorem \ref{Milne theorem 1}. Thus, we have $\ub_0$ is well-defined. It is
obvious to see $f_0^{\e}=f_{0,L}^{\e}=0$ is the only solution.\\
\ \\
Step 2: Construction of $\ub_1$ and $\u_0$.\\
Define the first order boundary layer as
\begin{eqnarray}\label{expansion temp 10}
\left\{
\begin{array}{rcl}
\ub_1(\eta,\tau,\phi)&=&\psi_0(\e^{1/2}\eta)\bigg(f_1^{\e}(\eta,\tau,\phi)-f^{\e}_{1,L}(\tau)\bigg),\\
\sin\phi\dfrac{\p f_1^{\e}}{\p\eta}+F(\e;\eta,\tau)\cos\phi\dfrac{\p
f_1^{\e}}{\p\phi}+f_1^{\e}-\bar
f_1^{\e}&=&\dfrac{1}{R_{\kappa}-\e\eta}\cos\phi\dfrac{\p
\ub_0}{\p\tau},\\\rule{0ex}{1.5em} f_1^{\e}(0,\tau,\phi)&=&\pp
[f_1^{\e}](0,\tau)+g_1(\tau,\phi)\ \ \text{for}\ \
\sin\phi>0,\\\rule{0ex}{1.5em}
f_1^{\e}(L,\tau,\phi)&=&f_1^{\e}(L,\tau,R\phi),
\end{array}
\right.
\end{eqnarray}
with
\begin{eqnarray}
\pp[f_1^{\e}](0,\tau)=0,
\end{eqnarray}
and $f^{\e}_{1,L}$ is defined as in Theorem \ref{Milne theorem 1}, where
\begin{eqnarray}
g_1(\vx_0,\vw)&=&\vw\cdot\nx\u_0(\vx_0)-\pp[\vw\cdot\nx\u_0(\vx_0)]+g(\vx_0,\vw),
\end{eqnarray}
with $\vx_0$ and $(0,\tau)$ denoting the same boundary point, and
\begin{eqnarray}
\vw&=&(-\sin(\phi-\tau),-\cos(\phi-\tau)),\\
\vn&=&(\cos\tau,\sin\tau).
\end{eqnarray}
To solve (\ref{expansion temp 10}), the data must satisfy the compatibility
condition (\ref{Milne compatibility condition}) as
\begin{eqnarray}
\\
\int_{\sin\phi>0}\bigg(g+\vw\cdot\nx\u_0(\vx_0)-\pp[\vw\cdot\nx\u_0(\vx_0)]\bigg)\sin\phi\ud{\phi}
+\int_0^{L}\int_{-\pi}^{\pi}\ue^{-V(s)}\frac{1}{1-\e
s}\cos\phi\frac{\p
\ub_0}{\p\tau}(s,\tau,\phi)\ud{\phi}\ud{s}\nonumber\\
=0,&&\nonumber
\end{eqnarray}
where $\dfrac{\p V}{\p\eta}=-F$ and $V(0)=0$.
Note the fact
\begin{eqnarray}
&&\int_{\sin\phi>0}\bigg(\vw\cdot\nx\u_0(\vx_0)-\pp[\vw\cdot\nx\u_0(\vx_0)]\bigg)\sin\phi\ud{\phi}\\
&=&
\int_{\sin\phi>0}(\vw\cdot\nx\u_0(\vx_0))\sin\phi\ud{\phi}-2\pp[\vw\cdot\nx\u_0(\vx_0)]\nonumber\\
&=&\int_{\sin\phi>0}(\vw\cdot\nx\u_0(\vx_0))\sin\phi\ud{\phi}+\int_{\sin\phi<0}(\vw\cdot\nx\u_0(\vx_0))\sin\phi\ud{\phi}\nonumber\\
&=&\int_{-\pi}^{\pi}(\vw\cdot\nx\u_0(\vx_0))\sin\phi\ud{\phi}\nonumber\\
&=&-\pi\nx\bu_0(\vx_0)\cdot\vn=-\pi\frac{\p\bu_0(\vx_0)}{\p\vec
\nu}.\nonumber
\end{eqnarray}
We can simplify the compatibility condition as follows:
\begin{eqnarray}
\int_{\sin\phi>0}g(\phi)\sin\phi\ud{\phi}-\pi\frac{\p\bu_0(\vx_0)}{\p\vec
\nu} +\int_0^{L}\int_{-\pi}^{\pi}\ue^{-V(s)}\frac{1}{1-\e
s}\cos\phi\frac{\p \ub_0}{\p\tau}(s,\tau,\phi)\ud{\phi}\ud{s}=0.
\end{eqnarray}
Then we have
\begin{eqnarray}
\frac{\p\bu_0(\vx_0)}{\p\vec
\nu}&=&\frac{1}{\pi}\int_{\sin\phi>0}g(\tau,\phi)\sin\phi\ud{\phi}
+\frac{1}{\pi}\int_0^{L}\int_{-\pi}^{\pi}\ue^{-V(s)}\frac{1}{1-\e
s}\cos\phi\frac{\p
\ub_0}{\p\tau}(s,\tau,\phi)\ud{\phi}\ud{s}\\
&=&\frac{1}{\pi}\int_{\sin\phi>0}g(\tau,\phi)\sin\phi\ud{\phi}.\nonumber
\end{eqnarray}
Hence, we define the zeroth order interior solution $\u_0(\vx,\vw)$ as
\begin{eqnarray}\label{expansion temp 11}
\left\{
\begin{array}{rcl}
\u_0&=&\bu_0 ,\\\rule{0ex}{1em} \Delta_x\bu_0&=&0\ \ \text{in}\
\ \Omega,\\\rule{0ex}{1em}\dfrac{\p\bu_0}{\p\vec
\nu}&=&\dfrac{1}{\pi}\displaystyle
\int_{\sin\phi>0}g(\tau,\phi)\sin\phi\ud{\phi}\ \ \text{on}\ \
\p\Omega,\\\rule{0ex}{1em}
\displaystyle\int_{\Omega}\u_0(\vx)\ud{\vx}&=&0.
\end{array}
\right.
\end{eqnarray}
\ \\
Step 3: Construction of $\u_1$.\\
We do not expand the boundary layer to $\ub_2$ and just terminate at $\ub_1$. Then we define the first order interior solution $\u_1(\vx)$ as
\begin{eqnarray}\label{expansion temp 12.}
\left\{
\begin{array}{rcl}
\u_1&=&\bu_1-\vw\cdot\nx\u_0,\\\rule{0ex}{1em}
\Delta_x\bu_1&=&-\displaystyle\int_{\s^1}(\vw\cdot\nx\u_{0})\ud{\vw}\
\ \text{in}\ \ \Omega,\\\rule{0ex}{1.0em} \dfrac{\p\bu_1}{\p\vec
\nu}&=&0\ \ \text{on}\ \
\p\Omega,\\\rule{0ex}{1em}
\displaystyle\int_{\Omega}\bu_1(\vx)\ud{\vx}&=&0.
\end{array}
\right.
\end{eqnarray}
Note that here we only require the trivial boundary condition since we cannot resort to the compatibility condition in $\e$-Milne problem with geometric correction. Based on \cite{AA003}, this might lead to $O(\e^2)$ error to the boundary approximation. Thanks to the improved remainder estimate, this error is acceptable.\\
\ \\
Step 4: Construction of $\u_2$.\\
By a similar fashion, we define the second order interior solution as
\begin{eqnarray}
\left\{
\begin{array}{rcl}
\u_{2}&=&\bu_{2}-\vw\cdot\nx\u_{1},\\\rule{0ex}{1em}
\Delta_x\bu_{2}&=&-\displaystyle\int_{\s^1}(\vw\cdot\nx\u_{1})\ud{\vw}\
\ \text{in}\ \ \Omega,\\\rule{0ex}{1.0em} \dfrac{\p\bu_{2}}{\p\vec
\nu}&=&0\ \ \text{on}\ \
\p\Omega,\\\rule{0ex}{1em}
\displaystyle\int_{\Omega}\bu_2(\vx)\ud{\vx}&=&0.
\end{array}
\right.
\end{eqnarray}
As the case of $\u_1$, we might have $O(\e^3)$ error in this step due to the trivial boundary data. However, it will not affect the diffusive limit.

\section{Regularity of $\e$-Milne Problem with Geometric Correction}

We consider the $\e$-Milne problem with geometric correction for $f^{\e}(\eta,\tau,\phi)$ in
the domain $(\eta,\tau,\phi)\in[0,L]\times[-\pi,\pi)\times[-\pi,\pi)$ where $L=\e^{-1/2}$ as
\begin{eqnarray}\label{Milne problem.}
\left\{ \begin{array}{rcl}\displaystyle \sin\phi\frac{\p
f^{\e}}{\p\eta}+F(\e;\eta,\tau)\cos\phi\frac{\p
f^{\e}}{\p\phi}+f^{\e}-\bar f^{\e}&=&S^{\e}(\eta,\tau,\phi),\\
f^{\e}(0,\tau,\phi)&=& h^{\e}(\tau,\phi)+\pp[f^{\e}](0,\tau)\ \ \text{for}\
\ \sin\phi>0,\\
f^{\e}(L,\tau,\phi)&=&f^{\e}(L,\tau,R\phi),
\end{array}
\right.
\end{eqnarray}
where $R\phi=-\phi$,
\begin{eqnarray}
\pp
[f^{\e}](0,\tau)=-\half\int_{\sin\phi<0}f^{\e}(0,\tau,\phi)\sin\phi\ud{\phi},
\end{eqnarray}
\begin{eqnarray}
F(\e;\eta,\tau)=-\frac{\e}{R_{\kappa}(\tau)-\e\eta},
\end{eqnarray}
In this section, for convenience, we temporarily ignore the superscript on $\e$. We define the norms in the
space $(\eta,\phi)\in[0,L]\times[-\pi,\pi)$ as follows:
\begin{eqnarray}
\tnnm{f(\tau)}&=&\bigg(\int_0^{L}\int_{-\pi}^{\pi}\abs{f(\eta,\tau,\phi)}^2\ud{\phi}\ud{\eta}\bigg)^{1/2},\\
\lnnm{f(\tau)}&=&\sup_{(\eta,\phi)\in[0,L]\times[-\pi,\pi)}\abs{f(\eta,\tau,\phi)},
\end{eqnarray}
Similarly, we can define the norm at in-flow boundary as
\begin{eqnarray}
\tnm{f(0,\tau)}&=&\bigg(\int_{\sin\phi>0}\abs{f(0,\tau,\phi)}^2\ud{\phi}\bigg)^{1/2},\\
\lnm{f(0,\tau)}&=&\sup_{\sin\phi>0}\abs{f(0,\tau,\phi)},
\end{eqnarray}
Also define
\begin{eqnarray}
\br{f,g}_{\phi}(\eta,\tau)=\int_{-\pi}^{\pi}f(\eta,\tau,\phi)g(\eta,\tau,\phi)\ud{\phi}
\end{eqnarray}
as the $L^2$ inner product in $\phi$. We further assume
\begin{eqnarray}\label{Milne bounded}
\lnm{h(\tau)}+\lnm{\frac{\p h}{\p\phi}(\tau)}+\lnm{\frac{\p h}{\p\tau}(\tau)}\leq C,
\end{eqnarray}
and
\begin{eqnarray}\label{Milne decay}
\lnnm{S(\tau)}+\lnnm{\frac{\p S}{\p\eta}(\tau)}+\lnnm{\frac{\p S}{\p\phi}(\tau)}+\lnnm{\frac{\p S}{\p\tau}(\tau)}\leq C\ue^{-K\eta},
\end{eqnarray}
for $C>0$ and $K>0$ uniform in $\e$ and $\tau$.

As in \cite[Section 6]{AA003}, in order to study problem with diffusive boundary, we first need to study the $\e$-Milne problem with in-flow boundary for
$f(\eta,\tau,\phi)$ in the domain
$(\eta,\tau,\phi)\in[0,L]\times[-\pi,\pi)\times[-\pi,\pi)$ as
\begin{eqnarray}\label{Milne problem}
\left\{
\begin{array}{rcl}\displaystyle
\sin\phi\frac{\p f}{\p\eta}+F(\eta,\tau)\cos\phi\frac{\p
f}{\p\phi}+f-\bar f&=&S(\eta,\tau,\phi),\\
f(0,\tau,\phi)&=&h(\tau,\phi)\ \ \text{for}\ \ \sin\phi>0,\\\rule{0ex}{1.0em}
f(L,\tau,\phi)&=&f(L,\tau, R\phi).
\end{array}
\right.
\end{eqnarray}
Define a potential function $V(\e;\eta,\tau)$ satisfying $V(\e;0,\tau)=0$ and $\dfrac{\p V}{\p\eta}=-F(\e;\eta,\tau)$.
\begin{lemma}\label{rt lemma 1}
We have
$\ue^{-V(\e;0,\tau)}=1$ and
\begin{eqnarray}
\ue^{-V(\e;L,\tau)}=1-\frac{\e^{1/2}}{\rk}.
\end{eqnarray}
\end{lemma}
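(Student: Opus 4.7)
The plan is a direct computation. The first claim $\ue^{-V(\e;0,\tau)}=1$ is immediate from the normalization $V(\e;0,\tau)=0$. For the second claim, I would integrate the defining ODE $\p_\eta V = -F(\e;\eta,\tau) = \dfrac{\e}{R_{\kappa}(\tau)-\e\eta}$ from $0$ to $L$, treating $\tau$ as a parameter.

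Substituting $u = R_{\kappa}(\tau) - \e s$ (so $\ud u = -\e\,\ud s$), the antiderivative is elementary:
\begin{eqnarray*}
V(\e;\eta,\tau) = \int_0^{\eta}\frac{\e}{R_{\kappa}(\tau)-\e s}\,\ud{s} = -\ln\!\left(R_{\kappa}(\tau)-\e s\right)\Big|_{s=0}^{s=\eta} = \ln\!\frac{R_{\kappa}(\tau)}{R_{\kappa}(\tau)-\e\eta}.
\end{eqnarray*}
Exponentiating gives $\ue^{-V(\e;\eta,\tau)} = 1 - \dfrac{\e\eta}{R_{\kappa}(\tau)}$. Evaluating at $\eta = L = \e^{-1/2}$ then yields $\ue^{-V(\e;L,\tau)} = 1 - \dfrac{\e^{1/2}}{R_{\kappa}}$, as claimed.

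There is no real obstacle here: the only thing to verify is that the integrand is well-defined on $[0,L]$, which uses convexity of $\Omega$. Since $L = \e^{-1/2}$ and $R_{\kappa}(\tau) \geq R_{\min} > 0$, we have $R_{\kappa}(\tau) - \e\eta \geq R_{\min} - \e^{1/2} > 0$ for all $\eta \in [0,L]$ once $\e$ is small enough, so the logarithm is well-defined and the substitution is legitimate. This also matches the validity range $0 \leq \mu < R_{\kappa}(\tau)$ of the local coordinate system established in the previous section.
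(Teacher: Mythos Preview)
Your proof is correct and follows essentially the same direct computation as the paper: integrate $\partial_\eta V = \e/(R_\kappa-\e\eta)$ to obtain $V(\e;\eta,\tau)=\ln\!\bigl(R_\kappa/(R_\kappa-\e\eta)\bigr)$, exponentiate, and evaluate at $\eta=L=\e^{-1/2}$. Your added remark on well-definedness of the integrand on $[0,L]$ is a helpful sanity check but not strictly needed for the argument.
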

\begin{proof}
We directly compute
\begin{eqnarray}
V(\e;\eta,\tau)=\ln\bigg(\frac{\rk(\tau)}{\rk(\tau)-\e\eta}\bigg),
\end{eqnarray}
and
\begin{eqnarray}
\ue^{-V(\e;\eta,\tau)}=\frac{\rk(\tau)-\e\eta}{\rk(\tau)}.
\end{eqnarray}
Hence, our result naturally follows.
\end{proof}
In the following, we will temporarily ignore $\e$ dependence. Note that all the estimates are uniform in $\e$, which further means uniform in $L$. From now on, let $C$ denote a finite universal constant that is independent of $\e$ and $\tau$.

\subsection{Well-Posedness and Decay}

Since most of the results can be obtained via obvious modifications of \cite[Section 4]{AA003}, we will only state the main theorems without proofs.

\subsubsection{$L^2$ Estimates}

We may decompose the solution
\begin{eqnarray}
f(\eta,\tau,\phi)=q(\eta,\tau)+r(\eta,\tau,\phi),
\end{eqnarray}
where the hydrodynamical part $q$ is in the null space of the
operator $f-\bar f$, and the microscopic part $r$ is
the orthogonal complement, i.e.
\begin{eqnarray}\label{hydro}
q(\eta,\tau)=\frac{1}{2\pi}\int_{-\pi}^{\pi}f(\eta,\tau,\phi)\ud{\phi}\quad
r(\eta,\tau,\phi)=f(\eta,\tau,\phi)-q(\eta,\tau).
\end{eqnarray}
\begin{lemma}\label{Milne lemma 1}
Assume (\ref{Milne bounded}) and (\ref{Milne decay}) hold. Then there exists a solution $f(\eta,\tau,\phi)$ of the equation
(\ref{Milne problem}), satisfying
\begin{eqnarray}
\tnnm{r(\eta,\tau,\phi)}&\leq&C,\\
\br{\sin\phi,r}_{\phi}(\eta,\tau)&=&-\int_{\eta}^{L}\ue^{V(\eta,\tau)-V(y,\tau)}\bar
S(y,\tau)\ud{y}.
\end{eqnarray}
Also for
\begin{eqnarray}
f_L(\tau)=q_L(\tau)=\frac{\br{\sin^2\phi,f}_{\phi}(L,\tau)}{\tnm{\sin\phi}^2}.
\end{eqnarray}
we have
\begin{eqnarray}
\abs{q_L(\tau)}&\leq&C,\\
\tnm{q(\eta,\tau)-q_L(\tau)}&\leq&C\bigg(\tnm{r(\eta,\tau)}+\int_{\eta}^{L}\abs{F(y,\tau)}\tnm{r(y,\tau)}\ud{y}+\int_{\eta}^{L}\lnm{S(y,\tau)}\ud{y}
\bigg),\\
\tnnm{q(\eta,\tau)-q_L(\tau)}&\leq&C.
\end{eqnarray}
The
solution is unique among functions satisfying
$\tnnm{f(\eta,\tau,\phi)-f_L(\tau)}\leq C$.
\end{lemma}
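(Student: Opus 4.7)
The plan is to adapt the framework of \cite{AA003}, Section 4, to the present situation; since every estimate is pointwise in $\tau$, the $\tau$-dependence of $\rk$ merely enters as a parameter and the argument reduces, for each fixed $\tau$, to a planar Milne problem with a slightly more general potential. For existence I would first regularize the equation by adding a small damping $\lambda f$ and smoothing the $\phi$-average, solve the regularized problem by a contraction along characteristics on $(\eta,\phi)\in[0,L]\times[-\pi,\pi)$ with the specular relation $f(L,\tau,\phi)=f(L,\tau,-\phi)$ closing the in-flow at $\eta=L$, and then remove the regularization by passing to the limit using the a priori bounds derived below.

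The fundamental $L^2$ estimate is obtained by multiplying (\ref{Milne problem}) by $\ue^{-V}(f-q_L)$ and integrating over $(\eta,\phi)\in[0,L]\times[-\pi,\pi)$. The transport part $\sin\phi\,\p_\eta f+F\cos\phi\,\p_\phi f$ is in conservative form with respect to $\ue^{-V}\ud\eta\ud\phi$ because $\p_\eta V=-F$, so the interior contributions reduce to traces at $\eta\in\{0,L\}$: the trace at $\eta=L$ vanishes because specular symmetry makes $f^{2}(L,\tau,\cdot)$ even in $\phi$ while $\sin\phi$ is odd, while the trace at $\eta=0$ splits into an incoming piece controlled by $\lnm{h(\tau)}$ and an outgoing piece absorbed into the dissipation. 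Lemma \ref{rt lemma 1} gives $\ue^{-V(\eta,\tau)}\in[1-\e^{1/2}/\rk,\,1]$, uniformly positive for small $\e$, so the dissipation is comparable to $\tnnm{r}^{2}$ and yields $\tnnm{r}\leq C$. The identity for $\br{\sin\phi,r}_\phi$ then follows by integrating the equation in $\phi$: the non-local term $f-\bar f$ has zero $\phi$-mean, $\int\sin\phi\,\p_\eta f\,\ud\phi=\p_\eta\br{\sin\phi,f}_\phi$, and integration by parts in $\phi$ yields $\int F\cos\phi\,\p_\phi f\,\ud\phi=F\br{\sin\phi,f}_\phi$. Multiplying the resulting first-order linear ODE in $\eta$ by the integrating factor $\ue^{-V}$, integrating from $\eta$ to $L$, and using $\br{\sin\phi,f}_\phi(L,\tau)=0$ (specular symmetry again) produces the stated formula; since $q$ is $\phi$-independent, $\br{\sin\phi,f}_\phi=\br{\sin\phi,r}_\phi$.

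For the hydrodynamic part, $\abs{q_L(\tau)}\leq C$ is immediate from its definition as a weighted average together with the $L^2$ trace of $f$ at $\eta=L$ provided by the energy identity. The pointwise-in-$\eta$ bound on $\tnm{q(\eta,\tau)-q_L(\tau)}$ is obtained from the same $\sin\phi$-tested ODE, rewritten as $\p_\eta\br{\sin\phi,f}_\phi=-F\br{\sin\phi,f}_\phi+\br{\sin\phi,S}_\phi$; integrating from $\eta$ to $L$ and projecting onto the hydrodynamic component expresses $q(\eta,\tau)-q_L(\tau)$ as an explicit combination of $r(\eta,\tau)$, $\int_\eta^L\abs{F(y,\tau)}\tnm{r(y,\tau)}\ud y$, and $\int_\eta^L\lnm{S(y,\tau)}\ud y$, giving the announced inequality. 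Squaring and integrating in $\eta$, together with $\int_0^L\abs{F(y,\tau)}\ud y=V(L,\tau)\leq C$ from Lemma \ref{rt lemma 1}, yields $\tnnm{q-q_L}\leq C$. Uniqueness within the class $\tnnm{f-f_L}\leq C$ follows by applying the same energy identity to the difference of two such solutions, which satisfies the homogeneous equation with zero data: this forces $r\equiv 0$ and $q\equiv q_L$, and $q_L$ is then pinned down by its defining projection.

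The hard part is to make every constant uniform in $\e$ and in $\tau\in[-\pi,\pi)$. Convexity gives $\rk(\tau)\geq R_{\min}>0$ uniformly, and the explicit formula in Lemma \ref{rt lemma 1} yields both the uniform pinch $\ue^{-V(\eta,\tau)}\in[1-\e^{1/2}/\rk,1]$ and the smallness $\int_0^L\abs{F(y,\tau)}\ud y=V(L,\tau)=\ln\bigl(\rk/(\rk-\e^{1/2})\bigr)=O(\e^{1/2})$; without these two inputs the ODE argument for $\br{\sin\phi,r}_\phi$ and the pointwise bound on $q-q_L$ would not close uniformly in $\e$, and the entire machinery of \cite{AA003}, Section 4, would need to be reworked.
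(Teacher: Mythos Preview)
Your approach is exactly what the paper does: it gives no proof at all for this lemma, stating only that ``most of the results can be obtained via obvious modifications of \cite[Section 4]{AA003}''. Your outline follows that route---regularize, energy estimate with weight $\ue^{-V}$, moment identities for the flux, then the ODE argument for $q-q_L$---and correctly isolates the two places where uniformity in $\e$ and $\tau$ must be checked via Lemma \ref{rt lemma 1}.

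One small slip in your write-up: the ODE $\p_\eta\br{\sin\phi,f}_\phi+F\br{\sin\phi,f}_\phi=\text{(zeroth moment of }S\text{)}$ comes from integrating the equation in $\phi$; it is what gives the flux identity for $\br{\sin\phi,r}_\phi$, but it does not by itself yield the estimate on $q-q_L$. For that you must test with $\sin\phi$ (not with $1$), producing an ODE for $\br{\sin^2\phi,f}_\phi=\pi q+\br{\sin^2\phi,r}_\phi$; integrating that from $\eta$ to $L$ is what gives the three terms on the right of the stated bound, with the $\int_\eta^L\abs{F}\tnm{r}$ contribution coming from the cross term $F\br{\cos 2\phi,r}_\phi$ and the $\int_\eta^L\lnm{S}$ contribution entering via the already-established flux identity. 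Your paragraph conflates these two moment equations (and writes $\br{\sin\phi,S}_\phi$ where the zeroth moment $2\pi\bar S$ should appear). This is a bookkeeping issue rather than a genuine gap, but it is worth straightening out before you write the details.
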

\begin{theorem}\label{Milne theorem 1}
Assume (\ref{Milne bounded}) and (\ref{Milne decay}) hold. For the $\e$-Milne problem (\ref{Milne problem}), there exists a unique
solution $f(\eta,\tau,\phi)$ satisfying the estimates
\begin{eqnarray}
\tnnm{f(\eta,\tau,\phi)-f_L(\tau)}\leq C
\end{eqnarray}
for some real number $f_L(\tau)$ satisfying
\begin{eqnarray}
\abs{f_L(\tau)}\leq C.
\end{eqnarray}
\end{theorem}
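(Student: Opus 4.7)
The plan is to derive Theorem \ref{Milne theorem 1} essentially as a consolidation of Lemma \ref{Milne lemma 1}, using the hydrodynamic/microscopic splitting $f=q+r$ from (\ref{hydro}) and setting $f_L(\tau):=q_L(\tau)=\br{\sin^2\phi,f}_{\phi}(L,\tau)/\tnm{\sin\phi}^2$. Lemma \ref{Milne lemma 1} already produces a solution with $\abs{q_L(\tau)}\leq C$, which immediately yields $\abs{f_L(\tau)}\leq C$ as required.

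First, I would combine the two estimates $\tnnm{r(\eta,\tau,\phi)}\leq C$ and $\tnnm{q(\eta,\tau)-q_L(\tau)}\leq C$ provided by the lemma. Writing $f-f_L=r+(q-q_L)$ and applying the triangle inequality in the $L^{2}L^{2}$ norm gives $\tnnm{f-f_L}\leq\tnnm{r}+\tnnm{q-q_L}\leq C$, which is the advertised bound. Uniqueness within the class $\tnnm{f-f_L}\leq C$ is transferred verbatim from the uniqueness clause of Lemma \ref{Milne lemma 1}: if $f_1,f_2$ are two such solutions, their difference has zero source, zero inflow, and the specular reflection at $\eta=L$, so the lemma forces $f_1-f_2$ to be a constant in $\tau$, which is then pinned down by matching $q_L$ at the artificial boundary.

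The substantive work really sits inside Lemma \ref{Milne lemma 1} and is invoked here as a black box; I would only sketch why those inputs hold. The microscopic bound $\tnnm{r}\leq C$ comes from a weighted energy estimate against $r$ itself, where the geometric correction $F(\e;\eta,\tau)\cos\phi\,\p_\phi$ is absorbed using the potential $V$ determined by $\p_\eta V=-F$ and Lemma \ref{rt lemma 1}, together with the exponential decay (\ref{Milne decay}) of $S$. The identity $\br{\sin\phi,r}_{\phi}(\eta,\tau)=-\int_{\eta}^{L}\ue^{V(\eta,\tau)-V(y,\tau)}\bar S(y,\tau)\,dy$ is then obtained by testing the equation against $\id$ (so the $\bar f$ term drops) and integrating along $\eta$ using the reflection boundary condition at $\eta=L$. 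Finally, testing against $\sin\phi$ converts control of $r$ into an ODE for $q$ that yields $\tnnm{q-q_L}\leq C$, with the choice $f_L=q_L$ fixing the additive constant.

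The main obstacle in this theorem (beyond citing the lemma) is ensuring that the potential $V$ remains uniformly bounded so that the weighted estimates do not degenerate as $\e\to 0$ or as $\eta\to L$; this is precisely what Lemma \ref{rt lemma 1} guarantees, since $\ue^{-V(\e;L,\tau)}=1-\e^{1/2}/\rk$ stays bounded away from $0$ and $\infty$ uniformly in $\e$ and $\tau$ (using that $\rk$ is bounded below on a smooth convex $\p\Omega$). With these inputs in place, the theorem follows from the triangle inequality and the uniqueness statement of Lemma \ref{Milne lemma 1}.
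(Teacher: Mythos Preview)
Your proposal is correct and matches the paper's approach: the paper itself does not give an explicit proof here but simply states Theorem \ref{Milne theorem 1} (along with Lemma \ref{Milne lemma 1}) as a direct import from \cite[Section 4]{AA003}, and the theorem is indeed nothing more than the triangle-inequality consolidation of the lemma's estimates $\tnnm{r}\leq C$, $\tnnm{q-q_L}\leq C$, $\abs{q_L}\leq C$, with uniqueness inherited from the lemma's uniqueness clause. Your sketch of why Lemma \ref{Milne lemma 1} holds (energy estimate for $r$, testing against $\id$ and $\sin\phi$ to get the flux identity and the ODE for $q$) is the standard argument in \cite{AA003} and is accurate, though strictly speaking the paper treats that lemma as a black box here.
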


\subsubsection{$L^{\infty}$ Estimates}

\begin{lemma}\label{Milne lemma 2}
Assume (\ref{Milne bounded}) and (\ref{Milne decay}) hold. The solution $f(\eta,\tau,\phi)$ to the Milne problem (\ref{Milne
problem}) satisfies
\begin{eqnarray}
\lnnm{f(\eta,\tau,\phi)-f_L(\tau)}\leq C\bigg(1+\tnnm{f(\eta,\tau,\phi)-f_L(\tau)}\bigg).
\end{eqnarray}
\end{lemma}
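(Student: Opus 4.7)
The plan is to upgrade the $L^2$ control given by Theorem \ref{Milne theorem 1} into an $L^\infty$ control via a ``double Duhamel'' argument along the characteristics of (\ref{Milne problem}), using the conserved quantity $E(\eta,\phi) = (\rk(\tau)-\e\eta)\cos\phi$ for the characteristic ODE $\dot\eta = \sin\phi$, $\dot\phi = F(\e;\eta,\tau)\cos\phi$. Let $g := f - f_L$. Since $f_L(\tau)$ is constant in $(\eta,\phi)$, its $\phi$-average equals itself, so $g$ solves the same Milne problem (\ref{Milne problem}) with inflow data $g(0,\tau,\phi) = h(\tau,\phi) - f_L(\tau)$, which is uniformly bounded by (\ref{Milne bounded}) and Theorem \ref{Milne theorem 1}, and with unchanged specular reflection at $\eta = L$. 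It therefore suffices to show $\lnnm{g(\tau)} \leq C(1+\tnnm{g(\tau)})$.

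For each fixed $\tau$, a backward trajectory from an interior point $(\eta,\phi)$ splits into two regimes according to $E$: when $|E| > \rk(\tau)-\e^{1/2}$, the trajectory is trapped in $[0,\eta_*]$ with $\eta_* = (\rk(\tau)-|E|)/\e < L$ and turns around tangentially; when $|E| < \rk(\tau)-\e^{1/2}$, it reflects specularly at $\eta = L$ before reaching $\eta = 0$. In either regime it meets the inflow set $\{\eta = 0,\,\sin\phi_0 > 0\}$ at some finite backward time $t_b$. Along the trajectory the PDE reduces to the scalar ODE $\tfrac{dg}{ds}+g = \bar g + S$, and Duhamel gives
\begin{eqnarray*}
g(\eta,\tau,\phi) = e^{-t_b}\, g(0,\tau,\phi_0) + \int_0^{t_b} e^{-s}\,(\bar g + S)(\eta(s),\tau,\phi(s))\,ds.
\end{eqnarray*}
The boundary term and the $S$-integral are each bounded by $C$ using (\ref{Milne bounded}), (\ref{Milne decay}), and Theorem \ref{Milne theorem 1}, so only the $\bar g$-integral remains.

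For that, expand $\bar g(\eta(s),\tau) = \tfrac{1}{2\pi}\int_{-\pi}^{\pi} g(\eta(s),\tau,\phi')\,d\phi'$ and apply Duhamel once more to each $g(\eta(s),\tau,\phi')$ along its own backward characteristic. The boundary and $S$-contributions from this second expansion again give $C$, and the principal term is a triple integral of the form $\iiint e^{-s-s'}\bar g(\eta''(s,\phi',s'),\tau)\,ds'\,d\phi'\,ds$. Away from the grazing set $|E|\approx 0$ and the reflection threshold $|E|\approx \rk-\e^{1/2}$, the change of variables from $(s,\phi',s')$ to a phase-space slice $(\eta'',\phi'',\phi')$ has Jacobian bounded below, so Cauchy-Schwarz converts this triple integral into $C\tnnm{g(\tau)}$. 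The small-measure grazing and threshold regions are absorbed into the additive $C$ via the exponential damping $e^{-s-s'}$ together with the convexity $\rk(\tau)>0$.

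The main analytic obstacle is controlling the characteristic flow precisely where $|E|$ is close to the threshold $\rk(\tau)-\e^{1/2}$: there the backward trajectory either turns around tangentially near $\eta = L$ or undergoes many specular reflections, making $t_b$ potentially large and the change-of-variables Jacobian degenerate. Resolving this requires a careful phase-space partition, directly paralleling \cite[Section 4]{AA003} for the constant-curvature case; the new feature here is that $\rk(\tau)$ enters only as a $\tau$-dependent parameter, frozen throughout the $(\eta,\phi)$-integration, so once one checks that the needed Jacobian lower bounds are uniform in $\tau$ on a fixed convex domain, the argument transfers verbatim.
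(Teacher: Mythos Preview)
Your proposal is correct and takes essentially the same approach as the paper: the paper gives no proof of its own here, deferring to \cite[Section 4]{AA003} as an ``obvious modification,'' and the characteristic-based double-Duhamel $L^2\to L^\infty$ bootstrap you sketch is precisely that argument, with $\tau$ entering only as a frozen parameter through $R_\kappa(\tau)$. One minor slip: the grazing set corresponds to $|E|$ near its \emph{maximum} (where $\sin\phi\to 0$), not $|E|\approx 0$, but this does not affect the strategy.
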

\begin{theorem}\label{Milne theorem 2}
Assume (\ref{Milne bounded}) and (\ref{Milne decay}) hold. The unique solution $f(\eta,\tau,\phi)$ to the $\e$-Milne problem
(\ref{Milne problem}) satisfies
\begin{eqnarray}
\lnnm{f(\eta,\tau,\phi)-f_L(\tau)}\leq C.
\end{eqnarray}
\end{theorem}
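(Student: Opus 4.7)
The plan is to combine the two preceding results directly. Under the standing hypotheses (\ref{Milne bounded}) and (\ref{Milne decay}), Theorem \ref{Milne theorem 1} already produces the unique solution $f(\eta,\tau,\phi)$ of the $\epsilon$-Milne problem (\ref{Milne problem}) together with a constant $f_L(\tau)$ with $|f_L(\tau)|\leq C$, and moreover
\[
\tnnm{f(\eta,\tau,\phi)-f_L(\tau)}\leq C
\]
uniformly in $\epsilon$ and $\tau$. Lemma \ref{Milne lemma 2} then furnishes the conditional $L^\infty$ bound
\[
\lnnm{f(\eta,\tau,\phi)-f_L(\tau)}\leq C\bigl(1+\tnnm{f(\eta,\tau,\phi)-f_L(\tau)}\bigr).
\]
Substituting the first estimate into the right-hand side of the second yields $\lnnm{f-f_L(\tau)}\leq C$, which is precisely the assertion of the theorem.

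The real work has therefore been done in the two predecessor statements, so at the level of Theorem \ref{Milne theorem 2} itself no new obstacle arises. Theorem \ref{Milne theorem 1} handles well-posedness and the uniform $L^2$ bound by splitting $f=q+r$ into hydrodynamic and microscopic parts as in (\ref{hydro}) and exploiting the weighted identity for $\br{\sin\phi,r}_\phi$ involving $\mathrm{e}^{-V(\eta,\tau)}$; Lemma \ref{Milne lemma 2} then carries out the $L^2$-to-$L^\infty$ upgrade along the characteristics of the geometric-corrected transport operator $\sin\phi\,\p_\eta+F(\eta,\tau)\cos\phi\,\p_\phi$, representing $f-f_L(\tau)$ via Duhamel's formula and controlling the nonlocal term $\bar f-f_L(\tau)$ by Cauchy–Schwarz in terms of $\tnnm{f-f_L(\tau)}$. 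The only small bookkeeping point to verify is that the constant $f_L(\tau)$ occurring in Theorem \ref{Milne theorem 1} agrees with the one used in Lemma \ref{Milne lemma 2}; this holds by construction, since both are defined by the same formula $f_L(\tau)=\br{\sin^2\phi,f}_\phi(L,\tau)/\tnm{\sin\phi}^2$ from Lemma \ref{Milne lemma 1}. Consequently, the combined estimate is uniform in $\epsilon$ and $\tau$, as required.
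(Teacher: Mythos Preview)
Your proposal is correct and matches the paper's approach exactly: the paper places Lemma~\ref{Milne lemma 2} immediately before Theorem~\ref{Milne theorem 2} precisely so that the theorem follows by substituting the $L^2$ bound from Theorem~\ref{Milne theorem 1} into the conditional $L^\infty$ estimate of Lemma~\ref{Milne lemma 2}. The paper does not even write out this step, noting that these results are obvious modifications of \cite[Section 4]{AA003}, so your argument is the intended one.
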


\subsubsection{Exponential Decay}

\begin{theorem}\label{Milne theorem 3}
Assume (\ref{Milne bounded}) and (\ref{Milne decay}) hold. There exists $K_0>0$ such that the unique solution $f(\eta,\tau,\phi)$ to the
$\e$-Milne problem (\ref{Milne problem}) satisfies
\begin{eqnarray}\label{Milne temp 85}
\lnnm{\ue^{K_0\eta}\bigg(f(\eta,\tau,\phi)-f_L(\tau)\bigg)}\leq C.
\end{eqnarray}
\end{theorem}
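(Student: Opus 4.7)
The plan is to upgrade the $L^{\infty}$ bound of Theorem~\ref{Milne theorem 2} to exponential decay by applying an exponential weight $\ue^{K_0 \eta}$ and re-running the two-step machinery ($L^2$ followed by $L^{\infty}$) of Lemma~\ref{Milne lemma 1} and Lemma~\ref{Milne lemma 2} in the spirit of \cite[Section~4]{AA003}. I would set $G(\eta,\tau,\phi) := \ue^{K_0\eta}\bigl(f(\eta,\tau,\phi) - f_L(\tau)\bigr)$ for a small constant $K_0>0$ to be fixed later. Since $f_L(\tau)$ is independent of $\eta$ and $\phi$, a direct substitution into (\ref{Milne problem}) yields
\[
\sin\phi\,\frac{\p G}{\p\eta} + F(\eta,\tau)\cos\phi\,\frac{\p G}{\p\phi} + (1 - K_0\sin\phi)G - \bar G = \ue^{K_0\eta} S(\eta,\tau,\phi),
\]
with inflow data $G(0,\tau,\phi) = h(\tau,\phi) - f_L(\tau)$ for $\sin\phi>0$ (bounded by Theorem~\ref{Milne theorem 1}) and the specular reflection $G(L,\tau,\phi) = G(L,\tau,R\phi)$, which is preserved by the weight since it acts only at a single value of $\eta$. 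The new source $\ue^{K_0\eta}S$ stays uniformly $L^{\infty}$-bounded, and indeed keeps decaying exponentially, provided $K_0 < K$ by (\ref{Milne decay}).

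I would then carry out the weighted $L^2$ estimate analogous to Lemma~\ref{Milne lemma 1}. Decomposing $G = Q + R$ with $Q = \bar G$ and $R = G - \bar G$ as in (\ref{hydro}), testing the equation against $G$ and integrating by parts in $\eta$ and $\phi$ produces the dissipative contribution $\int_0^L \tnm{R(\eta,\tau)}^2\,\ud{\eta}$ just as before, plus an additional perturbative term $-K_0\int_0^L\int_{-\pi}^{\pi}\sin\phi\,G^2\,\ud{\phi}\,\ud{\eta}$ arising from the exponential shift. For $K_0$ small, this perturbation is absorbed into the dissipation, where the hydrodynamic part $Q$ is controlled through the Poincar\'e-type bound already implicit in the control of $q-q_L$ in Lemma~\ref{Milne lemma 1}. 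The averaged compatibility identity becomes
\[
\frac{\p}{\p\eta}\br{\sin\phi,G}_{\phi} + \bigl(F(\eta,\tau) - K_0\bigr)\br{\sin\phi,G}_{\phi} = \br{1,\ue^{K_0\eta}S}_{\phi},
\]
which integrates explicitly with factor $\ue^{-V(\eta,\tau) - K_0\eta}$; the specular condition at $\eta=L$ forces the endpoint term to vanish by the $\phi\mapsto R\phi$ symmetry, exactly as in the original argument. Together these give $\tnnm{G(\eta,\tau,\phi)} \leq C$.

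The final step is to promote this to $L^{\infty}$ by the same characteristic argument as in Lemma~\ref{Milne lemma 2}. Integrating the equation for $G$ along the characteristic curves of $\sin\phi\,\p_{\eta} + F\cos\phi\,\p_{\phi}$, along which the invariant $(\rk - \e\eta)\cos\phi$ is conserved, and using the uniform positivity of $1-K_0\sin\phi$ for $K_0 < \tfrac{1}{2}$, the resulting Duhamel representation controls $\lnnm{G}$ in terms of the incoming data, the weighted source, and $\lnnm{\bar G}$. The $L^2$ bound on $Q = \bar G$ from the previous step, combined with the $L^2$-to-$L^{\infty}$ transfer of Lemma~\ref{Milne lemma 2}, closes the estimate and yields $\lnnm{G} \leq C$, which is precisely (\ref{Milne temp 85}).

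The principal technical difficulty is the simultaneous choice of $K_0$: it must be (i) strictly smaller than $K$ from (\ref{Milne decay}) so that the weighted source is integrable; (ii) small enough that $1-K_0\sin\phi$ stays uniformly positive and the $-K_0\sin\phi\,G^2$ contribution can be absorbed by dissipation; and (iii) independent of $\e$, hence of $L=\e^{-1/2}$, so the constants do not blow up as $\e\to 0$. The subtle point is checking that the boundary contribution at $\eta=L$ does not pick up an uncontrolled $\ue^{K_0 L}$ factor; this is ensured by the $\phi\mapsto R\phi$ symmetry of the specular boundary condition, which is unaffected by the weight. Once $K_0$ is fixed, the remainder of the argument is an \emph{obvious modification} of the uniform $\e$-estimates already established in Lemma~\ref{Milne lemma 1}, Lemma~\ref{Milne lemma 2}, and Theorems~\ref{Milne theorem 1}--\ref{Milne theorem 2}.
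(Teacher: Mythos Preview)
Your proposal is correct and follows essentially the same approach the paper has in mind: the paper itself omits the proof, stating that it is an ``obvious modification of \cite[Section~4]{AA003},'' and the standard modification is precisely to set $G=\ue^{K_0\eta}(f-f_L)$, observe that $G$ satisfies the same equation with the extra lower-order term $K_0\sin\phi\,G$ and the weighted source $\ue^{K_0\eta}S$, and then rerun the $L^2$ and $L^{\infty}$ arguments of Lemma~\ref{Milne lemma 1} and Lemma~\ref{Milne lemma 2} for $K_0$ small. The same mechanism appears explicitly later in the paper in the proof of Theorem~\ref{pt theorem 2}, where the exponential weight contributes $K_0\a\sin\phi$ and $K_0\b\sin\phi$ to the source and is absorbed for small $K_0$.
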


\subsection{Preliminaries of Regularity Estimates}

It is easy to see $\v(\eta,\tau,\phi)=f(\eta,\tau,\phi)-f_L(\tau)$ satisfies the equation
\begin{eqnarray}\label{Milne difference problem}
\left\{
\begin{array}{rcl}\displaystyle
\sin\phi\frac{\p \v}{\p\eta}+F(\eta,\tau)\cos\phi\frac{\p
\v}{\p\phi}+\v-\bar\v&=&S(\eta,\tau,\phi),\\
\v(0,\tau,\phi)&=&p(\tau,\phi)\ \ \text{for}\ \ \sin\phi>0,\\\rule{0ex}{1.0em}
\v(L,\tau,\phi)&=&\v(L,\tau, R\phi).
\end{array}
\right.
\end{eqnarray}
where
\begin{eqnarray}
p(\tau,\phi)=h(\tau,\phi)-f_L(\tau).
\end{eqnarray}
We intend to estimate the normal, tangential and velocity derivative. This idea is motivated by \cite{Guo.Kim.Tonon.Trescases2013}.
From now on, without specification, we temporarily ignore the dependence on $\tau$ and all the estimates are uniform in $\tau$.
Define a distance function $\zeta(\eta,\phi)$ as
\begin{eqnarray}\label{weight function}
\zeta(\eta,\phi)=\Bigg(1-\bigg(\ue^{-V(\eta)}\cos\phi\bigg)^2\Bigg)^{1/2}.
\end{eqnarray}
Note that the closer $(\eta,\phi)$ is to the grazing set, the smaller $\zeta$ is. In particular, at grazing set, $\zeta=0$. Also, we have $0\leq\zeta\leq 1$.
\begin{lemma}\label{rt lemma 2}
We have
\begin{eqnarray}
\sin\phi\frac{\p \zeta}{\p\eta}+F(\eta)\cos\phi\frac{\p
\zeta}{\p\phi}=0.
\end{eqnarray}
\end{lemma}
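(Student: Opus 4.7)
The identity asserts that $\zeta$ is conserved along the characteristic curves of the transport operator $\sin\phi\,\p_\eta + F(\eta)\cos\phi\,\p_\phi$ driving the $\e$-Milne equation; this is exactly the geometric content behind calling $\zeta$ the \emph{invariant kinetic distance} in the introduction. My plan is therefore a direct chain-rule verification, organized so as to avoid the square root for as long as possible.

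The cleanest route is to work with $\zeta^2 = 1 - \ue^{-2V(\eta)}\cos^2\phi$ and to use the factorization
\begin{equation*}
\sin\phi\,\p_\eta\zeta^2 + F\cos\phi\,\p_\phi\zeta^2 = 2\zeta\bigl(\sin\phi\,\p_\eta\zeta + F\cos\phi\,\p_\phi\zeta\bigr).
\end{equation*}
Using the defining relation $\p_\eta V = -F$, a short computation gives $\p_\eta\zeta^2 = -2F\,\ue^{-2V}\cos^2\phi$ and $\p_\phi\zeta^2 = 2\ue^{-2V}\cos\phi\sin\phi$. Plugging these into the operator produces two terms of the form $\pm 2F\,\ue^{-2V}\cos^2\phi\sin\phi$ which cancel exactly, so the operator annihilates $\zeta^2$. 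Dividing by $2\zeta$ then delivers the claimed identity on the set $\{\zeta>0\}$.

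There is essentially no analytic obstacle here; the only minor point is that $\zeta$ fails to be smooth on the grazing set. However, the explicit formula for the potential in Lemma \ref{rt lemma 1} gives $V(\eta) > 0$ for $\eta > 0$, hence $\zeta \geq \sqrt{1 - \ue^{-2V(\eta)}} > 0$ throughout the interior, and the grazing locus reduces to $\{\eta = 0,\ \sin\phi = 0\}$, a measure-zero set that plays no role in the weighted estimates to follow. Thus the identity is valid everywhere it is needed.
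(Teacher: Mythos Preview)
Your proof is correct and follows essentially the same approach as the paper: both are direct chain-rule verifications of the transport-invariance of $\zeta$. The only cosmetic difference is that you apply the operator to $\zeta^2$ and divide by $2\zeta$, whereas the paper differentiates $\zeta$ directly and obtains the same two cancelling terms $\pm\ue^{-2V}F\cos^2\phi\sin\phi/\zeta$; your remark about the grazing set is a nice addition but not needed for the lemma as stated.
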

\begin{proof}
We may directly compute
\begin{eqnarray}
\frac{\p \zeta}{\p\eta}&=&\frac{1}{2}\Bigg(1-\bigg(\ue^{-V(\eta)}\cos\phi\bigg)^2\Bigg)^{-1/2}\bigg(-2\ue^{-2V(\eta)}\cos^2\phi\bigg)F(\eta)
=-\frac{\ue^{-2V(\eta)}F(\eta)\cos^2\phi}{\zeta},\\
\frac{\p\zeta}{\p\phi}&=&\frac{1}{2}\Bigg(1-\bigg(\ue^{-V(\eta)}\cos\phi\bigg)^2\Bigg)^{-1/2}\bigg(-2\ue^{-2V(\eta)}\cos\phi\bigg)(-\sin\phi)
=\frac{\ue^{-2V(\eta)}\cos\phi\sin\phi}{\zeta}.
\end{eqnarray}
Hence, we know
\begin{eqnarray}
\sin\phi\frac{\p \zeta}{\p\eta}+F(\eta)\cos\phi\frac{\p\zeta}{\p\phi}&=&
\frac{-\sin\phi\bigg(\ue^{-2V(\eta)}F(\eta)\cos^2\phi\bigg)+F(\eta)\cos\phi\bigg(\ue^{-2V(\eta)}\cos\phi\sin\phi\bigg)}{\zeta}=0.
\end{eqnarray}
\end{proof}

\subsection{Direct Estimates along Characteristics}

In this section, we will prove some preliminary estimates that are based on the characteristics of $\v$ itself instead of the derivative.
Here, we have two formulations of the equation (\ref{Milne difference problem}) along the characteristics:
\begin{itemize}
\item
Formulation I: $\eta$ is the principle variable, $\phi=\phi(\eta)$, and the equation can be rewritten as
\begin{eqnarray}
\sin\phi\frac{\ud{\v}}{\ud{\eta}}+\v=S+\bar\v.
\end{eqnarray}
\item
Formulation II: $\phi$ is the principle variable, $\eta=\eta(\phi)$ and the equation can be rewritten as
\begin{eqnarray}
F(\eta)\cos\phi\frac{\ud{\v}}{\ud{\phi}}+\v=S+\bar\v.
\end{eqnarray}
\end{itemize}
These two formulations are equivalent and can be applied to different regions of the domain.
Define the energy as follows:
\begin{eqnarray}
E(\eta,\phi)=\ue^{-V(\eta)}\cos\phi .
\end{eqnarray}
Along the characteristics, this energy is conserved. In the following, let $0<\d_0<<1$ be a small quantity.

\begin{lemma}\label{pt lemma 1}
Assume $\lnnm{S}+\lnnm{\dfrac{\p S}{\p\phi}}\leq C$. For $\sin\phi>\d_0$, we have
\begin{eqnarray}
\abs{\sin\phi\frac{\p\v}{\p\eta}(\eta,\phi)}\leq C\bigg(1+\frac{1}{\d_0^3}\bigg).
\end{eqnarray}
\end{lemma}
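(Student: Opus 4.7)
The plan is to read off $\sin\phi\,\p_\eta\v$ directly from (\ref{Milne difference problem}), which gives
\[
\sin\phi\,\p_\eta\v \;=\; \bar\v-\v+S \;-\; F\cos\phi\,\p_\phi\v,
\]
and then control each summand. Theorem \ref{Milne theorem 2} yields $\lnnm{\v-f_L}\le C$, hence $\abs{\v}$ and $\abs{\bar\v}$ are each bounded by $C$; together with the hypothesis $\lnnm{S}\le C$, the zeroth-order piece on the right-hand side contributes at most $C$. Since $\abs{F}\le C\e\le C$ by (\ref{force}) and $\abs{\cos\phi}\le 1$, the remaining task is to establish $\abs{\p_\phi\v}\le C/\d_0^3$ on $\{\sin\phi>\d_0\}$.

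For that I would work along the Formulation I characteristic through $(\eta,\phi)$: it originates from $(0,\phi_0)$ with $\cos\phi_0=\ue^{-V(\eta)}\cos\phi$, and the monotonicity $V(s)\le V(\eta)$ (together with $V\ge 0$) forces $\sin\phi(s;\phi_0)\ge\sin\phi>\d_0$ for every $s\in[0,\eta]$ along the backward ray. The integrating-factor solution is
\[
\v(\eta,\phi)=h(\phi_0)\ue^{-A(\eta)}+\int_0^\eta\frac{(\bar\v+S)(s,\phi(s;\phi_0))}{\sin\phi(s;\phi_0)}\,\ue^{-(A(\eta)-A(s))}\,\ud{s},\quad A(\eta)=\int_0^\eta\frac{\ud{\eta'}}{\sin\phi(\eta';\phi_0)},
\]
with $\phi_0=\phi_0(\eta,\phi)$.

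I then differentiate this formula in $\phi$ at fixed $\eta$. Energy conservation supplies
\[
\frac{\p\phi_0}{\p\phi}=\frac{\ue^{-V(\eta)}\sin\phi}{\sin\phi_0}\le 1, \qquad \frac{\p\phi(s;\phi_0)}{\p\phi_0}=\frac{\ue^{V(s)}\sin\phi_0}{\sin\phi(s;\phi_0)}\le \frac{C}{\d_0},
\]
while differentiating $1/\sin\phi(s)$ and $A(\eta)$ inserts further $1/\sin\phi(s)\le 1/\d_0$ factors, with at most three such factors in any single summand. The hypothesis $\lnnm{\p S/\p\phi}\le C$ is used precisely to differentiate $S$ in its second slot via the chain rule, while $\bar\v(s)$ is $\phi$-independent and contributes nothing extra. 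The exponential damping $\ue^{-A(\eta)}\le\ue^{-\eta}$ combined with $\sup_{\eta\ge 0}\eta^k\ue^{-\eta}\le C_k$ absorbs the polynomial growth in $\eta$, producing the desired $\abs{\p_\phi\v}\le C/\d_0^3$.

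The main obstacle I anticipate is the careful bookkeeping in the derivative computation: verifying that no term accumulates more than three $1/\sin\phi(s)$ factors and that every such factor can be uniformly controlled by $1/\d_0$ along the entire backward characteristic. The latter relies on the geometric correction rendering $V$ non-decreasing in $\eta$, which is the convexity hypothesis in disguise.
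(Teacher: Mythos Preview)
Your reduction to bounding $F\cos\phi\,\p_\phi\v$ via the equation, followed by differentiating the Formulation~I representation in $\phi$, matches the paper's approach. The gap is in the last paragraph. When you differentiate the integral term
\[
\int_0^\eta\frac{(\bar\v+S)(s,\phi(s))}{\sin\phi(s)}\,\ue^{-(A(\eta)-A(s))}\,\ud{s}
\]
in $\phi$, one of the resulting pieces carries the factor $\p_\phi A(s)$, and your own computation gives $\abs{\p_\phi A(s)}\lesssim s/\d_0^2$. The accompanying exponential weight is $\ue^{-(A(\eta)-A(s))}\le \ue^{-(\eta-s)}$, which damps in $\eta-s$, not in $s$. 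Hence this piece is bounded only by
\[
\frac{C}{\d_0^3}\int_0^\eta \ue^{-(\eta-s)}\,s\,\ud s\;\sim\;\frac{C\eta}{\d_0^3},
\]
which grows with $\eta$; the inequality $\sup_{\eta\ge0}\eta^k\ue^{-\eta}\le C_k$ does not apply here because the exponential is not $\ue^{-\eta}$ but a convolution kernel. Consequently your claim $\abs{\p_\phi\v}\le C/\d_0^3$ uniformly in $\eta$ is not established by this argument.

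The paper's fix is exactly to \emph{not} discard the smallness of $F$: it accepts $\abs{\p_\phi\v}\lesssim(1+\eta^2)/\d_0^3$ and then uses $\abs{F(\eta)}\sim\e$ together with $\eta\le L=\e^{-1/2}$ to obtain $\abs{F}\eta^2\le 1$, so that $\abs{F\cos\phi\,\p_\phi\v}\le C(1+1/\d_0^3)$. Your line ``$\abs{F}\le C\e\le C$'' throws away precisely the $\e$-smallness needed to kill the polynomial growth. An alternative repair would be to invoke the exponential decay of $\bar\v$ (Theorem~\ref{Milne theorem 3}) and of $S$ (assumption~(\ref{Milne decay})) inside the integral, which does suppress the $s$-growth; but that uses more than the $L^\infty$ bound on $S$ stated in the lemma, and is not the mechanism you described.
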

\begin{proof}
Using the $\e$-Milne problem (\ref{Milne difference problem}), we only need to show
\begin{eqnarray}
\abs{F(\eta)\cos\phi\frac{\p\v}{\p\phi}(\eta,\phi)}\leq C\bigg(1+\frac{1}{\d_0^3}\bigg).
\end{eqnarray}
We use Formulation I to rewrite the equation along the characteristics as
\begin{eqnarray}\label{mt 01}
\v(\eta,\phi)&=&\exp\left(-G_{\eta,0}\right)\Bigg(p(\phi'(0))
+\int_0^{\eta}\frac{(S+\bar\v)(\eta',\phi'(\eta'))}{\sin\phi'(\eta')}
\exp\left(G_{\eta',0}\right)\ud{\eta'}\Bigg).
\end{eqnarray}
where $\phi'(\eta')=\phi'(\eta';\eta,\phi)$ satisfying $(\eta',\phi')$ and $(\eta,\phi)$ are on the same characteristic with $\sin\phi'\geq0$, and
\begin{eqnarray}
G_{t,s}&=&\int_{s}^{t}\frac{1}{\sin\phi'(\xi)}\ud{\xi}.
\end{eqnarray}
for any $s,t\geq0$. Note that
\begin{eqnarray}
\frac{\p{G_{t,s}}}{\p{\phi}}&=&\int_{s}^{t}\frac{\p{}}{\p{\phi}}\bigg(\frac{1}{\sin\phi'(\xi)}\bigg)\ud{\xi}
=-\int_{s}^{t}\frac{\cos\phi'(\xi)}{\sin\phi'^2(\xi)}\frac{\p\phi'(\xi)}{\p\phi}\ud{\xi}.
\end{eqnarray}
Taking $\phi$ derivative on both sides of (\ref{mt 01}), we have
\begin{eqnarray}\label{mt 04}
\frac{\p\v}{\p\phi}&=&J=J_1+J_2+J_3+J_4,
\end{eqnarray}
where
\begin{eqnarray}
J_1&=&\exp\left(-G_{\eta,0}\right)
\bigg(\int_{0}^{\eta}\frac{\cos\phi'(\xi)}{\sin\phi'^2(\xi)}\frac{\p\phi'(\xi)}{\p\phi}\ud{\xi}\bigg)\Bigg(p(\phi'(0))\\
&&+\int_0^{\eta}\frac{(S+\bar\v)(\eta',\phi'(\eta'))}{\sin\phi'(\eta')}
\exp\left(G_{\eta',0}\right)\ud{\eta'}\Bigg),\no\\
J_2&=&\exp\left(-G_{\eta,0}\right)\frac{\p p(\phi'(0))}{\p\phi},\\
J_3&=&\exp\left(-G_{\eta,0}\right)\Bigg(\int_0^{\eta}(S+\bar\v)(\eta',\phi'(\eta'))
\exp\left(G_{\eta',0}\right)\\
&&\bigg(-\frac{1}{\sin\phi'(\eta')}\int_{0}^{\eta'}\frac{\cos\phi'(\xi)}{\sin\phi'^2(\xi)}\frac{\p\phi'(\xi)}{\p\phi}\ud{\xi}
-\frac{\cos\phi'(\eta')}{\sin\phi^2_{\ast}(\eta')}\frac{\p\phi'(\eta')}{\p\phi}\bigg)\ud{\eta'}\Bigg),\no\\
J_4&=&\exp\left(-G_{\eta,0}\right)
\int_0^{\eta}\frac{1}{\sin\phi'(\eta')}\frac{\p S(\eta',\phi'(\eta'))}{\p\phi}
\exp\left(G_{\eta',0}\right)\ud{\eta'}.
\end{eqnarray}
Then we divide the proof into several steps:\\
\ \\
Step 1: Estimate of $J_1$.\\
We can directly compute
\begin{eqnarray}
J_1&=&\v\bigg(\int_{0}^{\eta}\frac{\cos\phi'(\xi)}{\sin\phi'^2(\xi)}\frac{\p\phi'(\xi)}{\p\phi}\ud{\xi}\bigg).
\end{eqnarray}
Since
\begin{eqnarray}\label{mt 05}
E(\eta,\phi)=\ue^{-V(\eta)}\cos\phi=\ue^{-V(\xi)}\cos\phi'(\xi),
\end{eqnarray}
when taking $\phi$ derivative on both sides of (\ref{mt 05}), we obtain
\begin{eqnarray}
\frac{\p\phi'(\xi)}{\p\phi}=\frac{\sin\phi}{\sin\phi'(\xi)}\ue^{V(\xi)-V(\eta)}.
\end{eqnarray}
Hence, we have
\begin{eqnarray}\label{mt 06}
\int_{0}^{\eta}\frac{\cos\phi'(\xi)}{\sin\phi'^2(\xi)}\frac{\p\phi'(\xi)}{\p\phi}\ud{\xi}
&=&\int_{0}^{\eta}\frac{\cos\phi'(\xi)\sin\phi}{\sin\phi'^3(\xi)}\ue^{V(\xi)-V(\eta)}\ud{\xi}.
\end{eqnarray}
Since $\sin\phi'\geq\sin\phi\geq\d_0$, we naturally have
\begin{eqnarray}
\abs{\int_{0}^{\eta}\frac{\cos\phi'(\xi)\sin\phi}{\sin\phi'^3(\xi)}\ue^{V(\xi)-V(\eta)}\ud{\xi}}\leq \frac{C\eta}{\d_0^3}.
\end{eqnarray}
Since $\v$ decays exponentially, we obtain
\begin{eqnarray}
\abs{J_1}\leq \ue^{-K_0\eta}\frac{C\eta}{\d_0^3}\leq \frac{C}{\d_0^3}.
\end{eqnarray}
\ \\
Step 2: Estimate of $J_2$.\\
For $J_2$, we can estimate
\begin{eqnarray}
\abs{J_2}=\abs{\exp\left(-G_{\eta,0}\right)\frac{\p p(\phi'(0))}{\p\phi}}\leq \abs{\frac{\p p(\phi'(0))}{\p\phi}},
\end{eqnarray}
since for any $\xi\in[0,\eta]$,
\begin{eqnarray}
\frac{1}{\sin\phi'(\xi)}\geq1.
\end{eqnarray}
We may directly compute
\begin{eqnarray}
\ue^{-V(\eta)}\cos\phi=\ue^{-V(0)}\cos\phi'(0)=\cos\phi'(0).
\end{eqnarray}
Taking $\phi$ derivative on both sides, we get
\begin{eqnarray}
\frac{\p \phi'(0)}{\p\phi}=\frac{\sin\phi}{\sin\phi'(0)}\ue^{-V(\eta)},
\end{eqnarray}
which implies
\begin{eqnarray}
\abs{\frac{\p p(\phi'(0))}{\p\phi}}\leq \nm{p}_{W^{1,\infty}}\abs{\frac{\p \phi'(0)}{\p\phi}}\leq (C+\nm{h}_{W^{1,\infty}})\abs{\frac{\p \phi'(0)}{\p\phi}}\leq C\ue^{-V(\eta)}\leq C.
\end{eqnarray}
Hence, we have shown
\begin{eqnarray}
\abs{J_2}\leq C.
\end{eqnarray}
\ \\
Step 3: Estimate of $J_3$.\\
Similar to the estimate of $J_1$, we have
\begin{eqnarray}
\abs{J_3}&\leq&C\abs{\int_0^{\eta}
\exp\left(-G_{\eta',\eta}\right)\bigg(\frac{C(\eta'+1)}{\d_0^3}\bigg)\ud{\eta'}}\leq\frac{C(\eta^2+1)}{\d_0^3}.
\end{eqnarray}
Considering $F(\eta)\leq \e$ and $\eta\leq L=\e^{-1/2}$, we have
\begin{eqnarray}
\abs{J_3}\leq \abs{\frac{1}{F(\eta)}}\frac{C}{\d_0^3}.
\end{eqnarray}
\ \\
Step 4: Estimate of $J_4$.\\
Similar to the estimate of $J_1$, we have
\begin{eqnarray}
\abs{J_4}&\leq&\frac{C}{\d_0}\abs{
\int_0^{\eta}
\exp\left(-G_{\eta',\eta}\right)\ud{\eta'}}\leq C.
\end{eqnarray}
\ \\
Step 5: Synthesis.\\
In summary, we have
\begin{eqnarray}
\abs{J}\leq \frac{C}{\d_0^3}\bigg(1+\abs{\frac{1}{F(\eta)}}\bigg).
\end{eqnarray}
which implies
\begin{eqnarray}
\abs{F(\eta)\cos\phi\frac{\p\v}{\p\phi}}\leq C\bigg(1+\frac{1}{\d_0^3}\bigg),
\end{eqnarray}
and further
\begin{eqnarray}
\abs{\sin\phi\frac{\p\v}{\p\eta}}\leq C\bigg(1+\frac{1}{\d_0^3}\bigg).
\end{eqnarray}
\end{proof}

\begin{lemma}\label{pt lemma 2}
Assume $\lnnm{S}+\lnnm{\dfrac{\p S}{\p\phi}}\leq C$. For $\sin\phi<0$ with $\abs{E(\eta,\phi)}\leq \ue^{-V(L)}$, if it satisfies $\min_{\phi'}\sin\phi'\geq\d_0$ where $(\eta',\phi')$ are on the same characteristics as $(\eta,\phi)$ with $\sin\phi'\geq0$, then we have
\begin{eqnarray}
\abs{\sin\phi\frac{\p\v}{\p\eta}(\eta,\phi)}\leq C\bigg(1+\frac{1}{\d_0^3}\bigg).
\end{eqnarray}
\end{lemma}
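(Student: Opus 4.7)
The plan follows the blueprint of Lemma \ref{pt lemma 1}, adapted to a backward characteristic that must reflect off the specular wall at $\eta=L$ before reaching the inflow boundary. Starting from $(\eta,\phi)$ with $\sin\phi<0$, I trace the characteristic backward: the energy bound $\abs{E(\eta,\phi)}\leq\ue^{-V(L)}$ guarantees that it can climb to $\eta=L$, arriving at some $(L,\phi_L)$. The specular condition $\v(L,\tau,\phi)=\v(L,\tau,R\phi)$ identifies $\phi_L$ with $-\phi_L$, and I continue the trace from $(L,-\phi_L)$ down to the inflow boundary at $(0,\phi'(0))$ with $\sin\phi'(0)>0$. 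Because energy is preserved across the bounce and $\cos\phi$ has the same magnitude on both legs, the hypothesis $\min_{\phi'}\sin\phi'\geq\d_0$ along the $\sin\phi'\geq0$ leg automatically yields $\abs{\sin\phi'}\geq\d_0$ on the entire characteristic.

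Using Formulation I along this full two-leg trajectory, I obtain a Duhamel representation
\begin{eqnarray*}
\v(\eta,\phi)=\exp(-G_{\ast})\,p(\phi'(0))+\int\frac{(S+\bar\v)(\eta',\phi'(\eta'))}{\sin\phi'(\eta')}\exp(-G)\,\ud{\eta'},
\end{eqnarray*}
where the $\eta'$-integration is split over the ascending leg $[\eta,L]$ and the descending leg $[0,L]$ joined at the bounce. Differentiating in $\phi$ and decomposing $\p_\phi\v=J_1+J_2+J_3+J_4$ exactly as in Lemma \ref{pt lemma 1}, energy conservation $\ue^{-V(\xi)}\cos\phi'(\xi)=\ue^{-V(\eta)}\cos\phi$ gives
\begin{eqnarray*}
\frac{\p\phi'(\xi)}{\p\phi}=\frac{\sin\phi}{\sin\phi'(\xi)}\,\ue^{V(\xi)-V(\eta)},
\end{eqnarray*}
whose modulus is bounded by $C/\d_0$ throughout, since $\abs{\sin\phi'}\geq\d_0$ on both legs. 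At the wall $\xi=L$, the map $\phi_L\mapsto-\phi_L$ merely flips the sign of this sensitivity, so the sensitivity extends smoothly across the bounce.

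Each $J_i$ then obeys precisely the same chain of bounds as in Lemma \ref{pt lemma 1}: exponential decay from Theorem \ref{Milne theorem 3}, $L=\e^{-1/2}$, $\abs{F}\leq\e$, and the crude factor $1/\abs{\sin\phi'}\leq 1/\d_0$. The worst term, $J_3$, contributes at most $\abs{F(\eta)}^{-1}$, which is cancelled when one returns to (\ref{Milne difference problem}) to trade $F(\eta)\cos\phi\,\p_\phi\v$ for $\sin\phi\,\p_\eta\v$. The main obstacle I anticipate is the bookkeeping at the bounce: one must verify carefully that the specular identification transmits the $\phi$-sensitivity across $\eta=L$ with no loss beyond a sign, and that both the upward and downward legs stay uniformly away from the grazing set $\sin\phi'=0$. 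The first is immediate because $\v(L,\phi)=\v(L,-\phi)$ is a pure relabeling; the second is exactly what the hypothesis $\min_{\phi'}\sin\phi'\geq\d_0$ provides, once energy conservation is invoked to transfer it to the $\sin\phi'<0$ leg. Combining these estimates yields $\abs{F(\eta)\cos\phi\,\p_\phi\v}\leq C(1+1/\d_0^3)$, and (\ref{Milne difference problem}) then delivers the claim $\abs{\sin\phi\,\p_\eta\v(\eta,\phi)}\leq C(1+1/\d_0^3)$.
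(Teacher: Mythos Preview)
Your approach is valid but follows a different route from the paper. The paper does not pass through $\p_\phi\v$ at all; instead it differentiates the Region~II mild formula (\ref{mt 03}) \emph{directly in $\eta$}, producing seven terms $JJ_1,\ldots,JJ_7$. The payoff is that the characteristic sensitivity in $\eta$ already carries a factor of the force,
\[
\frac{\p\phi'(\eta')}{\p\eta}=-\frac{F(\eta)\cos\phi'(\eta')}{\sin\phi'(\eta')}=O\!\left(\frac{\e}{\d_0}\right),
\]
so every term except the trivial $JJ_1=-(S+\bar\v)/\sin\phi$ is bounded by $C\e/\d_0^3$; multiplying by $\sin\phi$ finishes without invoking the equation again and in fact yields the sharper $C(1+\e/\d_0^3)$.

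Your $\phi$-differentiation route also closes, but one point needs adjustment. Your claim that ``each $J_i$ obeys precisely the same chain of bounds as in Lemma~\ref{pt lemma 1}'' fails for $J_1$. In Region~I the inner integral in $J_1$ runs over $[0,\eta]$ and is killed by the decay $|\v(\eta,\phi)|\leq Ce^{-K_0\eta}$, giving $|J_1|\leq C\eta e^{-K_0\eta}/\d_0^3\leq C/\d_0^3$. In Region~II the characteristic has total $\eta$-length $\sim 2L-\eta$, so the inner integral is now $\leq CL/\d_0^3$; for small $\eta$ the factor $e^{-K_0\eta}\approx 1$ and you only get $|J_1|\leq C\e^{-1/2}/\d_0^3$. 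This is not fatal: since $\e^{-1/2}\leq C|F(\eta)|^{-1}$, your $J_1$ now behaves like $J_3$ and is cancelled by the same $F(\eta)$ prefactor in the final conversion $F(\eta)\cos\phi\,\p_\phi\v\mapsto\sin\phi\,\p_\eta\v$. So your argument works once you note that in Region~II \emph{both} $J_1$ and $J_3$ carry an $|F(\eta)|^{-1}$ blow-up and are simultaneously tamed at the last step. The paper's direct $\eta$-differentiation builds the $\e$-smallness in from the start and avoids this bookkeeping.
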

\begin{proof}
We use Formulation I to rewrite the equation along the characteristics as
\begin{eqnarray}\label{mt 03}
\v(\eta,\phi)&=&p(\phi'(0))\exp(-G_{L,0}-G_{L,\eta})\\
&&+\int_0^{L}\frac{(S+\bar\v)(\eta',\phi'(\eta'))}{\sin(\phi'(\eta'))}
\exp(-G_{L,\eta'}-G_{L,\eta})\ud{\eta'}\no\\
&&+\int_{\eta}^{L}\frac{(S+\bar\v)(\eta',R\phi'(\eta'))}{\sin(\phi'(\eta'))}\exp(-G_{\eta',\eta})\ud{\eta'}\nonumber.
\end{eqnarray}
Then taking $\eta$ derivative on both sides of (\ref{mt 03}) yields
\begin{eqnarray}
\frac{\p\v}{\p\eta}&=&JJ=JJ_1+JJ_2+JJ_3+JJ_4+JJ_5+JJ_6+JJ_7,
\end{eqnarray}
where
\begin{eqnarray}
JJ_1&=&-\frac{(S+\bar\v)(\eta,\phi)}{\sin\phi},\\
JJ_2&=&\int_{\eta}^{L}\frac{(S+\bar\v)(\eta',\phi'(\eta'))}{\sin\phi'(\eta')}
\exp(-G_{\eta',\eta})\\
&&\bigg(\frac{\cos\phi'(\eta')}{\sin\phi'(\eta')}\frac{\p\phi'(\eta')}{\p\eta}
-\int_{\eta}^{\eta'}\frac{\cos\phi'(\xi)}{\sin\phi'^2(\xi)}\frac{\p\phi'(\xi)}{\p\eta}\ud{\xi}\bigg)\ud{\eta'},\no\\
JJ_3&=&\int_{\eta}^{L}
\frac{\p S(\eta',\phi'(\eta'))}{\p\eta}\frac{1}{\sin\phi'(\eta')}\exp(-G_{\eta',\eta})\ud{\eta'},\\
JJ_4&=&\frac{\p p(\phi'(0))}{\p\eta}\exp(-G_{L,0}-G_{L,\eta}),\\
JJ_5&=&p(\phi'(0))\exp(-G_{L,0}-G_{L,\eta})\bigg(-\int^{L}_{0}\frac{\cos\phi'(\xi)}{\sin\phi'^2(\xi)}\frac{\p\phi'(\xi)}{\p\eta}\ud{\xi}
-\int^{L}_{\eta}\frac{\cos\phi'(\xi)}{\sin\phi'^2(\xi)}\frac{\p\phi'(\xi)}{\p\eta}\ud{\xi}\bigg),\\
JJ_6&=&\int_{0}^{L}\frac{(S+\bar\v)(\eta',\phi'(\eta'))}{\sin\phi'(\eta')}
\exp(-G_{L,\eta'}-G_{L,\eta})\\
&&\bigg(\frac{\cos\phi'(\eta')}{\sin\phi'(\eta')}\frac{\p\phi'(\eta')}{\p\eta}
-\int^{L}_{\eta'}\frac{\cos\phi'(\xi)}{\sin\phi'^2(\xi)}\frac{\p\phi'(\xi)}{\p\eta}\ud{\xi}
-\int^{L}_{\eta}\frac{\cos\phi'(\xi)}{\sin\phi'^2(\xi)}\frac{\p\phi'(\xi)}{\p\eta}\ud{\xi}\bigg)\ud{\eta'},\no\\
JJ_7&=&\int_{0}^{L}
\frac{\p S(\eta',\phi'(\eta'))}{\p\eta}\frac{1}{\sin\phi'(\eta')}\exp(-G_{L,\eta'}-G_{L,\eta})\ud{\eta'}.
\end{eqnarray}
We divide the proof into several steps:\\
\ \\
Step 1: Estimate of $JJ_1$.\\
This is obvious, since $S$ and $\bar\v$ are bounded.\\
\ \\
Step 2: Estimate of $JJ_2$.\\
We may directly compute
\begin{eqnarray}
\frac{\p\phi'(\eta')}{\p\eta}=-\frac{F(\eta)\cos\phi'(\eta')}{\sin\phi'(\eta')}
\end{eqnarray}
Hence, we have
\begin{eqnarray}
\frac{\cos\phi'(\eta')}{\sin^2\phi'(\eta')}\frac{\p\phi'(\eta')}{\p\eta}=
-\frac{F(\eta)\cos^2\phi'(\eta')}{\sin^3\phi'(\eta')}
\end{eqnarray}
Since $\sin\phi'(\eta')\geq\d_0$, we know
\begin{eqnarray}
\abs{\frac{\cos\phi'(\eta')}{\sin^2\phi'(\eta')}\frac{\p\phi'(\eta')}{\p\eta}}\leq \frac{\e}{\d_0^3}
\end{eqnarray}
Thus, we obtain
\begin{eqnarray}
\abs{\int_{\eta}^{\eta'}\frac{\cos\phi'(\xi)}{\sin\phi'^2(\xi)}\frac{\p\phi'(\xi)}{\p\eta}\ud{\xi}}\leq \frac{\e(\eta'-\eta)}{\d_0^3}.
\end{eqnarray}
Also, it is easy to see
\begin{eqnarray}
\exp(-G_{\eta',\eta})\leq \exp\bigg(-(\eta'-\eta)\bigg).
\end{eqnarray}
Since
\begin{eqnarray}
\sin\phi\geq \sin\phi'(\xi)\geq C\d_0,
\end{eqnarray}
we directly obtain
\begin{eqnarray}
\abs{JJ_2}\leq \frac{C\e}{\d_0^3}
\end{eqnarray}
\ \\
Step 3: Estimate of $JJ_3$.\\
We compute
\begin{eqnarray}
\frac{\p S(\eta',\phi'(\eta'))}{\p\eta}=\p_2S\frac{\p\phi'(\eta')}{\p\eta}=-\p_2S\frac{F(\eta)\cos\phi'(\eta')}{\sin\phi'(\eta')},
\end{eqnarray}
where $\p_2$ denotes derivative with respect to the second argument in $S(\eta,\phi)$. Hence, this implies
\begin{eqnarray}
\abs{\frac{\p S(\eta',\phi'(\eta'))}{\p\eta}}\leq \frac{\e}{\d_0}.
\end{eqnarray}
Thus, we have
\begin{eqnarray}
\abs{JJ_3}\leq \frac{\e}{\d_0}.
\end{eqnarray}
\ \\
Step 4: Estimate of $JJ_4$.\\
Since
\begin{eqnarray}
\ue^{-V(0)}\cos\phi'(0)=\ue^{-V(\eta)}\cos\phi,
\end{eqnarray}
we have
\begin{eqnarray}
\frac{\p\phi'(0)}{\p\eta}=\frac{\cos\phi'(0)F(\eta)}{\sin\phi'(0)}
\end{eqnarray}
Thus, we know
\begin{eqnarray}
\abs{\frac{\p p(\phi'(0))}{\p\eta}}\leq \nm{p}_{W^{1,\infty}}\abs{\frac{\p\phi'(0)}{\p\eta}}\leq \frac{C\e}{\d_0}.
\end{eqnarray}
Hence, we have
\begin{eqnarray}
\abs{JJ_4}\leq \frac{C\e}{\d_0}.
\end{eqnarray}
\ \\
Step 5: Estimate of $JJ_5$.\\
This is basically identical to the estimate of $JJ_2$. We have
\begin{eqnarray}
\abs{JJ_5}\leq \frac{C\e}{\d_0^3}
\end{eqnarray}
\ \\
Step 6: Estimate of $JJ_6$.\\
This is basically identical to the estimate of $JJ_2$. We have
\begin{eqnarray}
\abs{JJ_6}\leq \frac{C\e}{\d_0^3}
\end{eqnarray}
\ \\
Step 7: Estimate of $JJ_7$.\\
This is basically identical to the estimate of $JJ_2$. We have
\begin{eqnarray}
\abs{JJ_7}\leq \frac{C\e}{\d_0^3}
\end{eqnarray}
\ \\
Step 8: Synthesis.\\
Summarizing all above, we have
\begin{eqnarray}
\abs{\sin\phi\frac{\p\v}{\p\eta}(\eta,\phi)}\leq C\bigg(1+\frac{\e}{\d_0^3}\bigg).
\end{eqnarray}
\end{proof}

\begin{lemma}\label{pt lemma 6}
Assume $\lnnm{S}+\lnnm{\dfrac{\p S}{\p\eta}}\leq C$ and $\abs{\dfrac{\p\bar\v}{\p\eta}}\leq C(1+\abs{\ln(\e)}+\abs{\ln(\eta)})$. For $\sin\phi\leq0$ and $\abs{E(\eta,\phi)}\geq \ue^{-V(L)}$, we have
\begin{eqnarray}
\abs{\sin\phi\frac{\p\v}{\p\eta}(\eta,\phi)}\leq C(1+\abs{\ln(\e)}).
\end{eqnarray}
\end{lemma}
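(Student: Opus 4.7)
The plan is to represent $\v$ via Formulation~I along the grazing characteristic through $(\eta,\phi)$, differentiate in $\eta$ as in Lemma~\ref{pt lemma 2}, and use integration by parts in $\eta'$ on the $\bar\v$-pieces to convert the non-integrable factor $1/\sin\phi'$ into $\p_\eta\bar\v$ against a bounded weight. Since $\sin\phi\le 0$ and $|E(\eta,\phi)|\ge\ue^{-V(L)}$, Lemma~\ref{rt lemma 1} together with the invariance of $E=\ue^{-V}\cos\phi$ along characteristics (Lemma~\ref{rt lemma 2}) forces the characteristic through $(\eta,\phi)$ to turn around at an interior height $\ea\le L$ defined by $\ue^{-V(\ea)}=|E|$, \emph{never} touching $\eta=L$. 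It therefore runs from the inflow point $(0,\phi_+'(0))$ with $\sin\phi_+'(0)>0$ up to $(\ea,\pa)$ where $\sin\pa=0$, and back down along the $\sin<0$ branch $\phi_-'$ to $(\eta,\phi)$, yielding the Duhamel representation
\begin{align*}
\v(\eta,\phi)
&=p\bigl(\phi_+'(0)\bigr)\exp\!\bigl(-G_{\ea,0}-G_{\ea,\eta}\bigr)
+\int_0^{\ea}\frac{(S+\bar\v)(\eta',\phi_+'(\eta'))}{\sin\phi_+'(\eta')}\exp\!\bigl(-G_{\ea,\eta'}-G_{\ea,\eta}\bigr)\ud{\eta'}\\
&\quad+\int_\eta^{\ea}\frac{(S+\bar\v)(\eta',\phi_-'(\eta'))}{\abs{\sin\phi_-'(\eta')}}\exp\!\bigl(-G_{\eta',\eta}\bigr)\ud{\eta'},
\end{align*}
where $G_{a,b}\ge 0$ denotes the positive kinetic distance along the characteristic, in the notation of the proof of Lemma~\ref{pt lemma 2}.

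Differentiating in $\eta$ (as in Lemma~\ref{pt lemma 2}), $\p_\eta$ hits the $G$-exponents, the endpoints $\eta$ and $\ea$, the inflow argument $\phi_+'(0)$, and the characteristics $\phi_\pm'(\eta')$; energy conservation yields
\begin{equation*}
\frac{\p\phi'(\eta')}{\p\eta}=-\frac{F(\eta)\cos\phi'(\eta')}{\sin\phi'(\eta')},\qquad \frac{\p\ea}{\p\eta}=\frac{F(\eta)}{F(\ea)},
\end{equation*}
so every resulting term carries at most one additional $1/\sin\phi'$ paired with a factor $|F|\sim\e$. For the pieces that still contain $\bar\v$ (which depends only on $\eta'$), I integrate by parts in $\eta'$ using $\frac{1}{|\sin\phi'(\eta')|}\exp(-G)=\pm\frac{\ud}{\ud{\eta'}}\exp(-G)$; this produces bounded boundary values $\bar\v(0),\bar\v(\ea),\bar\v(\eta)$ plus integrals of the form $\int\p_{\eta'}\bar\v(\eta')\exp(-G)\ud{\eta'}$, which by the hypothesis $|\p_\eta\bar\v(\eta')|\le C(1+|\ln\e|+|\ln\eta'|)$ are bounded by $C(1+|\ln\e|)$ because $\ln\eta'$ is integrable near $0$ and $\exp(-G)$ controls the tail out to $\ea\le L=\e^{-1/2}$.

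The surviving pieces involve $S$ or the derivative factor, with $1/\sin\phi'$ paired with $\exp(-G)$ but no $\bar\v$. Near $\ea$ the expansion $\sin^2\phi'(\eta')\sim 2\e(\ea-\eta')/R_{\kappa}$ together with the substitution $u=2\sqrt{\ea-\eta'}/\sqrt\e$ converts the singular integrand into a bounded, Gaussian-type integral, yielding an $O(1)$ contribution after the left multiplication by $\sin\phi$; away from $\ea$, the integrand is already bounded. The inflow boundary piece $\p_\eta p(\phi_+'(0))$ generates $F(\eta)\cos\phi_+'(0)/\sin\phi_+'(0)$, but the identity $\sin^2\phi_+'(0)=\sin^2\phi+\cos^2\phi\bigl(1-\ue^{-2V(\eta)}\bigr)\ge\sin^2\phi$ forces $|\sin\phi/\sin\phi_+'(0)|\le 1$, so the left multiplication by $\sin\phi$ leaves $O(\e\,\|p\|_{W^{1,\infty}})$. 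The main obstacle is exactly this balance near the turning point $\ea$: the naive pointwise bound $|1/\sin\phi'|\lesssim\e^{-1/4}$ is far too weak, so the singular kernel must be paired with $\exp(-G)$ before estimating, and the integration by parts above is essential so that the logarithmic growth of $\p_\eta\bar\v$ is never multiplied by a non-integrable weight. Collecting all contributions yields $|\sin\phi\,\p_\eta\v(\eta,\phi)|\le C(1+|\ln\e|)$.
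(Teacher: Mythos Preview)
Your approach differs from the paper's, and there is a gap at the turning point that you have not closed. The paper does \emph{not} differentiate Formulation~I in this region; it switches to Formulation~II, writing $\v$ as an integral in the angular variable $\phi_\ast$ with kernel $1/(F(\eta_\ast)\cos\phi_\ast)$. The condition $|E|\ge \ue^{-V(L)}$ forces $|\cos\phi_\ast|\ge C_0>0$ along the entire characteristic, so this kernel is never singular; differentiating in $\eta$ then brings in only the bounded factor $\p_\eta\eta_\ast=F(\eta)/F(\eta_\ast)$. The single place $\sin\phi_\ast$ reappears is when the paper substitutes $\phi_\ast\to\eta_\ast$ in the term $JJJ_{5,2}$, producing exactly one integrable $1/\sin\phi_\ast$.

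Your assertion that differentiating Formulation~I yields ``at most one additional $1/\sin\phi'$ paired with $|F|$'' is not correct. The original integrand already carries $1/\sin\phi'(\eta')$, and $\p_\eta$ of that factor alone gives
\[
\p_\eta\frac{1}{\sin\phi'(\eta')}=\frac{F(\eta)\cos^2\phi'(\eta')}{\sin^3\phi'(\eta')},
\]
so the $JJ_2$-type pieces contain $F/\sin^3\phi'$. Near the turning point $\ea$, where $\sin\phi'(\eta')\sim\sqrt{\e(\ea-\eta')}$, this behaves like $(\ea-\eta')^{-3/2}/\sqrt{\e}$, which is not integrable; a single integration by parts against $\bar\v$ absorbs only one power of $1/\sin\phi'$, leaving a boundary term $\bar\v(\ea)/\sin^2\phi'(\ea)=\infty$. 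There \emph{are} delicate cancellations that make the total finite (for instance one can check $\p_\eta G_{\ea,\eta}=-F(\eta)G_{\ea,\eta}$ after the formally infinite endpoint and integrand contributions cancel, using $F'/F^2=-1$), but you have not identified them, and your sketch treats each piece as individually controllable. The paper's reparameterization by $\phi_\ast$ is chosen precisely to avoid this turning-point singularity rather than to rely on such cancellations.
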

\begin{proof}
We use Formulation II to rewrite the equation as
\begin{eqnarray}\label{mt 02}
\v(\eta,\phi)&=&p(\phi_{\ast}(0))\exp(-H_{\phi,\phi_{\ast}(0)})+
\int_{\phi_{\ast}(0)}^{\phi}\frac{(S+\bar\v)(\eta_{\ast}(\phi_{\ast}),\phi_{\ast})}{F(\eta_{\ast}(\phi_{\ast}))\cos\phi_{\ast}}
\exp(-H_{\phi,\phi_{\ast}})\ud{\phi_{\ast}}
\end{eqnarray}
where
\begin{eqnarray}
H_{\phi,\phi_{\ast}}&=&\int_{\phi_{\ast}}^{\phi}\frac{1}{F(\eta_{\ast}(\varpi))\cos\varpi}\ud{\varpi}.
\end{eqnarray}
with $(\eta_{\ast}(\phi_{\ast}),\phi_{\ast})$, $(0,\phi_{\ast}(0))$ and $(\eta,\phi)$ are on the same characteristics. Then taking $\eta$ derivative on both sides of (\ref{mt 02}) to obtain
\begin{eqnarray}
\frac{\p\v}{\p\eta}(\eta,\phi)&=&JJJ=JJJ_1+JJJ_2+JJJ_3+JJJ_4+JJJ_5,
\end{eqnarray}
where
\begin{eqnarray}
JJJ_1&=&\frac{\p p(\phi_{\ast}(0))}{\p\eta}\exp(-H_{\phi,\phi_{\ast}(0)})\\
JJJ_2&=&p(\phi_{\ast}(0))\exp(-H_{\phi,\phi_{\ast}(0)})\bigg(-\frac{\p \phi_{\ast}(0)}{\p\eta}\frac{1}{F(0)\cos\phi_{\ast}(0)}-
\int_{\phi_{\ast}(0)}^{\phi}\frac{F'(\eta_{\ast}(\varpi))}{F^2(\eta_{\ast}(\varpi))\cos\varpi}\frac{\p\eta_{\ast}(\varpi)}{\p\eta}\ud{\varpi}\bigg)\\
JJJ_3&=&-\frac{(S+\bar\v)(0,\phi_{\ast}(0))}{F(0)\cos\phi_{\ast}(0)}
\exp(-H_{\phi,\phi_{\ast}(0)})\frac{\p \phi_{\ast}(0)}{\p\eta}\\
JJJ_4&=&\int_{\phi_{\ast}(0)}^{\phi}\frac{(S+\bar\v)(\eta_{\ast}(\phi_{\ast}),\phi_{\ast})}{F(\eta_{\ast}(\phi_{\ast}))\cos\phi_{\ast}}
\exp(-H_{\phi,\phi_{\ast}})\\
&&\bigg(-\frac{F'(\eta_{\ast}(\phi_{\ast})}{F^2(\eta_{\ast}(\phi_{\ast}))\cos\phi_{\ast}}\frac{\p\eta_{\ast}(\phi_{\ast})}{\p\eta}
-\int_{\phi_{\ast}}^{\phi}\frac{F'(\eta_{\ast}(\varpi))}{F^2(\eta_{\ast}(\varpi))\cos\varpi}\frac{\p\eta_{\ast}(\varpi)}{\p\eta}\ud{\varpi}\bigg)\ud{\phi_{\ast}},\no\\
JJJ_5&=&\int_{\phi_{\ast}(0)}^{\phi}\frac{\p (S+\bar\v)(\eta_{\ast}(\phi_{\ast}),\phi_{\ast})}{\p\eta}\frac{1}{F(\eta_{\ast}(\phi_{\ast}))\cos\phi_{\ast}}
\exp(-H_{\phi,\phi_{\ast}})\ud{\phi_{\ast}}.
\end{eqnarray}
Then we divide the proof into several steps:\\
\ \\
Step 1: Estimate of $JJJ_1$.\\
Since we know
\begin{eqnarray}\label{mt 31}
\ue^{-V(0)}\cos\phi_{\ast}(0)=\ue^{-V(\eta)}\cos\phi,
\end{eqnarray}
taking $\eta$ derivative on both sides of (\ref{mt 31}) implies
\begin{eqnarray}
\frac{\p\phi_{\ast}(0)}{\p\eta}=-\frac{\ue^{V(0)-V(\eta)}F(\eta)\cos\phi}{\sin\phi_{\ast}(0)}.
\end{eqnarray}
which further yields
\begin{eqnarray}
\abs{\frac{\p\phi_{\ast}(0)}{\p\eta}}\leq\abs{\frac{\ue^{V(0)-V(\eta)}F(\eta)\cos\phi}{\sin\phi}}\leq\abs{\frac{C\e}{\sin\phi}},
\end{eqnarray}
due to
\begin{eqnarray}
\abs{\sin\phi_{\ast}(0)}\geq\abs{\sin\phi}.
\end{eqnarray}
Hence, we have
\begin{eqnarray}\label{mt 32}
\abs{\frac{\p p(\phi_{\ast}(0))}{\p\eta}}\leq\nm{p}_{W^{1,\infty}}\abs{\frac{\p\phi_{\ast}(0)}{\p\eta}}
\leq\abs{\frac{C\e}{\sin\phi}}.
\end{eqnarray}
Also, for $\sin\phi\leq\dfrac{1}{2}$ and $\eta\in[0,L]$, we always have
\begin{eqnarray}
-H_{\phi,\phi_{\ast}(0)}=-\int_{\phi_{\ast}(0)}^{\phi}\frac{1}{F(\eta_{\ast}(\varpi))\cos\varpi}\ud{\varpi}\leq0,
\end{eqnarray}
which further yields
\begin{eqnarray}\label{mt 33}
\exp(-H_{\phi,\phi_{\ast}(0)})\leq1.
\end{eqnarray}
Then combining (\ref{mt 32}) and (\ref{mt 33}), we have
\begin{eqnarray}
\abs{JJJ_1}\leq \abs{\frac{C\e}{\sin\phi}}.
\end{eqnarray}
\ \\
Step 2: Estimate of $JJJ_2$.\\
Based on the results in Step 1, we can easily verify
\begin{eqnarray}
\abs{p(\phi_{\ast}(0))}&\leq&C,\\
\abs{\exp(-H_{\phi,\phi_{\ast}(0)})}&\leq&1,\\
\abs{-\frac{\p \phi_{\ast}(0)}{\p\eta}}&\leq&\abs{\frac{C\e}{\sin\phi}}.
\end{eqnarray}
Then since
\begin{eqnarray}
\ue^{-V(\eta_{\ast}(\phi_{\ast}))}\cos\phi_{\ast}=\ue^{-V(\eta)}\cos\phi,
\end{eqnarray}
taking $\eta$ derivative on both sides implies
\begin{eqnarray}
\frac{\p\eta_{\ast}(\phi_{\ast})}{\p\eta}=\frac{F(\eta)\cos\phi}{F(\eta_{\ast}(\phi_{\ast}))\cos\phi_{\ast}}\ue^{V(\eta_{\ast}(\phi_{\ast}))-V(\eta)}
=\frac{F(\eta)}{F(\eta_{\ast}(\phi_{\ast}))}.
\end{eqnarray}
Considering $0\leq\eta^{\ast}\leq L$, we may directly obtain
\begin{eqnarray}
\abs{\frac{F(\eta)}{F(\eta_{\ast}(\phi_{\ast}))}}\leq C,
\end{eqnarray}
which further leads to
\begin{eqnarray}
\abs{\frac{\p\eta_{\ast}(\phi_{\ast})}{\p\eta}}\leq C.
\end{eqnarray}
On the other hand, for $\sin\phi\leq 0$ with $\abs{E(\eta,\phi)}\geq \ue^{-V(L)}$, we know
\begin{eqnarray}
\abs{\ue^{-V(\eta_{\ast}(\phi_{\ast}))}\cos\phi_{\ast}}=\abs{\ue^{-V(\eta)}\cos\phi}\geq \ue^{-V(L)},
\end{eqnarray}
which implies
\begin{eqnarray}
\abs{\cos\phi_{\ast}}\geq \ue^{V(\eta_{\ast}(\phi_{\ast}))-V(L)}\geq\ue^{V(0)-V(L)}\geq C_0>0.
\end{eqnarray}
In total, we have shown
\begin{eqnarray}\label{mt 34}
\abs{\cos\phi_{\ast}}\geq C_0>0,
\end{eqnarray}
which naturally yields
\begin{eqnarray}
\abs{\frac{1}{F(0)\cos\phi_{\ast}(0)}}\leq \frac{C}{\e}.
\end{eqnarray}
Also, we have
\begin{eqnarray}
\abs{\frac{F'(\eta_{\ast}(\varpi))}{F^2(\eta_{\ast}(\varpi))}}=1.
\end{eqnarray}
Hence, we have
\begin{eqnarray}
\abs{JJJ_2}\leq \abs{\frac{C}{\sin\phi}}.
\end{eqnarray}
\ \\
Step 3: Estimate of $JJJ_3$.\\
We may directly estimate
\begin{eqnarray}
\abs{JJJ_3}&\leq&\abs{-\frac{(S+\bar\v)(0,\phi_{\ast}(0))}{F(0)\cos\phi_{\ast}(0)}}\abs{\exp(-H_{\phi,\phi_{\ast}(0)})}\abs{\frac{\p \phi_{\ast}(0)}{\p\eta}}\leq\frac{C}{\e}\cdot 1\cdot\abs{\frac{C\e}{\sin\phi}}\leq\abs{\frac{C}{\sin\phi}},
\end{eqnarray}
based on the estimates from Step 1.
Therefore, we have proved
\begin{eqnarray}
\abs{JJJ_3}\leq \abs{\frac{C}{\sin\phi}}.
\end{eqnarray}
\ \\
Step 4: Estimate of $JJJ_4$.\\
Using estimates in Step 1 and Step 2, we have
\begin{eqnarray}
&&\abs{-\frac{F'(\eta_{\ast}(\phi_{\ast})}{F^2(\eta_{\ast}(\phi_{\ast}))\cos\phi_{\ast}}\frac{\p\eta_{\ast}(\phi_{\ast})}{\p\eta}
-\int_{\phi_{\ast}}^{\phi}\frac{F'(\eta_{\ast}(\varpi))}{F^2(\eta_{\ast}(\varpi))\cos\varpi}\frac{\p\eta_{\ast}(\varpi)}{\p\eta}\ud{\varpi}}\\
&=&\abs{\frac{1}{\cos\phi_{\ast}}\frac{\p\eta_{\ast}(\phi_{\ast})}{\p\eta}}
+\abs{\int_{\phi_{\ast}}^{\phi}\frac{1}{\cos\varpi}\frac{\p\eta_{\ast}(\varpi)}{\p\eta}\ud{\varpi}}\leq C.\no
\end{eqnarray}
Then we know
\begin{eqnarray}
\abs{JJJ_4}\leq C\abs{\int_{\phi_{\ast}(0)}^{\phi}\frac{1}{F(\eta_{\ast}(\phi_{\ast}))\cos\phi_{\ast}}
\exp(-H_{\phi,\phi_{\ast}})\ud{\phi_{\ast}}}=\abs{\exp(-H_{\phi,\phi_{\ast}(0)})-1}\leq C.
\end{eqnarray}
\ \\
Step 5: Estimate of $JJJ_5$.\\
We decompose $JJJ_5$ as
\begin{eqnarray}
JJJ_5&=&\int_{\phi_{\ast}(0)}^{\phi}\frac{\p S(\eta_{\ast}(\phi_{\ast}),\phi_{\ast})}{\p\eta}\frac{1}{F(\eta_{\ast}(\phi_{\ast}))\cos\phi_{\ast}}
\exp(-H_{\phi,\phi_{\ast}})\ud{\phi_{\ast}}\\
&&+\int_{\phi_{\ast}(0)}^{\phi}\frac{\p \bar\v(\eta_{\ast}(\phi_{\ast}))}{\p\eta}\frac{1}{F(\eta_{\ast}(\phi_{\ast}))\cos\phi_{\ast}}
\exp(-H_{\phi,\phi_{\ast}})\ud{\phi_{\ast}}\no\\
&=&JJJ_{5,1}+JJJ_{5,2}.\no
\end{eqnarray}
We may direct estimate
\begin{eqnarray}
\abs{JJJ_{5,1}}&\leq&C\abs{\int_{\phi_{\ast}(0)}^{\phi}\frac{1}{F(\eta_{\ast}(\phi_{\ast}))\cos\phi_{\ast}}
\exp(-H_{\phi,\phi_{\ast}})\ud{\phi_{\ast}}}=C\abs{\exp(-H_{\phi,\phi_{\ast}(0)})-1}\leq C.
\end{eqnarray}
Note that we cannot estimate $JJJ_{5,2}$ as above since $\dfrac{\p \bar\v(\eta_{\ast}(\phi_{\ast}))}{\p\eta}$ involves derivative of $\bar\v$ in the normal direction, which might contain singularity when approaching the boundary. Fortunately, this term lies in the integral along the characteristics, so we may substitute the principle variable $\phi_{\ast}\rt\eta_{\ast}$ back into formulation I as $(\eta_{\ast},\phi_{\ast}(\eta_{\ast})))$. This implies the substitution for derivative
\begin{eqnarray}
\frac{\p}{\p\eta}\rt\frac{\p}{\p\eta}+\frac{\p}{\p\eta_{\ast}}\frac{\p\eta_{\ast}}{\p\eta}=\frac{\p}{\p\eta}+\frac{\p}{\p\eta_{\ast}}\frac{F(\eta)}{F(\eta_{\ast})}.
\end{eqnarray}
Hence, for $\eta^+$ denoting the intersection of characteristics with $\sin\phi=0$, we know
\begin{eqnarray}
JJJ_{5,2}&=&\int_{0}^{\eta^+}\bigg(\frac{\p\bar\v(\eta_{\ast})}{\p\eta}+\frac{\p\bar\v(\eta_{\ast})}{\p\eta_{\ast}}
\frac{F(\eta)}{F(\eta_{\ast})}\bigg)\frac{1}{F(\eta_{\ast})\cos(\phi_{\ast}(\eta_{\ast}))}
\exp(-H_{\phi,\phi_{\ast}})\frac{\p\phi_{\ast}}{\p\eta_{\ast}}\ud{\eta_{\ast}}\\
&&+\int_{\eta}^{\eta^+}\bigg(\frac{\p\bar\v(\eta_{\ast})}{\p\eta}+\frac{\p\bar\v(\eta_{\ast})}{\p\eta_{\ast}}
\frac{F(\eta)}{F(\eta_{\ast})}\bigg)\frac{1}{F(\eta_{\ast})\cos(\phi_{\ast}(\eta_{\ast}))}
\exp(-H_{\phi,\phi_{\ast}})\frac{\p\phi_{\ast}}{\p\eta_{\ast}}\ud{\eta_{\ast}}\no\\
&=&\int_{0}^{\eta^+}\frac{\p\bar\v(\eta_{\ast})}{\p\eta_{\ast}}
\frac{F(\eta)}{F(\eta_{\ast})}\frac{1}{F(\eta_{\ast})\cos(\phi_{\ast}(\eta_{\ast}))}
\exp(-H_{\phi,\phi_{\ast}})\frac{\p\phi_{\ast}}{\p\eta_{\ast}}\ud{\eta_{\ast}}\no\\
&&+\int_{\eta}^{\eta^+}\frac{\p\bar\v(\eta_{\ast})}{\p\eta_{\ast}}
\frac{F(\eta)}{F(\eta_{\ast})}\frac{1}{F(\eta_{\ast})\cos(\phi_{\ast}(\eta_{\ast}))}
\exp(-H_{\phi,\phi_{\ast}})\frac{\p\phi_{\ast}}{\p\eta_{\ast}}\ud{\eta_{\ast}}\no\\
&=&\int_{0}^{\eta^+}\frac{\p\bar\v(\eta_{\ast})}{\p\eta_{\ast}}
\frac{F(\eta)}{F(\eta_{\ast})}\frac{1}{\sin(\phi_{\ast}(\eta_{\ast}))}
\exp(-H_{\phi,\phi_{\ast}})\ud{\eta_{\ast}},\no\\
&&\int_{\eta}^{\eta^+}\frac{\p\bar\v(\eta_{\ast})}{\p\eta_{\ast}}
\frac{F(\eta)}{F(\eta_{\ast})}\frac{1}{\sin(\phi_{\ast}(\eta_{\ast}))}
\exp(-H_{\phi,\phi_{\ast}})\ud{\eta_{\ast}},\no
\end{eqnarray}
where
\begin{eqnarray}
\frac{\p\phi_{\ast}}{\p\eta_{\ast}}=\frac{F(\eta_{\ast})\cos(\phi_{\ast}(\eta_{\ast}))}{\sin(\phi_{\ast}(\eta_{\ast}))}.
\end{eqnarray}
Since
\begin{eqnarray}
\abs{\frac{\p\bar\v}{\p\eta}}\leq C(1+\abs{\ln(\e)}+\abs{\ln(\eta)}),
\end{eqnarray}
and $\sin\phi_{\ast}\sim \sqrt{\e(\eta_{\ast}-\eta^+)}$, we know above integral is finite, i.e.
\begin{eqnarray}
\abs{JJJ_{5,2}}\leq C(1+\abs{\ln(\e)}).
\end{eqnarray}
Therefore, we know
\begin{eqnarray}
\abs{JJJ_5}\leq C(1+\abs{\ln(\e)}).
\end{eqnarray}
\ \\
Step 6: Synthesis.\\
In summary, we have shown
\begin{eqnarray}
\abs{JJJ}\leq C(1+\abs{\ln(\e)})+\abs{\frac{C}{\sin\phi}}.
\end{eqnarray}
which implies
\begin{eqnarray}
\abs{\sin\phi\frac{\p\v}{\p\eta}(\eta,\phi)}\leq C(1+\abs{\ln(\e)}).
\end{eqnarray}
\end{proof}

\begin{remark}
Estimates in Lemma \ref{pt lemma 1}, Lemma \ref{pt lemma 2} and Lemma \ref{pt lemma 6} can provide pointwise bounds of derivatives. However, they are not uniform estimates due to presence of $\d_0$ and $\ln(\eta)$. We need weighted $L^{\infty}$ estimates of derivatives to close the proof.
\end{remark}

\subsection{Mild Formulation of Normal Derivative}

Consider the $\e$-transport problem for $\a=\zeta\dfrac{\p\v}{\p\eta}$ as
\begin{eqnarray}\label{Milne infinite problem LI}
\left\{
\begin{array}{rcl}\displaystyle
\sin\phi\frac{\p\a}{\p\eta}+F(\eta)\cos\phi\frac{\p
\a}{\p\phi}+\a&=&\tilde\a+S_{\a},\\
\a(0,\phi)&=&p_{\a}(\phi)\ \ \text{for}\ \ \sin\phi>0,\\
\a(L,\phi)&=&\a(L,R\phi),
\end{array}
\right.
\end{eqnarray}
where $p_{\a}$ and $S_{\a}$ will be specified later with
\begin{eqnarray}
\tilde\a(\eta,\phi)=\frac{1}{2\pi}\int_{-\pi}^{\pi}\frac{\zeta(\eta,\phi)}{\zeta(\eta,\phi_{\ast})}\a(\eta,\phi_{\ast})\ud{\phi_{\ast}}.
\end{eqnarray}

\begin{lemma}\label{pt lemma 3}
We have
\begin{eqnarray}
\lnnm{\a}&\leq&C\bigg(\lnm{p_{\a}}+\lnnm{S_{\a}}\bigg)\\
&&+C\abs{\ln(\e)}^8\bigg(\lnnm{\v}+\lnm{\frac{\p p}{\p\phi}}+\lnnm{S}+\lnnm{\frac{\p S}{\p\phi}}+\lnnm{\frac{\p S}{\p\eta}}\bigg).\no
\end{eqnarray}
\end{lemma}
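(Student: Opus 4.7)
The plan is to establish the estimate by writing a mild formulation of (\ref{Milne infinite problem LI}) along the characteristics, in the same spirit as the proofs of Lemmas \ref{pt lemma 1}, \ref{pt lemma 2}, and \ref{pt lemma 6}, and then closing the bootstrap against the non-local averaging term $\tilde\a$. The key structural advantage is Lemma \ref{rt lemma 2}: since $\zeta$ is invariant along the characteristic flow, the transport operator $\sin\phi\,\p_\eta+F\cos\phi\,\p_\phi$ acting on $\a=\zeta\p_\eta\v$ produces exactly the same left-hand side as for $\v$ itself, and the weight ratio $\zeta(\eta,\phi)/\zeta(\eta,\phi_\ast)$ inside $\tilde\a$ is precisely the factor needed to tame the grazing singularity of $\p_\eta\v$ when integrated in $\phi_\ast$.

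I would partition $[0,L]\times[-\pi,\pi)$ into the three regions used previously: (i) $\sin\phi>\d_0$, where characteristics originate from $\eta=0$; (ii) $\sin\phi<0$ with $\abs{E(\eta,\phi)}\leq\ue^{-V(L)}$, where characteristics reflect off $\eta=L$; and (iii) $\sin\phi\leq 0$ with $\abs{E(\eta,\phi)}\geq\ue^{-V(L)}$, the grazing region. In Regions (i) and (ii) I would use Formulation~I to obtain the Duhamel representation of $\a$, while in Region (iii) I would switch to Formulation~II with $\phi$ as the principal variable. The contributions of $p_\a$ and $S_\a$ integrate directly to the first bracket on the right; the heart of the proof lies in controlling the contribution of $\tilde\a$.

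To avoid a naive loss that would prevent closure, I would apply a double Duhamel iteration: substitute the mild formulation for $\a(\eta,\phi_\ast)$ back into the integral defining $\tilde\a$, exchange the order of integration, and split the resulting inner $\phi_\ast$-integral into a near-grazing piece of measure $O(\d_0)$ and a transverse piece on which $\abs{\sin\phi_\ast}\gtrsim \d_0$. On the transverse piece the iterated kernel furnishes the needed contraction, leaving a fraction of $\lnnm{\a}$ that can be absorbed to the left-hand side. On the near-grazing piece I would use the pointwise bounds from Lemmas \ref{pt lemma 1}, \ref{pt lemma 2}, and \ref{pt lemma 6}, which tolerate $\d_0^{-3}$ and $\abs{\ln\e}$ losses compensated by the $O(\d_0)$ measure gain. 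Choosing $\d_0$ as a suitable negative power of $\abs{\ln\e}$ optimally balances the losses and produces the final $\abs{\ln\e}^8$ factor.

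The principal obstacle is that the forcing for $\a$ implicitly contains the normal derivative $\p_\eta\bar\v$, which is only controlled by $C(1+\abs{\ln\e}+\abs{\ln\eta})$ as in Lemma \ref{pt lemma 6}; carrying this logarithmic singularity through the substitution $\p_\phi\leftrightarrow\p_\eta$ along characteristics in the grazing region, analogous to Step~5 of the proof of Lemma \ref{pt lemma 6}, is what inflates the final logarithmic loss from a single power to the eighth. Once these inhomogeneities are absorbed and the non-local $\tilde\a$ contribution is moved to the left-hand side, the stated estimate follows.
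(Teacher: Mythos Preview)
Your plan has the right scaffolding (three characteristic regions, mild formulation, splitting the $\phi_\ast$-integral) but the mechanism you describe for closing the estimate is essentially inverted relative to what actually works, and one step would fail outright.

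The main gap is your treatment of the two pieces of the $\phi_\ast$-integral. You propose to extract smallness (a fraction of $\lnnm{\a}$ to absorb) from the \emph{transverse} piece via a double Duhamel iteration, and to control the \emph{near-grazing} piece by the pointwise bounds of Lemmas \ref{pt lemma 1}--\ref{pt lemma 6}. Both halves are problematic. First, Lemmas \ref{pt lemma 1} and \ref{pt lemma 2} require $\sin\phi_\ast\gtrsim\d_0$, precisely the opposite of near-grazing, so they give no information there; and even if they did, a $\d_0^{-3}$ loss times an $O(\d_0)$ measure is $\d_0^{-2}$, not a gain. Second, there is no contraction in the iterated kernel on the transverse set: the $\phi_\ast$-integral there has $O(1)$ measure and the weight ratio $\zeta(\eta',\phi')/\zeta(\eta',\phi_\ast)$ is not small. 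The paper's argument runs the other way. On the transverse set $\abs{\sin\phi_\ast}\geq\d$ (a second cutoff, fixed independent of $\e$), one unravels $\a(\eta',\phi_\ast)/\zeta(\eta',\phi_\ast)=\p_\eta\v(\eta',\phi_\ast)$ and uses the equation itself to write $\p_\eta\v=-\sin^{-1}\phi_\ast\big(F\cos\phi_\ast\,\p_{\phi_\ast}\v+\v-\bar\v-S\big)$; after an integration by parts in $\phi_\ast$ this produces a bound in terms of \emph{data only}, namely $C(\d)(\lnnm{\v}+\lnnm{S})$, with no $\lnnm{\a}$ at all. The near-grazing set $\abs{\sin\phi_\ast}\leq 2\d$ is where the absorbable $\lnnm{\a}$-terms live, and obtaining the small coefficients there ($C\d$ or $C(1+\abs{\ln\e})\sqrt{\d_0}$) requires a further case split comparing $\sqrt{\e\eta'}$ with $\sin\phi'$ and $\sin^2\phi$ with $\e(\eta-\eta')$, exploiting the explicit structure of $\zeta$ in (\ref{pt 01})--(\ref{pt 03}); this geometric analysis is the heart of the proof and is absent from your sketch.

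Two further points. The paper uses \emph{two} cutoff scales: a fixed $\d$ for $\phi_\ast$ and a smaller $\d_0=\d\abs{\ln\e}^{-2}$ for the outer variable $\phi$. Lemma \ref{pt lemma 1} is invoked only when $\sin\phi\geq\d_0$ (the outer variable), yielding directly $\abs{\a}\leq C\d_0^{-4}(\text{data})$; it is this $\d_0^{-4}=\abs{\ln\e}^{8}\d^{-4}$ that produces the final logarithmic exponent, not any interaction with $\p_\eta\bar\v$ or the mechanism of Lemma \ref{pt lemma 6} that you describe in your last paragraph. Finally, in Region III one has $\abs{\sin\phi}\leq\e^{1/4}\leq\d_0$ automatically, so no analogue of the $\sin\phi\geq\d_0$ case is needed there; your plan to use Formulation II throughout Region III would also need to be reconciled with the paper's decomposition of the $\eta'$-integral into $[0,\eta]$ and $[\eta,\eta^+]$ pieces handled as in Regions I and II respectively.
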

The rest of this subsection will be devoted to the proof of this lemma. We first introduce some notation.
Define the energy as before
\begin{eqnarray}
E(\eta,\phi)=\ue^{-V(\eta)}\cos\phi .
\end{eqnarray}
Along the characteristics, where this energy is conserved and $\zeta$ is a constant, the equation can be simplified as follows:
\begin{eqnarray}
\sin\phi\frac{\ud{\a}}{\ud{\eta}}+\a=\tilde\a+S_{\a}.
\end{eqnarray}
An implicit function
$\eta^+(\eta,\phi)$ can be determined through
\begin{eqnarray}
\abs{E(\eta,\phi)}=\ue^{-V(\eta^+)}.
\end{eqnarray}
which means $(\eta^+,\phi_0)$ with $\sin\phi_0=0$ is on the same characteristics as $(\eta,\phi)$.
Define the quantities for $0\leq\eta'\leq\eta^+$ as follows:
\begin{eqnarray}
\phi'(\phi,\eta,\eta')&=&\cos^{-1}(\ue^{V(\eta')-V(\eta)}\cos\phi ),\\
R\phi'(\phi,\eta,\eta')&=&-\cos^{-1}(\ue^{V(\eta')-V(\eta)}\cos\phi )=-\phi'(\phi,\eta,\eta'),
\end{eqnarray}
where the inverse trigonometric function can be defined
single-valued in the domain $[0,\pi)$ and the quantities are always well-defined due to the monotonicity of $V$. Note that $\sin\phi'\geq0$, even if $\sin\phi<0$.
Finally we put
\begin{eqnarray}
G_{\eta,\eta'}(\phi)&=&\int_{\eta'}^{\eta}\frac{1}{\sin(\phi'(\phi,\eta,\xi))}\ud{\xi}.
\end{eqnarray}
Similar to $\e$-Milne problem, we can define the solution along the characteristics as follows:
\begin{eqnarray}
\a(\eta,\phi)=\k[p_{\a}]+\t[\tilde\a+S_{\a}],
\end{eqnarray}
where\\
\ \\
Region I:\\
For $\sin\phi>0$,
\begin{eqnarray}
\k[p_{\a}]&=&p_{\a}(\phi'(0))\exp(-G_{\eta,0})\\
\t[\tilde\a+S_{\a}]&=&\int_0^{\eta}\frac{(\tilde\a+S_{\a})(\eta',\phi'(\eta'))}{\sin(\phi'(\eta'))}\exp(-G_{\eta,\eta'})\ud{\eta'}.
\end{eqnarray}
\ \\
Region II:\\
For $\sin\phi<0$ and $\abs{E(\eta,\phi)}\leq \ue^{-V(L)}$,
\begin{eqnarray}
\k[p_{\a}]&=&p_{\a}(\phi'(0))\exp(-G_{L,0}-G_{L,\eta})\\
\t[\tilde\a+S_{\a}]&=&\int_0^{L}\frac{(\tilde\a+S)(\eta',\phi'(\eta'))}{\sin(\phi'(\eta'))}
\exp(-G_{L,\eta'}-G_{L,\eta})\ud{\eta'}\\
&&+\int_{\eta}^{L}\frac{(\tilde\a+S)(\eta',R\phi'(\eta'))}{\sin(\phi'(\eta'))}\exp(-G_{\eta',\eta})\ud{\eta'}\nonumber.
\end{eqnarray}
\ \\
Region III:\\
For $\sin\phi<0$ and $\abs{E(\eta,\phi)}\geq \ue^{-V(L)}$,
\begin{eqnarray}
\k[p_{\a}]&=&p_{\a}(\phi'(\phi,\eta,0))\exp(-G_{\eta^+,0}-G_{\eta^+,\eta})\\
\t[\tilde\a+S_{\a}]&=&\int_0^{\eta^+}\frac{(\tilde\a+S_{\a})(\eta',\phi'(\eta'))}{\sin(\phi'(\eta'))}
\exp(-G_{\eta^+,\eta'}-G_{\eta^+,\eta})\ud{\eta'}\\
&&+
\int_{\eta}^{\eta^+}\frac{(\tilde\a+S_{\a})(\eta',R\phi'(\eta'))}{\sin(\phi'(\eta'))}\exp(-G_{\eta',\eta})\ud{\eta'}\nonumber.
\end{eqnarray}
Then we need to estimate $\k[p_{\a}]$ and $\t[\tilde\a+S_{\a}]$ in each case. We assume $0<\d<<1$ and $0<\d_0<<1$ are small quantities which will be determined later.

\subsubsection{Region I: $\sin\phi>0$}

Based on \cite[Lemma 4.7, Lemma 4.8]{AA003},
we can directly obtain
\begin{eqnarray}
\abs{\k[p_{\a}]}&\leq&\lnm{p_{\a}},\\
\abs{\t[S_{\a}]}&\leq&\lnnm{S_{\a}}.
\end{eqnarray}
Hence, we only need to estimate $I=\t[\tilde\a]$. We divide it into several steps:\\
\ \\
Step 0: Preliminaries.\\
We have
\begin{eqnarray}
E(\eta',\phi')=\frac{\rk-\e\eta'}{\rk}\cos\phi'.
\end{eqnarray}
We can directly obtain
\begin{eqnarray}\label{pt 01}
\zeta(\eta',\phi')&=&\frac{1}{\rk}\sqrt{\rk^2-\bigg((\rk-\e\eta')\cos\phi'\bigg)^2}
=\frac{1}{\rk}\sqrt{\rk^2-(\rk-\e\eta')^2+(\rk-\e\eta')^2\sin^2\phi'},\\
&\leq& \sqrt{\rk^2-(\rk-\e\eta')^2}+\sqrt{(\rk-\e\eta')^2\sin^2\phi'}\leq C\bigg(\sqrt{\e\eta'}+\sin\phi'\bigg),\no
\end{eqnarray}
and
\begin{eqnarray}\label{pt 02}
\zeta(\eta',\phi')\geq\frac{1}{\rk}\sqrt{\rk^2-(\rk-\e\eta')^2}\geq C\sqrt{\e\eta'}.
\end{eqnarray}
Also, we know for $0\leq\eta'\leq\eta$,
\begin{eqnarray}
\sin\phi'&=&\sqrt{1-\cos^2\phi'}=\sqrt{1-\bigg(\frac{\rk-\e\eta}{\rk-\e\eta'}\bigg)^2\cos^2\phi}\\
&=&\frac{\sqrt{(\rk-\e\eta')^2\sin^2\phi+(2\rk-\e\eta-\e\eta')(\e\eta-\e\eta')\cos^2\phi}}{\rk-\e\eta'}.
\end{eqnarray}
Since
\begin{eqnarray}
0\leq(2\rk-\e\eta-\e\eta')(\e\eta-\e\eta')\cos^2\phi\leq 2\rk\e(\eta-\eta'),
\end{eqnarray}
we have
\begin{eqnarray}
\sin\phi\leq\sin\phi'
\leq2\sqrt{\sin^2\phi+\e(\eta-\eta')},
\end{eqnarray}
which means
\begin{eqnarray}
\frac{1}{2\sqrt{\sin^2\phi+\e(\eta-\eta')}}\leq\frac{1}{\sin\phi'}
\leq\frac{1}{\sin\phi}.
\end{eqnarray}
Therefore,
\begin{eqnarray}\label{pt 03}
-\int_{\eta'}^{\eta}\frac{1}{\sin\phi'(y)}\ud{y}&\leq& -\int_{\eta'}^{\eta}\frac{1}{2\sqrt{\sin^2\phi+\e(\eta-y)}}\ud{y}\\
&=&\frac{1}{\e}\bigg(\sin\phi-\sqrt{\sin^2\phi+\e(\eta-\eta')}\bigg)\no\\
&=&-\frac{\eta-\eta'}{\sin\phi+\sqrt{\sin^2\phi+\e(\eta-\eta')}}\no\\
&\leq&-\frac{\eta-\eta'}{2\sqrt{\sin^2\phi+\e(\eta-\eta')}}.\no
\end{eqnarray}
Define a cut-off function $\chi\in C^{\infty}[-\pi,\pi]$ satisfying
\begin{eqnarray}
\chi(\phi)=\left\{
\begin{array}{ll}
1&\text{for}\ \ \abs{\sin\phi}\leq\d,\\
0&\text{for}\ \ \abs{\sin\phi}\geq2\d,
\end{array}
\right.
\end{eqnarray}
In the following, we will divide the estimate of $I$ into several cases based on the value of $\sin\phi$, $\sin\phi'$, $\e\eta'$ and $\e(\eta-\eta')$. Let $\id$ denote the indicator function. We write
\begin{eqnarray}
I&=&\int_0^{\eta}\id_{\{\sin\phi\geq\d_0\}}+\int_0^{\eta}\id_{\{0\leq\sin\phi\leq\d_0\}}\id_{\{\chi(\phi_{\ast})<1\}}
+\int_0^{\eta}\id_{\{0\leq\sin\phi\leq\d_0\}}\id_{\{\chi(\phi_{\ast})=1\}}\id_{\{ \sqrt{\e\eta'}\geq\sin\phi'\}}\\
&&+\int_0^{\eta}\id_{\{0\leq\sin\phi\leq\d_0\}}\id_{\{\chi(\phi_{\ast})=1\}}\id_{\{\sqrt{\e\eta'}\leq\sin\phi'\}}\id_{\{\sin^2\phi\leq\e(\eta-\eta')\}}\no\\
&&+\int_0^{\eta}\id_{\{0\leq\sin\phi\leq\d_0\}}\id_{\{\chi(\phi_{\ast})=1\}}\id_{\{\sqrt{\e\eta'}\leq\sin\phi'\}}\id_{\{\sin^2\phi\geq\e(\eta-\eta')\}}\no\\
&=&I_1+I_2+I_3+I_4+I_5.\no
\end{eqnarray}
\ \\
Step 1: Estimate of $I_1$ for $\sin\phi\geq\d_0$.\\
Based on Lemma \ref{pt lemma 1}, we know
\begin{eqnarray}
\abs{\sin\phi\frac{\p\v}{\p\eta}}\leq C\bigg(1+\frac{1}{\d_0^3}\bigg)\bigg(\lnnm{\v}+\lnm{\frac{\p h}{\p\phi}}+\lnnm{\frac{\p S}{\p\phi}}+\lnnm{\frac{\p S}{\p\eta}}\bigg).
\end{eqnarray}
Hence, we have
\begin{eqnarray}
\abs{I_1}\leq C\abs{\frac{\p\v}{\p\eta}}\leq \frac{C}{\d_0^4}\bigg(\lnnm{\v}+\lnm{\frac{\p h}{\p\phi}}+\lnnm{\frac{\p S}{\p\phi}}+\lnnm{\frac{\p S}{\p\eta}}\bigg).
\end{eqnarray}
\ \\
Step 2: Estimate of $I_2$ for $0\leq\sin\phi\leq\d_0$ and $\chi(\phi_{\ast})<1$.\\
We have
\begin{eqnarray}
I_2&=&\frac{1}{2\pi}\int_0^{\eta}\bigg(\int_{-\pi}^{\pi}\frac{\zeta(\eta',\phi')}{\zeta(\eta',\phi_{\ast})}(1-\chi(\phi_{\ast}))
\a(\eta',\phi_{\ast})\ud{\phi_{\ast}}\bigg)
\frac{1}{\sin\phi'}\exp(-G_{\eta,\eta'})\ud{\eta'}\\
&=&\frac{1}{2\pi}\int_0^{\eta}\bigg(\int_{-\pi}^{\pi}\zeta(\eta',\phi')(1-\chi(\phi_{\ast}))
\frac{\v(\eta',\phi_{\ast})}{\p\eta'}\ud{\phi_{\ast}}\bigg)\frac{1}{\sin\phi'}\exp(-G_{\eta,\eta'})\ud{\eta'}.\no
\end{eqnarray}
Based on the $\e$-Milne problem of $\v$ as
\begin{eqnarray}
\sin\phi_{\ast}\frac{\p\v(\eta',\phi_{\ast})}{\p\eta'}+F(\eta')\cos\phi_{\ast}\frac{\p\v(\eta',\phi_{\ast})}{\p\phi_{\ast}}+\v(\eta',\phi_{\ast})-\bar\v(\eta')=S(\eta',\phi_{\ast}),
\end{eqnarray}
we have
\begin{eqnarray}
\frac{\p\v(\eta',\phi_{\ast})}{\p\eta'}=-\frac{1}{\sin\phi_{\ast}}
\bigg(F(\eta')\cos\phi_{\ast}\frac{\p\v(\eta',\phi_{\ast})}{\p\phi_{\ast}}+\v(\eta',\phi_{\ast})-\bar\v(\eta')-S(\eta',\phi_{\ast})\bigg)
\end{eqnarray}
Hence, we have
\begin{eqnarray}
\tilde\a&=&\int_{-\pi}^{\pi}\zeta(\eta',\phi')(1-\chi(\phi_{\ast}))
\frac{\p\v(\eta',\phi_{\ast})}{\p\eta'}\ud{\phi_{\ast}}\\
&=&-\int_{-\pi}^{\pi}\zeta(\eta',\phi')(1-\chi(\phi_{\ast}))
\frac{1}{\sin\phi_{\ast}}
\bigg(\v(\eta',\phi_{\ast})-\bar\v(\eta')-S(\eta',\phi_{\ast})\bigg)\ud{\phi_{\ast}}\no\\
&&-\int_{-\pi}^{\pi}\zeta(\eta',\phi')(1-\chi(\phi_{\ast}))
\frac{1}{\sin\phi_{\ast}}
F(\eta')\cos\phi_{\ast}\frac{\p\v(\eta',\phi_{\ast})}{\p\phi_{\ast}}\ud{\phi_{\ast}}\no\\
&=&\tilde\a_1+\tilde\a_2.\no
\end{eqnarray}
We may directly obtain
\begin{eqnarray}
\abs{\tilde\a_1}&\leq&\int_{-\pi}^{\pi}\zeta(\eta',\phi')(1-\chi(\phi_{\ast}))
\frac{1}{\sin\phi_{\ast}}
\bigg(\v(\eta',\phi_{\ast})-\bar\v(\eta')-S(\eta',\phi_{\ast})\bigg)\ud{\phi_{\ast}}\\
&\leq&\frac{\rk}{\d}\abs{\int_{-\pi}^{\pi}
\bigg(\v(\eta',\phi_{\ast})-\bar\v(\eta')-S(\eta',\phi_{\ast})\bigg)\ud{\phi_{\ast}}}\no\\
&\leq&C(\d)\bigg(\lnnm{\v}+\lnnm{S}\bigg).\no
\end{eqnarray}
On the other hand, an integration by parts yields
\begin{eqnarray}
\tilde\a_2&=&\int_{-\pi}^{\pi}\frac{\p}{\p\phi_{\ast}}\bigg(\zeta(\eta',\phi')(1-\chi(\phi_{\ast}))
\frac{1}{\sin\phi_{\ast}}
F(\eta')\cos\phi_{\ast}\bigg)\v(\eta',\phi_{\ast})\ud{\phi_{\ast}},
\end{eqnarray}
which further implies
\begin{eqnarray}
\abs{\tilde\a_2}&\leq&\frac{C\e}{\d^2}\lnnm{\v}\leq C(\d)\lnnm{\v}.
\end{eqnarray}
Since we can use substitution to show
\begin{eqnarray}
\int_0^{\eta}\frac{1}{\sin\phi'}\exp(-G_{\eta,\eta'})\ud{\eta'}\leq 1,
\end{eqnarray}
we have
\begin{eqnarray}
\abs{I_2}&\leq&C(\d)\bigg(\lnnm{\v}+\lnnm{S}\bigg)\int_0^{\eta}
\frac{1}{\sin\phi'}\exp(-G_{\eta,\eta'})\ud{\eta'}\\
&\leq&C(\d)\bigg(\lnnm{\v}+\lnnm{S}\bigg).\no
\end{eqnarray}
\ \\
Step 3: Estimate of $I_3$ for $0\leq\sin\phi\leq\d_0$, $\chi(\phi_{\ast})=1$ and $\sqrt{\e\eta'}\geq\sin\phi'$.\\
Based on (\ref{pt 01}), this implies
\begin{eqnarray}
\zeta(\eta',\phi')\leq C\sqrt{\e\eta'}.\no
\end{eqnarray}
Then combining this with (\ref{pt 02}), we can directly obtain
\begin{eqnarray}
\int_{-\pi}^{\pi}\frac{\zeta(\eta',\phi')}{\zeta(\eta',\phi_{\ast})}\chi(\phi_{\ast})
\a(\eta',\phi_{\ast})\ud{\phi_{\ast}}&\leq&C\int_{-\d}^{\d}
\a(\eta',\phi_{\ast})\ud{\phi_{\ast}}\leq C\d\lnnm{\a}.
\end{eqnarray}
Hence, we have
\begin{eqnarray}
\abs{I_3}&\leq&C\d\lnnm{\a}\int_0^{\eta}\frac{1}{\sin\phi'}\exp(-G_{\eta,\eta'})\ud{\eta'}\leq C\d\lnnm{\a}.
\end{eqnarray}
\ \\
Step 4: Estimate of $I_4$ for $0\leq\sin\phi\leq\d_0$, $\chi(\phi_{\ast})=1$, $\sqrt{\e\eta'}\leq\sin\phi'$ and $\sin^2\phi\leq\e(\eta-\eta')$.\\
Based on (\ref{pt 01}), this implies
\begin{eqnarray}
\zeta(\eta',\phi')\leq C\sin\phi'.
\end{eqnarray}
Based on (\ref{pt 03}), we have
\begin{eqnarray}
-G_{\eta,\eta'}=-\int_{\eta'}^{\eta}\frac{1}{\sin\phi'(y)}\ud{y}&\leq&-\frac{\eta-\eta'}{2\sqrt{\e(\eta-\eta')}}\leq-C\sqrt{\frac{\eta-\eta'}{\e}}.
\end{eqnarray}
Hence, we know
\begin{eqnarray}
\abs{I_4}&\leq&C\int_0^{\eta}\bigg(\int_{-\pi}^{\pi}\frac{\zeta(\eta',\phi')}{\zeta(\eta',\phi_{\ast})}\chi(\phi_{\ast})
\a(\eta',\phi_{\ast})\ud{\phi_{\ast}}\bigg)
\frac{1}{\sin\phi'}\exp(-G_{\eta,\eta'})\ud{\eta'}\\
&\leq&C\int_0^{\eta}\bigg(\int_{-\d}^{\d}\frac{1}{\zeta(\eta',\phi_{\ast})}
\a(\eta',\phi_{\ast})\ud{\phi_{\ast}}\bigg)
\frac{\zeta(\eta',\phi')}{\sin\phi'}\exp(-G_{\eta,\eta'})\ud{\eta'}\no\\
&\leq&C\d\lnnm{\a}\int_0^{\eta}\frac{1}{\sqrt{\e\eta'}}\exp(-G_{\eta,\eta'})\ud{\eta'}\no\\
&\leq&C\d\lnnm{\a}\int_0^{\eta}\frac{1}{\sqrt{\e\eta'}}\exp\bigg(-C\sqrt{\frac{\eta-\eta'}{\e}}\bigg)\ud{\eta'}\no
\end{eqnarray}
Define $z=\dfrac{\eta'}{\e}$, which implies $\ud{\eta'}=\e\ud{z}$. Substituting this into above integral, we have
\begin{eqnarray}
\abs{I_4}&\leq&C\d\lnnm{\a}\int_0^{\eta/\e}\frac{1}{\sqrt{z}}\exp\bigg(-C\sqrt{\frac{\eta}{\e}-z}\bigg)\ud{z}\\
&=&C\d\lnnm{\a}\Bigg(\int_0^{1}\frac{1}{\sqrt{z}}\exp\bigg(-C\sqrt{\frac{\eta}{\e}-z}\bigg)\ud{z}
+\int_1^{\eta/\e}\frac{1}{\sqrt{z}}\exp\bigg(-C\sqrt{\frac{\eta}{\e}-z}\bigg)\ud{z}\Bigg).\no
\end{eqnarray}
We can estimate these two terms separately.
\begin{eqnarray}
\int_0^{1}\frac{1}{\sqrt{z}}\exp\bigg(-C\sqrt{\frac{\eta}{\e}-z}\bigg)\ud{z}&\leq&\int_0^{1}\frac{1}{\sqrt{z}}\ud{z}=2.
\end{eqnarray}
\begin{eqnarray}
\int_1^{\eta/\e}\frac{1}{\sqrt{z}}\exp\bigg(-C\sqrt{\frac{\eta}{\e}-z}\bigg)\ud{z}&\leq&\int_1^{\eta/\e}\exp\bigg(-C\sqrt{\frac{\eta}{\e}-z}\bigg)\ud{z}
\overset{t^2=\frac{\eta}{\e}-z}{\leq}2\int_0^{\infty}t\ue^{-Ct}\ud{t}<\infty.
\end{eqnarray}
Hence, we know
\begin{eqnarray}
\abs{I_4}&\leq&C\d\lnnm{\a}.
\end{eqnarray}
\ \\
Step 5: Estimate of $I_5$ for $0\leq\sin\phi\leq\d_0$, $\chi(\phi_{\ast})=1$, $\sqrt{\e\eta'}\leq\sin\phi'$ and $\sin^2\phi\geq\e(\eta-\eta')$.\\
Based on (\ref{pt 01}), this implies
\begin{eqnarray}
\zeta(\eta',\phi')\leq C\sin\phi'.\no
\end{eqnarray}
Based on (\ref{pt 03}), we have
\begin{eqnarray}
-G_{\eta,\eta'}=-\int_{\eta'}^{\eta}\frac{1}{\sin\phi'(y)}\ud{y}&\leq&-\frac{C(\eta-\eta')}{\sin\phi}.
\end{eqnarray}
Hence, we have
\begin{eqnarray}
\abs{I_5}&\leq& C\lnnm{\a}\int_0^{\eta}\bigg(\int_{-\d}^{\d}\frac{1}{\zeta(\eta',\phi_{\ast})}
\ud{\phi_{\ast}}\bigg)
\exp\left(-\frac{C(\eta-\eta')}{\sin\phi}\right)\ud{\eta'}
\end{eqnarray}
Here, we use a different way to estimate the inner integral. We use substitution to find
\begin{eqnarray}
\int_{-\d}^{\d}\frac{1}{\zeta(\eta',\phi_{\ast})}
\ud{\phi_{\ast}}
&=&\int_{-\d}^{\d}\frac{1}{\bigg(\rk^2-(\rk-\e\eta')^2\cos\phi_{\ast}^2\bigg)^{1/2}}
\ud{\phi_{\ast}}\\
&\overset{\sin\phi_{\ast}\ small}{\leq}&C\int_{-\d}^{\d}\frac{\cos\phi_{\ast}}{\bigg(\rk^2-(\rk-\e\eta')^2\cos\phi_{\ast}^2\bigg)^{1/2}}
\ud{\phi_{\ast}}\no\\
&=&C\int_{-\d}^{\d}\frac{\cos\phi_{\ast}}{\bigg(\rk^2-(\rk-\e\eta')^2+(\rk-\e\eta')^2\sin\phi_{\ast}^2\bigg)^{1/2}}
\ud{\phi_{\ast}}\no\\
&\overset{y=\sin\phi_{\ast}}{=}&C\int_{-\d}^{\d}\frac{1}{\bigg(\rk^2-(\rk-\e\eta')^2+(\rk-\e\eta')^2y^2\bigg)^{1/2}}
\ud{y}.\no
\end{eqnarray}
Define
\begin{eqnarray}
p&=&\sqrt{\rk^2-(\rk-\e\eta')^2}=\sqrt{2\rk\e\eta'-\e^2\eta'^2}\leq C\sqrt{\e\eta'},\\
q&=&\rk-\e\eta'\geq C,\\
r&=&\frac{p}{q}\leq C\sqrt{\e\eta'}.
\end{eqnarray}
Then we have
\begin{eqnarray}
\int_{-\d}^{\d}\frac{1}{\zeta(\eta',\phi_{\ast})}\ud{\phi_{\ast}}&\leq&C\int_{-\d}^{\d}\frac{1}{(p^2+q^2y^2)^{1/2}}\ud{y}\\
&\leq&C\int_{-2}^{2}\frac{1}{(p^2+q^2y^2)^{1/2}}\ud{y}\leq C\int_{-2}^{2}\frac{1}{(r^2+y^2)^{1/2}}\ud{y}\no\\
&\leq&C\int_{0}^{2}\frac{1}{(r^2+y^2)^{1/2}}\ud{y}=\bigg(\ln(y+\sqrt{r^2+y^2})-\ln(r)\bigg)\bigg|_0^{2}\no\\
&\leq&C\bigg(\ln(2+\sqrt{r^2+4})-\ln{r}\bigg)\leq C\bigg(1+\ln(r)\bigg)\no\\
&\leq&C\bigg(1+\abs{\ln(\e)}+\abs{\ln(\eta')}\bigg).\no
\end{eqnarray}
Hence, we know
\begin{eqnarray}
\abs{I_5}&\leq&C\lnnm{\a}\int_0^{\eta}\bigg(1+\abs{\ln(\e)}+\abs{\ln(\eta')}\bigg)
\exp\left(-\frac{C(\eta-\eta')}{\sin\phi}\right)\ud{\eta'}
\end{eqnarray}
We may directly compute
\begin{eqnarray}
\abs{\int_0^{\eta}\bigg(1+\abs{\ln(\e)}\bigg)
\exp\left(-\frac{C(\eta-\eta')}{\sin\phi}\right)\ud{\eta'}}\leq C\sin\phi(1+\abs{\ln(\e)}).
\end{eqnarray}
Hence, we only need to estimate
\begin{eqnarray}
\abs{\int_0^{\eta}\abs{\ln(\eta')}
\exp\left(-\frac{C(\eta-\eta')}{\sin\phi}\right)\ud{\eta'}}.
\end{eqnarray}
If $\eta\leq 2$, using Cauchy's inequality, we have
\begin{eqnarray}
\abs{\int_0^{\eta}\abs{\ln(\eta')}
\exp\left(-\frac{C(\eta-\eta')}{\sin\phi}\right)\ud{\eta'}}
&\leq&\bigg(\int_0^{\eta}\ln^2(\eta')\ud{\eta'}\bigg)^{1/2}\bigg(\int_0^{\eta}
\exp\left(-\frac{2C(\eta-\eta')}{\sin\phi}\right)\ud{\eta'}\bigg)^{1/2}\\
&\leq&\bigg(\int_0^{2}\ln^2(\eta')\ud{\eta'}\bigg)^{1/2}\bigg(\int_0^{\eta}
\exp\left(-\frac{2C(\eta-\eta')}{\sin\phi}\right)\ud{\eta'}\bigg)^{1/2}\no\\
&\leq&\sqrt{\sin\phi}.\no
\end{eqnarray}
If $\eta\geq 2$, we decompose and apply Cauchy's inequality to obtain
\begin{eqnarray}
&&\abs{\int_0^{\eta}\abs{\ln(\eta')}
\exp\left(-\frac{C(\eta-\eta')}{\sin\phi}\right)\ud{\eta'}}\\
&\leq&\abs{\int_0^{2}\abs{\ln(\eta')}
\exp\left(-\frac{C(\eta-\eta')}{\sin\phi}\right)\ud{\eta'}}+\abs{\int_2^{\eta}\ln(\eta')
\exp\left(-\frac{C(\eta-\eta')}{\sin\phi}\right)\ud{\eta'}}\no\\
&\leq&\bigg(\int_0^{2}\ln^2(\eta')\ud{\eta'}\bigg)^{1/2}\bigg(\int_0^{2}
\exp\left(-\frac{2C(\eta-\eta')}{\sin\phi}\right)\ud{\eta'}\bigg)^{1/2}+\ln(2)\abs{\int_2^{\eta}
\exp\left(-\frac{C(\eta-\eta')}{\sin\phi}\right)\ud{\eta'}}\no\\
&\leq&C\bigg(\sqrt{\sin\phi}+\sin\phi\bigg)\leq C\sqrt{\sin\phi}.\no
\end{eqnarray}
Hence, we have
\begin{eqnarray}
\abs{I_5}\leq C(1+\abs{\ln(\e)})\sqrt{\d_0}\lnnm{\a}.
\end{eqnarray}
\ \\
Step 6: Synthesis.\\
Collecting all the terms in previous steps, we have proved
\begin{eqnarray}
\abs{I}&\leq&C(1+\abs{\ln(\e)})\sqrt{\d_0}\lnnm{\a}+C\d\lnnm{\a}\\
&&+\frac{C}{\d_0^4}\bigg(\lnnm{\v}+\lnm{\frac{\p p}{\p\phi}}+\lnnm{\frac{\p S}{\p\phi}}+\lnnm{\frac{\p S}{\p\eta}}\bigg)+C(\d)\bigg(\lnnm{\v}+\lnnm{S}\bigg).\no
\end{eqnarray}
Therefore, we know
\begin{eqnarray}
\abs{\a}_{I}&\leq&\lnm{p_{\a}}+\lnnm{S_{\a}}+C(1+\abs{\ln(\e)})\sqrt{\d_0}\lnnm{\a}+C\d\lnnm{\a}\\
&&+\frac{C}{\d_0^4}\bigg(\lnnm{\v}+\lnm{\frac{\p p}{\p\phi}}+\lnnm{\frac{\p S}{\p\phi}}+\lnnm{\frac{\p S}{\p\eta}}\bigg)+C(\d)\bigg(\lnnm{\v}+\lnnm{S}\bigg).\no
\end{eqnarray}

\subsubsection{Region II: $\sin\phi<0$ and $\abs{E(\eta,\phi)}\leq \ue^{-V_{L}}$}

\begin{eqnarray}
\k[p_{\a}]&=&p_{\a}(\phi'(0))\exp(-G_{L,0}-G_{L,\eta})\\
\t[\tilde\a+S_{\a}]&=&\int_0^{L}\frac{(\tilde\a+S)(\eta',\phi'(\eta'))}{\sin(\phi'(\eta'))}
\exp(-G_{L,\eta'}-G_{L,\eta})\ud{\eta'}\\
&&+\int_{\eta}^{L}\frac{(\tilde\a+S)(\eta',R\phi'(\eta'))}{\sin(\phi'(\eta'))}\exp(-G_{\eta',\eta})\ud{\eta'}\nonumber.
\end{eqnarray}
Based on \cite[Lemma 4.7, Lemma 4.8]{AA003},
we can directly obtain
\begin{eqnarray}
\abs{\k[p_{\a}]}&\leq&\lnm{p_{\a}},\\
\abs{\t[S_{\a}]}&\leq&\lnnm{S_{\a}}.
\end{eqnarray}
Hence, we only need to estimate $II=\t[\tilde\a]$. In particular, we can decompose
\begin{eqnarray}
\t[\tilde\a]&=&\int_0^{L}\frac{\tilde\a(\eta',\phi'(\eta'))}{\sin(\phi'(\eta'))}\exp(-G_{L,\eta'}-G_{L,\eta})\ud{\eta'}
+\int_{\eta}^{L}\frac{\tilde\a(\eta',R\phi'(\eta'))}{\sin(\phi'(\eta'))}\exp(-G_{\eta',\eta})\ud{\eta'}\\
&=&\int_0^{\eta}\frac{\tilde\a(\eta',\phi'(\eta'))}{\sin(\phi'(\eta'))}\exp(-G_{L,\eta'}-G_{L,\eta})\ud{\eta'}\no\\
&&+\int_{\eta}^{L}\frac{\tilde\a(\eta',\phi'(\eta'))}{\sin(\phi'(\eta'))}\exp(-G_{L,\eta'}-G_{L,\eta})\ud{\eta'}
+\int_{\eta}^{L}\frac{\tilde\a(\eta',R\phi'(\eta'))}{\sin(\phi'(\eta'))}\exp(-G_{\eta',\eta})\ud{\eta'}.\no
\end{eqnarray}
The integral $\displaystyle\int_0^{\eta}\cdots$ can be estimated as in Region I, so we only need to estimate the integral $\displaystyle\int_{\eta}^L\cdots$. Also, noting that fact that \begin{eqnarray}
\exp(-G_{L,\eta'}-G_{L,\eta})\leq \exp(-G_{\eta',\eta}),
\end{eqnarray}
we only need to estimate
\begin{eqnarray}
\int_{\eta}^{L}\frac{\tilde\a(\eta',R\phi'(\eta'))}{\sin(\phi'(\eta'))}\exp(-G_{\eta',\eta})\ud{\eta'}.
\end{eqnarray}
Here the proof is almost identical to Case I, so we only point out the key differences.\\
\ \\
Step 0: Preliminaries.\\
We need to update one key result. For $0\leq\eta\leq\eta'$,
\begin{eqnarray}
\sin\phi'&=&\sqrt{1-\cos^2\phi'}=\sqrt{1-\bigg(\frac{\rk-\e\eta}{\rk-\e\eta'}\bigg)^2\cos^2\phi}\\
&=&\frac{\sqrt{(\rk-\e\eta')^2\sin^2\phi+(2\rk-\e\eta-\e\eta')(\e\eta'-\e\eta)\cos^2\phi}}{\rk-\e\eta'}\no\\
&\leq&\abs{\sin\phi}.
\end{eqnarray}
Then we have
\begin{eqnarray}\label{pt 04}
-\int_{\eta}^{\eta'}\frac{1}{\sin\phi'(y)}\ud{y}&\leq&-\frac{\eta'-\eta}{\abs{\sin\phi}}.
\end{eqnarray}
In the following, we will divide the estimate of $II$ into several cases based on the value of $\sin\phi$, $\sin\phi'$ and $\e\eta'$. We write
\begin{eqnarray}
II&=&\int_{\eta}^L\id_{\{\sin\phi\leq-\d_0\}}+\int_{\eta}^L\id_{\{-\d_0\leq\sin\phi\leq0\}}\id_{\{\chi(\phi_{\ast})<1\}}\\
&&+\int_{\eta}^L\id_{\{-\d_0\leq\sin\phi\leq0\}}\id_{\{\chi(\phi_{\ast})=1\}}\id_{\{\sqrt{\e\eta'}\geq\sin\phi'\}}
+\int_{\eta}^L\id_{\{-\d_0\leq\sin\phi\leq0\}}\id_{\{\chi(\phi_{\ast})=1\}}\id_{\{\sqrt{\e\eta'}\leq\sin\phi'\}}\no\\
&=&II_1+II_2+II_3+II_4.\no
\end{eqnarray}
\ \\
Step 1: Estimate of $II_1$ for $\sin\phi\leq-\d_0$.\\
We first estimate $\sin\phi'$. Along the characteristics, we know
\begin{eqnarray}
\ue^{-V(\eta')}\cos\phi'=\ue^{-V(\eta)}\cos\phi,
\end{eqnarray}
which implies
\begin{eqnarray}
\cos\phi'&=&\ue^{V(\eta')-V(\eta)}\cos\phi\leq \ue^{V(L)-V(0)}\cos\phi= \ue^{V(L)-V(0)}\sqrt{1-\d_0^2}.
\end{eqnarray}
Based on Lemma \ref{rt lemma 1}, we can further deduce that
\begin{eqnarray}
\cos\phi'\leq \bigg(1-\frac{\e^{1/2}}{\rk}\bigg)^{-1}\sqrt{1-\d_0^2}.
\end{eqnarray}
Then we have
\begin{eqnarray}
\sin\phi'\geq\sqrt{1-\bigg(1-\frac{\e^{1/2}}{\rk}\bigg)^{-2}(1-\d_0^2)}\geq \d_0-\e^{1/4}>\frac{\d_0}{2},
\end{eqnarray}
when $\e$ is sufficiently small.
Based on Lemma \ref{pt lemma 2}, we know
\begin{eqnarray}
\abs{\sin\phi\frac{\p\v}{\p\eta}}\leq C\bigg(1+\frac{1}{\d_0^3}\bigg)\bigg(\lnnm{\v}+\lnnm{\frac{\p S}{\p\phi}}+\lnnm{\frac{\p S}{\p\eta}}\bigg).
\end{eqnarray}
Hence, we have
\begin{eqnarray}
\abs{II_1}\leq  \frac{1}{\abs{\sin\phi}}\abs{\frac{\p\v}{\p\eta}}\leq \frac{C}{\d_0^4}\bigg(\lnnm{\v}+\lnnm{\frac{\p S}{\p\phi}}+\lnnm{\frac{\p S}{\p\eta}}\bigg).
\end{eqnarray}
\ \\
Step 2: Estimate of $II_2$ for $-\d_0\leq\sin\phi\leq0$ and $\chi(\phi_{\ast})<1$.\\
This is similar to the estimate of $I_2$ based on the integral
\begin{eqnarray}
\int_{\eta}^{L}\frac{1}{\sin\phi'}\exp(-G_{\eta',\eta})\ud{\eta'}\leq 1.
\end{eqnarray}
Then we have
\begin{eqnarray}
\abs{II_2}
&\leq&C(\d)\bigg(\lnnm{\v}+\lnnm{S}\bigg).
\end{eqnarray}
\ \\
Step 3: Estimate of $II_3$ for $-\d_0\leq\sin\phi\leq0$, $\chi(\phi_{\ast})=1$ and $\sqrt{\e\eta'}\geq\sin\phi'$.\\
This is identical to the estimate of $I_4$, we have
\begin{eqnarray}
\abs{II_3}&\leq&C\d\lnnm{\a}.
\end{eqnarray}
\ \\
Step 4: Estimate of $II_4$ for $-\d_0\leq\sin\phi\leq0$, $\chi(\phi_{\ast})=1$ and $\sqrt{\e\eta'}\leq\sin\phi'$.\\
This step is different. We do not need to further decompose the cases.
Based on (\ref{pt 04}), we have,
\begin{eqnarray}
-G_{\eta,\eta'}&\leq&-\frac{\eta'-\eta}{\abs{\sin\phi}}.
\end{eqnarray}
Then following the same argument in estimating $I_5$, we obtain
\begin{eqnarray}
\abs{II_4}&\leq&C\lnnm{\a}\int_{\eta}^{L}\bigg(1+\abs{\ln(\e)}+\abs{\ln(\eta')}\bigg)
\exp\left(-\frac{\eta'-\eta}{\abs{\sin\phi}}\right)\ud{\eta'}
\end{eqnarray}
If $\eta\geq 2$, we directly obtain
\begin{eqnarray}
\abs{\int_{\eta}^{L}\abs{\ln(\eta')}
\exp\left(-\frac{\eta'-\eta}{\abs{\sin\phi}}\right)\ud{\eta'}}&\leq& \abs{\int_{2}^{L}\ln(\eta')
\exp\left(-\frac{\eta'-\eta}{\abs{\sin\phi}}\right)\ud{\eta'}}\\
&\leq&\ln(2)\abs{\int_{2}^{L}
\exp\left(-\frac{\eta'-\eta}{\abs{\sin\phi}}\right)\ud{\eta'}}\no\\
&\leq&C\sqrt{\abs{\sin\phi}}.\no
\end{eqnarray}
If $\eta\leq 2$, we decompose as
\begin{eqnarray}
&&\abs{\int_{\eta}^{L}\abs{\ln(\eta')}
\exp\left(-\frac{\eta'-\eta}{\abs{\sin\phi}}\right)\ud{\eta'}}\\
&\leq&\abs{\int_{\eta}^{2}\abs{\ln(\eta')}
\exp\left(-\frac{\eta'-\eta}{\abs{\sin\phi}}\right)\ud{\eta'}}+\abs{\int_{2}^{L}\abs{\ln(\eta')}
\exp\left(-\frac{\eta'-\eta}{\abs{\sin\phi}}\right)\ud{\eta'}}
\end{eqnarray}
The second term is identical to the estimate in $\eta\geq2$. We apply Cauchy's inequality to the first term
\begin{eqnarray}
\abs{\int_{\eta}^{2}\abs{\ln(\eta')}
\exp\left(-\frac{\eta'-\eta}{\abs{\sin\phi}}\right)\ud{\eta'}}
&\leq&\bigg(\int_{\eta}^{2}\ln^2(\eta')\ud{\eta'}\bigg)^{1/2}\bigg(\int_{\eta}^{2}
\exp\left(-\frac{2(\eta'-\eta)}{\abs{\sin\phi}}\right)\ud{\eta'}\bigg)^{1/2}\\
&\leq&\bigg(\int_0^{2}\ln^2(\eta')\ud{\eta'}\bigg)^{1/2}\bigg(\int_{\eta}^{2}
\exp\left(-\frac{2(\eta'-\eta)}{\abs{\sin\phi}}\right)\ud{\eta'}\bigg)^{1/2}\no\\
&\leq&C\sqrt{\abs{\sin\phi}}.\no
\end{eqnarray}
Hence, we have
\begin{eqnarray}
\abs{II_4}\leq C(1+\abs{\ln(\e)})\sqrt{\d_0}\lnnm{\a}.
\end{eqnarray}
\ \\
Step 6: Synthesis.\\
Collecting all the terms in previous steps, we have proved
\begin{eqnarray}
\abs{II}&\leq&C(1+\abs{\ln(\e)})\sqrt{\d_0}\lnnm{\a}+C\d\lnnm{\a}\\
&&+\frac{C}{\d_0^4}\bigg(\lnnm{\v}+\lnm{\frac{\p p}{\p\phi}}+\lnnm{\frac{\p S}{\p\phi}}+\lnnm{\frac{\p S}{\p\eta}}\bigg)+C(\d)\bigg(\lnnm{\v}+\lnnm{S}\bigg).\no
\end{eqnarray}
Therefore, we know
\begin{eqnarray}
\abs{\a}_{II}&\leq&\lnnm{S_{\a}}+\lnm{p_{\a}}+C(1+\abs{\ln(\e)})\sqrt{\d_0}\lnnm{\a}+C\d\lnnm{\a}\\
&&+\frac{C}{\d_0^4}\bigg(\lnnm{\v}+\lnm{\frac{\p p}{\p\phi}}+\lnnm{\frac{\p S}{\p\phi}}+\lnnm{\frac{\p S}{\p\eta}}\bigg)+C(\d)\bigg(\lnnm{\v}+\lnnm{S}\bigg).\no
\end{eqnarray}

\subsubsection{Region III: $\sin\phi<0$ and $\abs{E(\eta,\phi)}\geq \ue^{-V(L)}$}

Based on \cite[Lemma 4.7, Lemma 4.8]{AA003}, we still have
\begin{eqnarray}
\abs{\k[p_{\a}]}&\leq&\lnm{p_{\a}},\\
\abs{\t[S_{\a}]}&\leq&\lnnm{S_{\a}}.
\end{eqnarray}
Hence, we only need to estimate $III=\t[\tilde\a]$. Note that $\abs{E(\eta,\phi)}\geq \ue^{-V(L)}$ implies
\begin{eqnarray}
\ue^{-V(\eta)}\cos\phi\geq \ue^{-V(L)}.
\end{eqnarray}
Hence, based on Lemma \ref{rt lemma 1}, we can further deduce that
\begin{eqnarray}
\cos\phi&\geq&\ue^{V(\eta)-V(L)}\geq \ue^{V(0)-V_{\infty}}\geq \bigg(1-\frac{\e^{1/2}}{\rk}\bigg).
\end{eqnarray}
Hence, we know
\begin{eqnarray}
\abs{\sin\phi}\leq\sqrt{1-\bigg(1-\frac{\e^{1/2}}{\rk}\bigg)^2}\leq \e^{1/4}.
\end{eqnarray}
Hence, when $\e$ is sufficiently small, we always have
\begin{eqnarray}
\abs{\sin\phi}\leq \e^{1/4}\leq \d_0.
\end{eqnarray}
This means we do not need to bother with the estimate of $\sin\phi\leq-\d_0$ as Step 1 in estimating $I$ and $II$.
Since we can decompose
\begin{eqnarray}
\t[\tilde\a]&=&\int_0^{\eta}\frac{\tilde\a(\eta',\phi'(\eta'))}{\sin(\phi'(\eta'))}
\exp(-G_{\eta^+,\eta'}-G_{\eta^+,\eta})\ud{\eta'}\\
&&\bigg(\int_{\eta}^{\eta^+}\frac{\tilde\a(\eta',\phi'(\eta'))}{\sin(\phi'(\eta'))}
\exp(-G_{\eta^+,\eta'}-G_{\eta^+,\eta})\ud{\eta'}+
\int_{\eta}^{\eta^+}\frac{(\tilde\a+S_{\a})(\eta',R\phi'(\eta'))}{\sin(\phi'(\eta'))}\exp(-G_{\eta',\eta})\ud{\eta'}\bigg)\nonumber.
\end{eqnarray}
Then the integral $\displaystyle\int_0^{\eta}(\cdots)$ is similar to the argument in Region I, and the integral $\displaystyle\int_{\eta}^{\eta^+}(\cdots)$ is similar to the argument in Region II. Hence, combining the method in Region I and Region II, we can show the desired result, i.e.
\begin{eqnarray}
\abs{\a}_{III}&\leq&\lnm{p_{\a}}+\lnnm{S_{\a}}+C(1+\abs{\ln(\e)})\sqrt{\d_0}\lnnm{\a}+C\d\lnnm{\a}\\
&&+C(\d)\bigg(\lnnm{\v}+\lnnm{S}\bigg).\no
\end{eqnarray}

\subsubsection{Estimate of Normal Derivative}

Combining the analysis in these three regions, we have
\begin{eqnarray}
\abs{\a}&\leq&\lnm{p_{\a}}+\lnnm{S_{\a}}+C(1+\abs{\ln(\e)})\sqrt{\d_0}\lnnm{\a}+C\d\lnnm{\a}\\
&&+\frac{C}{\d_0^4}\bigg(\lnnm{\v}+\lnm{\frac{\p p}{\p\phi}}+\lnnm{\frac{\p S}{\p\phi}}+\lnnm{\frac{\p S}{\p\eta}}\bigg)+C(\d)\bigg(\lnnm{\v}+\lnnm{S}\bigg).\no
\end{eqnarray}
Taking supremum over all $(\eta,\phi)$, we have
\begin{eqnarray}\label{pt 05}
\lnnm{\a}&\leq&\lnm{p_{\a}}+\lnnm{S_{\a}}+C(1+\abs{\ln(\e)})\sqrt{\d_0}\lnnm{\a}+C\d\lnnm{\a}\\
&&+\frac{C}{\d_0^4}\bigg(\lnnm{\v}+\lnm{\frac{\p p}{\p\phi}}+\lnnm{\frac{\p S}{\p\phi}}+\lnnm{\frac{\p S}{\p\eta}}\bigg)\no\\
&&+C(\d)\bigg(\lnnm{\v}+\lnnm{S}\bigg).\no
\end{eqnarray}
Then we choose these constants to perform absorbing argument. First we choose $0<\d<<1$ sufficiently small such that
\begin{eqnarray}
C\d\leq\frac{1}{4}.
\end{eqnarray}
Then we take $\d_0=\d\abs{\ln(\e)}^{-2}$ such that
\begin{eqnarray}
C(1+\abs{\ln(\e)})\sqrt{\d_0}\leq 2C\d\leq\frac{1}{2}.
\end{eqnarray}
for $\e$ sufficiently small. Note that this mild decay of $\d_0$ with respect to $\e$ also justifies the assumption in Case III and the proof of Lemma \ref{pt lemma 2} that
\begin{eqnarray}
\e^{1/4}\leq \frac{\d_0}{2},
\end{eqnarray}
for $\e$ sufficiently small. Here since $\d$ and $C$ are independent of $\e$, there is no circulant argument. Hence, we can absorb all the term related to $\lnnm{\a}$ on the right-hand side of (\ref{pt 05}) to the left-hand side to obtain
\begin{eqnarray}
\lnnm{\a}&\leq&C\bigg(\lnm{p_{\a}}+\lnnm{S_{\a}}\bigg)\\
&&+C\abs{\ln(\e)}^8\bigg(\lnnm{\v}+\lnm{\frac{\p p}{\p\phi}}+\lnnm{S}+\lnnm{\frac{\p S}{\p\phi}}+\lnnm{\frac{\p S}{\p\eta}}\bigg).\no
\end{eqnarray}

\subsection{Mild Formulation of Velocity Derivative}

Consider the general $\e$-Milne problem for $\b=\zeta\dfrac{\p\v}{\p\phi}$ as
\begin{eqnarray}
\left\{
\begin{array}{rcl}\displaystyle
\sin\phi\frac{\p\b}{\p\eta}+F(\eta)\cos\phi\frac{\p
\b}{\p\phi}+\b&=&S_{\b},\\
\b(0,\phi)&=&p_{\b}(\phi)\ \ \text{for}\ \ \sin\phi>0,\\
\b(L,\phi)&=&\b(L,R\phi),
\end{array}
\right.
\end{eqnarray}
where $p_{\b}$ and $S_{\b}$ will be specified later. This is much simpler than normal derivative, since we do not have $\tilde\b$. Then by a direct argument that
\begin{eqnarray}
\abs{\k[p_{\b}]}&\leq&\lnm{p_{\b}},\\
\abs{\t[S_{\b}]}&\leq&\lnnm{S_{\b}}.
\end{eqnarray}
we can get the desired result.
\begin{lemma}\label{pt lemma 4}
We have
\begin{eqnarray}
\lnnm{\b}&\leq&\lnm{p_{\b}}+\lnnm{S_{\b}}.
\end{eqnarray}
\end{lemma}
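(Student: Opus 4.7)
The plan is to follow exactly the mild-formulation strategy already developed for the normal derivative $\a$, but adapted to the much simpler equation satisfied by $\b = \zeta\,\partial_\phi \v$. Since $\b$ satisfies a genuine $\e$-Milne-type transport equation without any non-local term (the crucial observation being that there is no analog of $\tilde\a$ here), the problem reduces to estimating two quantities along characteristics: the boundary contribution $\k[p_\b]$ and the source contribution $\t[S_\b]$.

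First I would write down the solution representation by integration along characteristics, splitting the phase space $(\eta,\phi)$ into the three regions used throughout Section 3: Region I where $\sin\phi>0$, Region II where $\sin\phi<0$ and $|E(\eta,\phi)|\leq \ue^{-V(L)}$, and Region III where $\sin\phi<0$ and $|E(\eta,\phi)|\geq \ue^{-V(L)}$. In each region, the solution decomposes as $\b(\eta,\phi)=\k[p_\b](\eta,\phi)+\t[S_\b](\eta,\phi)$ with exactly the same explicit formulas that appear in the analysis of $\a$ (with the $\tilde\a$-contribution simply deleted).

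Next I would invoke the standard estimates of \cite[Lemma 4.7, Lemma 4.8]{AA003} in each of the three regions. These state precisely that the boundary operator satisfies $|\k[p_\b]|\leq \lnm{p_\b}$ and the source operator satisfies $|\t[S_\b]|\leq \lnnm{S_\b}$, because the exponential damping $\exp(-G_{\cdot,\cdot})$ weighted by $1/\sin\phi'$ integrates to at most one along any characteristic trajectory (including reflection at the grazing set in Region III and at $\eta=L$ in Region II). No delicate absorbing argument with $\d$, $\d_0$, or $|\ln\e|$ is required here, since the singular integrals involving $1/\zeta(\eta',\phi_{\ast})$ that forced such arguments for $\a$ simply do not appear.

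Finally, summing the two bounds and taking supremum over $(\eta,\phi)$ yields
\begin{equation*}
\lnnm{\b}\leq \lnm{p_\b}+\lnnm{S_\b},
\end{equation*}
as claimed. The main obstacle, if there is one, is only bookkeeping: one must verify carefully that the integrals of $\exp(-G)/\sin\phi'$ in all three regions (with the appropriate reflected characteristics) are indeed bounded by one, but this is exactly what Lemmas 4.7 and 4.8 of \cite{AA003} provide, and the argument goes through verbatim.
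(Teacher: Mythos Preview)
Your proposal is correct and essentially identical to the paper's argument. The paper also observes that the absence of a non-local term $\tilde\b$ makes this case much simpler than the normal derivative, and then invokes the same bounds $|\k[p_\b]|\leq \lnm{p_\b}$ and $|\t[S_\b]|\leq \lnnm{S_\b}$ (from \cite[Lemma 4.7, Lemma 4.8]{AA003}) to conclude directly.
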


\subsection{A Priori Estimate of Derivatives}

\begin{theorem}\label{pt theorem 1}
We have
\begin{eqnarray}
\lnnm{\zeta\frac{\p\v}{\p\eta}}+\lnnm{\zeta\frac{\p\v}{\p\eta}}\leq C\abs{\ln(\e)}^8.
\end{eqnarray}
\end{theorem}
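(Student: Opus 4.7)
The plan is to apply Lemma~\ref{pt lemma 3} to $\a = \zeta\,\p_\eta\v$ and Lemma~\ref{pt lemma 4} to $\b = \zeta\,\p_\phi\v$, and then close the resulting coupled inequalities by exploiting the smallness of $\e$ in the coupling constants. Differentiating (\ref{Milne difference problem}) in $\eta$, multiplying by $\zeta$, and using the commutator identity of Lemma~\ref{rt lemma 2} to push $\zeta$ through the transport operator gives
\[
\sin\phi\,\p_\eta\a + F\cos\phi\,\p_\phi\a + \a = \tilde\a + S_\a, \qquad S_\a = -F'(\eta)\cos\phi\,\b + \zeta\,\p_\eta S.
\]
The crucial identity here is $\zeta(\eta,\phi)\,\p_\eta\bar\v(\eta) = \tilde\a(\eta,\phi)$, which is immediate from the definition of $\tilde\a$ in (\ref{Milne infinite problem LI}) and is precisely what makes the nonlocal term fit the template of Lemma~\ref{pt lemma 3}. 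Since $F'(\eta) = -\e^2/(\rk - \e\eta)^2 = O(\e^2)$, the coupling of $\a$ to $\b$ is $O(\e^2)$. The analogous calculation in $\phi$, using $\p_\phi\bar\v = 0$, yields
\[
\sin\phi\,\p_\eta\b + F\cos\phi\,\p_\phi\b + \b = S_\b, \qquad S_\b = -\cos\phi\,\a + F\sin\phi\,\b + \zeta\,\p_\phi S,
\]
with $F\sin\phi = O(\e)$.

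Next I verify that all boundary and source data are uniformly bounded. At $\eta=0$ with $\sin\phi>0$, using $\zeta(0,\phi) = |\sin\phi|$ and solving (\ref{Milne difference problem}) for $\p_\eta\v(0,\phi)$ gives
\[
p_\a(\phi) = -F(0)\cos\phi\,\p_\phi p(\phi) - p(\phi) + \bar\v(0) + S(0,\phi),
\]
bounded by a constant thanks to (\ref{Milne bounded}), (\ref{Milne decay}), Theorem~\ref{Milne theorem 1}, and Theorem~\ref{Milne theorem 2}, while $|p_\b(\phi)| = |\sin\phi|\,|\p_\phi p(\phi)| \le C$ trivially; the reflection conditions at $\eta = L$ for $\a$ and $\b$ are inherited from $\v(L,\phi) = \v(L,R\phi)$ together with the $\phi$-symmetry of $\zeta$. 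Since the remaining data quantities $\lnnm{\v}, \lnm{\p_\phi p}, \lnnm{S}, \lnnm{\p_\phi S}, \lnnm{\p_\eta S}, \lnnm{\zeta\,\p_\phi S}$ are all $O(1)$ under the standing hypotheses, Lemma~\ref{pt lemma 3} delivers $\lnnm{\a} \le C\abs{\ln(\e)}^8 + C\e^2\lnnm{\b}$, and Lemma~\ref{pt lemma 4} delivers $\lnnm{\b} \le C + C\lnnm{\a} + C\e\lnnm{\b}$.

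The coupled system is then closed by a simple absorbing argument: from the second inequality $\lnnm{\b} \le C(1 + \lnnm{\a})$ for $\e$ small, and substituting back into the first gives $(1 - C\e^2)\lnnm{\a} \le C\abs{\ln(\e)}^8$, hence $\lnnm{\a} \le C\abs{\ln(\e)}^8$ and therefore $\lnnm{\b} \le C\abs{\ln(\e)}^8$ as well, which is the stated bound. The main delicate point is not the absorbing step, which is routine once the coupling is seen to be $O(\e^2)$ and $O(\e)$, but rather the derivation of the transport equations for $\a$ and $\b$: one must verify that the nonlocal averaging in the $\a$-equation falls exactly on the weighted form $\tilde\a$ rather than on some less tractable object, which relies entirely on the tailored definition of $\tilde\a$ with the specific ratio $\zeta(\eta,\phi)/\zeta(\eta,\phi_*)$.
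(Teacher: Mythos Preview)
Your proposal is correct and follows essentially the same approach as the paper: differentiate (\ref{Milne difference problem}) in $\eta$ and $\phi$, multiply by $\zeta$, use Lemma~\ref{rt lemma 2} to commute $\zeta$ through the transport operator, identify the resulting boundary/source data $p_{\a},p_{\b},S_{\a},S_{\b}$, apply Lemmas~\ref{pt lemma 3} and~\ref{pt lemma 4}, and close the coupled system by absorbing the $O(\e)$ and $O(\e^2)$ coupling terms. Your writeup is in fact more explicit than the paper's on two points---the identity $\zeta\,\p_\eta\bar\v=\tilde\a$ that makes the nonlocal term fit the template of Lemma~\ref{pt lemma 3}, and the correct signs and $S(0,\phi)$ term in $p_{\a}$---so no gaps.
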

\begin{proof}
Collecting the estimates for $\a$ and $\b$ in Lemma \ref{pt lemma 3} and Lemma \ref{pt lemma 4}, we have
\begin{eqnarray}\label{pt 06}
\lnnm{\a}&\leq&C\bigg(\lnm{p_{\a}}+\lnnm{S_{\a}}\bigg)\label{pt 06}\\
&&+C\abs{\ln(\e)}^8\bigg(\lnnm{\v}+\lnm{\frac{\p p}{\p\phi}}+\lnnm{S}+\lnnm{\frac{\p S}{\p\phi}}+\lnnm{\frac{\p S}{\p\eta}}\bigg),\no\\
\lnnm{\b}&\leq&\lnm{p_{\b}}+\lnnm{S_{\b}}.\label{pt 07}
\end{eqnarray}
Taking derivatives on both sides of (\ref{Milne decay}) and multiplying $\zeta$, based on Lemma \ref{rt lemma 2}, we have
\begin{eqnarray}
p_{\a}&=&\e\cos\phi\frac{\p p}{\p\phi}+p-\bar\v(0),\\
p_{\b}&=&\sin\phi\frac{\p p}{\p\phi},\\
S_{\a}&=&\frac{\p{F}}{\p{\eta}}\b\cos\phi+\zeta\frac{\p S}{\p\eta},\\
S_{\b}&=&\a\cos\phi+F\b\sin\phi+\zeta\frac{\p S}{\p\phi}.
\end{eqnarray}
Since $\abs{F(\eta)}+\abs{\dfrac{\p F}{\p\eta}}\leq \e$, by absorbing $\a$ and $\b$ on the right-hand side of (\ref{pt 06}) and (\ref{pt 07}), we derive
\begin{eqnarray}
\a&\leq&C\abs{\ln(\e)}^8,\\
\b&\leq&C\abs{\ln(\e)}^8.
\end{eqnarray}
\end{proof}

\begin{theorem}\label{pt theorem 2}
For $K_0>0$ sufficiently small, we have
\begin{eqnarray}
\lnnm{\ue^{K_0\eta}\zeta\frac{\p\v}{\p\eta}}+\lnnm{\ue^{K_0\eta}\zeta\frac{\p\v}{\p\eta}}\leq C\abs{\ln(\e)}^8.
\end{eqnarray}
\end{theorem}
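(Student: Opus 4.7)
The plan is to upgrade Theorem \ref{pt theorem 1} by inserting the weight $\ue^{K_0\eta}$ into the mild formulation and re-running the analysis of Lemmas \ref{pt lemma 3} and \ref{pt lemma 4}. Set $\a_K = \ue^{K_0\eta}\a$ and $\b_K = \ue^{K_0\eta}\b$. A short computation shows that $\a_K$ satisfies
\begin{eqnarray*}
\sin\phi\frac{\p\a_K}{\p\eta}+F(\eta)\cos\phi\frac{\p\a_K}{\p\phi}+\a_K &=& \tilde\a_K + \ue^{K_0\eta}S_{\a} + K_0\sin\phi\,\a_K,\\
\a_K(0,\phi)&=&p_{\a}(\phi),\qquad \a_K(L,\phi)=\ue^{K_0 L}\a(L,R\phi),
\end{eqnarray*}
and similarly for $\b_K$, with the extra term $K_0\sin\phi\,\b_K$ on the right. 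The non-local term transforms cleanly because $\ue^{K_0\eta}$ depends only on $\eta$, so $\ue^{K_0\eta}\tilde\a = \tilde{\a_K}$.

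Next I would check that all data in the weighted version remain bounded uniformly in $\e$. The boundary contribution $p_{\a}$ is unchanged since the weight is $1$ at $\eta=0$. The source $\ue^{K_0\eta}S_{\a}$ is controlled because $\zeta\,\ue^{K_0\eta}\dfrac{\p S}{\p\eta}$ is bounded by assumption (\ref{Milne decay}) and $\dfrac{\p F}{\p\eta}\b\cos\phi$ is already of order $\e\b$, which when weighted contributes $\e\b_K$ — a small perturbation. The ``background'' quantities $\lnnm{\v}$, $\lnnm{S}$, $\lnnm{\p S/\p\phi}$, $\lnnm{\p S/\p\eta}$ that appeared on the right side of (\ref{pt 06}) now get replaced by their exponentially weighted counterparts, which are bounded by Theorem \ref{Milne theorem 3} and (\ref{Milne decay}) respectively. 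The reflection condition at $\eta=L$ gains a factor $\ue^{K_0 L}=\ue^{K_0\e^{-1/2}}$, but this only enters through terms of the form $\exp(-G_{L,\eta'}-G_{L,\eta})$ in Region II/III, and the exponential decay along characteristics beats the weight provided $K_0$ is small compared to the characteristic damping rate.

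I would then rerun the case analysis of Lemma \ref{pt lemma 3} — Regions I, II, III, with all five (respectively four) subcases based on the sizes of $\sin\phi$, $\sin\phi'$, $\sqrt{\e\eta'}$, and $\sin^2\phi$ versus $\e(\eta-\eta')$ — treating the extra term $K_0\sin\phi\,\a_K$ as a new source and absorbing it at the end. The key point is that in each region the characteristic factor $\exp(-G_{\cdot,\cdot})$ decays at rate $1/\sin\phi'$ while the additional growth from the weight accumulates only at rate $K_0$, so the ratio stays $O(1)$ uniformly for $0<K_0\le K_0^\ast$ with some universal $K_0^\ast$. After the same absorbing argument with $\d\ll 1$ and $\d_0 = \d|\ln(\e)|^{-2}$, and after coupling the bounds for $\a_K$ and $\b_K$ just as in Theorem \ref{pt theorem 1} (so that the $\b_K$ appearing in $S_{\a}$ and the $\a_K$ appearing in $S_{\b}$ can be absorbed), I obtain
\begin{equation*}
\lnnm{\a_K}+\lnnm{\b_K}\le C|\ln(\e)|^{8}.
\end{equation*}

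The main obstacle is the bookkeeping in Region~III, where $|\sin\phi|\le\e^{1/4}$ and the characteristic bounces off $\eta^+$ before returning; the weight $\ue^{K_0\eta}$ grows most along the ``outgoing then incoming'' leg, and one must verify that the kernel $\exp(-G_{\eta^+,\eta'}-G_{\eta^+,\eta})$ still provides enough decay to dominate $\ue^{K_0\eta}$ when multiplied into the $\a_K$-integral. Fortunately, as in the proof of Theorem \ref{Milne theorem 3}, once $K_0$ is chosen strictly smaller than the exponential relaxation rate inherent in the transport operator, the perturbation $K_0\sin\phi\,\a_K$ can be absorbed by the same absorbing argument that closes the estimate for $\a$ itself, and the conclusion follows.
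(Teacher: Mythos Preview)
Your approach is correct and matches the paper's. The paper's own proof is a two-line remark: redo Theorem~\ref{pt theorem 1} with the weighted quantities, noting that the only new feature is the additional source terms $K_0\sin\phi\,\a_K$ and $K_0\sin\phi\,\b_K$, which are absorbed for $K_0$ small. One simplification you missed: since the weight $\ue^{K_0\eta}$ depends only on $\eta$, the specular condition $\a(L,\phi)=\a(L,R\phi)$ becomes $\a_K(L,\phi)=\a_K(L,R\phi)$ exactly, so no factor $\ue^{K_0L}$ appears and your Region~II/III concern is unnecessary.
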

\begin{proof}
This proof is almost identical to Theorem \ref{pt theorem 1}. The only difference is that $S_{\a}$ is added by $K_0\a\sin\phi$ and $S_{\b}$ added by $K_0\b\sin\phi$. When $K_0$ is sufficiently small, we can also absorb them into the left-hand side. Hence, this is obvious.
\end{proof}

\subsection{Iteration and Estimate of Derivatives}

So far, all the estimates are a priori. Hence, we first need to confirm the derivatives are well-defined. We start from continuity of solutions.
We consider the $\e$-transport equation for $\v$ as
\begin{eqnarray}
\left\{
\begin{array}{rcl}\displaystyle
\sin\phi\frac{\p\v}{\p\eta}+F(\eta)\cos\phi\frac{\p
\v}{\p\phi}+\v&=&H,\\
\v(0,\phi)&=&p(\phi)\ \ \text{for}\ \ \sin\phi>0,\\
f(L,\phi)&=&f(L,R\phi).
\end{array}
\right.
\end{eqnarray}
\begin{lemma}\label{pt additional 1}
Assume $H$ is continuous in $[0,L]\times[-\pi,\pi)$. Then we have $\v$ is continuous in $[0,L]\times[-\pi,\pi)$
\end{lemma}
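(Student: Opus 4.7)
The plan is to construct $\v$ explicitly by the method of characteristics and verify continuity region by region. Because the equation is a pure transport equation with source $H$ (no nonlocal averaging), $\v$ admits an explicit mild formulation identical in form to the one developed for $\a$ in Section 3.3, with $\tilde\a + S_\a$ replaced by $H$. The domain $[0,L] \times [-\pi,\pi)$ splits into three regions according to the sign of $\sin\phi$ and the size of the conserved energy $E(\eta,\phi) = \ue^{-V(\eta)}\cos\phi$: Region I ($\sin\phi > 0$, characteristics hit the in-flow boundary at $\eta = 0$); Region II ($\sin\phi \leq 0$ and $\abs{E} \leq \ue^{-V(L)}$, characteristics reach $\eta = L$ and specularly reflect); and Region III ($\sin\phi \leq 0$ and $\abs{E} \geq \ue^{-V(L)}$, characteristics turn around at some $\eta^+ < L$ before reaching $\eta = L$).

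In the interior of each region the characteristic flow depends smoothly on $(\eta,\phi)$, so continuity of $\v$ follows from continuity of $H$ and $p$ together with dominated convergence applied to the explicit integrals. Although the integrand contains a $1/\sin\phi'$ factor that blows up near grazing, the exponential damping $\ue^{-G_{\eta,\eta'}}$ and the lower bound $\sin\phi'(\eta') \geq \sin\phi$ for $\eta' \leq \eta$ keep the integrand dominated uniformly on compact subsets away from the transitions. Continuity across the transition $\abs{E(\eta,\phi)} = \ue^{-V(L)}$ between Regions II and III is also straightforward: as $\abs{E}$ crosses $\ue^{-V(L)}$, the turning point $\eta^+$ in Region III approaches $L$, and the specular reflection condition $\v(L,\phi) = \v(L,R\phi)$ forces the two piecewise formulas to agree in the limit.

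The main obstacle will be continuity across the grazing surface $\sin\phi = 0$ separating Region I from Regions II and III. Here the characteristic structure changes qualitatively and the integrand becomes singular. To handle this, I will localize the defining integral using a cut-off $\chi(\phi_{\ast})$ as in the estimates of $I_3$, $I_4$, $I_5$ in Section 3.3, apply the substitutions $y = \sin\phi_{\ast}$ and $z = \eta'/\e$ to absorb the $1/\sin\phi'$ singularity, and then pass to the limit by dominated convergence, exploiting continuity of $H$ to compare its values along nearby characteristics. Continuity at the periodic endpoints $\phi = \pm\pi$ is automatic from the $2\pi$-periodicity of the equation. I expect this grazing-set matching to be the most delicate step, since it requires uniform control of the singular Duhamel integral, but the machinery already developed in Section 3.3 is exactly suited to this task.
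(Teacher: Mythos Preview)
Your overall strategy—mild formulation in three regions, continuity in the interior of each region, matching across the separatrices—is exactly the paper's approach, and your treatment of the Region~II/III boundary (where $\eta^+\to L$) is correct.

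Your plan for the grazing set $\sin\phi=0$, however, is misdirected. You propose to ``localize the defining integral using a cut-off $\chi(\phi_{\ast})$ as in the estimates of $I_3,I_4,I_5$'' and to use the substitutions $y=\sin\phi_\ast$, $z=\eta'/\e$. But those tools were built to handle the \emph{extra} velocity integral inside $\tilde\a(\eta',\phi')=\frac{1}{2\pi}\int\frac{\zeta(\eta',\phi')}{\zeta(\eta',\phi_\ast)}\a(\eta',\phi_\ast)\,\ud\phi_\ast$; in the present lemma the source is simply $H$, so there is no $\phi_\ast$-integral to cut off and nothing for $\chi(\phi_\ast)$ or the $y=\sin\phi_\ast$ substitution to act on. The $1/\sin\phi'$ singularity here lives in the $\eta'$-integral of the Duhamel formula itself, not in a velocity average.

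The paper's argument at the grazing set is more elementary and direct. Approaching $(\eta,0)$ from Region~III with $(\eta_\ast,\phi_\ast)\to(\eta,0)$, the Region~III formula differs from the Region~I formula by the factor $\exp(-G_{\eta^+,\eta_\ast})$ and an additional ``reflected'' integral $\int_{\eta_\ast}^{\eta^+}\frac{H}{\sin\phi'}\ue^{-G_{\eta',\eta_\ast}}\,\ud\eta'$. The key computation is
\[
G_{\eta^+,\eta_\ast}=\int_{\eta_\ast}^{\eta^+}\frac{\ud y}{\sin\phi'(y)}
=\int_{\eta_\ast}^{\eta^+}\frac{\rk-\e y}{\sqrt{(\rk-\e y)^2-(\rk-\e\eta^+)^2}}\,\ud y
\le C\sqrt{\tfrac{\eta^+-\eta_\ast}{\e}}\longrightarrow 0,
\]
since $\eta^+\to\eta_\ast$ as $\phi_\ast\to 0$. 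Hence $\exp(-G_{\eta^+,\eta_\ast})\to 1$, and the reflected integral is bounded by $\lnnm{H}\,\bigl|\exp(-G_{\eta^+,\eta_\ast})-1\bigr|\to 0$ (recognizing $\frac{1}{\sin\phi'}\ue^{-G_{\eta',\eta_\ast}}$ as a total derivative in $\eta'$). This shows the two formulas match at the grazing line without any cut-off machinery. Replace your grazing-set plan with this direct matching and your proof goes through.
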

\begin{proof}
As before, we can define the solution along the characteristics as follows:
\begin{eqnarray}
\v(\eta,\phi)=\k[p]+\t[H],
\end{eqnarray}
where\\
\ \\
Region I:\\
For $\sin\phi>0$,
\begin{eqnarray}
\k[p]&=&p(\phi'(0))\exp(-G_{\eta,0}),\\
\t[H]&=&\int_0^{\eta}\frac{H(\eta',\phi'(\eta'))}{\sin(\phi'(\eta'))}\exp(-G_{\eta,\eta'})\ud{\eta'}.
\end{eqnarray}
\ \\
Region II:\\
For $\sin\phi<0$ and $\abs{E(\eta,\phi)}\leq \ue^{-V(L)}$,
\begin{eqnarray}
\k[p]&=&p(\phi'(0))\exp(-G_{L,0}-G_{L,\eta})\nonumber\\
\t[H]&=&\bigg(\int_0^{L}\frac{H(\eta',\phi'(\eta'))}{\sin(\phi'(\eta'))}
\exp(-G_{L,\eta'}-G_{L,\eta})\ud{\eta'}
+\int_{\eta}^{L}\frac{H(\eta',R\phi'(\eta'))}{\sin(\phi'(\eta'))}\exp(G_{\eta,\eta'})\ud{\eta'}\bigg)\nonumber.
\end{eqnarray}
\ \\
Region III:\\
For $\sin\phi<0$ and $\abs{E(\eta,\phi)}\geq \ue^{-V(L)}$,
\begin{eqnarray}
\k[p]&=&p(\phi'(0))\exp(-G_{\eta^+,0}-G_{\eta^+,\eta})\nonumber\\
\t[H]&=&\bigg(\int_0^{\eta^+}\frac{H(\eta',\phi'(\eta'))}{\sin(\phi'(\eta'))}
\exp(-G_{\eta^+,\eta'}-G_{\eta^+,\eta})\ud{\eta'}
+\int_{\eta}^{\eta^+}\frac{H(\eta',R\phi'(\eta'))}{\sin(\phi'(\eta'))}\exp(G_{\eta,\eta'})\ud{\eta'}\bigg)\nonumber.
\end{eqnarray}
When $(\eta,\phi)$ does not touch the boundary of each cases, we can directly use above mild formulation to see the continuity. Hence, we concentrate on the separatrix between these regions. We divide the proof into several steps:\\
\ \\
Step 1: Separatrix between Region I and Case II.\\
In our formulation, there is no intersection between these two cases, so we do not need to worry about it.\\
\ \\
Step 2: Separatrix between Region II and Region III.\\
The separatrix is the curve satisfying $\abs{E(\eta,\phi)}=\ue^{-V(L)}$. We have in Region II:\\
\begin{eqnarray}
\k[p]&=&p(\phi'(0))\exp(-G_{L,0}-G_{L,\eta})\nonumber\\
\t[H]&=&\bigg(\int_0^{L}\frac{H(\eta',\phi'(\eta'))}{\sin(\phi'(\eta'))}
\exp(-G_{L,\eta'}-G_{L,\eta})\ud{\eta'}
+\int_{\eta}^{L}\frac{H(\eta',R\phi'(\eta'))}{\sin(\phi'(\eta'))}\exp(G_{\eta,\eta'})\ud{\eta'}\bigg)\nonumber.
\end{eqnarray}
and in Region III:\\
\begin{eqnarray}
\k[p]&=&p(\phi'(0))\exp(-G_{\eta^+,0}-G_{\eta^+,\eta})\nonumber\\
\t[H]&=&\bigg(\int_0^{\eta^+}\frac{H(\eta',\phi'(\eta'))}{\sin(\phi'(\eta'))}
\exp(-G_{\eta^+,\eta'}-G_{\eta^+,\eta})\ud{\eta'}
+\int_{\eta}^{\eta^+}\frac{H(\eta',R\phi'(\eta'))}{\sin(\phi'(\eta'))}\exp(G_{\eta,\eta'})\ud{\eta'}\bigg)\nonumber.
\end{eqnarray}
Since we know $\eta^+=L$ on this curve, above two formulations give exactly the same formula. Hence, it is continuous.\\
\ \\
Step 3: Separatrix between Region I and Region III.\\
This is actually the segment of line $(\eta,0)$ for $0<\eta<L$.\\
\ \\
Direction 1: Approaching from Region I.\\
Consider $(\ea,\pa)\rt(\eta,0)$. Assume $(\ea,\pa)$ and $(\eta',\phi')$ are on the same characteristics. Then we have
\begin{eqnarray}
\k[p]&=&p(\phi'(0))\exp(-G_{\ea,0})\\
\t[H]&=&\int_0^{\ea}\frac{H(\eta',\phi'(\eta'))}{\sin\phi'(\eta')}\exp(-G_{\ea,\eta'})\ud{\eta'}.
\end{eqnarray}
We can directly take limit $(\ea,\pa)\rt(\eta',\phi')$ and obtain
\begin{eqnarray}
\k[p]&\rt& p(\phi'(\eta,0;0))\exp\bigg(-\int_{0}^{\eta}\frac{1}{\sin\phi'(y)}\ud{y}\bigg),\\
\t[H]&\rt&\int_0^{\eta}\frac{H(\eta',\phi'(\eta'))}{\sin\phi'(\eta')}\exp\bigg(-\int_{\eta'}^{\eta}\frac{1}{\sin\phi'(y)}\ud{y}\bigg)\ud{\eta'}.
\end{eqnarray}
Here, we cannot further simplify these quantities.\\
\ \\
Direction 2: Approaching from Region III.\\
Consider $(\ea,\pa)\rt(\eta,0)$. Assume $(\ea,\pa)$ and $(\eta',\phi')$ are on the same characteristics. Then we have
\begin{eqnarray}
\k[p]&=&p(\phi'(0))\exp(-G_{\eta^+,0}-G_{\eta^+,\ea})\\
\t[H]&=&\int_0^{\eta^+}\frac{H(\eta',\phi'(\eta'))}{\sin\phi'(\eta')}
\exp(-G_{\eta^+,\eta'}-G_{\eta^+,\ea})\ud{\eta'}\\
&&+\int_{\ea}^{\eta^+}\frac{H(\eta',R\phi'(\eta'))}{\sin\phi'(\eta')}\exp(-G_{\eta',\ea})\ud{\eta'}\nonumber.
\end{eqnarray}
In this region, we always have $\eta^+<L$ and
\begin{eqnarray}
\ue^{-V(\eta^+)}=\ue^{-V(\ea)}\cos\pa.
\end{eqnarray}
Also, it is easy to see
\begin{eqnarray}
\exp(-G_{\eta^+,\ea})\leq \ue^0=1.
\end{eqnarray}
Hence, considering
\begin{eqnarray}
\ue^{-V(\eta^+)}=\ue^{-V(y)}\cos\phi'(y),
\end{eqnarray}
we have
\begin{eqnarray}
-\int_{\ea}^{\eta^+}\frac{1}{\sin\phi'(y)}\ud{y}&=&-\int_{\ea}^{\eta^+}\frac{1}{\sqrt{1-\ue^{2V(y)-2V(\eta^+)}}}\ud{y}\\
&=&-\int_{\ea}^{\eta^+}\frac{\rk-\e y}{\sqrt{(\rk-\e y)^2-(\rk-\e\eta^+)^2}}\ud{y}\no\\
&=&-\int_{\ea}^{\eta^+}\frac{\rk-\e y}{\sqrt{\e(\eta^+-y)(2\rk-\e y-\e\eta^+)}}\ud{y}\no\\
&\geq&-C\int_{\ea}^{\eta^+}\frac{1}{\sqrt{\e(\eta^+-y)}}\ud{y}\no\\
&=&-2C\sqrt{\frac{\eta^+-\ea}{\e}}.\no
\end{eqnarray}
Therefore, we know
\begin{eqnarray}
\exp(-G_{\eta^+,\ea})=\exp\bigg(-\int_{\ea}^{\eta^+}\frac{1}{\sin\phi'(y)}\ud{y}\bigg)\geq \exp\bigg(-2C\sqrt{\frac{\eta^+-\ea}{\e}}\bigg).
\end{eqnarray}
When $\pa\rt0$, since
\begin{eqnarray}
\ue^{-V(\eta^+)}=\ue^{-V(\ea)}\cos\pa,
\end{eqnarray}
we have $\eta^+\rt\ea$, which further implies
\begin{eqnarray}
\exp(-G_{\eta^+,\ea})\rt \ue^0=1.
\end{eqnarray}
Then we apply such result to $\k[p]$ to obtain when $(\ea,\pa)\rt(\eta,0)$
\begin{eqnarray}
\k[p]&=&p(\phi'(0))\exp(-G_{\eta^+,0}-G_{\eta^+,\ea})\rt p(\phi'(0))\exp(-G_{\eta,0}).
\end{eqnarray}
On the other hand, we consider $\t[H]$. We directly obtain
\begin{eqnarray}
\int_0^{\eta^+}\frac{H(\eta',\phi'(\eta'))}{\sin\phi'(\eta')}
\exp(-G_{\eta^+,\eta'}-G_{\eta^+,\ea})\ud{\eta'}\rt \int_0^{\eta}\frac{H(\eta',\phi'(\eta'))}{\sin\phi'(\eta')}
\exp(-G_{\eta,\eta'})\ud{\eta'}.
\end{eqnarray}
Also, we know
\begin{eqnarray}
\abs{\int_{\ea}^{\eta^+}\frac{H(\eta',R\phi'(\eta'))}{\sin\phi'(\eta')}\exp(-G_{\eta',\ea})\ud{\eta'}}
&\leq&\lnnm{H}\abs{\int_{\ea}^{\eta^+}\frac{1}{\sin\phi'(\eta')}\exp(-G_{\eta',\ea})\ud{\eta'}}\\
&\leq&\lnnm{H}\abs{\exp(-G_{\eta',\ea})\bigg|_{\ea}^{\eta^+}}\no\\
&=&\lnnm{H}\abs{\exp(-G_{\eta^+,\ea})-\ue^0}\rt 0.\no
\end{eqnarray}
Therefore, we have
\begin{eqnarray}
\t[H]\rt \int_0^{\eta}\frac{H(\eta',\phi'(\eta'))}{\sin\phi'(\eta')}
\exp(-G_{\ea,\eta'})\ud{\eta'}.
\end{eqnarray}
\ \\
Synthesis:\\
Summarizing above two cases, we always have
\begin{eqnarray}
\k[p]&\rt& p(\phi'(0))\exp(-G_{\eta,0}),\\
\t[H]&\rt& \int_0^{\eta}\frac{H(\eta',\phi'(\eta'))}{\sin\phi'(\eta')}
\exp(-G_{\eta,\eta'})\ud{\eta'}.
\end{eqnarray}
Hence, the solution is continuous.\\
\ \\
Step 4: Triple Point $(L,0)$.\\
This is the only point that three cases can be applied simultaneously. However, based on previous analysis, we know at this point, Case II and Case III provides exactly the same formula. Also, Case I and Case III is equivalent when taking limit $(\ea,\pa)\rt(\eta,0)$. Then this point is also continuous.
\end{proof}

\begin{theorem}\label{pt additional 2}
The derivatives of $\v$ are well-defined a.e. and satisfies
\begin{eqnarray}
\lnnm{\ue^{K_0\eta}\zeta\frac{\p\v}{\p\eta}}+\lnnm{\ue^{K_0\eta}\zeta\frac{\p\v}{\p\eta}}\leq C\abs{\ln(\e)}^8.
\end{eqnarray}
\end{theorem}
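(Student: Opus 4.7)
The plan is to justify the a priori bounds from Theorem \ref{pt theorem 2} by constructing an approximating sequence whose derivatives genuinely exist, and then passing to the limit. Because the obstacle to differentiating $\v$ directly is the non-local term $\bar\v$, I will freeze this coupling through Picard iteration. Set $\v^0 \equiv 0$ and define $\v^{k+1}$ as the solution of the linear $\e$-transport problem
\begin{eqnarray*}
\sin\phi\,\frac{\p\v^{k+1}}{\p\eta}+F(\eta)\cos\phi\,\frac{\p\v^{k+1}}{\p\phi}+\v^{k+1} = \bar\v^{k}+S,
\end{eqnarray*}
with the in-flow condition $\v^{k+1}(0,\phi)=p(\phi)$ for $\sin\phi>0$ and the reflection condition at $\eta=L$. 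This problem carries no non-local coupling, so $\v^{k+1}$ is given explicitly by the mild formulas $\k[p]+\t[\bar\v^k+S]$ of Lemma \ref{pt additional 1} in each of Regions I, II, III.

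Next I would verify that each iterate is differentiable almost everywhere with the quantitative bounds. Since $\bar\v^k$ inherits continuity from $\v^k$ through Lemma \ref{pt additional 1}, and $S$ is smooth by (\ref{Milne decay}), the mild formulas can be differentiated term by term in the interior of each region, with the only singularities lying on the separatrices analyzed in Lemma \ref{pt additional 1}. The a priori chain of estimates in Lemmas \ref{pt lemma 1}, \ref{pt lemma 2}, \ref{pt lemma 6} and Lemmas \ref{pt lemma 3}, \ref{pt lemma 4} applies verbatim to the linear problem for $\v^{k+1}$, with $\bar\v^k$ playing the role of $\bar\v$ on the right-hand side. Running the absorbing argument at the end of Section 3.3.4 then yields uniform-in-$k$ bounds
\begin{eqnarray*}
\lnnm{\ue^{K_0\eta}\zeta\frac{\p\v^{k+1}}{\p\eta}}+\lnnm{\ue^{K_0\eta}\zeta\frac{\p\v^{k+1}}{\p\phi}} \leq C\abs{\ln(\e)}^8,
\end{eqnarray*}
together with the uniform $L^\infty$ decay of $\v^{k+1}$ itself from Theorem \ref{Milne theorem 3}.

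Finally I would establish convergence of the iteration and pass to the limit. By a standard contraction estimate for the linear transport operator with reflection, combined with the exponential decay factors in the mild formulation, $\v^{k+1}-\v^k$ satisfies $\lnnm{\v^{k+1}-\v^k}\leq \lambda \lnnm{\v^k-\v^{k-1}}$ for some $\lambda<1$ when $\e$ is small (the same mechanism that gives uniqueness of $\v$ in Theorem \ref{Milne theorem 1}). Hence $\v^k\to\v$ uniformly, and the weighted derivative bounds persist in the limit: since $\zeta\p_\eta\v^k$ and $\zeta\p_\phi\v^k$ are uniformly bounded in $L^\infty$, they converge weakly-$\ast$ along a subsequence; combined with the uniform convergence of $\v^k$, the limits can be identified with $\zeta\p_\eta\v$ and $\zeta\p_\phi\v$ in the distributional sense, and the $L^\infty$ bound is preserved by lower semicontinuity. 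The main obstacle I expect is in the weak-$\ast$ limit step, specifically making sure the derivatives of the limit coincide with the limit of the derivatives and that the bounds survive pointwise almost everywhere; this will be handled by testing against compactly supported smooth functions away from the grazing set $\{\zeta=0\}$, where the degeneracy of the weight is harmless since $\zeta$ multiplies the derivative.
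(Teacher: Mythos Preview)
Your iteration scheme is structurally close to the paper's, but there is a genuine gap in the convergence step. The non-penalized Picard iteration you propose is \emph{not} a contraction in $L^{\infty}L^{\infty}$: the difference $\z^{k+1}=\v^{k+1}-\v^k$ satisfies
\[
\sin\phi\,\partial_\eta\z^{k+1}+F\cos\phi\,\partial_\phi\z^{k+1}+\z^{k+1}=\bar\z^{k},\qquad \z^{k+1}(0,\cdot)=0,
\]
and the mild formulation gives only $|\z^{k+1}|\leq (1-\ue^{-G})\lnnm{\bar\z^k}\leq \lnnm{\z^k}$. The factor $(1-\ue^{-G_{\eta,0}})$ has supremum equal to $1$, attained as $\eta\to L=\e^{-1/2}$ with $\sin\phi$ bounded below; making $\e$ small only worsens this. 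So you get non-expansion, not strict contraction with a constant $\lambda<1$. Your appeal to ``the same mechanism that gives uniqueness of $\v$ in Theorem \ref{Milne theorem 1}'' is misplaced: uniqueness there is stated only modulo the constant $f_L$, and constant functions lie in the kernel of $f\mapsto f-\bar f$, which is exactly the obstruction to contraction here.

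The paper closes this gap by adding a penalty: one iterates the equation with $(1+\l)\v_{\l}^{m}-\bar\v_{\l}^{m-1}$ for $\l>0$, which produces a genuine contraction factor $\frac{1}{1+\l}$ at the level of $\lnnm{\cdot}$ (see (\ref{pt 08})). The derivative bounds for the increments then combine this decay with the $\d$-small coupling of $\tilde\a$ (see (\ref{pt 09})) to show $\v_{\l}^m\to\v_{\l}$ in the weighted $W^{1,\infty}$ norm. Because the resulting bounds on $\v_{\l}$ are uniform in $\l$, a second limit $\l\to0$ via weak-$\ast$ compactness recovers $\v$ and its derivatives. Your proposal has the right structure for the derivative step (Lemmas \ref{pt lemma 3}--\ref{pt lemma 4} do yield a small-constant coupling between successive iterates' weighted derivatives), but without the penalty you cannot control $\lnnm{\z^m}$, and that term appears with a large constant $C\abs{\ln(\e)}^8$ on the right-hand side of the derivative estimate.
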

\begin{proof}
Based on the a priori estimate, it suffices to show the derivatives are well-defined. Consider the iteration of penalized $\e$-Milne problem for $\{\v_{\l}^m\}_{m=0}^{\infty}$ with $\v_{\l}^0=0$ and for $m\geq1$
\begin{eqnarray}
\left\{
\begin{array}{rcl}\displaystyle
\sin\phi\frac{\p \v_{\l}^{m}}{\p\eta}+F(\eta)\cos\phi\frac{\p
\v_{\l}^{m}}{\p\phi}+(1+\l)\v_{\l}^{m}-\bar\v_{\l}^{m-1}&=&S(\eta,\phi),\\
\v_{\l}^{m}(0,\phi)&=&p(\phi)\ \ \text{for}\ \ \sin\phi>0,\\
\v_{\l}^{m}(L,\phi)&=&\v_{\l}^{m}(L,R\phi).
\end{array}
\right.
\end{eqnarray}
Here we require $\l>0$. We divide the proof into several steps:\\
\ \\
Step 1: $m\rt\infty$ convergence.\\
Tracking along the characteristics, as we have shown in $\e$-Milne problem, we have $\v_{\l}^{m}\in L^{\infty}([0,L]\times[-\pi,\pi))$. Hence, it is easy to see each $\v_{\l}^{m}$ is uniquely determined.
Define $\z^m=\v_{\l}^{m}-\v_{\l}^{m-1}$ for $m\geq1$. Then $\z^m$ satisfies the equation
\begin{eqnarray}
\left\{
\begin{array}{rcl}\displaystyle
\sin\phi\frac{\p \z^{m}}{\p\eta}+F(\eta)\cos\phi\frac{\p
\z^{m}}{\p\phi}+(1+\l)\z^{m}-\bar\z^{m-1}&=&0,\\
\z^{m}(0,\phi)&=&0\ \ \text{for}\ \ \sin\phi>0,\\
\z^{m}(L,\phi)&=&\z^{m}(L,R\phi).
\end{array}
\right.
\end{eqnarray}
Based on previous analysis, we know
\begin{eqnarray}\label{pt 08}
\lnnm{\z^m}\leq\frac{1}{1+\l}\lnnm{\z^{m-1}}\leq\bigg(\frac{1}{1+\l}\bigg)^{m-1}\lnnm{\z^1}.
\end{eqnarray}
Since $\v_{\l}^0=0$, we have $\z^1=\v_{\l}^1$. Applying Lemma \ref{pt additional 1} for $H=0$, we know $\z^1$ is continuous. Using the proofs of Lemma \ref{pt lemma 1}, Lemma \ref{pt lemma 2} and Lemma \ref{pt lemma 6} with
\begin{eqnarray}
\v=\v_{\l}^1,\quad
\bar\v=0,\quad
p=0,
\end{eqnarray}
we get $\dfrac{\p\z^1}{\p\eta}$ and $\dfrac{\p\z^1}{\p\phi}$ are a.e. well-defined. However, the estimates from these lemmas are not strong enough to show the convergence of this iteration. Then we can use estimates of $\e$-Milne problem and the proofs of Lemma \ref{pt lemma 3} and Lemma \ref{pt lemma 4} with
\begin{eqnarray}
&&\a=\zeta\dfrac{\p\v_{\l}^1}{\p\eta},\quad
\tilde\a=0,\quad
p_{\a}=\cos\phi\frac{\p p}{\p\phi}+\v_{\l}^1(0,\phi),\quad
S_{\a}=\zeta\frac{\p S}{\p\eta},\\
&&\b=\zeta\dfrac{\p\v_{\l}^1}{\p\phi},\quad
p_{\b}=\frac{\p p}{\p\phi},\quad
S_{\b}=\zeta\frac{\p S}{\p\phi},
\end{eqnarray}
to see
\begin{eqnarray}
&&\lnnm{\zeta\frac{\p\z^{1}}{\p\eta}}+\lnnm{\zeta\frac{\p\z^{1}}{\p\phi}}+\lnnm{\z^1}\\
&\leq& C\bigg(\lnnm{S}+\lnm{p}+\lnnm{\frac{\p S}{\p\eta}}+\lnnm{\frac{\p S}{\p\phi}}+\lnm{\frac{\p p}{\p\phi}}\bigg).\no
\end{eqnarray}
and further
\begin{eqnarray}
\abs{\dfrac{\p\bar\z^{1}}{\p\eta}}\leq C\abs{\ln(\e)}^8(1+\abs{\ln(\e)}+\abs{\ln(\eta)}).
\end{eqnarray}
Note that here the extra $\l$ will not affect the result. Similarly, for each $m>1$, $\bar\z^{m-1}$ can be regarded as known. Applying Lemma \ref{pt additional 1} for $H=\bar\z^{m-1}$, we know $\z^m$ is continuous. Then we use the proofs of Lemma \ref{pt lemma 1}, Lemma \ref{pt lemma 2} and Lemma \ref{pt lemma 6} with
\begin{eqnarray}
\v=\z^m,\quad
S+\bar\v=\z^{m-1},\quad
p=0,
\end{eqnarray}
to confirm the derivatives $\dfrac{\p\z^m}{\p\eta}$ and $\dfrac{\p\z^m}{\p\phi}$ are a.e. well-defined. Then we utilize the proofs of Lemma \ref{pt lemma 3} and Lemma \ref{pt lemma 4} with
\begin{eqnarray}
\\
&&\a=\zeta\dfrac{\p\z^m}{\p\eta},\quad
\tilde\a=\frac{1}{2\pi}\int_{-\pi}^{\pi}\frac{\zeta(\eta,\phi)}{\zeta(\eta,\phi_{\ast})}\frac{\p\z^{m-1}(\eta,\phi_{\ast})}{\p\eta}\ud{\phi_{\ast}},\quad
p_{\a}=\z^m(0,\phi)-\bar\z^{m-1},\quad
S_{\a}=0,\no\\
&&\b=\zeta\dfrac{\p\z^m}{\p\phi},\quad
p_{\b}=0,\quad
S_{\b}=0,
\end{eqnarray}
to show
\begin{eqnarray}\label{pt 09}
\\
\lnnm{\zeta\frac{\p\z^m}{\p\eta}}+\lnnm{\zeta\frac{\p\z^m}{\p\phi}}&\leq&\d\bigg(\lnnm{\zeta\frac{\p\z^{m-1}}{\p\eta}}+\lnnm{\zeta\frac{\p\z^{m-1}}{\p\phi}}\bigg)
+C\abs{\ln(\e)}^8\lnnm{\z^m}.\no
\end{eqnarray}
for $0<\d<<1$ and
\begin{eqnarray}
\abs{\dfrac{\p\bar\z^{m}}{\p\eta}}\leq C\abs{\ln(\e)}^8(1+\abs{\ln(\e)}+\abs{\ln(\eta)}).
\end{eqnarray}
Therefore, combining (\ref{pt 08}) and (\ref{pt 09}), for fixed $\d\leq\dfrac{1}{1+\l}$, we have
\begin{eqnarray}
&&\lnnm{\zeta\frac{\p\z^m}{\p\eta}}+\lnnm{\zeta\frac{\p\z^m}{\p\phi}}\\
&\leq& \d^{m-1}\bigg(\lnnm{\zeta\frac{\p\z^{1}}{\p\eta}}+\lnnm{\zeta\frac{\p\z^{1}}{\p\phi}}\bigg)+Cm\abs{\ln(\e)}^8\bigg(\frac{1}{1+\l}\bigg)^{m-1}\lnnm{\z^1}\no\\
&\leq&Cm\abs{\ln(\e)}^8\bigg(\frac{1}{1+\l}\bigg)^{m-1}\bigg(\lnnm{\z^1}+\lnnm{\zeta\frac{\p\z^{1}}{\p\eta}}+\lnnm{\zeta\frac{\p\z^{1}}{\p\phi}}\bigg).\no
\end{eqnarray}
For fixed $\e$ and $\l>0$, when $m\rt\infty$, we know
\begin{eqnarray}
\lnnm{\z^m}+\lnnm{\zeta\frac{\p\z^m}{\p\eta}}+\lnnm{\zeta\frac{\p\z^m}{\p\phi}}\rt0,
\end{eqnarray}
and further for any $N>1$,
\begin{eqnarray}
\sum_{k=m}^{m+N}\bigg(\lnnm{\z^m}+\lnnm{\zeta\frac{\p\z^m}{\p\eta}}+\lnnm{\zeta\frac{\p\z^m}{\p\phi}}\bigg)\rt0.
\end{eqnarray}
Hence, $\v_{\l}^m$ is a Cauchy sequence. Thus we have $\v_{\l}^m\rt\v_{\l}$ strongly which satisfies
\begin{eqnarray}
&&\lnnm{\v_{\l}}+\lnnm{\zeta\frac{\p\v_{\l}}{\p\eta}}+\lnnm{\zeta\frac{\p\v_{\l}}{\p\phi}}\\
&\leq&\sum_{k=1}^{\infty}\bigg(\lnnm{\z^m}+\lnnm{\zeta\frac{\p\z^m}{\p\eta}}+\lnnm{\zeta\frac{\p\z^m}{\p\phi}}\bigg)\no\\
&\leq&\frac{C}{\l}\abs{\ln(\e)}^8\lnnm{\z^1}\no\\
&\leq& \frac{C}{\l}\abs{\ln(\e)}^8\bigg(\lnnm{S}+\lnm{p}+\lnnm{\frac{\p S}{\p\eta}}+\lnnm{\frac{\p S}{\p\phi}}+\lnm{\frac{\p h}{\p\phi}}\bigg).\no
\end{eqnarray}
Hence, we know $\dfrac{\p\v_{\l}}{\p\eta}$ and $\dfrac{\p\v_{\l}}{\p\phi}$ are a.e. well-defined.\\
\ \\
Step 2: $\l\rt0$ convergence.\\
We know $\v_{\l}$ satisfies the equation
\begin{eqnarray}
\left\{
\begin{array}{rcl}\displaystyle
\sin\phi\frac{\p \v_{\l}}{\p\eta}+F(\eta)\cos\phi\frac{\p
\v_{\l}}{\p\phi}+(1+\l)\v_{\l}-\bar\v_{\l}&=&S(\eta,\phi),\\
\v_{\l}(0,\phi)&=&p(\phi)\ \ \text{for}\ \ \sin\phi>0,\\
\v_{\l}(L,\phi)&=&\v_{\l}(L,R\phi).
\end{array}
\right.
\end{eqnarray}
Since its derivatives are a.e. well-defined, we can use the proof of Lemma \ref{pt lemma 3} and Lemma \ref{pt lemma 4} to show
\begin{eqnarray}
&&\lnnm{\v_{\l}}+\lnnm{\zeta\frac{\p\v_{\l}}{\p\eta}}+\lnnm{\zeta\frac{\p\v_{\l}}{\p\phi}}\\
&\leq& C\abs{\ln(\e)}^8\bigg(\lnnm{S}+\lnm{p}+\lnnm{\zeta\frac{\p S}{\p\eta}}+\lnnm{\zeta\frac{\p S}{\p\phi}}+\lnm{\frac{\p p}{\p\phi}}\bigg),\no
\end{eqnarray}
which is uniform in $\l$. Then we can define weak-$\ast$ limit $\v_{\l}\rt\v$ in weighted $W^{1,\infty}$, up to extracting a subsequence as $\l\rt0$. Also, the analysis of $\e$-Milne problem in \cite[Section 4]{AA003} reveals that $\v_{\l}\rt\v$ weakly in $L^2L^2$ as $\l\rt0$.
Hence, $\dfrac{\p\v}{\p\eta}$ and $\dfrac{\p\v}{\p\phi}$ are a.e. well-defined.
Therefore, we can apply the a priori estimates in Theorem \ref{pt theorem 1} and Theorem \ref{pt theorem 2} to obtain the desired result.
\end{proof}

\begin{corollary}\label{pt corollary}
We have
\begin{eqnarray}
\lnnm{\ue^{K_0\eta}\sin\phi\frac{\p\v}{\p\eta}(\eta,\phi)}\leq C\abs{\ln(\e)}^8.
\end{eqnarray}
\end{corollary}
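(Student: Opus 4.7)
The plan is to reduce this corollary to a direct pointwise comparison between the weight $\zeta(\eta,\phi)$ and $|\sin\phi|$, so that Theorem \ref{pt theorem 2} applies immediately. Specifically, I would observe that on the range $\eta\in[0,L]$ the potential $V(\eta)$ from Lemma \ref{rt lemma 1} is nonnegative, since
\begin{equation*}
\ue^{-V(\eta)}=\frac{\rk(\tau)-\e\eta}{\rk(\tau)}\in(0,1],
\end{equation*}
for every $\eta\in[0,L]$ and every $\tau$.

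Consequently, the weight function defined in (\ref{weight function}) satisfies
\begin{equation*}
\zeta(\eta,\phi)^2=1-\bigl(\ue^{-V(\eta)}\cos\phi\bigr)^2\;\geq\;1-\cos^2\phi=\sin^2\phi,
\end{equation*}
so that $|\sin\phi|\leq \zeta(\eta,\phi)$ pointwise. This is the whole content of the estimate: it encodes the fact that $\zeta$ is an ``inflated'' version of $|\sin\phi|$ obtained by following the characteristic back to the grazing set.

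Multiplying by $\ue^{K_0\eta}\,|\p_\eta \v|$ and taking the supremum over $(\eta,\phi)\in[0,L]\times[-\pi,\pi)$, we obtain
\begin{equation*}
\lnnm{\ue^{K_0\eta}\sin\phi\,\tfrac{\p\v}{\p\eta}}\;\leq\;\lnnm{\ue^{K_0\eta}\zeta\,\tfrac{\p\v}{\p\eta}}\;\leq\;C\abs{\ln(\e)}^8,
\end{equation*}
where the last inequality is exactly Theorem \ref{pt theorem 2}. There is essentially no obstacle here, the only point to verify carefully is the sign of $V$, which is immediate from the explicit formula in Lemma \ref{rt lemma 1}; all of the heavy lifting has already been done in the weighted $W^{1,\infty}$ analysis leading up to Theorem \ref{pt theorem 2}.
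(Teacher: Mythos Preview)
Your proof is correct and follows essentially the same approach as the paper: the paper's proof is the single line ``This is a natural result of Theorem \ref{pt additional 2} since $\zeta(\eta,\phi)\geq \abs{\sin\phi}$,'' and you have simply spelled out why that pointwise inequality holds. The only cosmetic difference is that you cite Theorem \ref{pt theorem 2} (the a~priori version) rather than Theorem \ref{pt additional 2} (the version that also establishes the derivatives are well-defined a.e.); since both state the same bound and both are available at this point, this is harmless.
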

\begin{proof}
This is a natural result of Theorem \ref{pt additional 2} since $\zeta(\eta,\phi)\geq \abs{\sin\phi}$.
\end{proof}

Now we pull $\tau$ dependence back and study the tangential derivative.
\begin{theorem}\label{Milne tangential}
We have
\begin{eqnarray}
\lnnm{\ue^{K_0\eta}\frac{\p\v}{\p\tau}(\eta,\tau,\phi)}\leq C\abs{\ln(\e)}^8.
\end{eqnarray}
\end{theorem}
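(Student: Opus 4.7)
I would differentiate the $\e$-Milne problem (\ref{Milne problem}) directly in the tangential variable $\tau$ and treat $\tilde\w := \p_\tau f$ as the solution of an auxiliary $\e$-Milne problem to which Theorems \ref{Milne theorem 2} and \ref{Milne theorem 3} apply. Since $\v = f - f_L(\tau)$, one has $\w = \p_\tau \v = \tilde\w - f_L'(\tau)$, and the claim will follow once the bulk constant of the $\tilde\w$-problem is identified with $f_L'(\tau)$.

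\textbf{Auxiliary problem.} Using $\p_\phi f = \p_\phi\v$ and $\p_\eta f = \p_\eta\v$ (since $f_L$ depends only on $\tau$), differentiating in $\tau$ yields
\begin{equation*}
\sin\phi\,\p_\eta \tilde\w + F(\eta,\tau)\cos\phi\,\p_\phi \tilde\w + \tilde\w - \bar{\tilde\w} = \p_\tau S - (\p_\tau F)\cos\phi\,\p_\phi f,
\end{equation*}
with in-flow data $\tilde\w(0,\tau,\phi) = \p_\tau h(\tau,\phi)$ on $\sin\phi > 0$ and the specular condition $\tilde\w(L,\tau,\phi) = \tilde\w(L,\tau,R\phi)$. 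The boundary datum is bounded by (\ref{Milne bounded}) and $\p_\tau S$ decays exponentially by (\ref{Milne decay}).

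\textbf{Main obstacle.} The only dangerous term is $(\p_\tau F)\cos\phi\,\p_\phi f$: Theorem \ref{pt additional 2} controls only $\zeta\,\p_\phi\v$, and $\zeta$ degenerates on the grazing set, so a direct bound fails. The key trick is to eliminate $\p_\phi f$ algebraically using the original equation, $F\cos\phi\,\p_\phi f = S + \bar\v - \v - \sin\phi\,\p_\eta\v$, together with the identity
\begin{equation*}
\frac{\p_\tau F(\eta,\tau)}{F(\eta,\tau)} = -\frac{R_\kappa'(\tau)}{R_\kappa(\tau) - \e\eta},
\end{equation*}
which is uniformly bounded on $0 \leq \eta \leq L = \e^{-1/2}$ by convexity ($R_\kappa \geq R_{\min} > 0$ and $R_\kappa - \e L \geq R_{\min}/2$ for sufficiently small $\e$). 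Substituting gives
\begin{equation*}
(\p_\tau F)\cos\phi\,\p_\phi f = -\frac{R_\kappa'(\tau)}{R_\kappa(\tau) - \e\eta}\bigl(S + \bar\v - \v - \sin\phi\,\p_\eta\v\bigr),
\end{equation*}
and each term on the right has an $L^\infty$ bound with exponential decay: $\v,\bar\v$ via Theorem \ref{Milne theorem 3}, $\sin\phi\,\p_\eta\v$ via Corollary \ref{pt corollary}, and $S$ via (\ref{Milne decay}). Hence the full source satisfies $|\tilde S| \leq C|\ln\e|^8 \ue^{-K_1\eta}$ for some $K_1>0$.

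\textbf{Conclusion.} Theorems \ref{Milne theorem 2} and \ref{Milne theorem 3} applied to the $\tilde\w$-problem then produce a bulk constant $\tilde\w_L(\tau)$ with $\lnnm{\ue^{K_0\eta}(\tilde\w - \tilde\w_L)} \leq C|\ln\e|^8$. Differentiating the defining formula $f_L(\tau) = \br{\sin^2\phi, f}_\phi(L,\tau)/\tnm{\sin\phi}^2$ from Lemma \ref{Milne lemma 1} in $\tau$ identifies $f_L'(\tau) = \br{\sin^2\phi, \tilde\w}_\phi(L,\tau)/\tnm{\sin\phi}^2 = \tilde\w_L(\tau)$. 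Therefore $\w = \tilde\w - \tilde\w_L$ and the claimed estimate follows. The a.e.\ existence of $\p_\tau f$ is justified by the penalized-iteration scheme of Theorem \ref{pt additional 2}, now applied to $\tau$-differences of $f$ in place of the weighted derivative $\zeta\p_\eta\v$.
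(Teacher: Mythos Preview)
Your proposal is correct and follows essentially the same route as the paper. The paper differentiates the $\v$-equation (\ref{Milne difference problem}) directly, obtaining $\w=\p_\tau\v$ as the solution of an $\e$-Milne problem whose extra source term is exactly $\dfrac{R_\kappa'}{R_\kappa-\e\eta}\bigl(F\cos\phi\,\p_\phi\v\bigr)$; it then bounds $F\cos\phi\,\p_\phi\v$ via the original equation together with Corollary~\ref{pt corollary}, and finishes by invoking Theorem~\ref{Milne theorem 3}. Your detour through $\tilde\w=\p_\tau f$ and the explicit identification $\tilde\w_L=f_L'(\tau)$ is a cosmetic variant: the paper's direct approach silently uses the equivalent fact $\w_L=0$ (which follows from $\langle\sin^2\phi,\v\rangle_\phi(L,\tau)=0$), whereas you make this step explicit.
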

\begin{proof}
Following a similar fashion in proof of Lemma \ref{pt additional 2}, using iteration and characteristics, we can show $\dfrac{\p\v}{\p\tau}$ is a.e. well-defined, so here we focus on the a priori estimate. Let $\w=\dfrac{\p\v}{\p\tau}$. Taking $\tau$ derivative on both sides of (\ref{Milne difference problem}), we have $\w$ satisfies the equation
\begin{eqnarray}\label{Milne tangential problem}
\left\{
\begin{array}{rcl}\displaystyle
\sin\phi\frac{\p \w}{\p\eta}+F(\eta)\cos\phi\frac{\p
\w}{\p\phi}+\w-\bar\w&=&\dfrac{\p S}{\p\tau}(\eta,\tau,\phi)+\dfrac{R'_{\kappa}(\tau)}{R_{\kappa}(\tau)-\e\eta}\bigg(F(\eta)\cos\phi\dfrac{\p
\v}{\p\phi}\bigg),\\\rule{0ex}{1.5em}
\w(0,\tau,\phi)&=&\dfrac{\p p}{\p\tau}(\tau,\phi)\ \ \text{for}\ \ \sin\phi>0,\\\rule{0ex}{1.5em}
\w(L,\tau,\phi)&=&\w(L,\tau,R\phi),
\end{array}
\right.
\end{eqnarray}
where $R'_{\kappa}$ is the $\tau$ derivative of $R_{\kappa}$. Our assumptions on $S$ verify
\begin{eqnarray}
\lnnm{\ue^{K_0\eta}\frac{\p S}{\p\tau}(\eta,\tau,\phi)}&\leq&C.
\end{eqnarray}
For $\eta\in[0,L]$, we have
\begin{eqnarray}
\dfrac{R'_{\kappa}(\tau)}{R_{\kappa}(\tau)-\e\eta}\leq C\max_{\tau}R'_{\kappa}(\tau)\leq C.
\end{eqnarray}
Based on Corollary \ref{pt corollary} and the equation (\ref{Milne difference problem}), we know
\begin{eqnarray}
\lnnm{\ue^{K_0\eta}\bigg(F(\eta)\cos\phi\frac{\p\v}{\p\phi}\bigg)(\eta,\tau,\phi)}\leq C\abs{\ln(\e)}^8.
\end{eqnarray}
Therefore, the source term in the equation (\ref{Milne tangential problem}) is in $L^{\infty}$ and decays exponentially. By Theorem \ref{Milne theorem 3}, we have
\begin{eqnarray}
\lnnm{\ue^{K_0\eta}\w(\eta,\tau,\phi)}\leq C\abs{\ln(\e)}^8,
\end{eqnarray}
which is the desired estimate.
\end{proof}

\subsection{Diffusive Boundary}

In this subsection, we come back to the $\e$-Milne problem with diffusive boundary.
In \cite[Section 6]{AA003}, it has been proved that
\begin{lemma}\label{Milne lemma 1.}
In order for the equation (\ref{Milne problem.}) to have a solution
$f(\eta,\tau,\phi)\in L^{\infty}([0,L]\times[-\pi,\pi)\times[-\pi,\pi))$, the boundary data $h$
and the source term $S$ must satisfy the compatibility condition
\begin{eqnarray}\label{Milne compatibility condition}
\int_{\sin\phi>0}h(\tau,\phi)\sin\phi\ud{\phi}
+\int_0^{L}\int_{-\pi}^{\pi}\ue^{-V(s)}S(s,\tau,\phi)\ud{\phi}\ud{s}=0.
\end{eqnarray}
In particular, if $S=0$, then the compatibility condition reduces to
\begin{eqnarray}\label{Milne reduced compatibility condition}
\int_{\sin\phi>0}h(\tau,\phi)\sin\phi\ud{\phi}=0.
\end{eqnarray}
\end{lemma}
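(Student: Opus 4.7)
The plan is to derive the compatibility condition by testing the equation against the natural integrating factor $e^{-V(\eta)}$, which is induced by the geometric correction $F$ via the relation $\partial_\eta V=-F$. This is a standard solvability identity: the constant function lies in the null space of $f\mapsto f-\bar f$, and the weight $e^{-V}$ is chosen so that the transport terms combine into a pure divergence whose bulk part cancels.

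First I would multiply (\ref{Milne problem.}) by $e^{-V(\eta)}$ and integrate over $(\eta,\phi)\in[0,L]\times[-\pi,\pi)$. The local term vanishes since $\int_{-\pi}^{\pi}(f-\bar f)\,d\phi=0$. For the convection part I would integrate by parts in $\eta$ on $e^{-V}\sin\phi\,\partial_\eta f$ and in $\phi$ on $e^{-V}F\cos\phi\,\partial_\phi f$. Using $\partial_\eta e^{-V}=Fe^{-V}$ one gets a bulk contribution $-\int\!\!\int F e^{-V}\sin\phi\,f$ from the first, and $\partial_\phi(e^{-V}F\cos\phi)=-e^{-V}F\sin\phi$ gives $+\int\!\!\int Fe^{-V}\sin\phi\,f$ from the second, so these bulk terms cancel exactly. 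The $\phi$-boundary term vanishes by $2\pi$-periodicity. This is precisely the identity dual to Lemma \ref{rt lemma 2}. What remains is
\begin{eqnarray*}
\int_{-\pi}^{\pi}\Big(e^{-V(L)}\sin\phi\,f(L,\tau,\phi)-\sin\phi\,f(0,\tau,\phi)\Big)\,d\phi
=\int_0^{L}\!\!\int_{-\pi}^{\pi}e^{-V(\eta)}S(\eta,\tau,\phi)\,d\phi\,d\eta.
\end{eqnarray*}

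Next I would process the two boundary contributions. At $\eta=L$, the specular-type condition $f(L,\tau,\phi)=f(L,\tau,R\phi)$ with $R\phi=-\phi$ makes the integrand odd in $\phi$, so $\int_{-\pi}^{\pi}\sin\phi\,f(L,\tau,\phi)\,d\phi=0$ after the substitution $\phi\mapsto-\phi$. At $\eta=0$, I split the integral into $\sin\phi>0$ and $\sin\phi<0$. On the incoming part I plug in $f(0,\tau,\phi)=h(\tau,\phi)+\mathcal{P}[f](0,\tau)$, producing $\int_{\sin\phi>0}\sin\phi\,h\,d\phi+2\,\mathcal{P}[f](0,\tau)$ since $\int_{\sin\phi>0}\sin\phi\,d\phi=2$. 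On the outgoing part, the definition of $\mathcal{P}$ gives $\int_{\sin\phi<0}\sin\phi\,f(0,\tau,\phi)\,d\phi=-2\,\mathcal{P}[f](0,\tau)$. The two $\mathcal{P}$-pieces cancel, leaving exactly $\int_{\sin\phi>0}\sin\phi\,h(\tau,\phi)\,d\phi$, which after moving to the right-hand side yields (\ref{Milne compatibility condition}).

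The hard part is not really hard here; it is just the recognition that $e^{-V}$ is the correct integrating factor that adapts Bensoussan--Lions--Papanicolaou's flat compatibility condition to the presence of the geometric correction term. One should verify that the integration by parts is legitimate, which is fine whenever $f\in L^{\infty}$ with appropriate one-sided traces at $\eta=0,L$ (guaranteed by the well-posedness theory already developed in Section 3.1). The reduction to (\ref{Milne reduced compatibility condition}) when $S\equiv 0$ is then immediate. Finally, this condition is clearly necessary; sufficiency is not claimed in the statement and follows from the existence results quoted from \cite[Section 6]{AA003}.
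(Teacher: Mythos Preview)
Your argument is correct and is exactly the standard derivation: multiply by the integrating factor $\ue^{-V(\eta)}$, integrate over $[0,L]\times[-\pi,\pi)$, and observe that the bulk convection terms cancel by $\p_\eta V=-F$, the $\eta=L$ term vanishes by the reflection condition, and the diffusive boundary at $\eta=0$ leaves only the $h$-contribution after the two $\pp[f]$ pieces cancel. The paper itself does not supply a proof of this lemma; it simply invokes \cite[Section~6]{AA003}, so there is no in-paper argument to compare against, and your write-up fills that gap faithfully.
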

It is easy to see if $f$ is a solution to (\ref{Milne problem.}),
then $f+C$ is also a solution for any constant $C$. Hence, in order
to obtain a unique solution, we need a normalization condition
\begin{eqnarray}\label{Milne normalization}
\pp[f](0,\tau)=0.
\end{eqnarray}
The following lemma in \cite[Section 6]{AA003} tells us the problem (\ref{Milne problem.}) can
be reduced to the $\e$-Milne problem with in-flow boundary (\ref{Milne problem}).
\begin{lemma}\label{Milne lemma 2.}
If the boundary data $h$ and $S$ satisfy the compatibility condition
(\ref{Milne compatibility condition}), then the solution $f$ to the
$\e$-Milne problem (\ref{Milne problem}) with in-flow boundary as
$f=h$ on $\sin\phi>0$ is also a solution to the $\e$-Milne problem
(\ref{Milne problem.}) with diffusive boundary, which satisfies the
normalization condition (\ref{Milne normalization}). Furthermore,
this is the unique solution to (\ref{Milne problem.}) among the
functions satisfying (\ref{Milne normalization}) and
$\tnnm{f(\eta,\tau,\phi)-f_L(\tau)}\leq C$.
\end{lemma}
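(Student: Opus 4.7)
The plan is to reduce the diffusive-boundary problem (\ref{Milne problem.}) to the in-flow problem (\ref{Milne problem}), which has already been solved in Theorem \ref{Milne theorem 1}. Concretely, I take $f$ to be the unique solution of (\ref{Milne problem}) with the same boundary data $h$ and source $S$; by Theorem \ref{Milne theorem 1} this $f$ obeys $\tnnm{f-f_L}\leq C$. The only thing remaining is to show that, under the compatibility hypothesis (\ref{Milne compatibility condition}), one has $\pp[f](0,\tau)=0$. Once this is established, the in-flow boundary condition $f(0,\tau,\phi)=h(\tau,\phi)$ on $\sin\phi>0$ coincides with the diffusive condition $f(0,\tau,\phi)=h(\tau,\phi)+\pp[f](0,\tau)$, and the normalization (\ref{Milne normalization}) is automatic.

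The key step is to evaluate the identity from Lemma \ref{Milne lemma 1} at $\eta=0$. Since $V(0,\tau)=0$ and $\br{\sin\phi,r}_{\phi}=\br{\sin\phi,f}_{\phi}$ (the hydrodynamic part $q$ is $\phi$-independent), this identity delivers $\br{\sin\phi,f}_{\phi}(0,\tau)$ as a multiple of $\int_{0}^{L}\ue^{-V(y,\tau)}\bar{S}(y,\tau)\ud{y}$. The identity itself is derived by multiplying the equation in (\ref{Milne problem}) by $1$ and integrating in $\phi$, using periodicity to convert the $F\cos\phi\,\partial_{\phi}f$ term into $F\br{\sin\phi,f}_{\phi}$, then integrating the resulting first-order ODE in $\eta$ from $\eta$ to $L$ against the integrating factor $\ue^{-V}$; the specular-type symmetry $f(L,\tau,\phi)=f(L,\tau,R\phi)$ kills the boundary contribution at $L$ because $\sin\phi$ is odd while $f(L,\tau,\cdot)$ is even. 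On the other hand, splitting the inner product at $\eta=0$ by the sign of $\sin\phi$ and using the in-flow datum $f(0,\tau,\phi)=h(\tau,\phi)$ on $\sin\phi>0$ gives $\br{\sin\phi,f}_{\phi}(0,\tau)=\int_{\sin\phi>0}h\sin\phi\,\ud{\phi}+\int_{\sin\phi<0}f(0,\tau,\phi)\sin\phi\,\ud{\phi}$. The compatibility condition (\ref{Milne compatibility condition}) is exactly the relation that makes the first piece cancel the integrated source, forcing $\int_{\sin\phi<0}f(0,\tau,\phi)\sin\phi\,\ud{\phi}=0$, i.e.~$\pp[f](0,\tau)=0$.

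For uniqueness, take two solutions $f_1,f_2$ of (\ref{Milne problem.}) satisfying the normalization and the $L^2$ bound. Their difference $g=f_1-f_2$ solves the equation with $S=0$, and $\pp[g](0,\tau)=0$, so the diffusive condition degenerates to $g(0,\tau,\phi)=0$ on $\sin\phi>0$; thus $g$ solves the in-flow problem with zero data and zero source. The uniqueness clause of Theorem \ref{Milne theorem 1} (applied against the trivial candidate $g\equiv 0$ inside the class $\tnnm{g-g_L}\leq C$) then forces $g\equiv 0$, whence $f_1=f_2$. There is no substantive obstacle in this argument; the whole proof reduces to a single algebraic identity at $\eta=0$. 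The only delicate point is the bookkeeping of the $2\pi$ factors between $\bar S$ and $\int_{-\pi}^{\pi}S\,\ud{\phi}$ when matching Lemma \ref{Milne lemma 1} against (\ref{Milne compatibility condition}); these normalizations must be tracked consistently so that the cancellation at $\eta=0$ genuinely produces $\pp[f](0,\tau)=0$ and not some nonzero multiple of the integrated source.
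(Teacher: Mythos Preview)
Your argument is correct and is essentially the standard reduction that the paper defers to \cite[Section 6]{AA003} rather than proving in-text. The key identity you invoke --- integrating the equation in $\phi$, using periodicity and the specular symmetry at $\eta=L$ to obtain $\br{\sin\phi,f}_{\phi}(0,\tau)=-\int_0^L\int_{-\pi}^{\pi}\ue^{-V(y)}S(y,\tau,\phi)\,\ud{\phi}\,\ud{y}$ --- is exactly the computation behind the flux formula in Lemma~\ref{Milne lemma 1}, and matching it against the in-flow split at $\eta=0$ together with (\ref{Milne compatibility condition}) indeed forces $\int_{\sin\phi<0}f(0,\tau,\phi)\sin\phi\,\ud{\phi}=0$, i.e.\ $\pp[f](0,\tau)=0$. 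Your uniqueness argument is also fine: the difference $g=f_1-f_2$ satisfies $\tnnm{g-(f_{1,L}-f_{2,L})}\leq C$, which places it in the uniqueness class of Theorem~\ref{Milne theorem 1} for the zero-data in-flow problem, whence $g\equiv 0$. The $2\pi$ bookkeeping you flag is real --- the formula in Lemma~\ref{Milne lemma 1} as stated carries $\bar S$ rather than $\int S\,\ud{\phi}$ --- but your direct derivation of the flux identity bypasses this and matches (\ref{Milne compatibility condition}) on the nose.
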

In summary, based on above analysis, we can utilize the known result
for $\e$-Milne problem (\ref{Milne problem}) to obtain the
desired results of the solution to the
$\e$-Milne problem (\ref{Milne problem.}).
\begin{theorem}\label{Milne theorem 1.}
There exists a unique solution $f(\eta,\tau,\phi)$ to the $\e$-Milne problem
(\ref{Milne problem.}) with the normalization condition (\ref{Milne
normalization}) satisfying
\begin{eqnarray}
\tnnm{f(\eta,\tau,\phi)-f_L(\tau)}\leq C.
\end{eqnarray}
\end{theorem}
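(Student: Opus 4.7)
The plan is to reduce the diffusive boundary problem to the already-solved in-flow problem by invoking Lemmas \ref{Milne lemma 1.} and \ref{Milne lemma 2.}, and then apply Theorem \ref{Milne theorem 1}. First I would assume the compatibility condition (\ref{Milne compatibility condition}) on the data $h$ and $S$, which is necessary by Lemma \ref{Milne lemma 1.}. Under this assumption I apply Theorem \ref{Milne theorem 1} with in-flow data $h(\tau,\phi)$ on $\sin\phi > 0$ and source $S$, which produces a unique in-flow solution $f(\eta,\tau,\phi)$ satisfying $\tnnm{f - f_L(\tau)} \leq C$ with $\abs{f_L(\tau)} \leq C$ (at each fixed tangential slice $\tau$; all constants uniform in $\tau$ because the transport coefficient $R_\k(\tau)$ is bounded away from $0$ and $\infty$ on a convex domain).

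Next I would quote Lemma \ref{Milne lemma 2.}: because $h$ and $S$ satisfy (\ref{Milne compatibility condition}), the in-flow solution $f$ obtained above also solves the diffusive problem (\ref{Milne problem.}) and automatically satisfies the normalization (\ref{Milne normalization}). The mechanism is simple: the normalization $\pp[f](0,\tau)=0$ collapses the diffusive boundary condition $f(0,\tau,\phi) = h(\tau,\phi) + \pp[f](0,\tau)$ on $\sin\phi > 0$ back to the in-flow condition $f(0,\tau,\phi) = h(\tau,\phi)$, so no new equation needs to be solved. This gives existence together with the stated $L^2$ estimate.

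For uniqueness within the class satisfying (\ref{Milne normalization}) and $\tnnm{f - f_L(\tau)} \leq C$, suppose $f_1$ and $f_2$ are two such solutions. Their difference $g := f_1 - f_2$ satisfies the homogeneous transport equation and, on $\sin\phi > 0$, the boundary condition $g(0,\tau,\phi) = \pp[f_1](0,\tau) - \pp[f_2](0,\tau) = 0 - 0 = 0$. Hence $g$ is an in-flow solution with zero data and zero source, with $\tnnm{g - (f_{1,L} - f_{2,L})} \leq C$. By the uniqueness part of Theorem \ref{Milne theorem 1} (equivalently Lemma \ref{Milne lemma 1}), $g$ must equal the constant $g_L(\tau) := f_{1,L}(\tau) - f_{2,L}(\tau)$. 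Applying the normalization $\pp[g](0,\tau) = 0$ and using that $\pp$ acts as the identity on constants (since $-\tfrac{1}{2}\int_{\sin\phi < 0} \sin\phi \, \ud\phi = 1$), we get $g_L(\tau) = 0$, so $f_1 \equiv f_2$.

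The main content — constructing the in-flow solution with controlled $L^2$ and pointwise bounds — is the already-proved Theorem \ref{Milne theorem 1}, and the reduction to it is the already-proved Lemma \ref{Milne lemma 2.}. So the only genuinely forward work in this statement is the uniqueness argument above and the bookkeeping that all bounds are uniform in the parameter $\tau$, which requires only the standing assumption that $R_\k(\tau)$ and $R_\k'(\tau)$ are bounded on the convex domain. I do not expect any serious obstacle; the theorem is essentially a packaging result assembling the earlier machinery.
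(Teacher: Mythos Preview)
Your proposal is correct and follows exactly the paper's approach: the paper presents this theorem as an immediate consequence of Lemma~\ref{Milne lemma 2.} (reduction of diffusive to in-flow) together with Theorem~\ref{Milne theorem 1} (in-flow $L^2$ estimate), without writing out a separate proof. Your uniqueness argument is fine but in fact redundant, since the uniqueness clause is already contained in the statement of Lemma~\ref{Milne lemma 2.}.
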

\begin{theorem}\label{Milne theorem 2.}
The unique solution $f(\eta,\tau,\phi)$ to the $\e$-Milne problem
(\ref{Milne problem.}) with the normalization condition (\ref{Milne
normalization}) satisfying
\begin{eqnarray}
\lnnm{f(\eta,\tau,\phi)-f_L(\tau)}\leq C.
\end{eqnarray}
\end{theorem}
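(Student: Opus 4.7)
The plan is to exploit the reduction from the diffusive boundary problem to the in-flow boundary problem that was established in Lemma \ref{Milne lemma 2.}, and then invoke the $L^{\infty}$ estimate already proved for the in-flow case, namely Theorem \ref{Milne theorem 2}.

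More concretely, I would first appeal to Lemma \ref{Milne lemma 2.}: since the data $h^{\e}$ and $S^{\e}$ are assumed to satisfy the compatibility condition (\ref{Milne compatibility condition}), the unique solution $f(\eta,\tau,\phi)$ of the diffusive $\e$-Milne problem (\ref{Milne problem.}) satisfying the normalization (\ref{Milne normalization}) coincides with the solution of the in-flow problem (\ref{Milne problem}) in which the inflow datum is taken to be precisely $h^{\e}(\tau,\phi)$ on $\sin\phi>0$. Uniqueness in the class $\tnnm{f-f_L}\le C$ has already been recorded in Theorem \ref{Milne theorem 1.}, so only the quantitative bound remains.

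Next, I would check that the hypotheses (\ref{Milne bounded}) and (\ref{Milne decay}) used in the in-flow theory are in force: the bound on $h^{\e}$ and its derivatives holds by the assumption of the diffusive problem, and the exponential decay of $S^{\e}$ is also part of the hypotheses. Therefore, Theorem \ref{Milne theorem 2} applies directly to the in-flow reformulation and yields
\begin{eqnarray}
\lnnm{f(\eta,\tau,\phi)-f_L(\tau)}\leq C,
\end{eqnarray}
for the same real number $f_L(\tau)$ constructed in Lemma \ref{Milne lemma 1}, which is the conclusion of Theorem \ref{Milne theorem 2.}.

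There is essentially no new analytic difficulty here; the entire content lies in the fact that the diffusive boundary condition, combined with the normalization $\pp[f](0,\tau)=0$ and the compatibility condition on the data, collapses to a pure in-flow boundary condition $f(0,\tau,\phi)=h^{\e}(\tau,\phi)$ for $\sin\phi>0$. The only point requiring a little care is that $\tau$ is a passive parameter: all constants in (\ref{Milne bounded}), (\ref{Milne decay}) and in Theorem \ref{Milne theorem 2} are uniform in $\tau$, so the resulting estimate is automatically uniform in $\tau$ as well. No additional absorbing argument or new weighted estimate is required beyond what has already been proved for the in-flow Milne problem.
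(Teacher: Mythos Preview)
Your proposal is correct and matches the paper's approach exactly: the paper states Theorem \ref{Milne theorem 2.} without a separate proof, simply noting that the diffusive-boundary results follow from the in-flow results via Lemma \ref{Milne lemma 2.}, which is precisely the reduction you spell out.
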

\begin{theorem}\label{Milne theorem 3.}
There exists $K_0>0$ such that the solution $f(\eta,\tau,\phi)$ to the
$\e$-Milne problem (\ref{Milne problem.}) with the normalization
condition (\ref{Milne normalization}) satisfies
\begin{eqnarray}
\lnnm{\ue^{K_0\eta}\bigg(f(\eta,\tau,\phi)-f_L(\tau)\bigg)}\leq C.
\end{eqnarray}
\end{theorem}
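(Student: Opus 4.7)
The plan is to reduce the diffusive boundary problem (\ref{Milne problem.}) to the in-flow boundary problem (\ref{Milne problem}), for which Theorem \ref{Milne theorem 3} already provides the desired exponential decay estimate. This reduction is precisely the content of Lemma \ref{Milne lemma 2.}: under the compatibility condition (\ref{Milne compatibility condition}) and the normalization (\ref{Milne normalization}), a solution to the in-flow problem with boundary data $h^{\e}$ on $\sin\phi>0$ is automatically a solution to the diffusive problem, since $\pp[f](0,\tau)=0$ makes the diffusive boundary condition collapse to the in-flow one $f(0,\tau,\phi)=h^{\e}(\tau,\phi)$. Uniqueness among functions satisfying $\tnnm{f(\eta,\tau,\phi)-f_L(\tau)}\leq C$ and the normalization is also supplied by Lemma \ref{Milne lemma 2.}.

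Given this reduction, I would argue as follows. First, verify that the data $h^{\e}$ and $S^{\e}$ for the Milne problem inherit the bounds (\ref{Milne bounded}) and (\ref{Milne decay}); this is immediate since these hypotheses were built into the statement of the problem and are preserved when we reinterpret the diffusive boundary data as in-flow data. Second, apply Theorem \ref{Milne theorem 3} to the resulting in-flow problem: there exists $K_0>0$ (independent of $\e$ and $\tau$) and a constant $C>0$ such that
\begin{eqnarray}
\lnnm{\ue^{K_0\eta}\bigg(f(\eta,\tau,\phi)-f_L(\tau)\bigg)}\leq C.
\end{eqnarray}
Since this solution coincides with the unique normalized solution of the diffusive problem by Lemma \ref{Milne lemma 2.}, the estimate transfers verbatim.

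I do not expect any serious obstacle here, because all of the hard analysis, including the construction of characteristics, the choice of weight $\zeta$, the careful bookkeeping across the three regions (Regions I, II, III), and the exponential decay for the in-flow Milne problem, has already been carried out in Theorem \ref{Milne theorem 3}. The only mild points to be careful about are: (i) checking that the normalization condition $\pp[f](0,\tau)=0$ indeed makes the boundary contribution $\pp[f^{\e}](0,\tau)$ disappear, so the effective in-flow data is exactly $h^{\e}$; and (ii) noting that the constant $K_0$ and the bound $C$ from Theorem \ref{Milne theorem 3} are uniform in $\tau$, which is needed since $\tau$ now enters as a genuine parameter through $R_{\kappa}(\tau)$. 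Both points follow directly from the statements already established, so the proof collapses to a short invocation of the two preceding results.
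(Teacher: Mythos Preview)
Your proposal is correct and matches the paper's approach exactly: the paper states Theorems \ref{Milne theorem 1.}--\ref{Milne tangential.} without separate proofs, prefacing them with the remark that one can ``utilize the known result for $\e$-Milne problem (\ref{Milne problem}) to obtain the desired results of the solution to the $\e$-Milne problem (\ref{Milne problem.}),'' which is precisely the reduction via Lemma \ref{Milne lemma 2.} followed by Theorem \ref{Milne theorem 3} that you describe. One minor clarification: the weight $\zeta$ and the Region I/II/III decomposition belong to the regularity analysis of Sections 3.4--3.6, not to the exponential decay Theorem \ref{Milne theorem 3} itself (whose proof is deferred to \cite{AA003}), but this does not affect the validity of your argument.
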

\begin{theorem}\label{Milne tangential.}
The unique solution $f(\eta,\tau,\phi)$ to the
$\e$-Milne problem (\ref{Milne problem.}) with the normalization
condition (\ref{Milne normalization}) satisfies
\begin{eqnarray}
\lnnm{\ue^{K_0\eta}\frac{\p(f-f_L)}{\p\tau}(\eta,\tau,\phi)}\leq C\abs{\ln(\e)}^8.
\end{eqnarray}
\end{theorem}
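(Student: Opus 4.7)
The plan is to obtain this theorem as an immediate corollary of Theorem \ref{Milne tangential} via the reduction supplied by Lemma \ref{Milne lemma 2.}. Assuming the compatibility condition (\ref{Milne compatibility condition}) and the normalization (\ref{Milne normalization}), Lemma \ref{Milne lemma 2.} identifies the unique normalized solution $f$ of the diffusive-boundary problem (\ref{Milne problem.}) with the unique solution of the in-flow $\e$-Milne problem (\ref{Milne problem}) carrying the same data $h^{\e}$ and $S^{\e}$. Consequently $\v := f - f_L$ solves the very problem (\ref{Milne difference problem}) to which all the $W^{1,\infty}$ machinery of Section 3 applies, and the hypotheses (\ref{Milne bounded}), (\ref{Milne decay}) are precisely those in force here.

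With this identification, I would simply invoke Theorem \ref{Milne tangential} applied to the associated in-flow solution, which yields
\[
\lnnm{\ue^{K_0\eta}\frac{\p(f-f_L)}{\p\tau}(\eta,\tau,\phi)} = \lnnm{\ue^{K_0\eta}\frac{\p\v}{\p\tau}(\eta,\tau,\phi)} \leq C\abs{\ln(\e)}^8,
\]
which is exactly the stated bound. Internally, the proof of Theorem \ref{Milne tangential} differentiates (\ref{Milne difference problem}) in $\tau$ to obtain (\ref{Milne tangential problem}), controls the forcing term on the right by combining the $\tau$-regularity of $S$ with Corollary \ref{pt corollary} (applied to $F(\eta)\cos\phi\,\p_\phi\v$), and concludes via the exponential decay Theorem \ref{Milne theorem 3}; nothing in this chain needs to be modified when the upstream boundary condition is diffusive rather than in-flow.

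Since all the substantive work — the invariant distance $\zeta$, the weighted $L^\infty$ bounds on $\p_\eta\v$ and $\p_\phi\v$, and the tangential iteration — was already carried out for the in-flow problem, I do not anticipate any genuine obstacle. The only mild point to verify is that $f_L(\tau)$, given by the formula in Lemma \ref{Milne lemma 1}, depends smoothly on $\tau$ with $|f_L'(\tau)| \leq C$ uniformly in $\e$; this follows directly from the $\tau$-smoothness of $R_\kappa$, $h^{\e}$ and $S^{\e}$ together with the uniform $L^\infty$ bound on $f$ from Theorem \ref{Milne theorem 2.}. Hence the theorem follows essentially by quotation of results already established.
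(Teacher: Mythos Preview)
Your proposal is correct and matches the paper's approach exactly: Section~3.8 simply records Theorem~\ref{Milne tangential.} (along with Theorems~\ref{Milne theorem 1.}--\ref{Milne theorem 3.}) as an immediate consequence of the corresponding in-flow result Theorem~\ref{Milne tangential}, using Lemma~\ref{Milne lemma 2.} to identify the normalized diffusive-boundary solution with the in-flow solution. Your additional remark about the $\tau$-regularity of $f_L$ is a reasonable sanity check that the paper leaves implicit.
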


\section{Remainder Estimate}

In this section, we consider the the remainder equation for $u(\vx,\vw)$ as
\begin{eqnarray}\label{neutron}
\left\{
\begin{array}{rcl}\displaystyle
\e\vw\cdot\nx u+u-\bar
u&=&f(\vx,\vw)\ \ \text{in}\ \ \Omega,\\
u(\vx_0,\vw)&=&\pp[u](\vx_0)+h(\vx_0,\vw)\ \ \text{for}\ \
\vw\cdot\vn<0\ \ \text{and}\ \ \vx_0\in\p\Omega,
\end{array}
\right.
\end{eqnarray}
where
\begin{eqnarray}
\bar u(\vx)=\frac{1}{2\pi}\int_{\s^1}u(\vx,\vw)\ud{\vw},
\end{eqnarray}
\begin{eqnarray}
\pp[u](\vx_0)=\frac{1}{2}\int_{\vw\cdot\vn>0}u(\vx_0,\vw)(\vw\cdot\vn)\ud{\vw},
\end{eqnarray}
$\vn$ is the outward unit normal vector, with the Knudsen number $0<\e<<1$. To guarantee uniqueness, we need the normalization condition
\begin{eqnarray}\label{normalization.}
\int_{\Omega\times\s^1}u(\vx,\vw)\ud{\vw}\ud{\vx}=0.
\end{eqnarray}
Also, the data $f$ and $h$ satisfy the compatibility condition
\begin{eqnarray}\label{compatibility.}
\int_{\Omega\times\s^1}f(\vx,\vw)\ud{\vw}\ud{\vx}+\e\int_{\p\Omega}\int_{\vw\cdot\vn<0}h(\vx_0,\vw)(\vw\cdot\vn)\ud{\vw}\ud{\vx_0}=0.
\end{eqnarray}
We define the $L^p$ norm with $1\leq p<\infty$ and $L^{\infty}$ norms in $\Omega\times\s^1$ as
usual:
\begin{eqnarray}
\nm{f}_{L^p(\Omega\times\s^1)}&=&\bigg(\int_{\Omega}\int_{\s^1}\abs{f(\vx,\vw)}^p\ud{\vw}\ud{\vx}\bigg)^{1/p},\\
\nm{f}_{L^{\infty}(\Omega\times\s^1)}&=&\sup_{(\vx,\vw)\in\Omega\times\s^1}\abs{f(\vx,\vw)}.
\end{eqnarray}
Define the $L^p$ norm with $1\leq p<\infty$ and $L^{\infty}$ norms on the boundary as follows:
\begin{eqnarray}
\nm{f}_{L^p(\Gamma)}&=&\bigg(\iint_{\Gamma}\abs{f(\vx,\vw)}^p\abs{\vw\cdot\vn}\ud{\vw}\ud{\vx}\bigg)^{1/p},\\
\nm{f}_{L^p(\Gamma^{\pm})}&=&\bigg(\iint_{\Gamma^{\pm}}\abs{f(\vx,\vw)}^p\abs{\vw\cdot\vn}\ud{\vw}\ud{\vx}\bigg)^{1/p},\\
\nm{f}_{L^{\infty}(\Gamma)}&=&\sup_{(\vx,\vw)\in\Gamma}\abs{f(\vx,\vw)},\\
\nm{f}_{L^{\infty}(\Gamma^{\pm})}&=&\sup_{(\vx,\vw)\in\Gamma^{\pm}}\abs{f(\vx,\vw)}.
\end{eqnarray}

\subsection{Preliminaries}

In order to show the $L^{\infty}$ estimates of the
equation (\ref{neutron}), we start with some preparations with the
transport equation.
\begin{lemma}\label{well-posedness lemma 1}
Assume $f(\vx,\vw)\in L^{\infty}(\Omega\times\s^1)$ and
$h(x_0,\vw)\in L^{\infty}(\Gamma^-)$. Then for the
transport equation
\begin{eqnarray}\label{penalty equation}
\left\{
\begin{array}{rcl}
\e\vw\cdot\nx u+u&=&f(\vx,\vw)\ \ in\ \ \Omega\\
u(\vx_0,\vw)&=&h(\vx_0,\vw)\ \ for\ \ \vx_0\in\p\Omega\ \ and\ \vw\cdot\vn<0,
\end{array}
\right.
\end{eqnarray}
there exists a unique
solution $u(\vx,\vw)\in L^{\infty}(\Omega\times\s^1)$
satisfying
\begin{eqnarray}
\im{u}{\Omega\times\s^1}\leq
\im{f}{\Omega\times\s^1}+\im{h}{\Gamma^-}
\end{eqnarray}
\end{lemma}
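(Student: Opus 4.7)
The plan is to solve the transport equation explicitly along backward characteristics, which gives existence together with the desired $L^\infty$ estimate in one stroke, and uniqueness follows from the same formula applied to the homogeneous problem.

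For each $(\vx,\vw) \in \Omega \times \s^1$, define the backward exit time
\[
t_b(\vx,\vw) = \inf\{t \geq 0 : \vx - t\vw \notin \Omega\},
\]
and the backward exit point $\vx_b = \vx - t_b \vw \in \p\Omega$. Since $\Omega$ is convex (and bounded), $t_b$ is finite and the characteristic $\vx - s\vw$ for $s \in [0, t_b)$ stays in $\Omega$. At $\vx_b$ the vector $-\vw$ points outward, so $\vw\cdot\vn(\vx_b) \leq 0$, matching the in-flow boundary where $h$ is prescribed.

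Along the characteristic, writing $g(s) = u(\vx - s\vw, \vw)$ and using $\e\,\vw\cdot\nx u + u = f$, one obtains the ODE $g'(s) - \e^{-1} g(s) = -\e^{-1} f(\vx - s\vw, \vw)$. Multiplying by the integrating factor $\ue^{-s/\e}$ and integrating over $[0, t_b]$ yields the explicit Duhamel representation
\[
u(\vx,\vw) = h(\vx_b, \vw)\,\ue^{-t_b/\e} + \int_0^{t_b} \frac{f(\vx - s\vw, \vw)}{\e}\, \ue^{-s/\e}\, \ud{s}.
\]
This formula is used as the \emph{definition} of $u$; a direct differentiation (valid wherever $t_b$ is smooth, and extendable to a.e. $(\vx,\vw)$) confirms that it satisfies (\ref{penalty equation}) in the interior with the correct in-flow boundary value as $t_b \to 0$.

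For the $L^\infty$ bound, estimate each term in the representation:
\[
\abs{u(\vx,\vw)} \leq \im{h}{\Gamma^-}\,\ue^{-t_b/\e} + \im{f}{\Omega\times\s^1}\int_0^{t_b}\frac{1}{\e}\ue^{-s/\e}\,\ud{s} \leq \im{h}{\Gamma^-}\,\ue^{-t_b/\e} + \im{f}{\Omega\times\s^1}\bigl(1 - \ue^{-t_b/\e}\bigr),
\]
which is bounded by $\im{h}{\Gamma^-} + \im{f}{\Omega\times\s^1}$ uniformly in $(\vx,\vw)$. Taking the supremum gives the claimed estimate. Uniqueness is immediate: if $u_1, u_2$ are two $L^\infty$ solutions, then $u_1 - u_2$ satisfies the homogeneous equation with $f = 0$ and $h = 0$, so the same representation forces $u_1 - u_2 \equiv 0$.

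The only delicate point is handling the geometry of $t_b$ near the grazing set $\Gamma^0$, where $t_b$ need not be continuous; however, since no derivatives of $t_b$ are needed for the $L^\infty$ bound and the measure of the grazing set is zero, this causes no trouble. The argument is entirely standard for the absorbing transport operator $\e\vw\cdot\nx + \id$, with the crucial feature that the penalization constant in front of $u$ is positive, which produces the decaying exponentials $\ue^{-s/\e}$ guaranteeing integrability independent of the domain size.
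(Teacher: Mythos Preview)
Your proof is correct and follows essentially the same approach as the paper: both solve the equation by the method of characteristics, write down the explicit Duhamel formula, and read off the $L^\infty$ bound from the exponential weights. The only cosmetic difference is that the paper rescales time by $\e$ (defining $t_b$ via $\vx-\e t\vw$) so that the ODE along characteristics becomes $\dfrac{\ud u}{\ud s}+u=f$ with weight $\ue^{-t_b}$, whereas you keep the unscaled parametrization and carry the factors $\e^{-1}$ and $\ue^{-t_b/\e}$ explicitly; the two formulations are related by the substitution $s\mapsto \e s$.
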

\begin{proof}
The characteristics $(X(s),W(s))$ of the equation (\ref{penalty
equation}) which goes through $(\vx,\vw)$ is defined by
\begin{eqnarray}\label{character}
\left\{
\begin{array}{lll}
\dfrac{\ud{X(s)}}{\ud{s}}=\e W(s),
\quad\dfrac{\ud{W(s)}}{\ud{s}}=0,\\ \rule{0ex}{2.0em}
(X(0),W(0))=(\vx,\vw).
\end{array}
\right.
\end{eqnarray}
which implies
\begin{eqnarray}
X(s)=\vx+(\e\vw)s,\quad W(s)=\vw.
\end{eqnarray}
Along the characteristics, the equation (\ref{penalty equation}) takes the form
\begin{eqnarray}
\left\{
\begin{array}{rcl}
\dfrac{\ud{u}}{\ud{s}}+ u&=&f(\vx,\vw)\ \ \text{in}\ \ \Omega\\\rule{0ex}{1.0em}
u(\vx_b,\vw)&=&h(\vx_b,\vw)\ \ \text{for}\ \ \vw\cdot\vn<0,
\end{array}
\right.
\end{eqnarray}
where
\begin{eqnarray}
t_b(\vx,\vw)&=&\inf\{t\geq0: \vx-\e t\vw\in\p\Omega\},\\
x_b(\vx,\vw)&=&\vx-\e t_b\vw.
\end{eqnarray}
We rewrite the equation (\ref{penalty equation}) along the characteristics as
\begin{eqnarray}
u(\vx,\vw)&=&h(\vx-\e
t_b\vw,\vw)\ue^{- t_b}+\int_{0}^{t_b}f(\vx-\e(t_b-s)\vw,\vw)\ue^{- (t_b-s)}\ud{s}.
\end{eqnarray}
The existence and uniqueness directly follows from above formulation. Also, we have
\begin{eqnarray}
\im{u}{\Omega\times\s^1}\leq
\im{h}{\Gamma^-}+\im{f}{\Omega\times\s^1}.
\end{eqnarray}
Hence, our desired result is obvious.
\end{proof}

\subsection{$L^2$ Estimate}

In this subsection, we start from the preliminary equation (\ref{penalty equation}) and take $\bar u$ and $\pp[u]$ into consideration.
\begin{lemma}\label{well-posedness lemma 2}
Define the near-grazing set of $\Gamma^+$ or $\Gamma^-$ as
\begin{eqnarray}
\Gamma_{\pm}^{\delta}=\left\{(\vx,\vw)\in\Gamma^{\pm}:
\abs{\vn(\vx)\cdot\vw}\leq\delta\right\}.
\end{eqnarray}
Then
\begin{eqnarray}
\nm{f{\bf{1}}_{\Gamma^{\pm}\backslash\Gamma_{\pm}^{\delta}}}_{L^1(\Gamma^{\pm})}\leq
C(\delta)\bigg(\nm{f}_{L^1(\Omega\times\s^1)}+\nm{\vw\cdot\nx f}_{L^1(\Omega\times\s^1)}\bigg).
\end{eqnarray}
\end{lemma}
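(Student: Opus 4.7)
The plan is to exploit the transport structure: the value of $f$ at a non-grazing boundary point admits a representation along the backward characteristic as an average plus a time integral of $\vw\cdot\nx f$, and both pieces can be controlled by bulk $L^1$ norms after an appropriate change of variables. By density it suffices to prove the bound for $f$ smooth enough for the identities below, then extend to the stated class by approximation.

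First I would fix $(\vx_0,\vw)\in\Gamma^+\setminus\Gamma_+^{\delta}$ (the $\Gamma^-$ case is symmetric, using the forward chord $\vx_0+s\vw$ instead). Since $\vw\cdot\vn(\vx_0)>\delta$ and $\p\Omega$ is $C^2$ convex, there is an $s_0=s_0(\delta,\Omega)>0$, uniform in $(\vx_0,\vw)$, such that $\vx_0-s\vw\in\bar\Omega$ for every $s\in[0,s_0]$. The fundamental theorem of calculus along the chord gives
\begin{equation*}
f(\vx_0,\vw)=f(\vx_0-s\vw,\vw)+\int_0^{s}(\vw\cdot\nx f)(\vx_0-t\vw,\vw)\,\ud{t},
\end{equation*}
and averaging in $s\in(0,s_0)$ together with the triangle inequality yields
\begin{equation*}
\abs{f(\vx_0,\vw)}\leq\frac{1}{s_0}\int_0^{s_0}\abs{f(\vx_0-s\vw,\vw)}\,\ud{s}+\int_0^{s_0}\abs{(\vw\cdot\nx f)(\vx_0-t\vw,\vw)}\,\ud{t}.
\end{equation*}

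Next, parametrizing $\p\Omega$ by arc length $\sigma$, for each fixed $\vw$ the map $(\sigma,s)\mapsto\vx_0(\sigma)-s\vw$ has Jacobian $\abs{\vw\cdot\vn(\sigma)}$, and convexity of $\Omega$ together with the restriction $\vw\cdot\vn>\delta$ renders this map injective onto a subset of $\Omega$. Consequently, for any nonnegative $g$,
\begin{equation*}
\int_{\{\vw\cdot\vn>\delta\}}\abs{\vw\cdot\vn}\,\ud\sigma\int_0^{s_0}g(\vx_0-s\vw)\,\ud{s}\leq\int_{\Omega}g(\vx)\,\ud{\vx}.
\end{equation*}
Multiplying the pointwise inequality by $\abs{\vw\cdot\vn}$, integrating over $(\vx_0,\vw)\in\Gamma^+\setminus\Gamma_+^{\delta}$, and applying this change of variables successively to $g(\vx)=\abs{f(\vx,\vw)}$ and $g(\vx)=\abs{(\vw\cdot\nx f)(\vx,\vw)}$, one obtains
\begin{equation*}
\nm{f\id_{\Gamma^+\setminus\Gamma_+^{\delta}}}_{L^1(\Gamma^+)}\leq\frac{1}{s_0}\nm{f}_{L^1(\Omega\times\s^1)}+\nm{\vw\cdot\nx f}_{L^1(\Omega\times\s^1)},
\end{equation*}
which is the claim with $C(\delta)=\max(1,1/s_0)$.

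The main delicate point is securing the uniform positive lower bound $s_0=s_0(\delta,\Omega)$ for the inward-chord length: this is where smoothness and convexity of $\p\Omega$ genuinely enter. A quantitative argument using the bounded curvature of $\p\Omega$ shows one can take $s_0$ of order $\delta$, so $C(\delta)$ blows up like $\delta^{-1}$ as the grazing set shrinks, in accordance with classical trace estimates. Convexity simultaneously guarantees injectivity of the change-of-variables map, which is what removes any multiplicity factor from the right-hand side.
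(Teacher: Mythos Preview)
Your argument is correct and is precisely the standard proof of this trace estimate; the paper itself does not give a proof but simply cites \cite[Lemma 2.1]{Esposito.Guo.Kim.Marra2013}, whose argument proceeds exactly along the lines you outline (fundamental theorem of calculus along the inward chord, averaging in the chord parameter, and the change of variables $(\sigma,s)\mapsto\vx_0(\sigma)-s\vw$ with Jacobian $\abs{\vw\cdot\vn}$). Your remarks on the role of convexity for injectivity and on the $\delta^{-1}$ blow-up of $C(\delta)$ are also accurate.
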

\begin{proof}
See the proof of \cite[Lemma 2.1]{Esposito.Guo.Kim.Marra2013}.
\end{proof}
\begin{lemma}(Green's Identity)\label{well-posedness lemma 3}
Assume $f(\vx,\vw),\ g(\vx,\vw)\in L^2(\Omega\times\s^1)$ and
$\vw\cdot\nx f,\ \vw\cdot\nx g\in L^2(\Omega\times\s^1)$ with $f,\
g\in L^2(\Gamma)$. Then
\begin{eqnarray}
\iint_{\Omega\times\s^1}\bigg((\vw\cdot\nx f)g+(\vw\cdot\nx
g)f\bigg)\ud{\vx}\ud{\vw}=\int_{\Gamma}fg\ud{\gamma},
\end{eqnarray}
where $\ud{\gamma}=(\vw\cdot\vn)\ud{s}$ on the boundary.
\end{lemma}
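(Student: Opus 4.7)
The plan is to reduce the identity to the classical divergence theorem in the spatial variable by fixing the velocity and then integrating over $\s^1$, with a density/approximation argument to handle the stated $L^2$ regularity.

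First I would verify the identity on a dense subclass, say $f,g \in C^1(\overline{\Omega}\times\s^1)$. For each fixed $\vw\in\s^1$, the product rule gives
\begin{eqnarray}
(\vw\cdot\nx f)g + (\vw\cdot\nx g)f = \vw\cdot\nx(fg),\nonumber
\end{eqnarray}
since $\vw$ is independent of $\vx$. Viewing $\vw(fg)$ as a vector field in $\vx$, the classical divergence theorem on $\Omega$ yields
\begin{eqnarray}
\int_{\Omega}\vw\cdot\nx(fg)\ud{\vx}=\int_{\p\Omega}f(\vx,\vw)g(\vx,\vw)(\vw\cdot\vn)\ud{s}.\nonumber
\end{eqnarray}
Integrating this in $\vw\in\s^1$ and applying Fubini produces exactly the claimed equation with $\ud{\gamma}=(\vw\cdot\vn)\ud{s}$.

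Second I would extend to the stated $L^2$ class by an approximation argument. Given $f,g$ with $f,g\in L^2(\Omega\times\s^1)$, $\vw\cdot\nx f,\vw\cdot\nx g\in L^2(\Omega\times\s^1)$, and traces in $L^2(\Gamma)$, I would mollify in $\vx$ (extending across $\p\Omega$ by a standard extension operator for a $C^2$ domain) and in $\vw$, obtaining $f_\e,g_\e\in C^1(\overline{\Omega}\times\s^1)$ with $f_\e\to f$, $g_\e\to g$ in $L^2(\Omega\times\s^1)$ and with $\vw\cdot\nx f_\e\to \vw\cdot\nx f$, $\vw\cdot\nx g_\e\to \vw\cdot\nx g$ in $L^2(\Omega\times\s^1)$. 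The left-hand side then passes to the limit by Cauchy--Schwarz. For the boundary term I would split $\Gamma=(\Gamma\setminus\Gamma^{\d})\cup\Gamma^{\d}$ where $\Gamma^{\d}=\Gamma_+^{\d}\cup\Gamma_-^{\d}$; on the non-grazing part Lemma \ref{well-posedness lemma 2} gives control of the traces in $L^1$ (and in fact convergence of $f_\e g_\e$ to $fg$ in $L^1$ off the near-grazing set), while on $\Gamma^{\d}$ the measure $\abs{\vw\cdot\vn}\ud{s}$ is uniformly small, so the $L^2(\Gamma)$ assumption on $f,g$ together with Cauchy--Schwarz bounds the contribution by $C\d^{1/2}\nm{f}_{L^2(\Gamma)}\nm{g}_{L^2(\Gamma)}$, which is sent to zero after passing to the limit in $\e$ and then in $\d$.

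The main obstacle is justifying that the mollification respects both the $\vw\cdot\nx$ norm and the $L^2(\Gamma)$ trace, particularly near the grazing set $\Gamma^0$ where the measure $\abs{\vw\cdot\vn}$ degenerates; this is exactly why Lemma \ref{well-posedness lemma 2} is placed immediately before: it forces the bulk $L^2$ and directional-derivative control to dominate the boundary flux off a grazing neighborhood, and the $L^2(\Gamma)$ hypothesis removes the remaining grazing contribution. Once both sides of the smooth identity are shown to converge under the approximation, the general case follows.
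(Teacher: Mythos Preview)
The paper does not actually prove this lemma: its entire proof reads ``See the proof of \cite[Chapter 9]{Cercignani.Illner.Pulvirenti1994} and \cite{Esposito.Guo.Kim.Marra2013}.'' Your proposal is therefore more detailed than what the paper supplies, and the strategy you outline---verify the identity for smooth functions via the divergence theorem and then pass to the limit by density, using a grazing/non-grazing decomposition of the boundary---is precisely the standard argument found in those references.

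One small correction: your claimed bound $C\d^{1/2}\nm{f}_{L^2(\Gamma)}\nm{g}_{L^2(\Gamma)}$ on the grazing contribution is not what Cauchy--Schwarz gives directly; Cauchy--Schwarz yields $\nm{f}_{L^2(\Gamma^{\d})}\nm{g}_{L^2(\Gamma^{\d})}$, and the point is that this tends to zero as $\d\to 0$ by absolute continuity of the integral (since $f,g\in L^2(\Gamma)$ and the weighted measure of $\Gamma^{\d}$ vanishes). The conclusion is unchanged. Otherwise your argument is correct and matches the approach of the cited sources.
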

\begin{proof}
See the proof of \cite[Chapter 9]{Cercignani.Illner.Pulvirenti1994} and
\cite{Esposito.Guo.Kim.Marra2013}.
\end{proof}
\begin{lemma}\label{LT estimate}
Assume $f(\vx,\vw)\in L^{\infty}(\Omega\times\s^1)$ and $h(x_0,\vw)\in
L^{\infty}(\Gamma^-)$. Then for the transport equation (\ref{neutron}),
there exists a unique solution $u(\vx,\vw)\in L^2(\Omega\times\s^1)$
satisfying
\begin{eqnarray}
\frac{1}{\e}\nm{(1-\pp)[u]}^2_{L^2(\Gamma^+)}+\nm{u}_{L^2(\Omega\times\s^1)}\leq C\bigg(\frac{1}{\e^2}\nm{f}_{L^2(\Omega\times\s^1)}+\frac{1}{\e}\nm{h}_{L^2(\Gamma^-)}\bigg),
\end{eqnarray}
\end{lemma}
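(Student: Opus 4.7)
The plan is to derive the a priori estimate by a two-step energy/test-function argument, then obtain existence and uniqueness via penalization and a limiting procedure.

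\textbf{Step 1 (energy estimate).} First, I multiply (\ref{neutron}) by $u$, integrate over $\Omega\times\s^1$, and apply Lemma \ref{well-posedness lemma 3} to obtain
\[
\frac{\e}{2}\int_\Gamma u^2\,\ud\gamma+\nm{u-\bar u}^2_{L^2(\Omega\times\s^1)}=\iint_{\Omega\times\s^1}fu\,\ud\vw\ud\vx.
\]
The boundary integral is split over $\Gamma^{\pm}$. Since $\pp[u]$ is velocity-independent, one has the cancellations
\[
\int_{\vw\cdot\vn>0}\pp[u]^2(\vw\cdot\vn)\,\ud\vw=\int_{\vw\cdot\vn<0}\pp[u]^2|\vw\cdot\vn|\,\ud\vw,\qquad\int_{\vw\cdot\vn>0}(1-\pp)[u]\pp[u](\vw\cdot\vn)\,\ud\vw=0.
\]
Inserting the diffusive boundary condition $u=\pp[u]+h$ on $\Gamma^-$ then reduces the identity to
\[
\frac{\e}{2}\nm{(1-\pp)[u]}^2_{L^2(\Gamma^+)}+\nm{u-\bar u}^2_{L^2}=\iint fu\,\ud\vw\ud\vx+\e\int_{\Gamma^-}\pp[u]h\,\ud\gamma+\frac{\e}{2}\nm{h}^2_{L^2(\Gamma^-)}.
\]
This controls the microscopic part and the $(1-\pp)[u]$ boundary trace, but leaves $\bar u$ unconstrained.

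\textbf{Step 2 (test function for $\bar u$).} To recover $\bar u$, I introduce the auxiliary Neumann--Poisson problem
\[
-\Delta_x\Psi=\bar u\ \text{in}\ \Omega,\qquad\frac{\p\Psi}{\p\vn}=0\ \text{on}\ \p\Omega,\qquad\int_\Omega\Psi\,\ud\vx=0,
\]
which is solvable because the normalization (\ref{normalization.}) forces $\int_\Omega\bar u\,\ud\vx=0$, and elliptic regularity gives $\nm{\Psi}_{H^2(\Omega)}\leq C\nm{\bar u}_{L^2}$. Testing (\ref{neutron}) against $\psi:=\vw\cdot\nx\Psi$ and using $\int_{\s^1}w_iw_j\,\ud\vw=\pi\delta_{ij}$ together with $\int_{\s^1}\vw\,\ud\vw=0$, Green's identity yields
\[
\pi\e\nm{\bar u}^2_{L^2}=\iint f\psi-\iint(u-\bar u)\psi+\e\iint(u-\bar u)(\vw\cdot\nx\psi)-\e\int_\Gamma u\psi\,\ud\gamma.
\]
The key point is that the Neumann condition produces
\[
\int_{\s^1}(\vw\cdot\nx\Psi)(\vw\cdot\vn)\,\ud\vw=\pi\,\frac{\p\Psi}{\p\vn}=0,
\]
so that the $\pp[u]$ contributions on $\Gamma^+$ and $\Gamma^-$ cancel in $\int_\Gamma u\psi\,\ud\gamma$, leaving only contributions from $(1-\pp)[u]$ on $\Gamma^+$ and $h$ on $\Gamma^-$, both estimated via the trace inequality together with $\nm{\psi}_{L^2(\p\Omega)}\leq C\nm{\bar u}_{L^2}$.

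\textbf{Step 3 (absorb and combine).} Every right-hand-side term from Steps 1 and 2 has the schematic form $\nm{X}\nm{Y}$ with one factor being $\nm{\bar u}_{L^2}$, $\nm{u-\bar u}_{L^2}$, or $\nm{(1-\pp)[u]}_{L^2(\Gamma^+)}$. Applying Young's inequality with weights tuned so that all such quadratic factors can be absorbed into the left-hand sides $\e\nm{\bar u}^2_{L^2}$, $\nm{u-\bar u}^2_{L^2}$, and $\e\nm{(1-\pp)[u]}^2_{L^2(\Gamma^+)}$, one obtains the remaining data contributions $\e^{-2}\nm{f}^2_{L^2}$ and $\e^{-1}\nm{h}^2_{L^2(\Gamma^-)}$; taking square roots gives the stated bound.

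\textbf{Step 4 (well-posedness and main obstacle).} Existence follows from the standard penalization $\e\vw\cdot\nx u_\lambda+(1+\lambda)u_\lambda-\bar u_\lambda=f$ with the diffusive boundary condition, solved by Banach contraction via Lemma \ref{well-posedness lemma 1}, then passing to the weak limit $\lambda\to 0$ using the uniform estimate above; uniqueness is immediate by applying the bound to the difference of two solutions. The main obstacle is Step 2: verifying the Neumann cancellation of $\pp[u]$ on the boundary and choosing the Young-inequality weights in Step 3 so that no boundary term is more singular than $\e^{-1}\nm{h}^2_{L^2(\Gamma^-)}$. Once this cancellation is secured, the rest is standard energy arithmetic.
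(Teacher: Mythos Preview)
Your approach follows the same two-ingredient strategy as the paper (energy estimate plus the Neumann test function $\vw\cdot\nx\Psi$ to recover $\bar u$), and your Step~2 cancellation of the $\pp[u]$ boundary contribution via $\partial\Psi/\partial\vn=0$ is correct and matches the paper exactly. However, there is a genuine gap in Step~3.

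The claim ``every right-hand-side term has one factor among $\nm{\bar u}_{L^2}$, $\nm{u-\bar u}_{L^2}$, or $\nm{(1-\pp)[u]}_{L^2(\Gamma^+)}$'' is false for the cross term
\[
\e\int_{\Gamma^-}\pp[u]\,h\,\ud\gamma
\]
from Step~1. After Cauchy's inequality this term produces $\e^2\eta\nm{\pp[u]}_{L^2(\Gamma^-)}^2$, and $\pp[u]$ on the boundary is \emph{not} controlled by any of your three quantities: it is a genuine boundary trace that cannot be bounded by interior $L^2$ norms alone. Your Step~2 cancellation only removes $\pp[u]$ from the boundary term generated by the \emph{test function} $\psi$, not from the energy identity with test function $u$. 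The paper closes this gap by invoking Lemma~\ref{well-posedness lemma 2} (the near-grazing trace estimate): one writes $\nm{\pp[u]}_{L^2(\Gamma^-)}^2\leq C\nm{u\,{\bf 1}_{\Gamma^+\setminus\Gamma_+^\delta}}_{L^2(\Gamma^+)}^2$, bounds the latter by $\nm{u}_{L^2(\Omega\times\s^1)}^2+\nm{\vw\cdot\nx(u^2)}_{L^1}$, and uses the equation itself to control $\vw\cdot\nx(u^2)$ by already-estimated quantities. This extra step is essential and cannot be replaced by Young's inequality alone.

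A secondary issue: in Step~4, Lemma~\ref{well-posedness lemma 1} treats only in-flow boundary data, so a direct contraction for the diffusive boundary condition is not immediate. The paper introduces a double penalization (a factor $(1-1/j)$ in front of $\pp[u]$ on $\Gamma^-$ together with the $\lambda$-penalty) precisely so that the boundary operator becomes a strict contraction; one first passes $j\to\infty$ and then $\lambda\to0$. You should at least indicate this mechanism.
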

\begin{proof}
We divide the proof into several steps:\\
\ \\
Step 1: Penalized equation.\\
We first consider the penalized equation
\begin{eqnarray}\label{penalty neutron}
\\
\left\{
\begin{array}{rcl}\displaystyle
\e\vw\cdot\nx u_{j,\l}+ (1+\l)u_{j,\l}-\bar
u_{j,\l}&=&f(\vx,\vw)\ \ \text{in}\ \ \Omega,\\
u_{j,\l}(\vx_0,\vw)&=&\left(1-\dfrac{1}{j}\right)\pp[u_{j,\l}](\vx_0)+h(\vx_0,\vw)\ \ \text{for}\ \
\vw\cdot\vn<0\ \ \text{and}\ \ \vx_0\in\p\Omega,\no
\end{array}
\right.
\end{eqnarray}
for $\l>0$, $j\in\mathbb{N}$ and $j\geq \dfrac{2}{\l}$. We iteratively construct
an approximating sequence $\{u^k_j\}_{k=0}^{\infty}$ where $u^0_j=0$ and
\begin{eqnarray}\label{penalty iteration}
\\
\left\{
\begin{array}{rcl}\displaystyle
\e\vw\cdot\nx u_{j,\l}^k+(1+\l)u_{j,\l}^k-\bar u_{j,\l}^{k-1}&=&f(\vx,\vw)\ \ \text{in}\ \ \Omega,\\
u_{j,\l}^k(\vx_0,\vw)&=&\left(1-\dfrac{1}{j}\right)\pp[u_{j,\l}^{k-1}](\vx_0)+h(\vx_0,\vw)\ \
\text{for}\ \ \vx_0\in\p\Omega\ \ \text{and}\ \vw\cdot\vn<0.\no
\end{array}
\right.
\end{eqnarray}
By Lemma \ref{well-posedness lemma 1}, this sequence is
well-defined and $\im{u_{j,\l}^k}{\Omega\times\s^1}<\infty$.
We rewrite equation (\ref{penalty iteration}) along the
characteristics as
\begin{eqnarray}
u_{j,\l}^k(\vx,\vw)&=&\left(h+\left(1-\dfrac{1}{j}\right)\pp[u_{j,\l}^{k-1}]\right)(\vx-\e
t_b\vw,\vw)\ue^{-(1+\l)t_b}\\
&&+\int_{0}^{t_b}(f+\bar
u_{j,\l}^{k-1})(\vx-\e(t_b-s)\vw,\vw)\ue^{-(1+\l)(t_b-s)}\ud{s}\no.
\end{eqnarray}
We define the difference $v^k=u^{k}_{j,\l}-u^{k-1}_{j,\l}$ for $k\geq1$. Then
$v^k_{j,\l}$ satisfies
\begin{eqnarray}
v^{k+1}_{j,\l}(\vx,\vw)&=&\left(1-\dfrac{1}{j}\right)\pp[v^{k}_{j,\l}](\vx-\e
t_b\vw,\vw)\ue^{-(1+\l)t_b}+\int_{0}^{t_b}\bar
v^{k}_{j,\l}(\vx-\e(t_b-s)\vw,\vw)\ue^{-(1+\l)(t_b-s)}\ud{s}.
\end{eqnarray}
Since $\im{\bar
v^k_{j,\l}}{\Omega\times\s^1}\leq\im{v^k_{j,\l}}{\Omega\times\s^1}$ and $\im{\pp
[v^k_{j,\l}]}{\Gamma^+}\leq \im{v^k_{j,\l}}{\Omega\times\s^1}$, we can directly estimate
\begin{eqnarray}
\im{v^{k+1}_{j,\l}}{\Omega\times\s^1}&\leq&\ue^{-(1+\l)t_b}\left(1-\dfrac{1}{j}\right)\im{v^{k+1}_{j,\l}}{\Omega\times\s^1}
+\im{v^{k}_{j,\l}}{\Omega\times\s^1}\int_0^{t_b}\ue^{-(1+\l)(t_b-s)}\ud{s}\\
&\leq&\ue^{-(1+\l)t_b}\left(1-\dfrac{1}{j}\right)\im{v^{k+1}_{j,\l}}{\Omega\times\s^1}+\frac{1}{1+\l}(1-\ue^{-(1+\l)t_b})\im{v^{k}_{j,\l}}{\Omega\times\s^1}\no\\
&\leq&\frac{1}{1+\l}\im{v^{k}_{j,\l}}{\Omega\times\s^1},\no
\end{eqnarray}
since $j\geq\dfrac{2}{\l}$.
Hence, we naturally have
\begin{eqnarray}
\im{v^{k+1}_{j,\l}}{\Omega\times\s^1}&\leq&\left(1-\dfrac{1}{j}\right)\im{v^{k}_{j,\l}}{\Omega\times\s^1}.
\end{eqnarray}
Thus, this is a contraction iteration. Considering $v^1=u^1$, we
have
\begin{eqnarray}
\im{v^{k}_{j,\l}}{\Omega\times\s^1}\leq\left(1-\dfrac{1}{j}\right)^{k-1}\im{u^{1}_{j,\l}}{\Omega\times\s^1}.
\end{eqnarray}
for $k\geq1$. Therefore, $u^k_{j,\l}$ converges strongly in $L^{\infty}$ to
the limiting solution $u_{j,\l}$ satisfying
\begin{eqnarray}\label{well-posedness temp 1}
\im{u_{j,\l}}{\Omega\times\s^1}\leq\sum_{k=1}^{\infty}\im{v^{k}_{j,\l}}{\Omega\times\s^1}\leq j\im{u^1_{j,\l}}{\Omega\times\s^1}.
\end{eqnarray}
Since $u^1_{j,\l}$ satisfies the equation
\begin{eqnarray}
u^1_{j,\l}(\vx,\vw)&=&h(\vx-\e
t_b\vw,\vw)\ue^{-(1+\l)t_b}+\int_{0}^{t_b}f(\vx-\e(t_b-s)\vw,\vw)\ue^{-(1+\l)(t_b-s)}\ud{s}\nonumber.
\end{eqnarray}
Based on Lemma \ref{well-posedness lemma 1}, we can directly
estimate
\begin{eqnarray}\label{well-posedness temp 2}
\im{u^1_{j,\l}}{\Omega\times\s^1}\leq
\im{f}{\Omega\times\s^1}+\im{h}{\Gamma^-}.
\end{eqnarray}
Combining (\ref{well-posedness temp 1}) and (\ref{well-posedness
temp 2}), we can naturally obtain the existence and the estimate
\begin{eqnarray}
\im{u_{j,\l}}{\Omega\times\s^1}\leq j\bigg(\im{f}{\Omega\times\s^1}+\im{h}{\Gamma^-}\bigg).
\end{eqnarray}
This justify the well-posedness of $u_{j,\l}$.
Note that when $\l\rt0$ or $j\rt\infty$, this estimate blows up. Hence, we have to find a uniform estimate in $\l$ and $j$.\\
\ \\
Step 2: Energy Estimate of $u_{\l,j}$.\\
Multiplying $u_{j,\l}$ on both sides of (\ref{penalty neutron}) and integrating over $\Omega\times\s^1$, by Lemma \ref{well-posedness lemma 3}, we get the energy estimate
\begin{eqnarray}
\half\e\int_{\Gamma}\abs{u_{j,\l}}^2\ud{\gamma}+\l\nm{u_{j,\l}}_{L^2(\Omega\times\s^1)}^2+\nm{u_{j,\l}-\bar
u_{j,\l}}_{L^2(\Omega\times\s^1)}^2=\iint_{\Omega\times\s^1}fu_{j,\l}.
\end{eqnarray}
A direct computation shows
\begin{eqnarray}
&&\half\e\int_{\Gamma}\abs{u_{j,\l}}^2\ud{\gamma}\\
&=&\half\e\nm{u_{j,\l}}^2_{L^2(\Gamma^+)}-\half\e\nm{\left(1-\dfrac{1}{j}\right)\pp[u_{j,\l}]
+h}^2_{L^2(\Gamma^-)}\no\\
&=&\half\e\bigg(\nm{u_{j,\l}}^2_{L^2(\Gamma^+)}-\nm{\left(1-\dfrac{1}{j}\right)\pp[u_{j,\l}]}^2_{L^2(\Gamma^-)}\bigg)
-\half\e\nm{h}_{L^2(\Gamma^-)}^2-\e\left(1-\dfrac{1}{j}\right)\int_{\Gamma^-}h\pp[u_{j,\l}]\abs{\vw\cdot\vn}\ud{\gamma}.\nonumber
\end{eqnarray}
Hence, we have
\begin{eqnarray}
&&\half\e\bigg(\nm{u_{j,\l}}^2_{L^2(\Gamma^+)}-\nm{\left(1-\dfrac{1}{j}\right)\pp[u_{j,\l}]}^2_{L^2(\Gamma^-)}\bigg)+\l\nm{u_{j,\l}}_{L^2(\Omega\times\s^1)}^2+\nm{u_{j,\l}-\bar
u_{j,\l}}_{L^2(\Omega\times\s^1)}^2\\
&=&\iint_{\Omega\times\s^1}fu_{j,\l}+\half\e\nm{h}_{L^2(\Gamma^-)}^2+\e\left(1-\dfrac{1}{j}\right)\int_{\Gamma^-}h\pp[u_{j,\l}]\abs{\vw\cdot\vn}\ud{\gamma}.\no
\end{eqnarray}
Noting the fact that
\begin{eqnarray}
\e\bigg(\nm{u_{j,\l}}^2_{L^2(\Gamma^+)}-\nm{\pp[u_{j,\l}]}^2_{L^2(\Gamma^-)}\bigg)=\e\nm{(1-\pp)[u_{j,\l}]}^2_{L^2(\Gamma^+)},
\end{eqnarray}
we deduce
\begin{eqnarray}
&&\half\e\nm{(1-\pp)[u_{j,\l}]}^2_{L^2(\Gamma^+)}+\l\nm{u_{j,\l}}_{L^2(\Omega\times\s^1)}^2+\nm{u_{j,\l}-\bar
u_{j,\l}}_{L^2(\Omega\times\s^1)}^2\\
&\leq&\iint_{\Omega\times\s^1}fu_{j,\l}+\half\e\nm{h}_{L^2(\Gamma^-)}^2+\e\int_{\Gamma^-}h\pp[u_{j,\l}]\abs{\vw\cdot\vn}\ud{\gamma}.\no
\end{eqnarray}
Applying Cauchy's inequality, we obtain for $\eta>0$ sufficiently small,
\begin{eqnarray}
\e\int_{\Gamma^-}h\pp[u_{j,\l}]\abs{\vw\cdot\vn}\ud{\gamma}\leq \frac{4}{\eta}\nm{h}_{L^2(\Gamma^-)}^2+\e^2\eta\nm{\pp[u_{j,\l}]}^2_{L^2(\Gamma^-)},
\end{eqnarray}
which further implies
\begin{eqnarray}\label{wt 02}
&&\half\e\nm{(1-\pp)[u_{j,\l}]}^2_{L^2(\Gamma^+)}+\l\nm{u_{j,\l}}_{L^2(\Omega\times\s^1)}^2+\nm{u_{j,\l}-\bar
u_{j,\l}}_{L^2(\Omega\times\s^1)}^2\\
&\leq&\iint_{\Omega\times\s^1}fu_{j,\l}+\bigg(1+\frac{4}{\eta}\bigg)\nm{h}_{L^2(\Gamma^-)}^2+\e^2\eta\nm{\pp[u_{j,\l}]}^2_{L^2(\Gamma^-)}.\no
\end{eqnarray}
Now the only difficulty is $\e^2\eta\nm{\pp[u_{j,\l}]}^2_{L^2(\Gamma^-)}$, which we cannot bound directly.\\
\ \\
Step 3: Estimate of $\nm{\pp[u_{j,\l}]}^2_{L^2(\Gamma^-)}$.\\
Multiplying $u_{j,\l}$ on both sides of (\ref{penalty neutron}), we have
\begin{eqnarray}\label{wt 03}
\half\e\vw\cdot\nx(u_{j,\l}^2)=-\l u_{j,\l}^2-u_{j,\l}(u_{j,\l}-\bar u_{j,\l})+fu_{j,\l}.
\end{eqnarray}
Taking absolute value on both sides of (\ref{wt 03}) and integrating over $\Omega\times\s^1$, we get
\begin{eqnarray}
\nm{\vw\cdot\nx(u_{j,\l}^2)}_{L^1(\Omega\times\s^1)}\leq \frac{2\l}{\e}\nm{u_{j,\l}}_{L^2(\Omega\times\s^1)}^2+\frac{2}{\e}\nm{u_{j,\l}-\bar
u_{j,\l}}_{L^2(\Omega\times\s^1)}^2+\frac{2}{\e}\iint_{\Omega\times\s^1}fu_{j,\l}.
\end{eqnarray}
Based on (\ref{wt 02}), we can further obtain
\begin{eqnarray}\label{wt 11}
\nm{\vw\cdot\nx(u_{j,\l}^2)}_{L^1(\Omega\times\s^1)}\leq \frac{1}{\e}\bigg(1+\frac{4}{\eta}\bigg)\nm{h}_{L^2(\Gamma^-)}^2+\e\eta\nm{\pp[u_{j,\l}]}^2_{L^2(\Gamma^-)}+\frac{4}{\e}\iint_{\Omega\times\s^1}fu.
\end{eqnarray}
Hence, by Lemma \ref{well-posedness lemma 2}, (\ref{wt 02}) and (\ref{wt 11}), we know for given $\d>0$
\begin{eqnarray}\label{wt 04}
\nm{u_{j,\l}^2{\bf{1}}_{\Gamma^{\pm}\backslash\Gamma_{\pm}^{\delta}}}_{L^1(\Gamma^{\pm})}&\leq&
C(\delta)\bigg(\nm{u_{j,\l}}_{L^2(\Omega\times\s^1)}^2+\nm{\vw\cdot\nx(u_{j,\l}^2)}_{L^1(\Omega\times\s^1)}\bigg)\\
&\leq&C(\delta)\Bigg(\frac{1}{\l}\bigg(1+\frac{4}{\eta}\bigg)\nm{h}_{L^2(\Gamma^-)}^2
+\frac{\e^2\eta}{\l}\nm{\pp[u_{j,\l}]}^2_{L^2(\Gamma^-)}+\frac{1}{\l}\iint_{\Omega\times\s^1}fu_{j,\l}.\Bigg).\no
\end{eqnarray}
Noting the fact that
\begin{eqnarray}
\nm{\pp[u_{j,\l}{\bf{1}}_{\Gamma_{+}\backslash\Gamma_{+}^{\delta}}]}_{L^2(\Gamma^-)}\leq \nm{
u_{j,\l}{\bf{1}}_{\Gamma_{+}\backslash\Gamma_{+}^{\delta}}}_{L^2(\Gamma^-)},
\end{eqnarray}
and for $\d$ sufficiently small, we have
\begin{eqnarray}
\nm{\pp[u_{j,\l}{\bf{1}}_{\Gamma_{+}\backslash\Gamma_{+}^{\delta}}]}_{L^2(\Gamma^-)}\geq \half\nm{\pp[u_{j,\l}]}_{L^2(\Gamma^-)}.
\end{eqnarray}
Combining with (\ref{wt 04}), we naturally obtain
\begin{eqnarray}
\nm{\pp[u_{j,\l}]}_{L^2(\Gamma^-)}^2&\leq&2\nm{\pp[u_{j,\l}{\bf{1}}_{\Gamma_{+}\backslash\Gamma_{+}^{\delta}}]}_{L^2(\Gamma^-)}^2\leq2\nm{
u_{j,\l}{\bf{1}}_{\Gamma_{+}\backslash\Gamma_{+}^{\delta}}}_{L^2(\Gamma^-)}^2\\
&\leq&C(\delta)\Bigg(\frac{1}{\l}\bigg(1+\frac{1}{\eta}\bigg)\nm{h}_{L^2(\Gamma^-)}^2+\frac{\e^2\eta}{\l}\nm{\pp[u_{j,\l}]}^2_{L^2(\Gamma^-)}
+\frac{1}{\l}\iint_{\Omega\times\s^1}fu_{j,\l}.\Bigg).\no
\end{eqnarray}
For fixed $\d$, taking $\eta>0$ sufficiently small, we obtain
\begin{eqnarray}\label{wt 13}
\nm{\pp[u_{j,\l}]}_{L^2(\Gamma^-)}^2
&\leq&C\bigg(\frac{1}{\l}\nm{h}_{L^2(\Gamma^-)}^2+\frac{1}{\l}\iint_{\Omega\times\s^1}fu_{j,\l}\bigg).
\end{eqnarray}
Plugging (\ref{wt 13}) into (\ref{wt 02}), we deduce
\begin{eqnarray}\label{wt 08}
\\
&&\half\e\nm{(1-\pp)[u_{j,\l}]}^2_{L^2(\Gamma^+)}+\l\nm{u_{j,\l}}_{L^2(\Omega\times\s^1)}^2+\nm{u_{j,\l}-\bar
u_{j,\l}}_{L^2(\Omega\times\s^1)}^2\leq \frac{C\e^2}{\l}\bigg(\iint_{\Omega\times\s^1}fu_{j,\l}+\nm{h}_{L^2(\Gamma^-)}^2\bigg).\no
\end{eqnarray}
\ \\
Step 4: Limit $j\rt\infty$.\\
Naturally, based on (\ref{wt 08}), we deduce
\begin{eqnarray}\label{wt 09}
\nm{u_{j,\l}}_{L^2(\Omega\times\s^1)}^2\leq C\bigg(\frac{1}{\l}\iint_{\Omega\times\s^1}fu+\frac{1}{\l}\nm{h}_{L^2(\Gamma^-)}^2\bigg).
\end{eqnarray}
Applying Cauchy's inequality, we obtain for $C_0>0$ sufficiently small
\begin{eqnarray}\label{wt 10}
\frac{1}{\l}\iint_{\Omega\times\s^1}fu_{j,\l}\leq \frac{4}{C_0\l^2}\nm{f}_{L^2(\Omega\times\s^1)}^2+C_0\nm{u_{j,\l}}_{L^2(\Omega\times\s^1)}^2.
\end{eqnarray}
Combining (\ref{wt 09}) and (\ref{wt 10}), we obtain
\begin{eqnarray}
\nm{u_{j,\l}}_{L^2(\Omega\times\s^1)}^2\leq C\bigg(\frac{1}{\l^2}\nm{f}_{L^2(\Omega\times\s^1)}^2+\frac{1}{\l}\nm{h}_{L^2(\Gamma^-)}^2\bigg),
\end{eqnarray}
which further implies
\begin{eqnarray}\label{wt 12}
\nm{u_{j,\l}}_{L^2(\Omega\times\s^1)}\leq C\bigg(\frac{1}{\l}\nm{f}_{L^2(\Omega\times\s^1)}+\frac{1}{\l^{1/2}}\nm{h}_{L^2(\Gamma^-)}\bigg).
\end{eqnarray}
Since this estimate is uniform in $j$, we may take weak limit $u_{j,\l}\rightharpoonup u_{\l}$ in $L^2(\Omega\times\s^1)$ as $j\rt\infty$. By Lemma \ref{well-posedness lemma 3} and weak semi-continuity, there exists a unique solution $u_{\l}$ to the equation
\begin{eqnarray}\label{penalty neutron.}
\left\{
\begin{array}{rcl}\displaystyle
\e\vw\cdot\nx u_{\l}+ (1+\l)u_{\l}-\bar
u_{\l}&=&f(\vx,\vw)\ \ \text{in}\ \ \Omega,\\
u_{\l}(\vx_0,\vw)&=&\pp[u_{\l}](\vx_0)+h(\vx_0,\vw)\ \ \text{for}\ \
\vw\cdot\vn<0\ \ \text{and}\ \ \vx_0\in\p\Omega,
\end{array}
\right.
\end{eqnarray}
and satisfies
\begin{eqnarray}
\nm{u_{\l}}_{L^2(\Omega\times\s^1)}\leq C\bigg(\frac{1}{\l}\nm{f}_{L^2(\Omega\times\s^1)}+\frac{1}{\l^{1/2}}\nm{h}_{L^2(\Gamma^-)}\bigg).
\end{eqnarray}
However, this estimate still blows up when $\l\rt0$, so we need to find a uniform estimate for $u_{\l}$.\\
\ \\
Step 5: Kernel Estimate.\\
Applying Lemma \ref{well-posedness lemma 3} to the solution of the
equation (\ref{penalty neutron}). Then for any
$\phi\in L^2(\Omega\times\s^1)$ satisfying $\vw\cdot\nx\phi\in
L^2(\Omega\times\s^1)$ and $\phi\in L^2(\Gamma)$, we have
\begin{eqnarray}\label{well-posedness temp 4}
\l\iint_{\Omega\times\s^1}u_{\l}\phi+\e\int_{\Gamma}u_{\l}\phi\ud{\gamma}
-\e\iint_{\Omega\times\s^1}(\vw\cdot\nx\phi)u_{\l}+\iint_{\Omega\times\s^1}(u_{\l}-\bar
u_{\l})\phi=\iint_{\Omega\times\s^1}f\phi.
\end{eqnarray}
Our goal is to choose a particular test function $\phi$. We first
construct an auxiliary function $\zeta$. Since $u_{\l}\in
L^{\infty}(\Omega\times\s^1)$, it naturally implies $\bar u_{\l}\in
L^{\infty}(\Omega)$ which further leads to $\bar u_{\l}\in
L^2(\Omega)$. We define $\zeta(\vx)$ on $\Omega$ satisfying
\begin{eqnarray}\label{test temp 1}
\left\{
\begin{array}{rcl}
\Delta \zeta&=&\bar u_{\l}\ \ \text{in}\ \
\Omega,\\\rule{0ex}{1.0em} \dfrac{\p\zeta}{\p\vn}&=&0\ \ \text{on}\ \ \p\Omega.
\end{array}
\right.
\end{eqnarray}
A direct integration over $\Omega\times\s^1$ in (\ref{penalty neutron.}) implies
\begin{eqnarray}
\int_{\Omega\times\s^1}u_{\l}(\vx,\vw)\ud{\vw}\ud{\vx}=0.
\end{eqnarray}
Hence, in the bounded domain $\Omega$, based on the standard elliptic
estimate, there exists $\zeta\in H^2(\Omega)$ such that
\begin{eqnarray}\label{test temp 3}
\nm{\zeta}_{H^2(\Omega)}\leq C(\Omega)\nm{\bar
u_{\l}}_{L^2(\Omega)}.
\end{eqnarray}
We plug the test function
\begin{eqnarray}\label{test temp 2}
\phi=-\vw\cdot\nx\zeta
\end{eqnarray}
into the weak formulation (\ref{well-posedness temp 4}) and estimate
each term there. Naturally, we have
\begin{eqnarray}\label{test temp 4}
\nm{\phi}_{L^2(\Omega)}\leq C\nm{\zeta}_{H^1(\Omega)}\leq
C(\Omega)\nm{\bar u_{\l}}_{L^2(\Omega)}.
\end{eqnarray}
Easily we can decompose
\begin{eqnarray}\label{test temp 5}
-\e\iint_{\Omega\times\s^1}(\vw\cdot\nx\phi)u_{\l}&=&-\e\iint_{\Omega\times\s^1}(\vw\cdot\nx\phi)\bar
u_{\l}-\e\iint_{\Omega\times\s^1}(\vw\cdot\nx\phi)(u_{\l}-\bar
u_{\l}).
\end{eqnarray}
We estimate the two term on the right-hand side separately. By
(\ref{test temp 1}) and (\ref{test temp 2}), we have
\begin{eqnarray}\label{wellposed temp 1}
-\e\iint_{\Omega\times\s^1}(\vw\cdot\nx\phi)\bar
u_{\l}&=&\e\iint_{\Omega\times\s^1}\bar
u_{\l}\bigg(w_1(w_1\p_{11}\zeta+w_2\p_{12}\zeta)+w_2(w_1\p_{12}\zeta+w_2\p_{22}\zeta)\bigg)\\
&=&\e\iint_{\Omega\times\s^1}\bar
u_{\l}\bigg(w_1^2\p_{11}\zeta+w_2^2\p_{22}\zeta\bigg)\nonumber\\
&=&\e\pi\int_{\Omega}\bar u_{\l}(\p_{11}\zeta+\p_{22}\zeta)\nonumber\\
&=&\e\pi\nm{\bar u_{\l}}_{L^2(\Omega)}^2\nonumber\\
&=&\half\e\nm{\bar u_{\l}}_{L^2(\Omega\times\s^1)}^2\nonumber.
\end{eqnarray}
In the second equality, above cross terms vanish due to the symmetry
of the integral over $\s^1$. On the other hand, for the second term
in (\ref{test temp 5}), H\"older's inequality and the elliptic
estimate imply
\begin{eqnarray}\label{wellposed temp 2}
-\e\iint_{\Omega\times\s^1}(\vw\cdot\nx\phi)(u_{\l}-\bar
u_{\l})&\leq&C(\Omega)\e\nm{u_{\l}-\bar u_{\l}}_{L^2(\Omega\times\s^1)}\nm{\zeta}_{H^2(\Omega)}\\
&\leq&C(\Omega)\e\nm{u_{\l}-\bar
u_{\l}}_{L^2(\Omega\times\s^1)}\nm{\bar
u_{\l}}_{L^2(\Omega\times\s^1)}\nonumber.
\end{eqnarray}
We may decompose $\vw=(\vw\cdot\vn)\vn+\vw_{\perp}$ to obtain
\begin{eqnarray}
\e\int_{\Gamma}u_{\l}\phi\ud{\gamma}&=&\e\int_{\Gamma}u_{\l}(\vw\cdot\nx\zeta)\ud{\gamma}\\
&=&\e\int_{\Gamma}u_{\l}(\vn\cdot\nx\zeta)(\vw\cdot\vn)\ud{\gamma}+\e\int_{\Gamma}u_{\l}(\vw_{\perp}\cdot\nx\zeta)\ud{\gamma}\no\\
&=&\e\int_{\Gamma}u_{\l}(\vw_{\perp}\cdot\nx\zeta)\ud{\gamma}.\no
\end{eqnarray}
Based on (\ref{test temp 3}), (\ref{test temp 4}), the boundary
condition of the penalized neutron transport equation
(\ref{penalty neutron.}), the trace theorem,
H\"older's inequality and the elliptic estimate, we have
\begin{eqnarray}\label{wellposed temp 3}
\e\int_{\Gamma}u_{\l}\phi\ud{\gamma}&=&\e\int_{\Gamma}u_{\l}(\vw_{\perp}\cdot\nx\zeta)\ud{\gamma}\\
&=&\e\int_{\Gamma}\pp[u_{\l}](\vw_{\perp}\cdot\nx\zeta)\ud{\gamma}+\e\int_{\Gamma^+}(1-\pp)[u_{\l}](\vw_{\perp}\cdot\nx\zeta)\ud{\gamma}
+\e\int_{\Gamma^-}h(\vw_{\perp}\cdot\nx\zeta)\ud{\gamma}\no\\
&=&\e\int_{\Gamma^+}(1-\pp)[u_{\l}](\vw_{\perp}\cdot\nx\zeta)\ud{\gamma}
+\e\int_{\Gamma^-}h(\vw_{\perp}\cdot\nx\zeta)\ud{\gamma}\no\\
&\leq&\e\nm{\bar u_{\l}}_{L^2(\Omega\times\s^1)}\bigg(\nm{(1-\pp)[u_{j,\l}]}_{L^2(\Gamma^+)}+\nm{h}_{L^2(\Gamma^-)}\bigg).\no
\end{eqnarray}
Also, we obtain
\begin{eqnarray}\label{wellposed temp 4}
\l\iint_{\Omega\times\s^1}u_{\l}\phi&=&\l\iint_{\Omega\times\s^1}\bar
u_{\l}\phi+\l\iint_{\Omega\times\s^1}(u_{\l}-\bar u_{\l})\phi=\l\iint_{\Omega\times\s^1}(u_{\l}-\bar u_{\l})\phi\\
&\leq&C(\Omega)\l\nm{\bar
u_{\l}}_{L^2(\Omega\times\s^1)}\nm{u_{\l}-\bar
u_{\l}}_{L^2(\Omega\times\s^1)}\nonumber,
\end{eqnarray}
\begin{eqnarray}\label{wellposed temp 5}
\iint_{\Omega\times\s^1}(u_{\l}-\bar u_{\l})\phi\leq
C(\Omega)\nm{\bar u_{\l}}_{L^2(\Omega\times\s^1)}\nm{u_{\l}-\bar
u_{\l}}_{L^2(\Omega\times\s^1)},
\end{eqnarray}
\begin{eqnarray}\label{wellposed temp 6}
\iint_{\Omega\times\s^1}f\phi\leq C(\Omega)\nm{\bar
u_{\l}}_{L^2(\Omega\times\s^1)}\nm{f}_{L^2(\Omega\times\s^1)}.
\end{eqnarray}
Collecting terms in (\ref{wellposed temp 1}), (\ref{wellposed temp
2}), (\ref{wellposed temp 3}), (\ref{wellposed temp 4}),
(\ref{wellposed temp 5}) and (\ref{wellposed temp 6}), we obtain
\begin{eqnarray}
\e\nm{\bar u_{\l}}_{L^2(\Omega\times\s^1)}&\leq&
C(\Omega)\bigg((1+\e+\l)\nm{u_{\l}-\bar
u_{\l}}_{L^2(\Omega\times\s^1)}+\e\nm{u_{\l}}_{L^2(\Gamma^+)}+\nm{f}_{L^2(\Omega\times\s^1)}\\
&&+\e\nm{(1-\pp)[u_{j,\l}]}_{L^2(\Gamma^+)}+\e\tm{h}{\Gamma^-}\bigg)\nonumber,
\end{eqnarray}
When $0\leq\l<1$ and $0<\e<1$, we get the desired uniform estimate
with respect to $\lambda$ as
\begin{eqnarray}\label{well-posedness temp 8}
\e\nm{\bar u_{\l}}_{L^2(\Omega\times\s^1)}&\leq&
C(\Omega)\bigg(\nm{u_{\l}-\bar
u_{\l}}_{L^2(\Omega\times\s^1)}+\e\nm{u_{\l}}_{L^2(\Gamma^+)}+\nm{f}_{L^2(\Omega\times\s^1)}\\
&&+\e\nm{(1-\pp)[u_{j,\l}]}_{L^2(\Gamma^+)}+\e\tm{h}{\Gamma^-}\bigg)\nonumber,
\end{eqnarray}
\ \\
Step 6: Limit $\l\rt0$.\\
In the weak formulation (\ref{well-posedness temp 4}), we may take
the test function $\phi=u_{\l}$ to get the energy estimate
\begin{eqnarray}
\l\nm{u_{\l}}_{L^2(\Omega\times\s^1)}^2+\half\e\int_{\Gamma}\abs{u_{\l}}^2\ud{\gamma}+\nm{u_{\l}-\bar
u_{\l}}_{L^2(\Omega\times\s^1)}^2=\iint_{\Omega\times\s^1}fu_{\l}.
\end{eqnarray}
Similar to (\ref{wt 02}), we have
\begin{eqnarray}
&&\half\e\nm{(1-\pp)[u_{\l}]}^2_{L^2(\Gamma^+)}+\l\nm{u_{\l}}_{L^2(\Omega\times\s^1)}^2+\nm{u_{\l}-\bar
u_{\l}}_{L^2(\Omega\times\s^1)}^2\\
&\leq& \iint_{\Omega\times\s^1}fu_{\l}+\bigg(1+\frac{4}{\eta}\bigg)\nm{h}_{L^2(\Gamma^-)}^2+\e^2\eta\nm{\pp[u_{\l}]}^2_{L^2(\Gamma^-)}.\no
\end{eqnarray}
Also, based on Step 3, we know
\begin{eqnarray}\label{wt 14}
\nm{\pp[u_{\l}]}_{L^2(\Gamma^-)}^2&\leq&C\bigg(\nm{u_{\l}}_{L^2(\Omega\times\s^1)}^2+\nm{\vw\cdot\nx(u_{\l}^2)}_{L^1(\Omega\times\s^1)}\bigg)\\
&\leq&C\bigg(\nm{u_{\l}-\bar
u_{\l}}_{L^2(\Omega\times\s^1)}^2+\nm{\bar
u_{\l}}_{L^2(\Omega\times\s^1)}^2+\frac{1}{\e}\bigg(1+\frac{4}{\eta}\bigg)\nm{h}_{L^2(\Gamma^-)}^2+\frac{4}{\e}\iint_{\Omega\times\s^1}fu\bigg).\no
\end{eqnarray}
Hence, this naturally implies
\begin{eqnarray}\label{well-posedness temp 5}
&&\e\nm{(1-\pp)[u_{\l}]}^2_{L^2(\Gamma^+)}+\nm{u_{\l}-\bar
u_{\l}}_{L^2(\Omega\times\s^1)}^2\leq C\bigg(\e^2\eta\nm{\bar
u_{\l}}_{L^2(\Omega\times\s^1)}^2+\iint_{\Omega\times\s^1}fu_{\l}+\nm{h}_{L^2(\Gamma^-)}^2\bigg).
\end{eqnarray}
On the other hand, we can square on both sides of
(\ref{well-posedness temp 8}) to obtain
\begin{eqnarray}\label{well-posedness temp 6}
\e^2\nm{\bar u_{\l}}_{L^2(\Omega\times\s^1)}^2&\leq&
C(\Omega)\bigg(\nm{u_{\l}-\bar
u_{\l}}_{L^2(\Omega\times\s^1)}^2+\e^2\nm{u_{\l}}_{L^2(\Gamma^+)}^2+\nm{f}_{L^2(\Omega\times\s^1)}^2\\
&&+\e^2\nm{(1-\pp)[u_{j,\l}]}_{L^2(\Gamma^+)}^2+\e^2\tm{h}{\Gamma^-}^2\bigg).\nonumber
\end{eqnarray}
Taking $\eta$ sufficiently small, multiplying a sufficiently small constant on both sides of
(\ref{well-posedness temp 6}) and adding it to (\ref{well-posedness
temp 5}) to absorb $\nm{(1-\pp)[u_{j,\l}]}_{L^2(\Gamma^+)}^2$, $\nm{u_{\l}}_{L^2(\Omega\times\s^1)}^2$ and
$\nm{u_{\l}-\bar u_{\l}}_{L^2(\Omega\times\s^1)}^2$, we deduce
\begin{eqnarray}
&&\e\nm{(1-\pp)[u_{j,\l}]}_{L^2(\Gamma^+)}^2+\e^2\nm{\bar
u_{\l}}_{L^2(\Omega\times\s^1)}^2+\nm{u_{\l}-\bar
u_{\l}}_{L^2(\Omega\times\s^1)}^2\\&&\qquad\qquad\qquad\leq
C(\Omega)\bigg(\tm{f}{\Omega\times\s^1}^2+
\iint_{\Omega\times\s^1}fu_{\l}+\nm{h}_{L^2(\Gamma^-)}^2\bigg).\nonumber
\end{eqnarray}
Hence, we have
\begin{eqnarray}\label{well-posedness temp 7}
\e\nm{(1-\pp)[u_{j,\l}]}_{L^2(\Gamma^+)}^2+\e^2\nm{u_{\l}}_{L^2(\Omega\times\s^1)}^2\leq
C(\Omega)\bigg(\tm{f}{\Omega\times\s^1}^2+
\iint_{\Omega\times\s^1}fu_{\l}+\nm{h}_{L^2(\Gamma^-)}^2\bigg).
\end{eqnarray}
A simple application of Cauchy's inequality leads to
\begin{eqnarray}
\iint_{\Omega\times\s^1}fu_{\l}\leq\frac{1}{4C\e^2}\tm{f}{\Omega\times\s^1}^2+C\e^2\tm{u_{\l}}{\Omega\times\s^1}^2.
\end{eqnarray}
Taking $C$ sufficiently small, we can divide (\ref{well-posedness
temp 7}) by $\e^2$ to obtain
\begin{eqnarray}\label{well-posedness temp 21}
\frac{1}{\e}\nm{(1-\pp)[u_{j,\l}]}_{L^2(\Gamma^+)}^2+\nm{u_{\l}}_{L^2(\Omega\times\s^2)}^2\leq
C(\Omega)\bigg(
\frac{1}{\e^4}\nm{f}_{L^2(\Omega\times\s^2)}^2+\frac{1}{\e^2}\nm{h}_{L^2(\Gamma^-)}^2\bigg).
\end{eqnarray}
Since above estimate does not depend on $\l$, it gives a uniform
estimate for the penalized neutron transport equation
(\ref{penalty neutron}). Thus, we can extract a
weakly convergent subsequence $u_{\l}\rt u$ as $\l\rt0$. The weak
lower semi-continuity of norms $\nm{\cdot}_{L^2(\Omega\times\s^2)}$
and $\nm{\cdot}_{L^2(\Gamma^+)}$ implies $u$ also satisfies the
estimate (\ref{well-posedness temp 21}). Hence, in the weak
formulation (\ref{well-posedness temp 4}), we can take $\l\rt0$ to
deduce that $u$ satisfies equation (\ref{neutron}). Also $u_{\l}-u$
satisfies the equation
\begin{eqnarray}
\left\{
\begin{array}{rcl}
\epsilon\vec w\cdot\nabla_x(u_{\l}-u)+(u_{\l}-u)-(\bar u_{\l}-\bar
u)&=&-\l u_{\l}\ \ \text{in}\ \
\Omega\label{remainder},\\\rule{0ex}{1.0em} (u_{\l}-u)(\vec x_0,\vec
w)&=&0\ \ \text{for}\ \ \vec x_0\in\p\Omega\ \ and\ \vw\cdot\vec
n<0.
\end{array}
\right.
\end{eqnarray}
By a similar argument as above, we can achieve
\begin{eqnarray}
\nm{u_{\l}-u}_{L^2(\Omega\times\s^2)}^2\leq
C(\Omega)\bigg(\frac{\l}{\e^4}\nm{u_{\l}}_{L^2(\Omega\times\s^2)}^2\bigg).
\end{eqnarray}
When $\l\rt0$, the right-hand side approaches zero, which implies
the convergence is actually in the strong sense. The uniqueness
easily follows from the energy estimates.

\end{proof}

\subsection{$L^{\infty}$ Estimate - First Round}

In this subsection, we prove the $L^2$-$L^{\infty}$ estimate. We consider the characteristics that reflect several times on the boundary.
\begin{definition}(Stochastic Cycle)
For fixed point $(t,\vx,\vw)$ with $(\vx,\vw)\notin\Gamma^0$, let
$(t_0,\vx_0,\vw_0)=(0,\vx,\vw)$. For $\vw_{k+1}$ such that
$\vw_{k+1}\cdot\vn(\vx_{k+1})>0$, define the $(k+1)$-component of
the back-time cycle as
\begin{eqnarray}
(t_{k+1},\vx_{k+1},\vw_{k+1})=(t_k+t_b(\vx_k,\vw_k),\vx_b(\vx_k,\vw_k),\vw_{k+1})
\end{eqnarray}
where
\begin{eqnarray}
t_b(\vx,\vw)&=&\inf\{t>0:\vx-\e t\vw\notin\Omega\}\\
x_b(\vx,\vw)&=&\vx-\e t_b(\vx,\vw)\vw\notin\Omega
\end{eqnarray}
Set
\begin{eqnarray}
\xc(s;t,\vx,\vw)&=&\sum_{k}\chi_{t_{k+1}\leq s<t_k}\bigg(\vx_k-\e(t_k-s)\vw_k\bigg)\\
\wc(s;t,\vx,\vw)&=&\sum_{k}\chi_{t_{k+1}\leq s<t_k}\vw_k
\end{eqnarray}
Define $\mu_{k+1}=\{\vw\in \s^1:\vw\cdot\vn(\vx_{k+1})>0\}$, and
let the iterated integral for $k\geq2$ be defined as
\begin{eqnarray}
\int_{\prod_{k=1}^{k-1}\mu_j}\prod_{j=1}^{k-1}\ud{\sigma_j}=\int_{\mu_1}\ldots\bigg(\int_{\mu_{k-1}}\ud{\sigma_{k-1}}\bigg)\ldots\ud{\sigma_1}
\end{eqnarray}
where $\ud{\sigma_j}=(\vn(\vx_j)\cdot\vw)\ud{\vw}$ is a
probability measure.
\end{definition}
\begin{lemma}\label{well-posedness lemma 6}
For $T_0>0$ sufficiently large, there exists constants $C_1,C_2>0$
independent of $T_0$, such that for $k=C_1T_0^{5/4}$,
\begin{eqnarray}
\int_{\prod_{j=1}^{k-1}\mu_j}{\bf{1}}_{t_k(t,\vx,\vw,\vw_1,\ldots,\vw_{k-1})<T_0}\prod_{j=1}^{k-1}\ud{\sigma_j}\leq
\bigg(\frac{1}{2}\bigg)^{C_2T_0^{5/4}}
\end{eqnarray}
\end{lemma}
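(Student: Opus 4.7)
The plan is to separate each stochastic cycle into steps with short and long transit times $t_{j+1}-t_j=t_b(\vx_j,\vw_j)$, bound the $\sigma_j$-measure of the short set using convexity of $\Omega$, and then exploit the constraint $t_k<T_0$ through a pigeonhole argument to force many short steps simultaneously. The exponent $5/4$ will come out naturally from balancing the pigeonhole threshold against the decay rate of the measure of short-step velocities.

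First I would establish the key geometric fact: for a strictly convex $C^2$ domain, the chord length from a boundary point $\vx_j$ in direction $-\vw_j$ satisfies $\e\,t_b(\vx_j,\vw_j)\leq C_\Omega(\vw_j\cdot\vn(\vx_j))$ in the near-grazing regime, and is bounded below by the diameter otherwise. Thus, fixing a parameter $\delta>0$ and setting
\begin{equation*}
B_j^\delta:=\{\vw_j\in\mu_j:t_b(\vx_j,\vw_j)<\delta\},
\end{equation*}
we have $B_j^\delta\subset\{\vw_j\cdot\vn(\vx_j)\leq C\e\delta\}$, and a direct polar-coordinate computation of $\ud{\sigma_j}=(\vw\cdot\vn)\ud{\vw}$ on $\s^1$ yields $\sigma_j(B_j^\delta)\leq C\e^2\delta^2$.

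Next, since $t_k=\sum_{j=0}^{k-1}(t_{j+1}-t_j)$ with nonnegative summands, the constraint $t_k<T_0$ forces at most $\lfloor T_0/\delta\rfloor$ indices $j$ to satisfy $t_{j+1}-t_j\geq\delta$, so the set of short indices $I:=\{j:\vw_j\in B_j^\delta\}$ has cardinality at least $k-1-\lfloor T_0/\delta\rfloor$. A union bound over the choice of $I$ then gives
\begin{equation*}
\int_{\prod_{j=1}^{k-1}\mu_j}\id_{t_k<T_0}\prod_{j=1}^{k-1}\ud{\sigma_j}\leq\binom{k-1}{\lfloor T_0/\delta\rfloor}\bigl(C\e^2\delta^2\bigr)^{k-1-\lfloor T_0/\delta\rfloor}.
\end{equation*}

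Finally, I would take $\delta=T_0^{-1/4}$ and $k=C_1 T_0^{5/4}$ with $C_1>2$ fixed, so that $T_0/\delta=T_0^{5/4}$ and the small factor carries an exponent of at least $(C_1-1)T_0^{5/4}-1$. Using the crude bound $\binom{k}{m}\leq 2^k$ together with $C\e^2\delta^2=C\e^2 T_0^{-1/2}<1/4$ for $T_0$ sufficiently large, the right-hand side is controlled by $2^{C_1 T_0^{5/4}}\cdot 4^{-(C_1-1)T_0^{5/4}}\leq(1/2)^{C_2 T_0^{5/4}}$ for $C_2=C_1-2$, which yields the claim. The main obstacle is the first step: establishing the chord-length bound $\e\,t_b\leq C_\Omega(\vw\cdot\vn)$ uniformly in $\vx_j$ and in the reflection history. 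This uses the uniform two-sided bound on $R_\kappa(\tau)$ provided by strict convexity of $\p\Omega\in C^2$, and is essentially the same geometry that underlies the local coordinate system of Section 2; the remaining combinatorial estimate is then routine.
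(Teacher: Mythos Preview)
The paper does not prove this lemma in-text; it simply cites Lemma 4.1 of \cite{Esposito.Guo.Kim.Marra2013}. Your sketch is precisely the argument used there: bound the $\sigma_j$-measure of velocities yielding a short transit time via the near-grazing geometry, then combine pigeonhole on the constraint $t_k<T_0$ with a union bound over which steps are short. The choice $\delta=T_0^{-1/4}$, $k\sim T_0^{5/4}$ is exactly how the exponent $5/4$ arises in the reference.

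One inequality is stated in the wrong direction, however. To obtain the inclusion $B_j^\delta\subset\{\vw_j\cdot\vn(\vx_j)\le C\e\delta\}$ you need a \emph{lower} bound on the chord, $\e\,t_b\ge c_\Omega\,(\vw_j\cdot\vn)$, not the upper bound $\e\,t_b\le C_\Omega\,(\vw_j\cdot\vn)$ that you wrote; it is the lower bound that converts ``short transit time'' into ``near-grazing direction.'' Strict convexity with curvature bounded above furnishes this lower bound via the local quadratic approximation of $\partial\Omega$ at $\vx_j$. (Similarly, ``bounded below by the diameter'' cannot be right for a chord length, which is always at most the diameter; you presumably meant that $t_b$ is bounded below by a fixed positive constant away from grazing.) With these corrections your measure bound $\sigma_j(B_j^\delta)\le C(\e\delta)^2$ and the subsequent combinatorics go through as written.
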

\begin{proof}
See \cite[Lemma 4.1]{Esposito.Guo.Kim.Marra2013}.
\end{proof}
\begin{theorem}\label{LI estimate.}
Assume $f(\vx,\vw)\in L^{\infty}(\Omega\times\s^1)$ and
$h(x_0,\vw)\in L^{\infty}(\Gamma^-)$. Then the solution $u(\vx,\vw)$ to the transport
equation (\ref{neutron}) satisfies
\begin{eqnarray}
\im{u}{\Omega\times\s^1}\leq C(\Omega)\bigg(\frac{1}{\e^{3}}\tm{f}{\Omega\times\s^1}+\frac{1}{\e^{2}}\tm{h}{\Gamma^-}+\im{f}{\Omega\times\s^1}+\im{h}{\Gamma^-}\bigg).\no
\end{eqnarray}
\end{theorem}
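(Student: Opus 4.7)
The plan is to use the $L^2$-$L^{\infty}$ framework of Guo, taking advantage of the $L^2$ bound already established in Lemma \ref{LT estimate}. I would begin by writing $u$ along the backward characteristic through $(\vx,\vw)$: by Duhamel's principle on the single straight segment up to the first boundary contact $(\vx_1,\vw)$ with $t_1=t_b(\vx,\vw)$,
\begin{eqnarray*}
u(\vx,\vw) &=& \ue^{-t_1}\bigl[h(\vx_1,\vw) + \pp[u](\vx_1)\bigr]
+ \int_0^{t_1}\ue^{-(t_1-s)} (f+\bar u)(\vx-\e(t_1-s)\vw,\vw)\,\ud{s}.
\end{eqnarray*}
The reflection term $\pp[u](\vx_1)$ is a velocity-average over the outgoing hemisphere at $\vx_1$, which I would then unfold by iterating the same mild formula at each subsequent bounce, producing the stochastic-cycle expansion with vertices $(\vx_k,\vw_k)$ and times $t_k$ introduced above Lemma \ref{well-posedness lemma 6}.

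After $k = C_1 T_0^{5/4}$ iterations, $u(\vx,\vw)$ is expressed as a finite sum of (i) products of $h$'s and $f$'s evaluated at bounce points, harmless in $L^\infty$ and $L^2$; (ii) multi-fold integrals of $\bar u$ along the piecewise-straight cycles; (iii) a tail where the trajectory has not yet escaped within time $T_0$. For the tail, Lemma \ref{well-posedness lemma 6} ensures its contribution is bounded by $2^{-C_2 T_0^{5/4}} \im{u}{\Omega\times\s^1}$, which is absorbed on the left once $T_0$ is chosen large. The contributions from $h$ and $f$ are bounded directly by $\im{h}{\Gamma^-}+\im{f}{\Omega\times\s^1}$ using that the probability measures $\ud\sigma_j$ are normalized and the exponentials are $\leq 1$.

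The main obstacle is controlling the iterated integrals of $\bar u$. Here I would apply Cauchy--Schwarz to the innermost $s$-integral, then change variables $\vy = \vx_k - \e(t_k-s)\vw_k$ along each leg: the Jacobian produces a factor $\e^{-1}$ (each leg is a line in $\Omega$), so after integrating over the probability measures the cycle integral is dominated by $\e^{-1} \tm{\bar u}{\Omega\times\s^1}$ times a harmless constant depending on $T_0$ and $\mathrm{diam}(\Omega)$. (A second Duhamel substitution inside a very short leg may be needed to transfer the remaining $\vw$-integral into a volume integral; this is where the fact that $\bar u$ is independent of $\vw$ is crucial.)

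Finally, inserting Lemma \ref{LT estimate}, which gives
\begin{eqnarray*}
\tm{\bar u}{\Omega\times\s^1}\leq \tm{u}{\Omega\times\s^1}\leq C\Bigl(\frac{1}{\e^2}\tm{f}{\Omega\times\s^1}+\frac{1}{\e}\tm{h}{\Gamma^-}\Bigr),
\end{eqnarray*}
into the bound on the iterated $\bar u$-term yields the claimed coefficients $\e^{-3}$ in front of $\tm{f}{}$ and $\e^{-2}$ in front of $\tm{h}{}$. Fixing $T_0$ so that $2^{-C_2 T_0^{5/4}} \leq \tfrac{1}{2}$, absorbing the tail, and combining all terms gives the desired $L^\infty$ estimate. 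The powers of $\e$ here are suboptimal, as expected for a "first round" bound; they will be improved in the $L^{2m}$-$L^\infty$ analysis of the next subsection.
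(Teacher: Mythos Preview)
Your overall architecture matches the paper's: stochastic-cycle expansion, absorb the tail via Lemma \ref{well-posedness lemma 6}, bound $h$ and $f$ contributions directly, and then control the $\bar u$ terms by a change of variables followed by Lemma \ref{LT estimate}. The final $\e^{-1}$ loss on $\tm{\bar u}{}$ and the resulting $\e^{-3},\e^{-2}$ coefficients are also what the paper obtains.

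However, there is a genuine gap in your description of the change of variables. Along the \emph{first} leg the velocity $\vw$ is the fixed initial velocity, so the map $s\mapsto \vx-\e(t_1-s)\vw$ traces only a one-dimensional segment in $\Omega$; Cauchy--Schwarz on this alone yields a line integral of $\abs{\bar u}^2$, which cannot be bounded by $\tm{\bar u}{\Omega}$. The remedy is not optional: you must unfold $\bar u(\vec y)=\frac{1}{2\pi}\int_{\s^1}u(\vec y,\vw_\ast)\,\ud\vw_\ast$ and apply Duhamel a \emph{second} time to $u(\cdot,\vw_\ast)$, producing a new time variable $s'$ along the characteristic in direction $\vw_\ast$. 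The pair $(\vw_\ast,s')$ now parametrizes a two-dimensional region, and the change of variables $(\vw_\ast,s')\to\vec y'=\vec y-\e(t'-s')\vw_\ast$ has Jacobian $\e^2(t'-s')$. After excising the near-diagonal piece $t'-s'<\delta$ (contributing $\delta\im{u}{}$, absorbed on the left), the remaining Jacobian is $\geq\e^2\delta$, and Cauchy--Schwarz gives the factor $\e^{-1}\delta^{-1/2}$ on $\tm{\bar u}{}$. This ``double Duhamel'' step is exactly what the paper does for $I_{2,2}$ and is the heart of the argument; your parenthetical remark hints at it but treats it as an afterthought rather than the essential mechanism. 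For the later legs ($II_2$) one could in principle use the boundary integration $\ud\sigma_l$ over $\vw_l$ as the second variable, but the paper uniformly applies the double Duhamel there as well, which avoids having to control how the cycle geometry depends on $\vw_l$.
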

\begin{proof}
We divide the proof into several steps:\\
\ \\
Step 1: Mild formulation.\\
We rewrite the equation (\ref{neutron}) along the characteristics as
\begin{eqnarray}
u(\vx,\vw)&=&h(\vx-\e t_1\vw,\vw)\ue^{- t_1}+\pp[u](\vx-\e t_1\vw,\vw)\ue^{- t_1}\\
&&+\int_0^{t_1}f(\vx-\e(t_1-s_1)\vw,\vw)\ue^{- (t_1-s_1)}\ud{s_1}+\int_0^{t_1}\bar u(\vx-\e(t_1-s_1)\vw)\ue^{- (t_1-s_1)}\ud{s_1}.\no
\end{eqnarray}
Note that here $\pp[u]$ is an integral over $\mu_1$ at $\vx_1$, using stochastic cycle, we may rewrite it again along the characteristics to $\vx_2$. This process can continue to arbitrary $\vx_k$. Then we get
\begin{eqnarray}\label{wt 21}
\\
u(\vx,\vw)&=&\ue^{- t_1}H+\sum_{l=1}^{k-1}\int_{\prod_{j=1}^l}\ue^{- t_{l+1}}G\prod_{j=1}^l\ud{\sigma_j}
+\sum_{l=1}^{k-1}\int_{\prod_{j=1}^l}\ue^{- t_{l+1}}\pp[u](\vx_k,\vw_{k-1})\prod_{j=1}^l\ud{\sigma_j}\no\\
&=&I+II+III.\no
\end{eqnarray}
where
\begin{eqnarray}
H&=&h(\vx-\e t_1\vw,\vw)\\
&&+\int_0^{t_1}f(\vx-\e(t_1-s_1)\vw,\vw)\ue^{ s_1}\ud{s_1}+\int_0^{t_1}\bar u(\vx-\e(t_1-s_1)\vw)\ue^{ s_1}\ud{s_1},\no\\
G&=&h(\vx_l-\e t_{l+1}\vw_l,\vw_l)\\
&&+\int_0^{t_l}f(\vx_l-\e(t_{l+1}-s_{l+1})\vw_l,\vw_l)\ue^{ s_{l+1}}\ud{s_{l+1}}+\int_0^{t_l}\bar u(\vx_l-\e(t_{l+1}-s_{l+1})\vw_l)\ue^{ s_{l+1}}\ud{s_{l+1}}.\no
\end{eqnarray}
We need to estimate each term on the right-hand side of (\ref{wt 21}).\\
\ \\
Step 2: Estimate of mild formulation.\\
We first consider $III$. We may decompose it as
\begin{eqnarray}
III&=&\sum_{l=1}^{k-1}\int_{\prod_{j=1}^l}\pp[u](\vx_k,\vw_{k-1})\ue^{- t_{l+1}}\prod_{j=1}^l\ud{\sigma_j}\\
&=&\sum_{l=1}^{k-1}\int_{\prod_{j=1}^l}{\bf{1}}_{t_k\leq T_0}\pp[u](\vx_k,\vw_{k-1})\ue^{- t_{l+1}}\prod_{j=1}^l\ud{\sigma_j}\no\\
&&+\sum_{l=1}^{k-1}\int_{\prod_{j=1}^l}{\bf{1}}_{t_k\geq T_0}\pp[u](\vx_k,\vw_{k-1})\ue^{- t_{l+1}}\prod_{j=1}^l\ud{\sigma_j},\no\\
&=&III_1+III_2,\no
\end{eqnarray}
where $T_0>0$ is defined as in Lemma \ref{well-posedness lemma 6}. Then we take $k=C_1T_0^{5/4}$. By Lemma \ref{well-posedness lemma 6}, we deduce
\begin{eqnarray}
\abs{III_1}&\leq&C\bigg(\frac{1}{2}\bigg)^{C_2T_0^{5/4}}\im{u}{\Omega\times\s^1}.
\end{eqnarray}
Also, we may directly estimate
\begin{eqnarray}
\abs{III_2}&\leq&C\ue^{- T_0}\im{u}{\Omega\times\s^1}.
\end{eqnarray}
Then taking $T_0$ sufficiently large, we know
\begin{eqnarray}\label{wt 22}
\abs{III}\leq \delta\im{u}{\Omega\times\s^1},
\end{eqnarray}
for $\d>0$ small.

On the other hand, we may directly estimate the terms in $I$ and $II$ related to $h$ and $f$, which we denote as $I_1$ and $II_1$. For fixed $T$, it is easy to see
\begin{eqnarray}\label{wt 23}
\abs{I_1}+\abs{II_1}\leq \im{f}{\Omega\times\s^1}+\im{h}{\Gamma^-}.
\end{eqnarray}
\ \\
Step 3: Estimate of $\bar u$ term.\\
The most troubling terms are related to $\bar u$. Here, we use the trick as in \cite{AA006}. Collecting the results in (\ref{wt 22}) and (\ref{wt 23}), we obtain
\begin{eqnarray}
\abs{u}&\leq&A+\abs{\int_0^{t_1}\bar u(\vx-\e(t_1-s_1)\vw)\ue^{- (t_1-s_1)}\ud{s_1}}\\
&&+\abs{\sum_{l=1}^{k-1}\int_{\prod_{j=1}^l}\bigg(\int_0^{t_l}\bar u(\vx_l-\e(t_{l+1}-s_{l+1})\vw_l)\ue^{- (t_{l+1}-s_{l+1})}\ud{s_{l+1}}\bigg)\prod_{j=1}^l\ud{\sigma_j}},\no\\
&=&A+I_2+II_2,\no
\end{eqnarray}
where
\begin{eqnarray}
A=\im{f}{\Omega\times\s^1}+\im{h}{\Gamma^-}+\delta\im{u}{\Omega\times\s^1}.
\end{eqnarray}
By definition, we know
\begin{eqnarray}
\abs{I_2}&=&\abs{\int_0^{t_1}\bigg(\int_{\s^1}u(\vx-\e(t_1-s_1)\vw,\vw_{s_1})\ud{\vw_{s_1}}\bigg)\ue^{- (t_1-s_1)}\ud{s_1}},
\end{eqnarray}
where $\vw_{s_1}\in\s^1$ is a dummy variable.
Then we can utilize the mild formulation (\ref{wt 21}) to rewrite $u(\vx-\e(t_1-s_1)\vw,\vw_{s_1})$ along the characteristics. We denote the stochastic cycle as $(t_k',\vx_k',\vw_k')$ correspondingly and $(t_0',\vx_0',\vw_0')=(0,\vx-\e(t_1-s_1)\vw,\vw_{s_1})$. Then
\begin{eqnarray}
\abs{I_2}&\leq& \abs{\int_0^{t_1}\bigg(\int_{\s^1}A\ud{\vw_{s_1}}\bigg)\ue^{- (t_1-s_1)}\ud{s_1}}\\
&&+\abs{\int_0^{t_1}\bigg(\int_{\s^1}\int_0^{t_1'}\bar u(\vx'-\e(t_1'-s_1')\vw_{s_1})\ue^{- (t_1'-s_1')}\ud{s_1'}\ud{\vw_{s_1}}\bigg)\ue^{- (t_1-s_1)}\ud{s_1}}\no\\
&&+\Bigg\vert\int_0^{t_1}\bigg(\int_{\s^1}\sum_{l'=1}^{k-1}\int_{\prod_{j'=1}^{l'}}\bigg(\int_0^{t_{l'}'}\bar u(\vx_{l'}-\e(t_{l'+1}'-s_{l'+1}')\vw_{l'})\ue^{- (t_{l'+1}'-s_{l'+1}')}\ud{s_{l'+1}'}\bigg)\prod_{j'=1}^{l'}
\ud{\sigma_{j'}}\ud{\vw_{s_1}}\bigg)\no\\
&&\ue^{- (t_1-s_1)}\ud{s_1}\Bigg\vert,\no\\
&=&\abs{I_{2,1}}+\abs{I_{2,2}}+\abs{I_{2,3}}.\no
\end{eqnarray}
It is obvious that
\begin{eqnarray}
\abs{I_{2,1}}&=&\abs{\int_0^{t_1}\bigg(\int_{\s^1}A\ud{\vw_{s_1}}\bigg)\ue^{- (t_1-s_1)}\ud{s_1}}\leq A\\
&\leq& \im{f}{\Omega\times\s^1}+\im{h}{\Gamma^-}+\delta\im{u}{\Omega\times\s^1}.\no
\end{eqnarray}
Then by definition, we know
\begin{eqnarray}
\abs{I_{2,2}}&=&\abs{\int_0^{t_1}\bigg(\int_{\s^1}\int_0^{t_1'}\bar u(\vx'-\e(t_1'-s_1')\vw_{s_1})\ue^{- (t_1'-s_1')}\ud{s_1'}\ud{\vw_{s_1}}\bigg)\ue^{- (t_1-s_1)}\ud{s_1}}.
\end{eqnarray}
We may decompose this integral
\begin{eqnarray}
\int_0^{t_1}\int_{\s^1}\int_0^{t_1'}&=&\int_0^{t_1}\int_{\s^1}\int_{t_1'-s_1'\leq\d}
+\int_0^{t_1}\int_{\s^1}\int_{t_1'-s_1'\geq\d}=I_{2,2,1}+I_{2,2,2}.
\end{eqnarray}
For $I_{2,2,1}$, since the integral is defined in the small domain $[t_1'-\d, t_1']$, it is easy to see
\begin{eqnarray}
\abs{I_{2,2,1}}\leq \delta\im{u}{\Omega\times\s^1}.
\end{eqnarray}
For $I_{2,2,2}$, applying H\"{o}lder's inequality, we get
\begin{eqnarray}
\abs{I_{2,2,2}}&\leq&\abs{\int_0^{t_1}\int_{\s^1}\int_{t_1'-s_1'\geq\d}\bar u(\vx'-\e(t_1'-s_1')\vw_{s_1})\ue^{- (t_1'-s_1')}\ue^{- (t_1-s_1)}\ud{s_1'}\ud{\vw_{s_1}}\ud{s_1}}\\
&\leq&\bigg(\int_0^{t_1}\int_{\s^1}\int_{t_1'-s_1'\geq\d}\ue^{- (t_1'-s_1')}\ue^{- (t_1-s_1)}\ud{s_1'}\ud{\vw_{s_1}}\ud{s_1}\bigg)^{1/2}\no\\
&&\bigg(\int_0^{t_1}\int_{\s^1}\int_{t_1'-s_1'\geq\d}{\bf{1}}_{\vx'-\e(t_1'-s_1')\vw_{s_1}\in\Omega}\abs{\bar u}^2(\vx'-\e(t_1'-s_1')\vw_{s_1})\ue^{- (t_1'-s_1')}\ue^{- (t_1-s_1)}\ud{s_1'}\ud{\vw_{s_1}}\ud{s_1}\bigg)^{1/2}\no\\
&\leq&\bigg(\int_0^{t_1}\int_{\s^1}\int_{t_1'-s_1'\geq\d}{\bf{1}}_{\vx'-\e(t_1'-s_1')\vw_{s_1}\in\Omega}\abs{\bar u}^2(\vx'-\e(t_1'-s_1')\vw_{s_1})\ue^{- (t_1'-s_1')}\ue^{- (t_1-s_1)}\ud{s_1'}\ud{\vw_{s_1}}\ud{s_1}\bigg)^{1/2}.\no
\end{eqnarray}
Since $\vw_{s_1}\in\s^1$, we can express it as $(\cos\phi,\sin\phi)$. Then we consider the substitution $(\phi,s'_1)\rt(y_1,y_2)$ as
\begin{eqnarray}
\vec y=\vx'-\e(t_1'-s_1')\vw_{s_1},
\end{eqnarray}
whose Jacobian is
\begin{eqnarray}
\abs{\frac{\p(y_1,y_2)}{\p(\phi,r')}}=\abs{\abs{\begin{array}{cc}
\e(t_1'-s_1')\sin\phi&\cos\phi\\
-\e(t_1'-s_1')\cos\phi&\sin\phi
\end{array}}}=\e^2(t_1'-s_1')\geq\e^2\d.
\end{eqnarray}
Therefore, we know
\begin{eqnarray}
\abs{I_{2,2,2}}&\leq&\frac{1}{\e\d^{\frac{1}{2}}}\nm{\bar u}_{L^2(\Omega\times\s^1)}.
\end{eqnarray}
Therefore, we have shown
\begin{eqnarray}
\abs{I_{2,2}}\leq \delta\im{u}{\Omega\times\s^1}+\frac{1}{\d^{\frac{1}{2}}\e}\nm{\bar u}_{L^2(\Omega\times\s^1)}.
\end{eqnarray}
After a similar but tedious computation, we can show
\begin{eqnarray}
\abs{I_{2,3}}\leq \delta\im{u}{\Omega\times\s^1}+\frac{1}{\d^{\frac{1}{2}}\e}\nm{\bar u}_{L^2(\Omega\times\s^1)}.
\end{eqnarray}
Hence, we have proved
\begin{eqnarray}
\abs{I_{2}}\leq \delta\im{u}{\Omega\times\s^1}+\frac{1}{\d^{\frac{1}{2}}\e}\nm{\bar u}_{L^2(\Omega\times\s^1)}+\im{f}{\Omega\times\s^1}+\im{h}{\Gamma^-}.
\end{eqnarray}
In a similar fashion, we can show
\begin{eqnarray}
\abs{II_{2}}\leq \delta\im{u}{\Omega\times\s^1}+\frac{1}{\d^{\frac{1}{2}}\e}\nm{\bar u}_{L^2(\Omega\times\s^1)}+\im{f}{\Omega\times\s^1}+\im{h}{\Gamma^-}.
\end{eqnarray}
\ \\
Step 4: Synthesis.\\
Summarizing all above, we have shown
\begin{eqnarray}
\abs{u}&\leq& \delta\im{u}{\Omega\times\s^1}+\frac{1}{\d^{\frac{1}{2}}\e}\nm{\bar u}_{L^2(\Omega\times\s^1)}+\im{f}{\Omega\times\s^1}+\im{h}{\Gamma^-}\\
&\leq&\delta\im{u}{\Omega\times\s^1}+\frac{1}{\d^{\frac{1}{2}}\e}\nm{u}_{L^2(\Omega\times\s^1)}+\im{f}{\Omega\times\s^1}+\im{h}{\Gamma^-}.\no
\end{eqnarray}
Since $(\vx,\vw)$ are arbitrary and $\d$ is small, taking supremum on both sides and applying Lemma \ref{LT estimate}, we have
\begin{eqnarray}
\im{u}{\Omega\times\s^1}&\leq&C(\Omega)\bigg(\frac{1}{\e}\nm{u}_{L^2(\Omega\times\s^1)}+\im{f}{\Omega\times\s^1}+\im{h}{\Gamma^-}\bigg)\\
&\leq&C(\Omega)\bigg(\frac{1}{\e^{3}}\tm{f}{\Omega\times\s^1}+\frac{1}{\e^{2}}\tm{h}{\Gamma^-}+\im{f}{\Omega\times\s^1}+\im{h}{\Gamma^-}\bigg).\no
\end{eqnarray}
This is the desired result.
\end{proof}

\subsection{$L^{2m}$ Estimate}

In this subsection, we try to improve previous estimates. In the following, we assume $m>2$ is an integer and let $o(1)$ denote a sufficiently small constant.
\begin{theorem}\label{LN estimate}
Assume $f(\vx,\vw)\in L^{\infty}(\Omega\times\s^1)$ and
$h(x_0,\vw)\in L^{\infty}(\Gamma^-)$. Then
$u(\vx,\vw)$ satisfies
\begin{eqnarray}
&&\frac{1}{\e^{\frac{1}{2}}}\nm{(1-\pp)[u]}_{L^2(\Gamma^+)}+\nm{
\bar u}_{L^{2m}(\Omega\times\s^1)}+\frac{1}{\e}\nm{u-\bar
u}_{L^2(\Omega\times\s^1)}\\
&\leq&
C\bigg(o(1)\e^{\frac{1}{m}}\nm{u}_{L^{\infty}(\Gamma^+)}+\frac{1}{\e}\tm{f}{\Omega\times\s^1}+
\frac{1}{\e^2}\nm{f}_{L^{\frac{2m}{2m-1}}(\Omega\times\s^1)}+\frac{1}{\e}\nm{h}_{L^2(\Gamma^-)}+\nm{h}_{L^{m}(\Gamma^-)}\bigg).\nonumber
\end{eqnarray}
\end{theorem}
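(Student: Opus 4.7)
The plan is to prove the estimate by combining three ingredients: a standard energy estimate controlling the microscopic part $u-\bar u$ and the boundary flux $(1-\pp)[u]$, a ``kernel'' estimate for $\bar u$ in $L^{2m}$ obtained via a carefully constructed test function in the weak formulation, and a final absorbing argument that ties them together. The heart of the argument is an improved version of Step~5 of Lemma \ref{LT estimate}, upgrading the Neumann potential from $L^2$-level to $L^{2m}$-level.

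First, I would multiply (\ref{neutron}) by $u$ and integrate over $\Omega\times\s^1$. After applying Lemma \ref{well-posedness lemma 3} and decomposing $u=\pp[u]+h+(1-\pp)[u]$ on $\Gamma$, the boundary flux produces a positive term proportional to $\e\nm{(1-\pp)[u]}_{L^2(\Gamma^+)}^2$, a data term $\e\nm{h}_{L^2(\Gamma^-)}^2$, and an interaction term between $h$ and $\pp[u]$ that is bounded by Cauchy's inequality at the cost of $\eta\e^2\nm{\pp[u]}_{L^2(\Gamma^-)}^2$. The $\pp[u]$ contribution is in turn controlled via Lemma \ref{well-posedness lemma 2} applied to $u^2$, exactly as in Step~3 of Lemma \ref{LT estimate}, and choosing $\eta$ small yields
\begin{equation*}
\e\nm{(1-\pp)[u]}_{L^2(\Gamma^+)}^2 + \nm{u-\bar u}_{L^2(\Omega\times\s^1)}^2 \leq C\bigg(\iint_{\Omega\times\s^1} f u + \nm{h}_{L^2(\Gamma^-)}^2\bigg).
\end{equation*}

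Second, for the kernel estimate I would introduce the Neumann potential $\zeta(\vx)$ solving
\begin{equation*}
\Delta\zeta = \abs{\bar u}^{2m-2}\bar u - \frac{1}{|\Omega|}\int_\Omega\abs{\bar u}^{2m-2}\bar u, \qquad \frac{\p\zeta}{\p\vn}\bigg|_{\p\Omega}=0,
\end{equation*}
which by $W^{2,p}$ elliptic regularity satisfies $\nm{\zeta}_{W^{2,2m/(2m-1)}(\Omega)}\leq C\nm{\bar u}_{L^{2m}(\Omega)}^{2m-1}$. Inserting the test function $\phi=-\vw\cdot\nx\zeta$ into the weak form of (\ref{neutron}) and exploiting $\int_{\s^1} w_iw_j\,\ud{\vw}=\pi\d_{ij}$ together with the normalization (\ref{normalization.}) reproduces the key identity
\begin{equation*}
-\e\iint_{\Omega\times\s^1}(\vw\cdot\nx\phi)\bar u = \e\pi\nm{\bar u}_{L^{2m}(\Omega)}^{2m},
\end{equation*}
so that the weak formulation rearranges into
\begin{equation*}
\e\pi\nm{\bar u}_{L^{2m}}^{2m}=\iint f\phi - \iint(u-\bar u)\phi + \e\iint(\vw\cdot\nx\phi)(u-\bar u) - \e\int_\Gamma u\phi\,\ud{\gamma}.
\end{equation*}
Each term on the right is then bounded by H\"older's inequality together with the 2D Sobolev embedding applied to $\zeta$: the coupling terms involving $u-\bar u$ are dominated by the energy bound from stage one, the source term $\iint f\phi$ extracts $\nm{f}_{L^{2m/(2m-1)}}$ by splitting $f=\bar f+(f-\bar f)$ and using that $\bar f$ drops out against $-\vw\cdot\nx\zeta$ since $\int_{\s^1}\vw\,\ud{\vw}=0$, and the boundary term decomposes through the reflection boundary condition into a data piece (yielding $\nm{h}_{L^m(\Gamma^-)}$ paired with the trace of $\nabla\zeta$), a microscopic piece ($(1-\pp)[u]$, absorbed into stage one), and a residual $L^\infty$ piece that produces the $o(1)\e^{1/m}\nm{u}_{L^\infty(\Gamma^+)}$ term after interpolation. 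A final application of Young's inequality with conjugate exponents $(2m,\,2m/(2m-1))$ absorbs the factor $\nm{\bar u}_{L^{2m}}^{2m-1}$ into the left-hand side $\nm{\bar u}_{L^{2m}}^{2m}$, yielding the stated bound.

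The main obstacle is matching the exponents coming from the 2D Sobolev embedding with the stated $L^{2m/(2m-1)}$-norm of $f$ and $L^m(\Gamma^-)$-norm of $h$. A crude pairing of $\iint f\phi$ using only $W^{2,2m/(2m-1)}\hookrightarrow W^{1,2m/(m-1)}$ produces the dual exponent $L^{2m/(m+1)}$ for $f$, which is strictly stronger than $L^{2m/(2m-1)}$; recovering the looser norm hinges on the cancellation $\int_{\s^1}\vw\,\ud{\vw}=0$ and on interpolating between the a priori $L^\infty$ bound on $u$ from Theorem \ref{LI estimate.} and the $L^2$ energy bound. A parallel delicacy arises on the boundary, where the trace of $\nabla\zeta$ sits in a fractional Sobolev space that must be interpolated to produce the precise $\e^{1/m}$ factor multiplying $\nm{u}_{L^\infty(\Gamma^+)}$. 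Verifying that the Young inequality step closes uniformly for every integer $m\geq 2$ is the principal technical difficulty.
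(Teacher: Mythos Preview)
Your overall architecture (energy estimate plus an $L^{2m}$-level Neumann potential as test function) matches the paper, but the mechanisms you describe for producing the $o(1)\e^{1/m}\nm{u}_{L^\infty}$ term and the $\nm{f}_{L^{2m/(2m-1)}}$ term are not correct, and as written the argument would not close.

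The $o(1)\e^{1/m}\nm{u}_{L^\infty}$ term does not come from a ``residual $L^\infty$ piece'' of the boundary decomposition, nor from invoking Theorem~\ref{LI estimate.}. In the kernel estimate, $\nx\phi$ lies only in $L^{2m/(2m-1)}$, so the term $\e\iint(\vw\cdot\nx\phi)(u-\bar u)$ forces $\e\nm{u-\bar u}_{L^{2m}(\Omega\times\s^1)}$ onto the right-hand side; likewise the trace of $\nx\zeta$ on $\Gamma$ is only in $L^{m/(m-1)}$, so the microscopic boundary piece forces $\e\nm{(1-\pp)[u]}_{L^m(\Gamma^+)}$. Neither of these can be ``absorbed into stage one,'' which only delivers $L^2$ control. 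The paper handles them by interpolation,
\[
\nm{u-\bar u}_{L^{2m}}\leq \nm{u-\bar u}_{L^2}^{1/m}\nm{u-\bar u}_{L^\infty}^{(m-1)/m},\qquad
\nm{(1-\pp)[u]}_{L^m}\leq \nm{(1-\pp)[u]}_{L^2}^{2/m}\nm{(1-\pp)[u]}_{L^\infty}^{(m-2)/m},
\]
followed by Young's inequality with $\e$-dependent weights: the $L^2$ factors are absorbed into the left-hand side and the $L^\infty$ factors produce exactly $o(1)\e^{1/m}\nm{u}_{L^\infty}$. This interpolation step is the heart of the theorem and is missing from your outline.

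The $\nm{f}_{L^{2m/(2m-1)}}$ norm does not arise from $\iint f\phi$ via the cancellation $\int_{\s^1}\vw\,\ud{\vw}=0$; the paper simply bounds $\iint f\phi\leq C\nm{f}_{L^2}\nm{\phi}_{L^2}\leq C\nm{f}_{L^2}\nm{\bar u}_{L^{2m}}^{2m-1}$, using $W^{2,2m/(2m-1)}\hookrightarrow H^1$ in two dimensions. The low-exponent norm of $f$ instead enters through the \emph{energy} estimate: the term $\iint fu$ is split as $\iint f\bar u+\iint f(u-\bar u)$, and the first piece is paired in $L^{2m/(2m-1)}$--$L^{2m}$ and then Cauchy'd against $\e^2\nm{\bar u}_{L^{2m}}^2$, yielding $\e^{-2}\nm{f}_{L^{2m/(2m-1)}}^2$. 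Your concern about Sobolev exponents for $\iint f\phi$ is therefore aimed at the wrong term.
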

\begin{proof}
We divide the proof into several steps:\\
\ \\
Step 1: Kernel Estimate.\\
Applying Green's identity to the solution of the
equation (\ref{neutron}). Then for any
$\phi\in L^2(\Omega\times\s^1)$ satisfying $\vw\cdot\nx\phi\in
L^2(\Omega\times\s^1)$ and $\phi\in L^2(\Gamma)$, we have
\begin{eqnarray}\label{well-posedness temp 4.}
\e\int_{\Gamma}u\phi\ud{\gamma}
-\e\iint_{\Omega\times\s^1}(\vw\cdot\nx\phi)u+\iint_{\Omega\times\s^1}(u-\bar
u)\phi=\iint_{\Omega\times\s^1}f\phi.
\end{eqnarray}
Our goal is to choose a particular test function $\phi$. We first
construct an auxiliary function $\zeta$. Naturally $u\in
L^{2m}(\Omega\times\s^1)$ implies $\bar u\in
L^{2m}(\Omega)$ which further leads to $(\bar u)^{2m-1}\in
L^{\frac{2m}{2m-1}}(\Omega)$. We define $\zeta(\vx)$ on $\Omega$ satisfying
\begin{eqnarray}\label{test temp 1.}
\left\{
\begin{array}{rcl}
\Delta \zeta&=&(\bar u)^{2m-1}-\displaystyle\frac{1}{\abs{\Omega}}\int_{\Omega}(\bar u)^{2m-1}\ud{\vx}\ \ \text{in}\ \
\Omega,\\\rule{0ex}{1.0em} \dfrac{\p\zeta}{\p\vn}&=&0\ \ \text{on}\ \ \p\Omega.
\end{array}
\right.
\end{eqnarray}
In the bounded domain $\Omega$, based on the standard elliptic
estimate, we have
\begin{eqnarray}\label{test temp 3.}
\nm{\zeta}_{W^{2,\frac{2m}{2m-1}}(\Omega)}\leq C\nm{(\bar
u)^{2m-1}}_{L^{\frac{2m}{2m-1}}(\Omega)}= C\nm{\bar
u}_{L^{2m}(\Omega)}^{2m-1}.
\end{eqnarray}
We plug the test function
\begin{eqnarray}\label{test temp 2.}
\phi=-\vw\cdot\nx\zeta
\end{eqnarray}
into the weak formulation (\ref{well-posedness temp 4.}) and estimate
each term there. By Sobolev embedding theorem, we have
\begin{eqnarray}
\nm{\phi}_{L^2(\Omega)}&\leq& C\nm{\zeta}_{H^1(\Omega)}\leq C\nm{\zeta}_{W^{2,\frac{2m}{2m-1}}(\Omega)}\leq
C\nm{\bar
u}_{L^{2m}(\Omega)}^{2m-1},\label{test temp 6.}\\
\nm{\phi}_{L^{\frac{2m}{2m-1}}(\Omega)}&\leq&C\nm{\zeta}_{W^{1,\frac{2m}{2m-1}}(\Omega)}\leq
C\nm{\bar
u}_{L^{2m}(\Omega)}^{2m-1}.\label{test temp 4.}
\end{eqnarray}
Easily we can decompose
\begin{eqnarray}\label{test temp 5.}
-\e\iint_{\Omega\times\s^1}(\vw\cdot\nx\phi)u_{\l}&=&-\e\iint_{\Omega\times\s^1}(\vw\cdot\nx\phi)\bar
u_{\l}-\e\iint_{\Omega\times\s^1}(\vw\cdot\nx\phi)(u_{\l}-\bar
u_{\l}).
\end{eqnarray}
We estimate the two term on the right-hand side of (\ref{test temp 5.}) separately. By
(\ref{test temp 1.}) and (\ref{test temp 2.}), we have
\begin{eqnarray}\label{wellposed temp 1.}
-\e\iint_{\Omega\times\s^1}(\vw\cdot\nx\phi)\bar
u&=&\e\iint_{\Omega\times\s^1}\bar
u\bigg(w_1(w_1\p_{11}\zeta+w_2\p_{12}\zeta)+w_2(w_1\p_{12}\zeta+w_2\p_{22}\zeta)\bigg)\\
&=&\e\iint_{\Omega\times\s^1}\bar
u\bigg(w_1^2\p_{11}\zeta+w_2^2\p_{22}\zeta\bigg)\nonumber\\
&=&\e\pi\int_{\Omega}\bar u(\p_{11}\zeta+\p_{22}\zeta)\nonumber\\
&=&\e\pi\nm{\bar u}_{L^{2m}(\Omega)}^{2m}\nonumber.
\end{eqnarray}
In the second equality, above cross terms vanish due to the symmetry
of the integral over $\s^1$. On the other hand, for the second term
in (\ref{test temp 5.}), H\"older's inequality and the elliptic
estimate imply
\begin{eqnarray}\label{wellposed temp 2.}
-\e\iint_{\Omega\times\s^1}(\vw\cdot\nx\phi)(u-\bar
u)&\leq&C\e\nm{u-\bar u}_{L^{2m}(\Omega\times\s^1)}\nm{\nx\phi}_{L^{\frac{2m}{2m-1}}(\Omega)}\\
&\leq&C\e\nm{u-\bar u}_{L^{2m}(\Omega\times\s^1)}\nm{\zeta}_{W^{2,\frac{2m}{2m-1}}(\Omega)}\no\\
&\leq&C\e\nm{u-\bar
u}_{L^{2m}(\Omega\times\s^1)}\nm{\bar
u}_{L^{2m}(\Omega)}^{2m-1}\nonumber.
\end{eqnarray}
Based on (\ref{test temp 3.}), (\ref{test temp 6.}), (\ref{test temp 4.}), Sobolev embedding theorem and the trace theorem, we have
\begin{eqnarray}
\\
\nm{\nx\zeta}_{L^{\frac{m}{m-1}}(\Gamma)}\leq C\nm{\nx\zeta}_{W^{\frac{1}{2m},\frac{2m}{2m-1}}(\Gamma)}\leq C\nm{\nx\zeta}_{W^{1,\frac{2m}{2m-1}}(\Omega)}\leq C\nm{\zeta}_{W^{2,\frac{2m}{2m-1}}(\Omega)}\leq
C\nm{\bar
u}_{L^{2m}(\Omega)}^{2m-1}.\no
\end{eqnarray}
We may also decompose $\vw=(\vw\cdot\vn)\vn+\vw_{\perp}$ to obtain
\begin{eqnarray}
\e\int_{\Gamma}u\phi\ud{\gamma}&=&\e\int_{\Gamma}u(\vw\cdot\nx\zeta)\ud{\gamma}\\
&=&\e\int_{\Gamma}u(\vn\cdot\nx\zeta)(\vw\cdot\vn)\ud{\gamma}+\e\int_{\Gamma}u(\vw_{\perp}\cdot\nx\zeta)\ud{\gamma}\no\\
&=&\e\int_{\Gamma}u(\vw_{\perp}\cdot\nx\zeta)\ud{\gamma}.\no
\end{eqnarray}
Based on (\ref{test temp 3.}), (\ref{test temp 4.}) and
H\"older's inequality, we have
\begin{eqnarray}
\e\int_{\Gamma}u\phi\ud{\gamma}&=&\e\int_{\Gamma}u(\vw_{\perp}\cdot\nx\zeta)\ud{\gamma}\\
&=&\e\int_{\Gamma}\pp[u](\vw_{\perp}\cdot\nx\zeta)\ud{\gamma}+\e\int_{\Gamma^+}(1-\pp)[u](\vw_{\perp}\cdot\nx\zeta)\ud{\gamma}
+\e\int_{\Gamma^-}h(\vw_{\perp}\cdot\nx\zeta)\ud{\gamma}\no\\
&=&\e\int_{\Gamma^+}(1-\pp)[u](\vw_{\perp}\cdot\nx\zeta)\ud{\gamma}
+\e\int_{\Gamma^-}h(\vw_{\perp}\cdot\nx\zeta)\ud{\gamma}\no\\
&\leq&C\e\nm{\nx\zeta}_{L^{\frac{m}{m-1}}(\Gamma)}\bigg(\nm{(1-\pp)[u]}_{L^{m}(\Gamma^+)}+\nm{h}_{L^{m}(\Gamma^-)}\bigg)\no\\
&\leq&C\e\nm{\bar u}_{L^{2m}(\Omega\times\s^1)}^{2m-1}\bigg(\nm{(1-\pp)[u]}_{L^{m}(\Gamma^+)}+\nm{h}_{L^{m}(\Gamma^-)}\bigg).\no
\end{eqnarray}
Hence, we know
\begin{eqnarray}\label{wellposed temp 3.}
\e\int_{\Gamma}u\phi\ud{\gamma}&\leq&C\e\nm{\bar u}_{L^{2m}(\Omega\times\s^1)}^{2m-1}\bigg(\nm{(1-\pp)[u]}_{L^{m}(\Gamma^+)}
+\nm{h}_{L^{m}(\Gamma^-)}\bigg).
\end{eqnarray}
Also, we have
\begin{eqnarray}\label{wellposed temp 5.}
\iint_{\Omega\times\s^1}(u-\bar u)\phi\leq
C\nm{\phi}_{L^2(\Omega\times\s^1)}\nm{u-\bar
u}_{L^2(\Omega\times\s^1)}\leq
C\nm{\bar
u}_{L^{2m}(\Omega)}^{2m-1}\nm{u-\bar
u}_{L^2(\Omega\times\s^1)},
\end{eqnarray}
\begin{eqnarray}\label{wellposed temp 6.}
\iint_{\Omega\times\s^1}f\phi\leq C\nm{\phi}_{L^2(\Omega\times\s^1)}\nm{f}_{L^2(\Omega\times\s^1)}\leq C\nm{\bar
u}_{L^{2m}(\Omega)}^{2m-1}\nm{f}_{L^2(\Omega\times\s^1)}.
\end{eqnarray}
Collecting terms in (\ref{wellposed temp 1.}), (\ref{wellposed temp
2.}), (\ref{wellposed temp 3.}),
(\ref{wellposed temp 5.}) and (\ref{wellposed temp 6.}), we obtain
\begin{eqnarray}\label{improve temp 1.}
\e\nm{\bar u}_{L^{2m}(\Omega\times\s^1)}&\leq&
C\bigg(\e\nm{u-\bar
u}_{L^{2m}(\Omega\times\s^1)}+\nm{u-\bar
u}_{L^2(\Omega\times\s^1)}+\nm{f}_{L^2(\Omega\times\s^1)}\\
&&+\e\nm{(1-\pp)[u]}_{L^{m}(\Gamma^+)}+\e\nm{h}_{L^{m}(\Gamma^-)}\bigg)\nonumber,
\end{eqnarray}
\ \\
Step 2: Energy Estimate.\\
In the weak formulation (\ref{well-posedness temp 4.}), we may take
the test function $\phi=u$ to get the energy estimate
\begin{eqnarray}\label{well-posedness temp 9.}
\half\e\int_{\Gamma}\abs{u}^2\ud{\gamma}+\nm{u-\bar
u}_{L^2(\Omega\times\s^1)}^2=\iint_{\Omega\times\s^1}fu.
\end{eqnarray}
Hence, by (\ref{wt 14}), this naturally implies
\begin{eqnarray}\label{well-posedness temp 5.}
\e\nm{(1-\pp)[u]}_{L^2(\Gamma^+)}^2+\nm{u-\bar
u}_{L^2(\Omega\times\s^1)}^2\leq o(1)\e^2\nm{\bar u}_{L^2(\Omega\times\s^1)}^2+\iint_{\Omega\times\s^1}fu+\nm{h}_{L^2(\Gamma^-)}^2.
\end{eqnarray}
On the other hand, we can square on both sides of
(\ref{improve temp 1.}) to obtain
\begin{eqnarray}\label{well-posedness temp 6.}
\e^2\nm{\bar u}_{L^{2m}(\Omega\times\s^1)}^2&\leq&
C\bigg(\e^2\nm{u-\bar
u}_{L^{2m}(\Omega\times\s^1)}^2+\nm{u-\bar
u}_{L^2(\Omega\times\s^1)}^2+\nm{f}_{L^2(\Omega\times\s^1)}^2\\
&&+\e^2\nm{(1-\pp)[u]}_{L^{m}(\Gamma^+)}+\e^2\nm{h}_{L^{m}(\Gamma^-)}^2\bigg)\nonumber,
\end{eqnarray}
Multiplying a sufficiently small constant on both sides of
(\ref{well-posedness temp 6.}) and adding it to (\ref{well-posedness
temp 5.}) to absorb
$\nm{u-\bar u}_{L^2(\Omega\times\s^1)}^2$ and $\e^2\nm{\bar u}_{L^2(\Omega\times\s^1)}^2$, we deduce
\begin{eqnarray}\label{wt 03.}
&&\e\nm{(1-\pp)[u]}_{L^2(\Gamma^+)}^2+\e^2\nm{\bar
u}_{L^{2m}(\Omega\times\s^1)}^2+\nm{u-\bar
u}_{L^2(\Omega\times\s^1)}^2\\
&\leq&
C\bigg(\e^2\nm{u-\bar
u}_{L^{2m}(\Omega\times\s^1)}^2+\e^2\nm{(1-\pp)[u]}_{L^{m}(\Gamma^+)}\no\\
&&+\tm{f}{\Omega\times\s^1}^2+
\iint_{\Omega\times\s^1}fu+\nm{h}_{L^2(\Gamma^-)}^2+\e^2\nm{h}_{L^{m}(\Gamma^-)}^2\bigg).\nonumber
\end{eqnarray}
By interpolation estimate and Young's inequality, we have
\begin{eqnarray}
\nm{(1-\pp)[u]}_{L^{m}(\Gamma^+)}&\leq&\nm{(1-\pp)[u]}_{L^2(\Gamma^+)}^{\frac{2}{m}}\nm{(1-\pp)[u]}_{L^{\infty}(\Gamma^+)}^{\frac{m-2}{m}}\\
&=&\bigg(\frac{1}{\e^{\frac{m-2}{m^2}}}\nm{(1-\pp)[u]}_{L^2(\Gamma^+)}^{\frac{2}{m}}\bigg)
\bigg(\e^{\frac{m-2}{m^2}}\nm{(1-\pp)[u]}_{L^{\infty}(\Gamma^+)}^{\frac{m-2}{m}}\bigg)\no\\
&\leq&C\bigg(\frac{1}{\e^{\frac{m-2}{m^2}}}\nm{(1-\pp)[u]}_{L^2(\Gamma^+)}^{\frac{2}{m}}\bigg)^{\frac{m}{2}}+o(1)
\bigg(\e^{\frac{m-2}{m^2}}\nm{(1-\pp)[u]}_{L^{\infty}(\Gamma^+)}^{\frac{m-2}{m}}\bigg)^{\frac{m}{m-2}}\no\\
&\leq&\frac{C}{\e^{\frac{m-2}{2m}}}\nm{(1-\pp)[u]}_{L^2(\Gamma^+)}+o(1)\e^{\frac{1}{m}}\nm{(1-\pp)[u]}_{L^{\infty}(\Gamma^+)}\no\\
&\leq&\frac{C}{\e^{\frac{m-2}{2m}}}\nm{(1-\pp)[u]}_{L^2(\Gamma^+)}+o(1)\e^{\frac{1}{m}}\nm{u}_{L^{\infty}(\Omega\times\s^1)}.\no
\end{eqnarray}
Similarly, we have
\begin{eqnarray}
\nm{u-\bar u}_{L^{2m}(\Omega\times\s^1)}&\leq&\nm{u-\bar u}_{L^2(\Omega\times\s^1)}^{\frac{1}{m}}\nm{u-\bar u}_{L^{\infty}(\Omega\times\s^1)}^{\frac{m-1}{m}}\\
&=&\bigg(\frac{1}{\e^{\frac{m-1}{m^2}}}\nm{u-\bar u}_{L^2(\Omega\times\s^1)}^{\frac{1}{m}}\bigg)\bigg(\e^{\frac{m-1}{m^2}}\nm{u-\bar u}_{L^{\infty}(\Omega\times\s^1)}^{\frac{m-1}{m}}\bigg)\no\\
&\leq&C\bigg(\frac{1}{\e^{\frac{m-1}{m^2}}}\nm{u-\bar u}_{L^2(\Omega\times\s^1)}^{\frac{1}{m}}\bigg)^{m}+o(1)\bigg(\e^{\frac{m-1}{m^2}}\nm{u-\bar u}_{L^{\infty}(\Omega\times\s^1)}^{\frac{m-1}{m}}\bigg)^{\frac{m}{m-1}}\no\\
&\leq&\frac{C}{\e^{\frac{m-1}{m}}}\nm{u-\bar u}_{L^2(\Omega\times\s^1)}+o(1)\e^{\frac{1}{m}}\nm{u-\bar u}_{L^{\infty}(\Omega\times\s^1)}.\no
\end{eqnarray}
We need this extra $\e^{\frac{1}{m}}$ for the convenience of $L^{\infty}$ estimate.
Then we know for sufficiently small $\e$,
\begin{eqnarray}
\e^2\nm{(1-\pp)[u]}_{L^{m}(\Gamma^+)}^2
&\leq&C\e^{2-\frac{m-2}{m}}\nm{(1-\pp)[u]}_{L^2(\Gamma^+)}^2+o(1)\e^{2+\frac{2}{m}}\nm{u}_{L^{\infty}(\Gamma^+)}^2\\
&\leq&o(1)\e\nm{(1-\pp)[u]}_{L^2(\Gamma^+)}^2+o(1)\e^{2+\frac{2}{m}}\nm{u}_{L^{\infty}(\Gamma^+)}^2.\no
\end{eqnarray}
Similarly, we have
\begin{eqnarray}
\e^2\nm{u-\bar
u}_{L^{2m}(\Omega\times\s^1)}^2&\leq&\e^{2-\frac{2m-2}{m}}\nm{u-\bar u}_{L^2(\Omega\times\s^1)}^2+o(1)\e^{2+\frac{2}{m}}\nm{u}_{L^{\infty}(\Omega\times\s^1)}^2\\
&\leq& o(1)\nm{u-\bar u}_{L^2(\Omega\times\s^1)}^2+o(1)\e^{2+\frac{2}{m}}\nm{u}_{L^{\infty}(\Omega\times\s^1)}^2.\no
\end{eqnarray}
In (\ref{wt 03}), we can absorb $\nm{u-\bar u}_{L^2(\Omega\times\s^1)}$ and $\e\nm{(1-\pp)[u]}_{L^2(\Gamma^+)}^2$ into left-hand side to obtain
\begin{eqnarray}\label{wt 04.}
&&\e\nm{(1-\pp)[u]}_{L^2(\Gamma^+)}^2+\e^2\nm{\bar
u}_{L^{2m}(\Omega\times\s^1)}^2+\nm{u-\bar
u}_{L^2(\Omega\times\s^1)}^2\\
&\leq&
C\bigg(o(1)\e^{2+\frac{2}{m}}\nm{u}_{L^{\infty}(\Omega\times\s^1)}^2+\tm{f}{\Omega\times\s^1}^2+
\iint_{\Omega\times\s^1}fu+\nm{h}_{L^2(\Gamma^-)}^2+\e^2\nm{h}_{L^{m}(\Gamma^-)}^2\bigg).\no
\end{eqnarray}
We can decompose
\begin{eqnarray}
\iint_{\Omega\times\s^1}fu=\iint_{\Omega\times\s^1}f\bar u+\iint_{\Omega\times\s^1}f(u-\bar u).
\end{eqnarray}
H\"older's inequality and Cauchy's inequality imply
\begin{eqnarray}
\iint_{\Omega\times\s^1}f\bar u\leq\nm{f}_{L^{\frac{2m}{2m-1}}(\Omega\times\s^1)}\nm{\bar u}_{L^{2m}(\Omega\times\s^1)}
\leq\frac{C}{\e^{2}}\nm{f}_{L^{\frac{2m}{2m-1}}(\Omega\times\s^1)}^2+o(1)\e^2\nm{\bar u}_{L^{2m}(\Omega\times\s^1)}^2,
\end{eqnarray}
and
\begin{eqnarray}
\iint_{\Omega\times\s^1}f(u-\bar u)\leq C\nm{f}_{L^{2}(\Omega\times\s^1)}^2+o(1)\nm{u-\bar u}_{L^2(\Omega\times\s^1)}^2.
\end{eqnarray}
Hence, absorbing $\e^2\nm{\bar u}_{L^{2m}(\Omega\times\s^1)}^2$ and $\nm{u-\bar u}_{L^2(\Omega\times\s^1)}^2$ into left-hand side of (\ref{wt 04.}), we get
\begin{eqnarray}\label{wt 06.}
&&\e\nm{(1-\pp)[u]}_{L^2(\Gamma^+)}^2+\e^2\nm{\bar
u}_{L^{2m}(\Omega\times\s^1)}^2+\nm{u-\bar
u}_{L^2(\Omega\times\s^1)}^2\\
&\leq&
C\bigg(o(1)\e^{2+\frac{2}{m}}\nm{u}_{L^{\infty}(\Gamma^+)}^2+\tm{f}{\Omega\times\s^1}^2+
\frac{1}{\e^2}\nm{f}_{L^{\frac{2m}{2m-1}}(\Omega\times\s^1)}^2+\nm{h}_{L^2(\Gamma^-)}^2+\e^2\nm{h}_{L^{m}(\Gamma^-)}^2\bigg),\nonumber
\end{eqnarray}
which implies
\begin{eqnarray}\label{wt 07.}
&&\frac{1}{\e^{\frac{1}{2}}}\nm{(1-\pp)[u]}_{L^2(\Gamma^+)}+\nm{
\bar u}_{L^{2m}(\Omega\times\s^1)}+\frac{1}{\e}\nm{u-\bar
u}_{L^2(\Omega\times\s^1)}\\
&\leq&
C\bigg(o(1)\e^{\frac{1}{m}}\nm{u}_{L^{\infty}(\Gamma^+)}+\frac{1}{\e}\tm{f}{\Omega\times\s^1}+
\frac{1}{\e^2}\nm{f}_{L^{\frac{2m}{2m-1}}(\Omega\times\s^1)}+\frac{1}{\e}\nm{h}_{L^2(\Gamma^-)}+\nm{h}_{L^{m}(\Gamma^-)}\bigg).\nonumber
\end{eqnarray}

\end{proof}

\subsection{$L^{\infty}$ Estimate - Second Round}

\begin{theorem}\label{LI estimate}
Assume $f(\vx,\vw)\in L^{\infty}(\Omega\times\s^1)$ and
$h(x_0,\vw)\in L^{\infty}(\Gamma^-)$. Then for the steady neutron
transport equation (\ref{neutron}), there exists a unique solution
$u(\vx,\vw)\in L^{\infty}(\Omega\times\s^1)$ satisfying
\begin{eqnarray}
\im{u}{\Omega\times\s^1}
&\leq& C\bigg(\frac{1}{\e^{1+\frac{1}{m}}}\tm{f}{\Omega\times\s^1}+
\frac{1}{\e^{2+\frac{1}{m}}}\nm{f}_{L^{\frac{2m}{2m-1}}(\Omega\times\s^1)}+\im{f}{\Omega\times\s^1}\\
&&+\frac{1}{\e^{1+\frac{1}{m}}}\nm{h}_{L^2(\Gamma^-)}+\frac{1}{\e^{\frac{1}{m}}}\nm{h}_{L^{m}(\Gamma^-)}+\im{h}{\Gamma^-}\bigg).\no
\end{eqnarray}
\end{theorem}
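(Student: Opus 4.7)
The strategy is to repeat the stochastic-cycle/double-Duhamel argument of Theorem \ref{LI estimate.}, but to replace the $L^2$--$L^2$ H\"older splitting used there in Step 3 by an $L^{2m}$--$L^{\frac{2m}{2m-1}}$ splitting, and then to feed in the sharper $L^{2m}$ control of $\bar u$ furnished by Theorem \ref{LN estimate} in place of the crude $L^2$ control of Lemma \ref{LT estimate}. Existence and uniqueness of $u\in L^{\infty}(\Omega\times\s^1)$ are already furnished by Theorem \ref{LI estimate.}, so the content of the statement is purely the quantitative improvement of the $\e$ weights from $\e^{-3},\e^{-2}$ to $\e^{-2-1/m},\e^{-1-1/m}$.

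Concretely, I would first reproduce the mild formulation along stochastic cycles verbatim, isolating the four groups of terms in (\ref{wt 21}): (i) the propagated boundary datum $h$, bounded by $\im{h}{\Gamma^-}$; (ii) the propagated source $f$, bounded by $\im{f}{\Omega\times\s^1}$; (iii) the cycle-tail $III$ involving $\pp[u]$ after $k = C_1T_0^{5/4}$ bounces, which by Lemma \ref{well-posedness lemma 6} together with the $\ue^{-T_0}$ decay contributes at most $\delta\im{u}{\Omega\times\s^1}$ for $T_0$ sufficiently large; and (iv) the averaging pieces
\begin{equation*}
\int_0^{t_1}\bar u(\vx-\e(t_1-s)\vw)\,\ue^{-(t_1-s)}\,ds
\end{equation*}
and its cycle-iterated versions. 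For group (iv) I invoke the double-Duhamel trick of Theorem \ref{LI estimate.}, expanding $\bar u$ once more along characteristics from the inner point so that the resulting double integral runs over $(\vw_{s_1},s'_1)\in\s^1\times[0,t'_1]$. After the change of variable $\vec y=\vx'-\e(t'_1-s'_1)\vw_{s_1}$ with Jacobian $\e^2(t'_1-s'_1)$ and a cutoff $t'_1-s'_1\geq\delta$ to tame the singularity, this becomes an area integral of $\bar u(\vec y)$ against a kernel $K$ of size $(\e^2(t'_1-s'_1))^{-1}$.

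The new ingredient is to apply H\"older's inequality with conjugate exponents $2m$ and $\frac{2m}{2m-1}$ rather than $2$ and $2$. Reverting to the original $(\vw_{s_1},s'_1)$ coordinates to evaluate $\|K\|_{L^{2m/(2m-1)}}$ produces an integrand of the form $(\e^2(t'_1-s'_1))^{-1/(2m-1)}$, whose exponent $\frac{1}{2m-1}<1$ is locally integrable for $m\geq 2$, and a direct computation gives $\|K\|_{L^{2m/(2m-1)}}\leq C\e^{-1/m}$. The iterated cycle terms $II_2$ are controlled uniformly in $l$ by the same device. Collecting everything and taking the supremum in $(\vx,\vw)$ yields
\begin{equation*}
\im{u}{\Omega\times\s^1}\leq C\Bigl(\e^{-1/m}\nm{\bar u}_{L^{2m}(\Omega\times\s^1)}+\im{f}{\Omega\times\s^1}+\im{h}{\Gamma^-}\Bigr)
\end{equation*}
after absorbing the $\delta\im{u}{}$ contributions.

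Finally I plug in Theorem \ref{LN estimate}: the term $\e^{-1/m}\cdot o(1)\e^{1/m}\nm{u}_{L^{\infty}(\Gamma^+)}=o(1)\im{u}{\Omega\times\s^1}$ is absorbed into the left-hand side, while the data terms produce precisely the $L^{\frac{2m}{2m-1}}$, $L^2$, $L^m$ norms with the advertised $\e$ powers. The main technical obstacle I foresee is the bookkeeping for the cycle-iterated averaging terms $II_2$, where the Jacobian change of variables must be carried out at each reflection without letting the combinatorial $k=C_1T_0^{5/4}$ factor spoil the $\e^{-1/m}$ gain; a secondary subtlety is the careful splitting of the near-singular slice $|t'_1-s'_1|<\delta$ (sent into the $\delta\im{u}{}$ absorption) from the far-from-singular part (where H\"older is applied), and verifying that the exponents $q-1=\frac{1}{2m-1}$ yield an integrable singularity independent of $\delta$ as $\delta\to 0$.
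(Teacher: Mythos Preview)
Your proposal is correct and follows essentially the same approach as the paper: rerun the stochastic-cycle/double-Duhamel argument of Theorem \ref{LI estimate.}, replace the $L^2$--$L^2$ H\"older step on $I_{2,2,2}$ by an $L^{2m}$--$L^{\frac{2m}{2m-1}}$ split to obtain $\im{u}{\Omega\times\s^1}\leq C\bigl(\e^{-1/m}\nm{\bar u}_{L^{2m}}+\im{f}{}+\im{h}{}\bigr)$, then insert Theorem \ref{LN estimate} and absorb the $o(1)\im{u}{}$ term. The only cosmetic difference is that the paper applies H\"older directly in the $(s_1,\vw_{s_1},s_1')$ variables with the exponential weight as measure and then changes variables only in the $L^{2m}$ factor using the crude Jacobian lower bound $\e^2\delta$ (yielding a harmless $\delta^{-1/(2m)}$ constant), whereas you propose to change variables first and compute $\nm{K}_{L^{2m/(2m-1)}}$ via the exact Jacobian, exploiting the integrability of $(t_1'-s_1')^{-1/(2m-1)}$; both routes give the same $\e^{-1/m}$ and your concern about the $\delta$-slice is already handled exactly as in Theorem \ref{LI estimate.}.
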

\begin{proof}
Following the argument in the proof of Theorem \ref{LI estimate.}, by double Duhamel's principle along the characteristics,
the key step is
\begin{eqnarray}
\abs{I_{2,2,2}}&\leq&\abs{\int_0^{t_1}\int_{\s^1}\int_{t_1'-s_1'\geq\d}\bar u(\vx'-\e(t_1'-s_1')\vw_{s_1})\ue^{- (t_1'-s_1')}\ue^{- (t_1-s_1)}\ud{s_1'}\ud{\vw_{s_1}}\ud{s_1}}\\
&\leq&\bigg(\int_0^{t_1}\int_{\s^1}\int_{t_1'-s_1'\geq\d}\ue^{- (t_1'-s_1')}\ue^{- (t_1-s_1)}\ud{s_1'}\ud{\vw_{s_1}}\ud{s_1}\bigg)^{\frac{2m-1}{2m}}\no\\
&&\bigg(\int_0^{t_1}\int_{\s^1}\int_{t_1'-s_1'\geq\d}{\bf{1}}_{\vx'-\e(t_1'-s_1')\vw_{s_1}\in\Omega}\abs{\bar u}^{2m}(\vx'-\e(t_1'-s_1')\vw_{s_1})\ue^{- (t_1'-s_1')}\ue^{- (t_1-s_1)}\ud{s_1'}\ud{\vw_{s_1}}\ud{s_1}\bigg)^{\frac{1}{2m}}\no\\
&\leq&\bigg(\int_0^{t_1}\int_{\s^1}\int_{t_1'-s_1'\geq\d}{\bf{1}}_{\vx'-\e(t_1'-s_1')\vw_{s_1}\in\Omega}\abs{\bar u}^{2m}(\vx'-\e(t_1'-s_1')\vw_{s_1})\ue^{- (t_1'-s_1')}\ue^{- (t_1-s_1)}\ud{s_1'}\ud{\vw_{s_1}}\ud{s_1}\bigg)^{\frac{1}{2m}}.\no
\end{eqnarray}
Then using the same substitution $(\phi,s'_1)\rt(y_1,y_2)$ as
\begin{eqnarray}
\vec y=\vx'-\e(t_1'-s_1')\vw_{s_1},
\end{eqnarray}
whose Jacobian is larger than $\e^2\d$, we know
\begin{eqnarray}
\abs{I_{2,2,2}}&\leq&\frac{1}{\e^{\frac{1}{m}}\d^{\frac{1}{2m}}}\nm{\bar u}_{L^{2m}(\Omega\times\s^1)}.
\end{eqnarray}
Therefore, we can show
\begin{eqnarray}
\im{u}{\Omega\times\s^1}\leq C\bigg(
\frac{1}{\e^{\frac{1}{m}}}\nm{\bar u}_{L^{2m}(\Omega\times\s^1)}+\im{f}{\Omega\times\s^1}+\im{g}{\Gamma^-}\bigg).
\end{eqnarray}
Considering Theorem \ref{LN estimate}, we obtain
\begin{eqnarray}
\im{u}{\Omega\times\s^1}&\leq& C\bigg(\frac{1}{\e^{1+\frac{1}{m}}}\tm{f}{\Omega\times\s^1}+
\frac{1}{\e^{2+\frac{1}{m}}}\nm{f}_{L^{\frac{2m}{2m-1}}(\Omega\times\s^1)}+\im{f}{\Omega\times\s^1}\\
&&+\frac{1}{\e^{1+\frac{1}{m}}}\nm{h}_{L^2(\Gamma^-)}+\frac{1}{\e^{\frac{1}{m}}}\nm{h}_{L^{m}(\Gamma^-)}+\im{h}{\Gamma^-}\bigg)+o(1)\im{u}{\Gamma^+}.\no
\end{eqnarray}
Absorbing $\im{u}{\Omega\times\s^1}$ into the left-hand side, we obtain
\begin{eqnarray}
\im{u}{\Omega\times\s^1}
&\leq& C\bigg(\frac{1}{\e^{1+\frac{1}{m}}}\tm{f}{\Omega\times\s^1}+
\frac{1}{\e^{2+\frac{1}{m}}}\nm{f}_{L^{\frac{2m}{2m-1}}(\Omega\times\s^1)}+\im{f}{\Omega\times\s^1}\\
&&+\frac{1}{\e^{1+\frac{1}{m}}}\nm{h}_{L^2(\Gamma^-)}+\frac{1}{\e^{\frac{1}{m}}}\nm{h}_{L^{m}(\Gamma^-)}+\im{h}{\Gamma^-}\bigg).\no
\end{eqnarray}

\end{proof}

\section{Diffusive Limit}

\begin{theorem}\label{diffusive limit}
Assume $g(\vx_0,\vw)\in C^2(\Gamma^-)$ satisfying (\ref{compatibility}). Then for the steady neutron
transport equation (\ref{transport}), there exists a unique solution
$u^{\e}(\vx,\vw)\in L^{\infty}(\Omega\times\s^1)$ satisfying (\ref{normalization}). Moreover, for any $0<\d<<1$, the solution obeys the estimate
\begin{eqnarray}
\im{u^{\e}-\u_0}{\Omega\times\s^1}\leq C(\d,\Omega)\e^{1-\d},
\end{eqnarray}
where $\u_0$ is defined in (\ref{expansion temp 11}).
\end{theorem}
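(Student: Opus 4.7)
The plan is to construct an approximate solution, estimate the remainder via the $L^{2m}$--$L^{\infty}$ machinery of Section~4, and close by the triangle inequality. Define
\begin{eqnarray}
u^{\e}_a(\vx,\vw) = \u_0(\vx) + \e \u_1(\vx,\vw) + \e^2 \u_2(\vx,\vw) + \e\, \ub_1(\eta,\tau,\phi),
\end{eqnarray}
where $(\eta,\tau,\phi)$ are the local boundary coordinates of Substitutions~1--5 and $\u_0, \u_1, \u_2, \ub_1$ come from (\ref{expansion temp 11}), (\ref{expansion temp 12.}), (\ref{expansion temp 10}), and the second-order interior expansion. Setting $R = u^{\e} - u^{\e}_a$, a direct substitution using the interior cancellations (\ref{expansion temp 1})--(\ref{expansion temp 3}) and the Milne equation (\ref{expansion temp 7}) shows that $R$ solves the remainder equation (\ref{neutron}) with source
\begin{eqnarray}
\mathcal{S} = -\e^3 \vw\cdot\nx \u_2 \;-\; \frac{\e^2\cos\phi}{R_{\kappa}-\e\eta}\,\p_\tau \ub_1 \;+\; \mathcal{S}_{\mathrm{cut}},
\end{eqnarray}
where $\mathcal{S}_{\mathrm{cut}}$ collects the terms produced when $\e\vw\cdot\nx$ hits the cutoff $\psi_0(\e^{1/2}\eta)$. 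A boundary-matching computation using $g_1 = \vw\cdot\nx\u_0-\pp[\vw\cdot\nx\u_0]+g$, $\pp[\ub_1](0,\tau)=-f^{\e}_{1,L}$, and $\u_2 = \bar\u_2 - \vw\cdot\nx\u_1$ reveals that the residual inflow datum reduces to
\begin{eqnarray}
\mathcal{H} = \e^2\bigl(\vw\cdot\nx\u_1 - \pp[\vw\cdot\nx\u_1]\bigr),
\end{eqnarray}
which is $O(\e^2)$ in $L^{\infty}(\Gamma^-)$ by the regularity of the interior solutions $\u_1,\u_2$.

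The next step is to bound $\mathcal{S}$ and $\mathcal{H}$ in the norms required by Theorem~\ref{LI estimate}. The contribution $\mathcal{S}_{\mathrm{cut}}$ is supported in $\eta\in[\tfrac12\e^{-1/2},\tfrac34\e^{-1/2}]$, so the exponential decay estimate of Theorem~\ref{Milne theorem 3.} forces $\lnm{\mathcal{S}_{\mathrm{cut}}}$ to be beyond-all-orders small in $\e$. The interior piece $-\e^3\vw\cdot\nx\u_2$ is trivially $O(\e^3)$ in every norm. The delicate piece is the tangential boundary-layer term, for which I will use the crucial bound $\lnnm{\ue^{K_0\eta}\p_\tau \ub_1}\le C\abs{\ln\e}^8$ from Theorem~\ref{Milne tangential.}. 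In Cartesian coordinates, the boundary layer is concentrated in a strip of thickness $O(\e)$, so the change of variables $\mu=\e\eta$ gives $\|\e^2\p_\tau\ub_1\|_{L^p(\Omega\times\s^1)} \le C\e^{2+1/p}\abs{\ln\e}^8$ for $1\le p\le\infty$ (where we read the $L^\infty$ case as $\e^2\abs{\ln\e}^8$). Applying this with $p=2, 2m/(2m-1), \infty$ and plugging into Theorem~\ref{LI estimate} yields
\begin{eqnarray}
\lnm{R} \leq C\bigl(\e^{1+1/(2m)}\abs{\ln\e}^8 + \e^{2-1/m}\bigr),
\end{eqnarray}
and choosing $m$ large enough so that $1/(2m) < \d/2$ while $\abs{\ln\e}^8 \le C(\d)\e^{-\d/2}$ gives $\lnm{R}\le C(\d,\Omega)\e^{1-\d}$.

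Finally, the triangle inequality combined with $\lnm{\e\u_1}+\lnm{\e^2\u_2}\le C\e$ and $\lnm{\e\ub_1}\le C\e$ (Theorem~\ref{Milne theorem 2.}) yields the desired bound $\lnm{u^{\e}-\u_0}\le C(\d,\Omega)\e^{1-\d}$. Existence and uniqueness of $u^\e$ satisfying (\ref{normalization}) follow from the unique solvability of the remainder equation in Theorem~\ref{LI estimate} and uniqueness of $u^\e_a$, with the normalization adjusted by an additive constant. The main obstacle in this program is controlling the $L^\infty$-norm of the tangential derivative $\p_\tau\ub_1$: only the weighted $W^{1,\infty}$ analysis developed in Section~3, which exploits convexity through the kinetic distance $\zeta$ and the invariance in Lemma~\ref{rt lemma 2}, produces the logarithmic-only blow-up $\abs{\ln\e}^8$ that can be absorbed into the $\e^{-\d}$ tolerance. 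Without this bound, the geometric-correction source would dominate and the remainder could not be pushed below $O(\e)$.
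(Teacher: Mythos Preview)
Your proposal is essentially the same argument as the paper's: identical approximate solution (note $\ub_0\equiv 0$), identical identification of the residual source as the sum of an $O(\e^3)$ interior piece, a cutoff commutator that is exponentially small, and the tangential boundary-layer term $\e^2\p_\tau\ub_1$ controlled via Theorem~\ref{Milne tangential.}, identical $O(\e^2)$ inflow residual, and the same application of Theorem~\ref{LI estimate} followed by the triangle inequality. One arithmetic slip: plugging $\nm{\mathcal S}_{L^{2m/(2m-1)}}\le C\e^{3-1/(2m)}\abs{\ln\e}^8$ into the $\e^{-2-1/m}$ term of Theorem~\ref{LI estimate} gives the dominant contribution $\e^{1-3/(2m)}\abs{\ln\e}^8$, not $\e^{1+1/(2m)}\abs{\ln\e}^8$; the conclusion $\lnm{R}\le C(\d,\Omega)\e^{1-\d}$ still follows by taking $3/(2m)<\d/2$.
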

\begin{proof}
We can divide the proof into several steps:\\
\ \\
Step 1: Remainder definitions.\\
We define the remainder as
\begin{eqnarray}\label{pf 1_}
R&=&u^{\e}-\sum_{k=0}^{2}\e^k\u_k-\sum_{k=0}^{1}\e^k\ub_k=u^{\e}-\q-\qb,
\end{eqnarray}
where
\begin{eqnarray}
\q&=&\u_0+\e\u_1+\e^2\u_2,\\
\qb&=&\ub_0+\e\ub_1.
\end{eqnarray}
Noting the equation (\ref{transport temp}) is equivalent to the
equation (\ref{transport}), we write $\ll$ to denote the neutron
transport operator as follows:
\begin{eqnarray}
\ll[u]&=&\e\vw\cdot\nx u+ u-\bar u\\
&=&\sin\phi\frac{\p
u}{\p\eta}-\frac{\e}{R_{\kappa}-\e\eta}\cos\phi\bigg(\frac{\p
u}{\p\phi}+\frac{\p u}{\p\tau}\bigg)+ u-\bar u\nonumber
\end{eqnarray}
\ \\
Step 2: Estimates of $\ll[\q]$.\\
The interior contribution can be estimated as
\begin{eqnarray}
\ll[\q]=\e\vw\cdot\nx \q+ \q-\bar
\q&=&\e^{3}\vw\cdot\nx \u_2.
\end{eqnarray}
We have
\begin{eqnarray}
\im{\ll[\q]}{\Omega\times\s^1}&\leq&\im{\e^{3}\vw\cdot\nx \u_2}{\Omega\times\s^1}\leq C\e^{3}\im{\nx\u_2}{\Omega\times\s^1}\leq
C\e^{3}.
\end{eqnarray}
This implies
\begin{eqnarray}\label{pf 2_}
\tm{\ll[\q]}{\Omega\times\s^1}&\leq& C\e^{3},\\
\nm{\ll[\q]}_{L^{\frac{2m}{2m-1}}(\Omega\times\s^1)}&\leq& C\e^{3},\\
\im{\ll[\q]}{\Omega\times\s^1}&\leq& C\e^{3}.
\end{eqnarray}
\ \\
Step 3: Estimates of $\ll \qb$.\\
Since $\ub_0=0$, we only need to estimate $\ub_1=(f_1^{\e}-f^{\e}_{1,L})\cdot\psi_0=\v\psi_0$ where
$f_1^{\e}(\eta,\tau,\phi)$ solves the $\e$-Milne problem and $\v=f_1^{\e}-f^{\e}_{1,L}$. The boundary layer contribution can be
estimated as
\begin{eqnarray}\label{remainder temp 1}
\ll[\e\ub_1]&=&\sin\phi\frac{\p
(\e\ub_1)}{\p\eta}-\frac{\e}{R_{\kappa}-\e\eta}\cos\phi\bigg(\frac{\p
(\e\ub_1)}{\p\phi}+\frac{\p(\e\ub_1)}{\p\tau}\bigg)+ (\e\ub_1)-
(\e\bar\ub_1)\\
&=&\e\Bigg(\sin\phi\bigg(\psi_0\frac{\p
\v}{\p\eta}+\v\frac{\p\psi_0}{\p\eta}\bigg)-\frac{\psi_0\e}{R_{\kappa}-\e\eta}\cos\phi\bigg(\frac{\p
\v}{\p\phi}+\frac{\p \v}{\p\tau}\bigg)+ \psi_0\v-\psi_0\bar\v\Bigg)\nonumber\\
&=&\e\psi_0\bigg(\sin\phi\frac{\p
\v}{\p\eta}-\frac{\e}{R_{\kappa}-\e\eta}\cos\phi\frac{\p
\v}{\p\phi}+\v-\bar\v\bigg)+\e\Bigg(\sin\phi
\frac{\p\psi_0}{\p\eta}\v-\frac{\psi_0\e}{R_{\kappa}-\e\eta}\cos\phi\frac{\p
\v}{\p\tau}\Bigg)\nonumber\\
&=&\e\Bigg(\sin\phi
\frac{\p\psi_0}{\p\eta}\v-\frac{\psi_0\e}{R_{\kappa}-\e\eta}\cos\phi\frac{\p
\v}{\p\tau}+\e^2\psi_0\v\Bigg)\nonumber.
\end{eqnarray}
Since $\psi_0=1$ when $\eta\leq R_{\min}/(4\e^{1/2})$, the effective region
of $\px\psi_0$ is $\eta\geq R_{\min}/(4\e^{1/2})$ which is further and further
from the origin as $\e\rt0$. By Theorem \ref{Milne theorem 2}, the
first term in (\ref{remainder temp 1}) can be bounded as
\begin{eqnarray}
\im{\e\sin\phi\frac{\p\psi_0}{\p\eta}\v}{\Omega\times\s^1}&\leq&
Ce^{-\frac{K_0}{\e^{1/2}}}\leq C\e^3.
\end{eqnarray}
Then we turn to the crucial estimate in the second term of (\ref{remainder temp 1}), by Theorem \ref{Milne tangential.}, we have
\begin{eqnarray}
\im{-\e\frac{\psi_0\e}{R_{\kappa}-\e\eta}\cos\phi\frac{\p
\v}{\p\tau}}{\Omega\times\s^1}&\leq&C\e^2\im{\frac{\p \v}{\p\tau}}{\Omega\times\s^1}\leq C\e^{2}\abs{\ln(\e)}^8.
\end{eqnarray}
Also, the exponential decay of $\dfrac{\p\v}{\p\tau}$ by Theorem \ref{Milne tangential.} and the rescaling $\eta=\mu/\e$ implies
\begin{eqnarray}
\tm{-\e\frac{\psi_0\e}{R_{\kappa}-\e\eta}\cos\phi\frac{\p
\v}{\p\tau}}{\Omega\times\s^1}&\leq& \e^2\tm{\frac{\p
\v}{\p\tau}}{\Omega\times\s^1}\\
&\leq&\e^2\Bigg(\int_{-\pi}^{\pi}\int_0^1(1-\mu)\lnm{\frac{\p\v}{\p\tau}(\mu,\tau)}^2\ud{\mu}\ud{\tau}\Bigg)^{1/2}\no\\
&\leq&\e^{\frac{5}{2}}\Bigg(\int_{-\pi}^{\pi}\int_0^{1/\e}(1-\e\eta)\lnm{\frac{\p\v}{\p\tau}(\eta,\tau)}^2\ud{\eta}\ud{\tau}\Bigg)^{1/2}\no\\
&\leq&C\e^{\frac{5}{2}}\Bigg(\int_{-\pi}^{\pi}\int_0^{1/\e}\ue^{-2K_0\eta}\abs{\ln(\e)}^{16}\ud{\eta}\ud{\tau}\Bigg)^{1/2}\no\\
&\leq& C\e^{\frac{5}{2}}\abs{\ln(\e)}^8.\no
\end{eqnarray}
Similarly, we have
\begin{eqnarray}
\nm{-\e\frac{\psi_0\e}{R_{\kappa}-\e\eta}\cos\phi\frac{\p
\v}{\p\tau}}_{L^{\frac{2m}{2m-1}}(\Omega\times\s^1)}&\leq&C\e^{3-\frac{1}{2m}}\abs{\ln(\e)}^8.
\end{eqnarray}
For the third term in (\ref{remainder temp 1}), we have
\begin{eqnarray}
\im{\e^3\psi_0\v}{\Omega\times\s^1}\leq C\e^3.
\end{eqnarray}
In total, we have
\begin{eqnarray}
\tm{\ll[\qb]}{\Omega\times\s^1}&\leq& C\e^{\frac{5}{2}}\abs{\ln(\e)}^8,\\
\nm{\ll[\qb]}_{L^{\frac{2m}{2m-1}}(\Omega\times\s^1)}&\leq& C\e^{3-\frac{1}{2m}}\abs{\ln(\e)}^8,\\
\im{\ll[\qb]}{\Omega\times\s^1}&\leq& C\e^{2}\abs{\ln(\e)}^8.
\end{eqnarray}
\ \\
Step 4: Diffusive Limit.\\
In summary, since $\ll[u^{\e}]=0$, collecting estimates in Step 2 and Step 3, we can prove
\begin{eqnarray}
\tm{\ll[R]}{\Omega\times\s^1}&\leq& C\e^{\frac{5}{2}}\abs{\ln(\e)}^8,\\
\nm{\ll[R]}_{L^{\frac{2m}{2m-1}}(\Omega\times\s^1)}&\leq& C\e^{3-\frac{1}{2m}}\abs{\ln(\e)}^8,\\
\im{\ll[R]}{\Omega\times\s^1}&\leq& C\e^{2}\abs{\ln(\e)}^8.
\end{eqnarray}
Also, based on our construction, it is easy to see
\begin{eqnarray}
R-\pp[R]=-\e^2(\vw\cdot\nx\u_1-\pp[\vw\cdot\nx\u_1]),
\end{eqnarray}
which further implies
\begin{eqnarray}
\tm{R-\pp[R]}{\Gamma^-}&\leq& C\e^2,\\
\nm{R-\pp[R]}_{L^{m}(\Gamma^-)}&\leq&C\e^2,\\
\im{R-\pp[R]}{\Gamma^-}&\leq& C\e^2
\end{eqnarray}
Also, the remainder $R$ satisfies the equation
\begin{eqnarray}
\left\{
\begin{array}{rcl}
\e \vw\cdot\nabla_x R+R-\bar R&=&\ll[R]\ \ \text{for}\ \ \vx\in\Omega,\\
R-\pp[R]&=&R-\pp[R]\ \ \text{for}\ \ \vw\cdot\vn<0\ \ \text{and}\ \
\vx_0\in\p\Omega.
\end{array}
\right.
\end{eqnarray}
It is easy to verify $R$ satisfies the normalization condition and compatibility condition (\ref{normalization.}) and (\ref{compatibility.}). By Theorem \ref{LI estimate}, we have for $m$ sufficiently large
\begin{eqnarray}
\im{R}{\Omega\times\s^1}
&\leq& C\bigg(\frac{1}{\e^{1+\frac{1}{m}}}\tm{\ll[R]}{\Omega\times\s^1}+
\frac{1}{\e^{2+\frac{1}{m}}}\nm{\ll[R]}_{L^{\frac{2m}{2m-1}}(\Omega\times\s^1)}+\im{\ll[R]}{\Omega\times\s^1}\\
&&+\frac{1}{\e^{1+\frac{1}{m}}}\nm{R-\pp[R]}_{L^2(\Gamma^-)}+\frac{1}{\e^{\frac{1}{m}}}\nm{R-\pp[R]}_{L^{m}(\Gamma^-)}+\im{R-\pp[R]}{\Gamma^-}\bigg)\no,\\
&\leq& C\Bigg(\frac{1}{\e^{1+\frac{1}{m}}}\bigg(\e^{\frac{5}{2}}\abs{\ln(\e)}^8\bigg)+
\frac{1}{\e^{2+\frac{1}{m}}}\bigg(\e^{3-\frac{1}{2m}}\abs{\ln(\e)}^8\bigg)+\bigg(\e^{2}\abs{\ln(\e)}^8\bigg)\no\\
&&+\frac{1}{\e^{1+\frac{1}{m}}}(\e^2)+\frac{1}{\e^{\frac{1}{m}}}(\e^2)+(\e^2)\Bigg)\no\\
&\leq&C\e^{1-\frac{3}{2m}}\abs{\ln(\e)}^8\leq C\e^{1-\d}
\end{eqnarray}
Note that the constant $C$ might depend on $m$ and thus depend on $\d$.
Since it is easy to see
\begin{eqnarray}
\im{\sum_{k=1}^{2}\e^k\u_k+\sum_{k=0}^{1}\e^k\ub_k}{\Omega\times\s^1}\leq C\e,
\end{eqnarray}
our result naturally follows. This completes the proof of main theorem.
\end{proof}


\bibliographystyle{siam}
\bibliography{Reference}

\end{document}